\newcommand{\ds}{\displaystyle}
\newtheorem{theorem}{Theorem}[section]
\newtheorem{lemma}[theorem]{Lemma}
\newtheorem{proposition}[theorem]{Proposition}
\newtheorem{corollary}[theorem]{Corollary}
\theoremstyle{definition}
\newtheorem{definition}[theorem]{Definition}
\newtheorem{remark}{Remark}
\numberwithin{equation}{section}
\newtheorem{example}{Example}
\DeclareMathOperator{\diam}{Diam}
\DeclareMathOperator{\loc}{loc}
\DeclareMathOperator{\di}{div}
\DeclareMathOperator{\lo}{loc}
\DeclareMathOperator{\dd}{{d}}
\DeclareMathOperator{\intt}{int}
\DeclareMathOperator{\supp}{supp}
\DeclareMathOperator{\nac}{nac}
\DeclareMathOperator{\AC}{\mathsf{AC}}
\DeclareMathOperator{\Lip}{Lip}
\DeclareMathOperator{\FAC}{\mathsf{FAC}}
\DeclareMathOperator{\NAC}{\mathsf{NAC}}
\DeclareMathOperator{\MAC}{\mathsf{MAC}}
\DeclareMathOperator{\BAC}{\mathsf{BAC}}
\DeclareMathOperator{\Vv}{\mathsf{pV}}
\DeclareMathOperator{\eV}{\mathsf{eV}}
 \DeclareMathOperator{\Bv}{\mathbf{v}}
\DeclareMathOperator{\e}{\mathfrak{e}}
\author{Fue Zhang}
\address{
Department of Mathematics\\
 Shihezi University\\
 832000 Shihezi, China}
\email{zhangfue@shzu.edu.cn}
\author{Wei Zhao}
\address{
Department of Mathematics\\
East China University of Science and Technology\\
200237 Shanghai, China}
\email{szhao\underline{ }wei@yahoo.com}
\keywords{absolutely continuous curve; Finsler manifold; Wasserstein space; gradient flow; continuity equation}
\subjclass[2010]{Primary 49J52, Secondary  58B20, 49J27}
\begin{document}

\title[Absolutely continuous curves in Finsler-like spaces]{Absolutely continuous curves in Finsler-like spaces}

\begin{abstract}
The present paper is devoted to the investigation of absolutely continuous curves in asymmetric metric spaces induced by Finsler structures. Firstly, for asymmetric spaces induced by Finsler manifolds, we show that three different kinds of absolutely continuous curves coincide when their domains are bounded closed intervals. As an application, a universal existence and regularity theorem for gradient flow is obtained in the Finsler setting. Secondly, we study absolutely continuous curves in Wasserstein spaces  over   Finsler manifolds and establish the Lisini structure theorem in this setting, which characterize the nature of  absolutely continuous curves in  Wasserstein spaces in terms of dynamical transference plans concentrated on absolutely continuous curves in base Finsler manifolds. Besides,   a close relation between continuity equations and absolutely continuous curves in Wasserstein spaces is founded. Last but not least,  we also consider nonsmooth ``Finlser-like" spaces, in which case most of the aforementioned results remain valid.
Various model examples are constructed in this paper, which  point out genuine differences between the asymmetric and symmetric settings.

\end{abstract}
\maketitle


\section{Introduction}
In recent years, a lot of effort has been devoted to the study of absolutely continuous curves in  the symmetric setting, which reveals that such curves have  close  relations to the theories of Sobolev space, gradient flow and heat flow in the context of metric geometry (e.g., \cite{AGS,AGS2,AGS3,APS,HKST,GMS,GKO}).

Asymmetric metrics  often occur in nature (i.e., the symmetry $d(x,y) = d(y,x)$ is not assumed); a prominent example is the Matsumoto metric (see \cite{M}) describing the law of walking on a mountain slope under the action of gravity. In view of the preeminent role of absolutely continuous curves in geometry and analysis, it is natural as well as necessary to investigate  such curves in  asymmetric metric spaces.
Nonetheless, the lack of the
symmetric structure causes a significant difference and limited work has been done in  this  case. Therefore, the aim of the present paper is to investigate such curves  in ``Finsler-like" asymmetric metric spaces.

Many interesting asymmetric metric spaces can be induced by Finsler manifolds.
An typical example  is the \textit{Funk manifold} (e.g., \cite[Example~1.3.5]{Sh1}), which is  the $n$-dimensional Euclidean unit ball
$\mathbb{B}^n =\{ x \in \mathbb{R}^n \,|\, \|x\|<1\}$ ($n \ge 2$) endowed by a Finsler metric, called {\it Funk metric}, defined as $F:\mathbb{B}^n \times \mathbb{R}^n \rightarrow [0,\infty)$ by
\begin{equation}\label{Funckmeatirc}
 F(x,v)
 =\frac{\sqrt{\|v\|^2-(\|x\|^2\|v\|^2-\langle x,v \rangle^2)} +\langle x,v\rangle}{1-\|x\|^2},
 \quad x \in \mathbb{B}^n,\ v \in T_x\mathbb{B}^n=\mathbb{R}^n,
\end{equation}
where $\|\cdot\|$ and $\langle\cdot, \cdot\rangle$ denote
the Euclidean norm and inner product, respectively. The distance
function associated to $F$ is
\begin{equation}\label{distFunk}
 d_{F}(x_1,x_2) =\ln\Bigg(
 \frac{\sqrt{\|x_1-x_2\|^2-(\|x_1\|^2\|x_2\|^2-\langle x_1,x_2\rangle^2)}-\langle x_1,x_2-x_1\rangle}
 {\sqrt{\|x_1-x_2\|^2-(\|x_1\|^2\|x_2\|^2-\langle x_1,x_2\rangle^2)}-\langle x_2,x_2-x_1\rangle}
 \Bigg), \quad x_1,x_2 \in \mathbb{B}^n,
\end{equation}
Thus, the {\it Funk space} $(\mathbb{B}^n,d_F)$ is an asymmetric metric space, i.e., $d_F$ is
non-negative and verifies the triangle inequality, but not the symmetry; in particular,
\begin{equation}\label{radisulog2}
\lim_{\|x\|\rightarrow1^-}d_F(\mathbf{0},x)=\infty,\ \lim_{\|x\|\rightarrow1^-} d_F(x,\mathbf{0})=\ln 2.
\end{equation}
In fact,  $(\mathbb{B}^n,d_F)$  is an object which serves as a model structure where the symmetry fails. Moreover, we will see that the properties of absolutely continuous curves in the Funk space and in its Wasserstein space are surprisingly different from the ones in the symmetric case.

In the present paper, a map $\gamma$ from an interval $I\subset \mathbb{R}$ to an asymmetric space $(X,d)$ is called {\it $p$-forward absolutely continuous} (resp., {\it $p$-backward absolutely continuous}) for some $p\in [1,\infty]$ if there is a nonnegative $f\in L^p(I)$ such that
\begin{align*}
&d(\gamma(t_1),\gamma(t_2))\leq \int^{t_2}_{t_1}f(s){\dd}s \quad   \bigg(\text{resp.,\  }d(\gamma(t_2),\gamma(t_1))\leq \int^{t_2}_{t_1}f(s){\dd}s\bigg),
\end{align*}
for any $t_1,t_2\in I$ with $t_1\leq t_2$. And $\gamma$ is called  {\it $p$-absolutely continuous} if it is both $p$-forward and $p$-backward absolutely continuous. For convenience, we use $\FAC^p(I;X)$, $\BAC^p(I;X)$ and $\AC^p(I;X)$ to denote the classes of these three kinds of curves, respectively.
Although they coincide in the symmetric case  (cf. \cite[Definition 1.1.1]{AGS}), it is another story for asymmetric metric spaces. A simple example is
$X:=\mathbb{R}$ endowed with an asymmetric metric
\[
d(x,y):=y-x,\text{ if }y\geq x; \quad\quad d(x,y):=1,\text{ if }y< x.
\]
Thus, $ \FAC^p(I;X)\neq \BAC^p(I;X)$ for every $p\in [1,\infty]$ (see Example \ref{curverexa1} below for reasons). Furthermore, unlike the symmetric case, usually neither forward nor backward absolutely continuous curves can be extended to the closure of $I$ no matter whether $(X,d)$ is complete. For instance, consider
  the unit speed minimizing geodesic $\gamma$ defined on $I= (-\ln 2,0)$  in the Funk space $(\mathbb{B}^n,d_F)$ with
\begin{equation*}
\lim_{t\rightarrow -\ln 2^+}\gamma(t)=(-1,0,\ldots,0)\in \partial\mathbb{B}^n,\quad \lim_{t\rightarrow 0^-}\gamma(t)=\mathbf{0}\in \mathbb{B}^n.
\end{equation*}
Despite the forward completeness of $(\mathbb{B}^n,d_F)$, the $p$-forward absolutely continuous curve $\gamma$ cannot be extended to $t=-\ln 2$ for every $p\in [1,\infty]$ (see Example \ref{funcurve1} below for details).
In view of so many distinct phenomenons, it is meaningful and challenging as well to study absolutely continuous curves in the asymmetric setting.

Given a Finsler manifold $(M,F)$, the {\it reversibility} of a set $U\subset M$ (cf. \cite{R,Rademacher}) is defined as
\begin{equation}\label{resiblity}
\lambda_F(U):=\sup_{x\in U}\left( \sup_{y\in T_xM}\frac{F(x,-y)}{F(x,y)}   \right).
\end{equation}
Clearly, $\lambda_F(M)\geq 1$ with equality if  $(M,F)$ is  {\it reversible}. And every
  irreversible Finsler manifold $(M,F)$  induces an asymmetric metric space $(M,d_F)$ (cf. \cite{BCS,Sh1,KZ}).
 The first part of this article focuses on absolutely continuous curves in such a space (see Section \ref{sect1}). For a generic Finsler manifold  $(M,F)$,
 without the completeness assumption  we are able to prove that if $I$ is a bounded and closed interval, then
\begin{equation}\label{threearequ}
\FAC^p(I;M)= \BAC^p(I;M)=\AC^p(I;M).
\end{equation}
 In particular, the boundedness and closedness assumptions about $I$ are both necessary due to the counterexamples provided by the Funk space
 (see Theorem \ref{FACBAC} and Remark \ref{closureisbounded}). 
 Besides,  when $p=1$,  the curves in the classes \eqref{threearequ} are independent of the choice of Finsler structures and are the natural extension of absolutely continuous functions in classical real analysis (see Theorem \ref{newdefsabcon}), which is a Finsler version of \cite[Proposition 3.18]{AB}.

  It is well known that gradient flows govern a wide range of important evolution problems and hence, the related study
  has attracted remarkable attention. In spite of quite a number of contributions dealing
with the symmetric case (e.g., \cite{AGS,AGS2,AGS3,MS,Ograd,S,Vi}), as far as we know, there are only three papers  \cite{BM,RMS,OZ} concerning the asymmetric case, not to mention the Finsler case. Nevertheless, with the help of the aforementioned results, we can investigate the gradient flow in the context of Finsler manifolds. More precisely, given a continuous, subjective and  strictly increasing function $h:[0,\infty)\rightarrow [0,\infty)$, define the corresponding convex primitive function $\psi$ and the Legendre-Fenchel-Moreau transform $\psi^*$ as
\[
\psi(x):=\int^x_0 h(r){\dd}r,\quad \psi^*(y)=\sup_{x\geq 0}\left[ xy-\psi(x)  \right].
\]
Thus, for every bounded $C^1$-function $\phi:M\rightarrow \mathbb{R}$ and every point $x_0\in M$, we prove that there always exists a $C^1$-solution $\xi:[0,\infty)\rightarrow M$ satisfying the following {\it generalized gradient flow}
\begin{equation}\label{A}
\frac{\dd}{{\dd}t}(\phi\circ\xi)(t) = -\psi(F(\xi'(t)))-\psi^*{\left(F(\nabla(-\phi)|_{\xi(t)}\right)},\quad \xi(0)=x_0.
\end{equation}
In particular, Equation \eqref{A} contains the standard gradient flow $\xi(t)=\nabla(-\phi(\xi(t)))$ as well as the $p$-gradient flow considered in \cite{OZ}. Moreover,
 because of the singularity of Finslerian gradient operator $\nabla$, even if $\phi$ is smooth, the $C^1$-regularity of $\xi$ {\it cannot} be improved unless $\nabla(-\phi(\xi(t)))\neq 0$ for all $t>0$. See  Theorem \ref{existenceofgenreaflow} and Remark \ref{regularifinslerm} below for more details.

The second part of this paper is to devoted to absolutely continuous curves in Wasserstein spaces $(\mathscr{P}_p(M),W_p)$ over (irreversible) Finsler manifolds $(M,F)$ (see Section \ref{asbwassficcns}). Roughly speaking, $(\mathscr{P}_p(M),W_p)$ is an asymmetric metric space consisting of a collection of probability measures of the base manifold $M$.
Although Wasserstein space plays an important role in  modern geometric analysis (e.g., \cite{AGS2,LV,Sturm-1,Sturm-2}), up to now few results are available
in the literature concerning the asymmetric case (cf. \cite{KZ,OS}). The main difficulty is the incompatibility of the  forward and backward topologies. As an example, for every $p\in [1,\infty)$, in the Wasserstein space  $(\mathscr{P}_p(\mathbb{B}^n),W_p)$ over the Funk space, there always exist a sequence $(\mu_n)\subset \mathscr{P}_p(\mathbb{B}^n)$ and a probability measure $\mu\in \mathscr{P}(\mathbb{B}^n)\backslash \mathscr{P}_p(\mathbb{B}^n)$ such that
\begin{equation}\label{supphinfunk}
\lim_{k\rightarrow \infty}W_p(\mu,\mu_{n})=0,\quad  \lim_{n\rightarrow \infty}W_p(\mu_n,\mu)=\infty.
\end{equation}
See Example \ref{wpcontfunk} below for the constructions of $\mu_n$ and $\mu$. {This extreme incompatibility may cause the discontinuity of  partially absolutely continuous curves.}
In order  to avoid this, we make an additional assumption about the reversibility \eqref{resiblity}.
Observing that in  the Funk space the reversibility of the forward $r$-ball centered at $\mathbf{0}$ is a convex function in $r$ (cf. \cite[p.\,229]{KZ}),
 we focus on the Finsler manifolds whose reversibilities (restricted on forward balls centered at fixed points) satisfy a   concavity property.  Under this mid assumption, we succeed in extending the Lisini structure theorem (cf. \cite{Li,Li2}) to the Finsler setting (see Theorem  \ref{maintheorem}), in which case every $p(>1)$-forward absolutely continuous curve $\mu_t\in \FAC^p([0,1];\mathscr{P}_p(M))$ is a displacement interpolation, i.e., $\mu_t=(e_t)_\sharp\eta$, where $\eta$ is a dynamical transference plan (i.e., a probability measure on $C([0,1];M)$) concentrated on $\AC^p([0,1];M)$ and the ``speed" of $\mu_t$ satisfy
\[
|\mu'_+|_p(t):=\lim_{h\rightarrow 0^+}\frac{W_p(\mu_t,\mu_{t+h})}{h}= \left(\int_{C([0,1];M)} F^p(\gamma'(t)) \,{\dd}{\eta}(\gamma)\right)^{1/p}.
 \]
 for $\mathscr{L}^1$-a.e. $t\in (0,1)$. A similar result also holds for $p$-backward absolutely continuous curves. Moreover, provided $\mu_t\in \AC^p([0,1];\mathscr{P}_p(M))$, then this  structure theorem  even remains valid without the concavity assumption about reversibility due to the continuity of  $\mu_t$   in $(\mathscr{P}_p(M),W_p)$ (see Theorem \ref{basicfinterevers}).
 As an application, we show that every $\mu_t\in \AC^p([0,1];\mathscr{P}_p(M))$ can be interpreted as solutions of the {\it continuity equation}
\begin{equation}\label{contequa12}
\partial_t\mu_t+\di(\Bv_t\mu_t)=0
\end{equation}
for some vector field $\mathbf{v}_t$ on $M$ with
$\int_{[0,1]}\int_{ M} F^p(\Bv_t(x)){\dd}\mu_t{\dd}t<\infty$;
in particular, if the uniform constant (cf. \cite{E}) is finite, there is a unique $\mathbf{v}_t$ satisfying  \eqref{contequa12}  and
\[
|\mu'_+|_p(t)=\left(\int_{M}F^p(\Bv_t(x)){\dd}\mu_t(x)\right)^{1/p}
\]
for $\mathscr{L}^1$-a.e. $t\in (0,1)$ (see Theorem \ref{contequat1} and Corollary \ref{unformuqncon11}), which covers the results in Euclidean case \cite{AGS}, the Riemannian case \cite{Erb} and the compact Finsler case \cite{OS,OZ}.

Last but not least, we discuss briefly absolutely continuous curves in the framework of a special kind of nonsmooth asymmetric metric spaces, called {\it forward metric space} (see Section \ref{forresul1t}). Such a space is a  natural generalization of Finsler manifolds,  which possess many nice properties:
for instance, the Gromov-Hausdorff topology, the theory of curvature-dimension condition
developed by Lott, Sturm and Villani, and the theory of gradient flow initiated by Ambrosio,  Gigli and Saver\'e all can be generalized to such spaces (cf. \cite{KZ,OZ}). 

Although some constructions throughout the paper are similar to the symmetric case, peculiar differences
appear due to the character of the asymmetric metric spaces we are working on, which provide
the motivation and real flavor of the present work.

\smallskip

\noindent{\textbf{Acknowledgements.}}
The second author was supported by Natural Science Foundation of Shanghai
(No.\ 21ZR1418300).

\section{Preliminaries of asymmetric metric spaces}\label{asygecase}
As all the metric spaces in the present paper are asymmetric, we recall and introduce some basic definitions and properties of such spaces in this section. Also refer to \cite{CZ,D,KZ,Me,Me2,W}  for more details  (partly with different names).

\begin{definition}\label{generalsapcedef}
Let $X$ be a set and $d:X\times X\rightarrow [0,\infty)$ be a function on $X$. The pair $(X,d)$ is called  {an} {\it asymmetric metric space} if for any   $x,y,z \in X$:
\begin{itemize}

\item[(i)]  $d(x,y)\geq 0,
\mbox{ with equality if and only if } x=y;$

\smallskip

\item[(ii)] $ d(x,z)\leq d(x,y)+d(y,z).$

\end{itemize}
\end{definition}

Since the metric $d$
  could be asymmetric, there are two kinds of balls, i.e., forward and backward balls, respectively.
  More precisely, given any  $r>0$ and a point $x\in X$,  the {\it forward ball} $B^+_x(r)$ and the {\it backward ball} $B_x^-(r)$  of radius $r$ centered at $x$ are defined respectively as
\[
B^+_x(r):=\{y\in X| \, d(x,y)<r\},\quad B^-_x(r):=\{y\in X| \, d(y,x)<r\}.
\]

Let $\mathcal {T}_+$ (resp., $\mathcal {T}_-$) denote by the {\it forward  topology} (resp., {\it backward topology})  which is induced by forward balls (resp., backward balls) and let $\hat{\mathcal {T}}$ be the {\it symmetrized topology} that is induced by both forward open balls and backward ones. Owing to \cite[Section 3]{Me} and \cite[Section 2]{OZ}, we have the following result.

\begin{theorem}\label{asymmetrtopol}
Let $(X,d)$ be an asymmetric metric space. Then the following statements are true:
\begin{itemize}

\item[(i)]  $d:X\times X\rightarrow [0,\infty)$ is continuous under $\hat{\mathcal {T}}\times \hat{\mathcal {T}}$ and $(X,d)$ is a Hausdorff space;

\item[(ii)] the symmetrized topology $\hat{\mathcal {T}}$ is exactly the one induced by the symmetrized metric
\begin{equation}\label{symmmetricde}
\hat{d}(x,y)=\frac12[d(x,y)+d(y,x)].
\end{equation}

\item[(iii)] a sequence $(x_n)$ converges to $x$ in $(X,\hat{\mathcal {T}})$ if and only if both $d(x,x_n)\rightarrow 0$ and $d(x_n,x)\rightarrow 0$.

\end{itemize}
\end{theorem}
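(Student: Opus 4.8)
The plan is to prove part (ii) first, since (i) and (iii) then come essentially for free. I would start by checking that $\hat d$ from \eqref{symmmetricde} is a genuine symmetric metric on $X$: symmetry is immediate, the triangle inequality follows by averaging $d(x,z)\le d(x,y)+d(y,z)$ with $d(z,x)\le d(z,y)+d(y,x)$, and $\hat d(x,y)=0$ forces $d(x,y)=0$, hence $x=y$, by Definition \ref{generalsapcedef}(i). Writing $\hat B_x(r):=\{y:\hat d(x,y)<r\}$ and using $d(x,y)\le 2\hat d(x,y)$ and $d(y,x)\le 2\hat d(x,y)$, one obtains the ball sandwich
\[
\hat B_x(r/2)\ \subseteq\ B^+_x(r)\cap B^-_x(r)\ \subseteq\ \hat B_x(r),\qquad x\in X,\ r>0.
\]
On the other hand, since $\hat{\mathcal T}$ is by definition generated by the subbasis of all forward and backward open balls, a routine argument — shrinking, via the triangle inequality, a finite intersection of subbasic balls around a point $x$ down to a single intersection $B^+_x(\delta)\cap B^-_x(\delta)$ — shows that $U$ is $\hat{\mathcal T}$-open if and only if for every $x\in U$ there is $\delta>0$ with $B^+_x(\delta)\cap B^-_x(\delta)\subseteq U$. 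Combining this description with the ball sandwich yields at once $\hat{\mathcal T}=\mathcal T_{\hat d}$, the metric topology of $\hat d$, which is (ii). In particular $(X,\hat{\mathcal T})$ is metrizable, hence Hausdorff, and (iii) reduces to the trivial equivalence that $\hat d(x,x_n)\to 0$ holds iff $d(x,x_n)\to 0$ and $d(x_n,x)\to 0$, the two summands being nonnegative.

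For the continuity claim in (i), metrizability of $\hat{\mathcal T}\times\hat{\mathcal T}$ (a product of metrizable topologies) lets me argue sequentially: if $(x_n,y_n)\to(x,y)$, then $d(x,x_n),\,d(x_n,x),\,d(y,y_n),\,d(y_n,y)$ all tend to $0$, and the double estimate
\[
d(x,y)-d(x,x_n)-d(y_n,y)\ \le\ d(x_n,y_n)\ \le\ d(x_n,x)+d(x,y)+d(y,y_n)
\]
sandwiches $d(x_n,y_n)\to d(x,y)$. The Hausdorff property can alternatively be read off directly: for $x\ne y$ set $\rho:=\hat d(x,y)>0$; then $B^+_x(\rho/2)\cap B^-_x(\rho/2)$ and $B^+_y(\rho/2)\cap B^-_y(\rho/2)$ are disjoint $\hat{\mathcal T}$-open neighbourhoods, since a common point $z$ would give $\hat d(x,y)\le\hat d(x,z)+\hat d(z,y)<\rho$.

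There is no serious obstacle here; this is a foundational lemma and every step is elementary. The only point demanding a bit of care is the topological bookkeeping in the first paragraph — identifying the $\hat{\mathcal T}$-open sets via the neighbourhood bases $\{B^+_x(\delta)\cap B^-_x(\delta)\}_{\delta>0}$, which requires reducing finite intersections of subbasic balls centered at arbitrary points to one centered at the relevant point — together with keeping track of the factor $2$ in the ball inclusions so that $\hat{\mathcal T}$ and $\mathcal T_{\hat d}$ come out genuinely equal rather than merely comparable.
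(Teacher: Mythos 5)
Your proof is correct. Note, however, that the paper does not actually supply a proof of this theorem: it simply states it and refers to \cite[Section 3]{Me} and \cite[Section 2]{OZ} for the justification, so there is no in-paper argument to compare against.

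That said, each step of your argument checks out. The reduction of the subbasis-generated topology $\hat{\mathcal T}$ to the neighbourhood basis $\{B^+_x(\delta)\cap B^-_x(\delta)\}_{\delta>0}$ is the only place requiring care, and your appeal to the triangle inequality (shrinking the radius by the distance from $x$ to the centre of each subbasic ball, one ball at a time, then taking the minimum) does handle it correctly. The ball sandwich $\hat B_x(r/2)\subseteq B^+_x(r)\cap B^-_x(r)\subseteq\hat B_x(r)$ then gives exact equality of topologies rather than mere comparability, and your deductions of (iii) and of the Hausdorff property from metrizability, together with the two-sided triangle estimate for the sequential continuity of $d$, are all sound. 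Your proof is thus a complete and self-contained substitute for the external references.
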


We now  recall the definition of completeness in the asymmetric setting.
\begin{definition}
Let $(X,d)$ be {an} asymmetric metric space.

\begin{itemize}
\item A sequence $(x_n)$ in $X$ is called
a {\it forward} (resp., {\it backward}) {\it Cauchy sequence} if, for each $\epsilon>0$, there exists
$N>0$ satisfying when $n\geq m>N$, then $d(x_m,x_n)<\epsilon$ (resp.,
$d(x_n,x_m)<\epsilon$).

\smallskip

\item  $(X, d)$ is called {\it forward} (resp., {\it backward}) {\it complete} if
every forward (resp. backward) Cauchy sequence converges in
$X$ with respect to $\hat{\mathcal {T}}$. And $(X,d)$ is called {\it complete} if it is both forward and backward complete.

\end{itemize}
\end{definition}
These two kinds of completeness are not equivalent. For example, the Funk space  \eqref{distFunk} is forward complete but not backward complete (cf. \cite{Sh1}). But if an asymmetric metric space $(X,d)$ is forward complete, then the corresponding {\it reverse space} $(X,\overleftarrow{d})$ is backward complete, where the {\it reverse metric} $\overleftarrow{d}(x,y)$ is  defined by $\overleftarrow{d}(x,y):=d(y,x)$. For this reason, in what follows we focus on forward complete asymmetric metric spaces.

We turn to the concept of absolutely continuous curve in the framework of asymmetric metric space.
Inspired by \cite{AGS2,RMS,OZ},  the following definition is introduced.

\begin{definition}\label{forwardabcontinc}  Let $(X,d)$ be an asymmetric metric space and let $I$ be an interval of $\mathbb{R}$.  Given $p\in [1,\infty]$, a map $\gamma:I\rightarrow X$ is said to be {\it $p$-forward absolutely continuous} (resp., {\it $p$-backward absolutely continuous})
if there exists a non-negative function $f\in L^p(I)$ such that
\begin{align}
&d(\gamma(t_1),\gamma(t_2))\leq \int^{t_2}_{t_1}f(s){\dd}s \quad   \bigg(\text{resp.,\  }d(\gamma(t_2),\gamma(t_1))\leq \int^{t_2}_{t_1}f(s){\dd}s\bigg),\label{moredfabsc}
\end{align}
for any $t_1,t_2\in I$ with $t_1\leq t_2$. The classes of $p$-forward absolutely continuous curves and $p$-backward absolutely continuous curves defined on $I$ are denoted by $\FAC^p(I;X)$ and $\BAC^p(I;X)$ respectively.
A curve $\gamma$ is called {\it $p$-absolutely continuous} if $$\gamma\in \FAC^p(I;X)\cap \BAC^p(I;X)=:\AC^p(I;X).$$
In particular, $ \FAC^1(I;X)$,  $\BAC^1(I;X)$ and $\AC^1(I;X)$ are denoted  by $ \FAC(I;X)$,  $\BAC(I;X)$ and $\AC(I;X)$, respectively.
\end{definition}

It is easy to see that $\gamma\in \AC^p(I;X)$ is continuous in $(X,d)$ with respect to $\hat{\mathcal {T}}$, but this is not true for partially  absolutely continuous curves.
The following example emphasizes the contrast between these two kinds of curves.
\begin{example}\label{curverexa1}
Let $X=\mathbb{R}$ endowed by the following asymmetric metric
\begin{align*}d(x,y):=\left\{
\begin{array}{lll}
y-x, && \text{ if }y\geq x;\\
\\
1, && \text{ otherwise. }
\end{array}
\right.
\end{align*}
Thus, $ \FAC^p([0,1];X)\neq \BAC^p([0,1];X)$. In fact, consider $\gamma(t):=t$, $t\in [0,1]$. It is easy to see
\begin{equation}\label{forbaccurvenotequ}
d(\gamma(t_1),\gamma(t_2))= \int^{t_2}_{t_1}1{\dd}s,\quad d(\gamma(t_2),\gamma(t_1))=1,
\end{equation}
for any $[t_1,t_2]\subset [0,1]$. Hence, $\gamma\in \FAC^p([0,1];X)$ but $\gamma\notin \BAC^p([0,1];X)$. In particular, $\gamma$ is discontinuous with respect to $ \hat{\mathcal {T}}$.
\end{example}

In a complete symmetric metric space, absolutely continuous curves defined on open intervals can be extended naturally to the closure of domains due to the uniform continuity (cf. \cite{AGS}). However, this is not true in the asymmetric case. The following example indicates that a forward absolutely continuous curve in a forward complete asymmetric metric space  can be extended forwardly but not backwardly.
 \begin{example}[\cite{OZ}]\label{funcurve1}
In the Funk space defined by \eqref{distFunk},
consider a unit speed minimizing geodesic $\gamma:(-\ln 2,0) \rightarrow \mathbb{B}^n$
such that
\begin{equation}\label{convergebn}
\lim_{t\rightarrow -\ln 2^+}\gamma(t)=(-1,0,\ldots,0)\in \partial\mathbb{B}^n\quad \lim_{t\rightarrow 0^-}\gamma(t)=\mathbf{0}\in \mathbb{B}^n.
\end{equation}
Here, the limits are defined by the standard topology of $\mathbb{R}^n$.
Thus, $\gamma \in \FAC((-\ln 2,0);\mathbb{B}^n)$ with $f \equiv 1$ in (\ref{moredfabsc}) because of the standard theory of geodesics in Finsler geometry (cf. \cite{Sh1}). Moreover, since $\hat{\mathcal {T}}$ is the standard topology of $\mathbb{B}^n$ (cf. \cite{BCS,Sh1}), it follows by \eqref{convergebn} that $\gamma$ can be extended at $t=0$  but not at $t=-\ln 2$. In fact, $\lim_{t \to -\ln 2^+} d_F(\mathbf{0},\gamma(t))=\infty$ and hence, $\gamma\notin \BAC((-\ln 2,0);\mathbb{B}^n)$.
\end{example}

In the sequel, let $\mathscr{L}^1$ denote the Lebesgue measure on $\mathbb{R}$. Thus,
it follows from \cite[Proposition 2.2]{RMS} that the ``partial" metric derivative of an absolutely continuous curve always exists for $\mathscr{L}^1$-a.e. point in the domain.
\begin{proposition}\label{baspeedforback}Let $(X,d)$ be an asymmetric metric space and let $I$ be an interval of $\mathbb{R}$.
Given $p\in [1,\infty]$, for any $\gamma\in \FAC^p(I;X)$ (resp., $\gamma\in \BAC^p(I;X)$), the forward (resp., backward) metric derivative
\begin{align*}
|\gamma'_+|(t)&:=\lim_{h\rightarrow 0^+}\frac{d(\gamma(t),\gamma(t+h))}{h}=\lim_{h\rightarrow 0^+}\frac{d(\gamma(t-h),\gamma(t))}{h}\\
 \bigg(\text{resp.,\  }|\gamma'_-|(t)&:=\lim_{h\rightarrow 0^+}\frac{d(\gamma(t+h),\gamma(t))}{h}=\lim_{h\rightarrow 0^+}\frac{d(\gamma(t),\gamma(t-h))}{h}\bigg),
\end{align*}
 exists for $\mathscr{L}^1$-a.e. $t\in \intt(I)$. Furthermore, $|\gamma'_+|$ (resp., $|\gamma'_-|$) is in $L^p(I)$ and satisfies
\begin{align*}
&d(\gamma(t_1),\gamma(t_2))\leq \int^{t_2}_{t_1}|\gamma'_+|(s){\dd}s \quad   \bigg(\text{resp.,\  }d(\gamma(t_2),\gamma(t_1))\leq \int^{t_2}_{t_1}|\gamma'_-|(s){\dd}s\bigg),
\end{align*}
for any $t_1,t_2\in I$ with $t_1\leq t_2$.

\end{proposition}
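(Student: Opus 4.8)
The statement is the asymmetric analogue of the classical metric-derivative theorem (see \cite[Theorem 1.1.2]{AGS}), and since the two displayed formulas are mirror images under passing to the reverse metric $\overleftarrow{d}$, it suffices to treat $\gamma\in\FAC^p(I;X)$ and the forward derivative $|\gamma'_+|$; the backward case follows by applying the forward case to $\gamma$ viewed as a curve in $(X,\overleftarrow{d})$. Fix $\gamma\in\FAC^p(I;X)$ with controlling function $f\in L^p(I)$, $f\geq 0$. The plan is to first produce, by a monotonicity argument, a single scalar nondecreasing function that simultaneously dominates both one-sided difference quotients, then use the Lebesgue differentiation theorem for monotone/absolutely continuous real functions to get existence a.e. of a candidate limit, and finally identify the two one-sided limits in the display with that candidate.

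First I would set $\varphi(t):=\int_{t_0}^{t} f(s)\,\dd s$ for a fixed $t_0\in I$; this is a nondecreasing, real-valued, locally absolutely continuous function on $I$, hence differentiable $\mathscr{L}^1$-a.e.\ with $\varphi'(t)=f(t)$ for a.e.\ $t$. By hypothesis $d(\gamma(t_1),\gamma(t_2))\leq\varphi(t_2)-\varphi(t_1)$ for $t_1\leq t_2$. The key step is to show that the function
\[
t\longmapsto \text{(a suitable pointwise bound of } d(\gamma(s),\gamma(t)) \text{ over nearby } s)
\]
behaves like the real-variable situation. Concretely, I would argue as in \cite[Proposition 2.2]{RMS} (which is cited and may be invoked directly): define the upper forward dilation $m_+(t):=\limsup_{h\to 0^+} d(\gamma(t),\gamma(t+h))/h$. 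From $d(\gamma(t),\gamma(t+h))\leq\varphi(t+h)-\varphi(t)$ one gets $m_+(t)\leq\varphi'(t)=f(t)$ at every point where $\varphi$ is differentiable, so $m_+\leq f$ $\mathscr{L}^1$-a.e.\ and in particular $m_+\in L^p(I)$. Symmetrically the left difference quotient $d(\gamma(t-h),\gamma(t))/h$ has $\limsup$ bounded by $f(t)$ a.e.

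The heart of the matter is upgrading these $\limsup$ bounds to genuine two-sided limits and showing the left and right versions agree. For this I would introduce $g(t):=m_+(t)$ (now known to be in $L^p\subset L^1_{loc}$) and its primitive $G(t):=\int_{t_0}^t g(s)\,\dd s$, and prove the reverse inequality $d(\gamma(t_1),\gamma(t_2))\geq G(t_2)-G(t_1)$ for $t_1\leq t_2$ by the standard covering/Vitali argument: the triangle inequality gives that $t\mapsto V(t):=\sup\sum d(\gamma(s_{i}),\gamma(s_{i+1}))$ (forward variation over partitions of $[t_0,t]$) is nondecreasing, satisfies $V(t_2)-V(t_1)\geq d(\gamma(t_1),\gamma(t_2))$ and has a.e.\ derivative $\leq g$ a.e.\ yet $\geq$ every forward difference quotient; by absolute continuity of $V$ (it is squeezed between $0$ and $\varphi$) one obtains $V' = g$ a.e.\ and $V(t)=\int_{t_0}^t g$. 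Then for a.e.\ $t$,
\[
g(t)=V'(t)=\lim_{h\to 0^+}\frac{V(t+h)-V(t)}{h}\geq \limsup_{h\to0^+}\frac{d(\gamma(t),\gamma(t+h))}{h}=m_+(t)=g(t),
\]
forcing the $\limsup$ to be a limit; running the same computation with $V(t)-V(t-h)$ gives the left limit equal to $g(t)$ as well. This yields the claimed equality of the two expressions defining $|\gamma'_+|$, shows $|\gamma'_+|=g=V'\in L^p(I)$, and the inequality $d(\gamma(t_1),\gamma(t_2))\leq V(t_2)-V(t_1)=\int_{t_1}^{t_2}|\gamma'_+|(s)\,\dd s$ is immediate from the definition of $V$. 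The backward statement follows verbatim in $(X,\overleftarrow d)$.

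I expect the main obstacle to be the absolute-continuity of the forward variation function $V$ in the asymmetric setting: one must check that finiteness and absolute continuity of $V$ are genuinely inherited from the one-sided bound $d(\gamma(t_1),\gamma(t_2))\leq\int_{t_1}^{t_2} f$, without accidentally needing $d(\gamma(t_2),\gamma(t_1))$ — i.e.\ that only the \emph{forward} chain of inequalities is used in the Vitali/covering step. Since the triangle inequality is directional-insensitive in the needed direction $d(\gamma(s_0),\gamma(s_k))\leq\sum d(\gamma(s_i),\gamma(s_{i+1}))$, this works, but it is the point where the asymmetry has to be handled with care; everything else is a transcription of the classical argument. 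Alternatively, if one simply wishes to cite \cite[Proposition 2.2]{RMS} as stated, the proof reduces to noting that $\FAC^p$ here coincides with the forward absolute continuity considered there and that $L^p$-membership of $f$ transfers to $|\gamma'_+|$ via $|\gamma'_+|\leq f$ a.e.
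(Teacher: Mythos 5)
The paper does not prove this proposition at all: it is invoked verbatim from \cite[Proposition 2.2]{RMS}, which is exactly the option you note in your closing sentence. So your ``cite RMS'' alternative coincides with the paper's argument, and the rest of your write-up is extra detail the paper does not supply.

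That said, the detailed argument you sketch has two real gaps. First, the asserted ``reverse inequality'' $d(\gamma(t_1),\gamma(t_2))\geq G(t_2)-G(t_1)$ with $G(t)=\int_{t_0}^t m_+$ is simply false — the curve can backtrack (take $\gamma(t)=\sin t$ in $\mathbb{R}$: $d(\gamma(0),\gamma(2\pi))=0$ while $\int_0^{2\pi} m_+>0$). What is true is $V(t_2)-V(t_1)\geq d(\gamma(t_1),\gamma(t_2))$, and you do switch to $V$ afterward, but the displayed inequality should not be stated. Second, and more seriously, the last displayed chain $g(t)=V'(t)\geq\limsup_h d(\gamma(t),\gamma(t+h))/h=g(t)$ only recovers $\limsup=V'$; it does \emph{not} ``force the $\limsup$ to be a limit,'' because nothing there bounds the $\liminf$ from below. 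To conclude existence of the limit you must separately show $\liminf_{h\to 0^+} d(\gamma(t),\gamma(t+h))/h\geq V'(t)$ a.e.; the variation argument alone does not give this. The clean route — and, morally, what \cite{RMS} does — is the dense-sequence argument of \cite[Theorem~1.1.2]{AGS}, adapted as follows: pick $(y_n)$ dense in $\overline{\gamma(I)}$ and set $e_n(t):=-d(\gamma(t),y_n)$. The forward bound $d(\gamma(s),\gamma(t))\leq\int_s^t f$ gives, via the triangle inequality in both directions, $|e_n(t)-e_n(s)|\leq\int_s^t f$, so each $e_n$ is absolutely continuous; $m(t):=\sup_n e_n'(t)$ is in $L^p$, satisfies $d(\gamma(s),\gamma(t))=\sup_n(e_n(t)-e_n(s))\leq\int_s^t m$, and $\liminf_h d(\gamma(t),\gamma(t+h))/h\geq e_n'(t)$ for each $n$ pins the limit to $m(t)$ a.e. Your variation function $V$ then equals $\int m$, consistently, but the existence of the two-sided limit comes from the $e_n$'s, not from $V$ directly.
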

On account of \eqref{forbaccurvenotequ}, a  forward (resp., backward) absolutely continuous curve may not have the backward (resp., forward) metric derivative.
We also emphasize that the notation $|\gamma'_\pm|$ notwithstanding, the curve $\gamma\in \AC^p(I;X)$ may be not  differentiable in the usual sense, even if $(X,d)$ is  a Minkowski normed space.
\begin{example}
Let $X:=L^1([0,1])$ and let $d(f,g):=\|g-f\|_{L^1}+\omega\int_{0}^1 (g-f)(s){\dd}s$, where $\omega\in (-1,1)$ is a constant. Thus, $(X,d)$ is an asymmetric metric space induced by the Minkowski norm $[f]:=\|f\|_{L^1}+\omega\int_{0}^1 f(s){\dd}s$.
Set
 $\gamma(t):={\textbf{1}_{[0,t]}}$ for $t\in [0,1]$, which is a family of  indicator functions. It is easy to check
\[
d(\gamma(t_1),\gamma(t_2))=|t_2-t_1|+\omega(t_2-t_1),\quad \forall\,t_1,t_2\in [0,1],
\]
which implies $\gamma\in \AC([0,1];X)$. However, $\gamma$ is not differentiable in the usual sense but
\[
|\gamma'_+|(t)=1+\omega,\quad |\gamma'_-|(t)=1-\omega,\quad \forall\,t\in (0,1).
\]
\end{example}

\section{Absolutely continuous curves in Finsler manifolds} \label{sect1}
Asymmetric metric spaces induced by
Finsler manifolds enjoy many interesting properties. This section is devoted to the  investigation of absolutely continuous curves  in such metric spaces.

\subsection{Finsler manifolds}
In this subsection we recall some definitions and properties from Finsler geometry; for details see  \cite{BCS,R,Shen2013,Sh1,Rademacher}, etc.

 Let $M$ be an $n(\geq2)$-dimensional connected
 smooth manifold without boundary and $TM=\bigcup_{x \in M}T_{x}
M $ be its tangent bundle. The pair $(M,F)$ is a \textit{Finsler
	manifold} if the continuous function $F:TM\to [0,\infty)$ satisfies
the following conditions:

\begin{itemize}

\item[(a)] $F\in C^{\infty}(TM\setminus\{ 0 \});$

\item[(b)] $F(x,\lambda y)=\lambda F(x,y)$ for all $\lambda\geq 0$ and $(x,y)\in TM;$

\item[(c)] $g_{ij}(x,y)=[\frac12F^{2}%
]_{y^{i}y^{j}}(x,y)$ is positive definite for all $(x,y)\in
TM\setminus\{ 0 \}$ where $F(x,y)=F(y^i\frac{\partial}{\partial x^i}|_x)$.

\end{itemize}

Condition (c) indicates the strong convexity of $F$, i.e., $F(x,y_1+y_2)\leq F(x,y_1)+F(x,y_2)$ for any $y_1,y_2\in T_xM$, with equality if and only if $y_1=\alpha y_2$ for $\alpha\geq 0$. Besides, Condition (a) is natural because
  $F(x,y)=\sqrt{g_{ij}(x,y)y^iy^j}$ for $(x,y)\in TM\backslash\{0\}$ and $(g_{ij})$  cannot be defined at $y=0$ unless   it is a Riemannian metric. In particular, $F$ is called {\it Riemannian} if $(g_{ij})$ is a Riemannian metric.

The  {\it reverse Finsler metric} of $(M,F)$ is defined as $\overleftarrow{F}(x,y):=F(x,-y)$. Then $(M,\overleftarrow{F})$ is also a Finsler manifold, which is called the {\it reverse Finsler manifold}. 

The {\it reversibility}  of a subset $U\subset (M,F)$ is defined as follows:
\[
\lambda_F(U):=\sup_{x\in U}\sup_{y\in T_xM}\frac{F(x,-y)}{F(x,y)}.
\]
Thus, $\lambda_F(M)\geq 1$ with equality if $F$ is {\it reversible} and moreover, $\lambda_F(x):=\lambda_F(\{x\})$ is a continuous function.

The {\it Legendre transformation} $\mathfrak{L} : TM \rightarrow T^*M$ is defined
by
\begin{equation*}
\mathfrak{L}(X):=\left \{
\begin{array}{lll}
 g_X(X,\cdot), & \ \ \text{ if } X\neq0; \\
 \\
0, & \ \ \text{ if } X=0.%
\end{array}
\right.
\end{equation*}
In particular,  $\mathfrak{L}:TM\backslash\{0\}\rightarrow T^*M\backslash\{0\}$ is a diffeomorphism and $F^*(\mathfrak{L}(X))=F(X)$, for any $X\in TM$.
Now let $f : M \rightarrow \mathbb{R}$ be a $C^1$-function on $M$; the
{\it gradient} of $f$ is defined as $\nabla f = \mathfrak{L}^{-1}({\dd}f)$. Thus,  ${\dd}f(X) = g_{\nabla f} (\nabla f,X)$. For a non-Riemannian Finsler metric, $\nabla$ is usually nonlinear, i.e., $\nabla(f+h)\neq\nabla f+\nabla h$.

The {\it dual metric} $F^*$ of $F$  is
defined as
\begin{equation*}\label{dualFinslermetric}
F^*(x,\xi):=\underset{y\in T_xM\backslash \{0\}}{\sup}\frac{\langle y, \xi\rangle}{F(x,y)},  \quad
\forall\, \xi\in T_x^*M,
\end{equation*}
where $\langle y,\xi\rangle$ is the canonical pairing between $T_xM$
and $T^*_xM$. Thus, $F^*$ is a Finsler metric on $T^*M$ with
\begin{equation}
\langle y,\xi\rangle\leq F(x,y)F^*(x,\xi),\quad \forall\,y\in T_xM,\ \xi\in T^*_xM, \label{tangninnerprot}
\end{equation}
with equality if and only if $y=\alpha\mathfrak{L}^{-1}(\xi)$ with $\alpha\geq 0$.

A smooth curve $\gamma: [0,\infty)\rightarrow M$ is called a {\it geodesic} if it satisfies  the following ODE
$$\frac{{\dd}^2}{{\dd}t^2}\gamma^i(t)+ 2G^i
\left(\frac{\dd}{{\dd}t}\gamma(t)\right)=0,$$
where
\[
G^i(y):=\frac14 g^{il}(y)\left\{2\frac{\partial g_{jl}}{\partial x^k}(y)-\frac{\partial g_{jk}}{\partial x^l}(y)\right\}y^jy^k,\quad (g^{ij}):=(g_{ij})^{-1}.
\]

In the sequel, we study the length structure of a  Finsler manifold $(M,F)$.
Let $\mathcal {A}_\infty([0,1];M)$ denote the {\it class of piecewise smooth curves defined on $[0,1]$} (with monotonous parameterizations).
Given $\gamma\in \mathcal {A}_\infty([0,1];M)$, its \textit{length}   is defined as
\[
L_F(\gamma):=\int^1_0 F(\gamma'(t)){\dd}t.
\]
Define the {\it distance function} $d_F:M\times M\rightarrow [0,\infty)$ by
\begin{equation}
d_{F}(p,q):=\inf\{L_F(\gamma)\,|\,\gamma\in \mathcal {A}_\infty([0,1];M),\
\gamma(0)=p,\ \gamma(1)=q\}. \label{distanceinFinslersetting}
\end{equation}
Thus, $(M,d_F)$ is an asymmetric metric space.
In particular,
$d_F(p,q)\neq d_F(q,p)$  unless $F$ is reversible. Moreover, the forward topology coincides with the backward topology, which is exactly the original topology of the manifold.

A Finsler manifold $(M,F)$ is said to be {\it  forward}   (resp., {\it backward}) {\it complete} if  $(M,d_F)$ is so, in which case every geodesic defined on $[0,1]$  (resp., $[-1, 0]$) can be extended to $[0, \infty)$ (resp. $(-\infty, 0]$).
According to  the Hopf-Rinow theorem (cf. \cite{BCS,Sh1}), the closure of a forward ball (with a finite radius) is always compact if $(M,F)$ is forward complete. However, this is not true for backward balls. For instance,   the Funk manifold defined by \eqref{Funckmeatirc} is forward complete, but the closure of a backward ball centered at $\mathbf{0}$ with any radius $r\geq \ln 2$ is  noncompact because such a backward ball is the total manifold $\mathbb{B}^n$ (see \eqref{radisulog2}).

By means of $d_F$, one can define
 a new length structure  as
\[
L_{d_F}(\gamma):=\sup\left\{  \sum_{i=1}^n d_F(\gamma(t_{i-1}),\gamma(t_i))\,\Big|\, n\in \mathbb{N},\ 0=t_0<\cdots <t_n=1 \right\},\quad \forall\,\gamma\in C([0,1];M).
\]
A standard argument (cf. \cite{BM,KZ}) yields that $d_F$ is an intrinsic metric and hence,
\[
L_F(\gamma)=L_{d_F}(\gamma),\quad \forall\,\gamma\in \mathcal {A}_\infty([0,1];M).
\]

\subsection{Absolutely continuous curves in a Finsler manifold}
In this subsection, we study the absolutely continuous curves
in the asymmetric metric space $(M,d_F)$ induced by a (irreversible) Finsler manifold $(M,F)$ {\it without the completeness assumption}.
Inspired by \cite{AB},
we first show that  the nature of such curves can be characterized in terms of the differential structure of a manifold or the absolute continuity of their length (see Theorem \ref{newdefsabcon} below).

\begin{definition}\label{abthreekinds}Let $(M,F)$ be a  Finsler manifold and let $I$ be an interval of $\mathbb{R}$.
\begin{enumerate}[{\rm (i)}]
\item\label{natrualdef}
A curve $\gamma:I\rightarrow M$ is called {\it naturally absolutely continuous} if for any chart $(U;\varphi)$ of $M$, the composition
\[
\varphi\circ \gamma:\ \gamma^{-1}(\gamma(I)\cap U)\rightarrow \varphi(U)\subset \mathbb{R}^n
\]
is locally absolutely continuous, i.e., absolutely continuous on all bounded closed subintervals of $\gamma^{-1}(\gamma(I)\cap U)$.

\smallskip

\item\label{metriccontinuous} A curve $\gamma: I\rightarrow M$ is
called {\it metric absolutely continuous}, if for all $\varepsilon>0$ there is a $\delta> 0$ such that
\[
\sum_{i=1}^n d_F(\gamma(a_i), \gamma(b_i)) <\varepsilon,
\]
whenever $\{(a_i, b_i)\}^n_{i=1}$ are nonoverlapping subintervals of $I$ with
 $\sum_{i=1}^n (b_i-a_i) <\delta$.


\end{enumerate}

Let  $\NAC(I;M)$ (resp.,  $\MAC(I;M)$)   denote the class of naturally (resp., metric) absolutely continuous curves defined on $I$.

\end{definition}

Obviously, the naturally absolute  continuity is independent of the choice of Finsler structures. Moreover,
for any $\gamma\in \NAC(I;M)$,
 the derivative $\gamma'(t)$ exists for $\mathscr{L}^1$-a.e. $t\in I$. Thus, a standard argument (see  \cite[Theorem 2.2]{AB}) together with the proof of the Busemann-Mayer theorem (cf.  \cite[p.\,160]{BCS}) yields
\[
L_{d_F}(\gamma)=L_{F}(\gamma)=\int_I F(\gamma'(t)){\dd}t.
\]
If $I$ is a bounded closed interval, it is not hard to check $L_{F}(\gamma)<\infty$ and hence, $F(\gamma')\in L^1(I)$.
On the other hand,  metric absolutely continuous curves are the extension of  absolutely continuous functions  in classical real analysis. In view of Definition \ref{forwardabcontinc}, the first main result of this subsection reads as follows.

\begin{theorem}\label{newdefsabcon}
For every Finsler manifold $(M,F)$ and any  bounded closed interval $I$, there holds
\begin{equation}\label{abcurindentiy}
\NAC(I;M)=\MAC(I;M)=\AC(I;M)=\FAC(I;M)=\BAC(I;M).
\end{equation}
\end{theorem}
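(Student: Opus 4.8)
The plan is to prove the chain of equalities \eqref{abcurindentiy} by establishing the circular chain of inclusions
\[
\NAC(I;M)\subset \MAC(I;M)\subset \AC(I;M)\subset \FAC(I;M)\cap\BAC(I;M)=\AC(I;M),
\]
together with $\FAC(I;M)\subset \NAC(I;M)$ and $\BAC(I;M)\subset \NAC(I;M)$; since trivially $\AC=\FAC\cap\BAC\subset \FAC$ and $\subset\BAC$, closing these loops forces all five classes to coincide. The genuinely asymmetric content lies in the last two inclusions, so I would save those for the end.

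\textbf{From $\NAC$ to $\MAC$.} Let $\gamma\in\NAC(I;M)$. As observed right before the theorem, $\gamma'(t)$ exists $\mathscr L^1$-a.e., and by the Busemann--Mayer argument $L_{d_F}(\gamma)=L_F(\gamma)=\int_I F(\gamma'(t))\,\dd t<\infty$ because $I$ is bounded and closed and $\gamma(I)$ is compact. Set $m(t):=F(\gamma'(t))$, so $m\in L^1(I)$. For nonoverlapping subintervals $\{(a_i,b_i)\}$ one has $d_F(\gamma(a_i),\gamma(b_i))\le L_F(\gamma|_{[a_i,b_i]})=\int_{a_i}^{b_i} m(s)\,\dd s$ (using that $\gamma|_{[a_i,b_i]}$ is itself naturally AC, hence rectifiable with the stated length formula), and absolute continuity of the integral $E\mapsto\int_E m$ gives the $\varepsilon$--$\delta$ condition. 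So $\gamma\in\MAC(I;M)$; the same computation with the single interval $I$ also shows $\gamma\in\AC(I;M)$ with $f=m$.

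\textbf{From $\MAC$ to $\AC$.} Given $\gamma\in\MAC(I;M)$, I would first show $\gamma$ is $\hat{\mathcal T}$-continuous and rectifiable; since $I$ is compact and $\gamma(I)$ lies in a forward ball of finite radius, by the Hopf--Rinow-type statement quoted in the preliminaries $\overline{\gamma(I)}$ is compact, so $d_F$ and $\hat d$ are bi-Lipschitz-comparable on it (both induce the manifold topology on a compact set and each dominates a constant multiple of the other there). Then the metric derivative $|\gamma'_+|(t)=|\gamma'_-|(t)$ exists $\mathscr L^1$-a.e. by the standard symmetric-metric argument applied to $(\overline{\gamma(I)},\hat d)$, it lies in $L^1$ by the $\MAC$ condition (which controls total variation), and one recovers $d_F(\gamma(t_1),\gamma(t_2))\le\int_{t_1}^{t_2}|\gamma'_+|\,\dd s$ and the reversed inequality with $|\gamma'_-|$. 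Hence $\gamma\in\FAC\cap\BAC=\AC$. Combined with the trivial inclusions $\AC\subset\FAC$, $\AC\subset\BAC$, it remains only to close the loop.

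\textbf{From $\FAC$ (or $\BAC$) back to $\NAC$.} This is the main obstacle, and it is where boundedness and closedness of $I$ are essential (the Funk counterexamples of the introduction show the result fails otherwise). Let $\gamma\in\FAC^1(I;M)$ with controlling function $f\in L^1(I)$. First, $\gamma$ is $\mathcal T_+$-continuous; I must upgrade this to $\hat{\mathcal T}$-continuity, i.e., $d_F(\gamma(t),\gamma(t_0))\to 0$ as $t\to t_0$ as well. Since $I$ is bounded and closed, the forward image $\gamma(I)$ is forward-bounded, hence contained in a forward ball whose closure is compact; on that compact set the local comparison $\frac1C\hat d\le d_F\le C\hat d$ holds, which promotes forward convergence to symmetric convergence and shows $\gamma$ is $\hat d$-continuous and $\hat d$-absolutely continuous (with $C f$ as controlling function), hence rectifiable. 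Working in a chart $(U;\varphi)$ around any $\gamma(t_0)$, the Euclidean components $\varphi\circ\gamma$ are then absolutely continuous on any closed subinterval mapping into $U$, because on the relevant compact piece of $U$ the Finsler distance $d_F$ is comparable to the Euclidean distance of coordinates (again by the two-sided local estimate $F(x,v)\asymp\|v\|$ on compacta), so $\|\varphi(\gamma(b_i))-\varphi(\gamma(a_i))\|\le C'\,d_F(\gamma(a_i),\gamma(b_i))\le C'\int_{a_i}^{b_i}f$, and absolute continuity of the integral finishes it. Therefore $\gamma\in\NAC(I;M)$. The argument for $\BAC$ is identical after passing to the reverse metric $\overleftarrow d$ and the reverse Finsler manifold $(M,\overleftarrow F)$, using that $\NAC$ is insensitive to reversal. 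This closes the circle and yields \eqref{abcurindentiy}.

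The one delicate point to get right is the compactness/comparability step: one must be sure that the relevant portion of $M$ visited by $\gamma$ has compact closure so that all the ``$F(x,v)\asymp\|v\|$'' and ``$d_F\asymp\hat d$'' estimates hold with uniform constants — and this is exactly the place where the hypothesis that $I$ is bounded and closed (so $\gamma(I)$ is forward-bounded and the forward Hopf--Rinow statement applies) cannot be dropped.
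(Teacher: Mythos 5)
Your chain of inclusions is organized differently from the paper's, which proves $\FAC\subset\MAC\subset\NAC\subset\FAC$ and then gets $\BAC$ by reversal; you instead prove $\NAC\subset\MAC\subset\AC$ and close the loop with $\FAC,\BAC\subset\NAC$. Both decompositions work, and your $\FAC\subset\NAC$ step is essentially the paper's $\MAC\subset\NAC$ argument (compactness of $\gamma(I)$ plus local comparison $F\asymp\|\cdot\|$ in a chart). Two routes, same ingredients. That said, a few things need fixing.

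First, you invoke a Hopf--Rinow-type statement in both the $\MAC\subset\AC$ step and the $\FAC\subset\NAC$ step to conclude that a forward ball has compact closure. Theorem~\ref{newdefsabcon} makes \emph{no} completeness assumption, so this invocation is illegitimate. Fortunately it is also unnecessary: for a Finsler manifold both the forward and backward topologies coincide with the manifold topology, so any $\gamma\in\FAC(I;M)$ or $\gamma\in\MAC(I;M)$ is continuous in the manifold topology, and $\gamma(I)$ is compact simply because $I$ is. That compactness alone gives you $\lambda_F(\gamma(I))<\infty$ and the local comparisons you need; no Hopf--Rinow required.

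Second, in the $\MAC\subset\AC$ step you assert ``$|\gamma'_+|(t)=|\gamma'_-|(t)$''. This is false in general: by Corollary~\ref{derivativeabsolu}, $|\gamma'_+|(t)=F(\gamma'(t))$ and $|\gamma'_-|(t)=F(-\gamma'(t))$, which differ unless $F$ is reversible along $\gamma$. What you actually want is the $\hat d_F$-metric derivative $|\gamma'|_{\hat d_F}$ (whose a.e.\ existence follows from the symmetric theory once you establish $\hat d_F$-absolute continuity); then $d_F(\gamma(s),\gamma(t))\le C\hat d_F(\gamma(s),\gamma(t))\le C\int_s^t|\gamma'|_{\hat d_F}$ with $C$ controlled by the reversibility on $\gamma(I)$, and symmetrically for the reversed distance, giving $\gamma\in\FAC\cap\BAC$. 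The conclusion stands but the intermediate claim must be corrected.

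Third, and most importantly, the load-bearing step of your argument is the identity $L_{d_F}(\gamma)=L_F(\gamma)=\int_I F(\gamma')$ for $\gamma\in\NAC(I;M)$, which immediately gives $d_F(\gamma(a),\gamma(b))\le L_{d_F}(\gamma|_{[a,b]})=\int_a^b F(\gamma')$ and hence $\NAC\subset\FAC$. You treat this as ``background'' from the Busemann--Mayer argument, but the nontrivial direction ($L_{d_F}\le L_F$) is precisely the point at issue and is not an immediate consequence of Busemann--Mayer; it requires an approximation of $\gamma$ by piecewise smooth curves in a topology strong enough to control both uniform distance and length. This is exactly what the paper's Theorem~\ref{metricddsmo} (density of $\mathcal A_\infty([0,1];M)$ in $(\NAC([0,1];M),\mathcal D_{\nac})$, yielding $d_{\nac}=d_F$) is doing. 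Citing the formula as a ``standard argument'' hides where all the real work in the theorem actually lives. If you are allowed to quote that identity as a black box (the paper does cite \cite[Theorem 2.2]{AB} for the Riemannian analogue), your proof closes; but a self-contained proof needs the mollification/patching argument of Theorem~\ref{metricddsmo} or its equivalent, and your write-up should say so.
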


The assumption on $I$ is necessary. On the one hand, Example \ref{funcurve1} indicates that $I$ must be closed.  On the other hand, if $I$ is a unbounded closed interval, then  $\NAC(I;M)\neq\AC(I;M)$ even in the Euclidean setting (by considering $\gamma(t)=t$ in $(\mathbb{R},|\cdot|)$).

To prove  Theorem \ref{newdefsabcon}, we   establish a connection between $\NAC([0,1];M)$ and $\MAC([0,1];M)$ by the following result.
\begin{proposition}\label{dacconverge}
Let $(M,F)$ be a Finsler manifold and set
\begin{align*}
d_{\nac}(x_0,x_1)&:=\inf\left\{ L_F(\gamma)\,|\ \gamma\in \NAC([0,1];M),\ \gamma(0)=x_0, \ \gamma(1)=x_1  \right\},\quad \forall\,x_0,x_1\in M;\\
\mathcal {D}_{\nac}(\gamma_1,\gamma_2)&:=\sup_{t\in [0,1]}d_{\nac}(\gamma_1(t),\gamma_2(t))+\int_0^1 \left| F(\gamma_1'(t))-F(\gamma_2'(t))   \right|{\dd}t,\quad \forall\,\gamma_1,\gamma_2\in \NAC([0,1];M).
\end{align*}
Thus, $(M,d_{\nac})$ is an asymmetric metric space such that  its  forward topology coincides with the backward topology, which is exactly the original topology of $M$. Therefore, $\left( \NAC([0,1];M), \mathcal {D}_{\nac} \right)$ is an asymmetric metric space as well.
\end{proposition}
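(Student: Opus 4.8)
The plan is to reduce the whole statement to the already-understood distance $d_F$ by establishing the identity $d_{\nac}=d_F$ on $M\times M$. The inequality $d_{\nac}\le d_F$ is immediate, because every piecewise smooth curve with monotone parametrization belongs to $\NAC([0,1];M)$, so the infimum defining $d_{\nac}$ is taken over a class containing the one used in \eqref{distanceinFinslersetting}, the two infima being of the same functional $\gamma\mapsto\int_0^1 F(\gamma'(t))\,{\dd}t$. For the reverse inequality I would invoke the length identity recalled just before Theorem \ref{newdefsabcon}: for every $\gamma\in\NAC([0,1];M)$ one has $L_F(\gamma)=L_{d_F}(\gamma)=\int_0^1 F(\gamma'(t))\,{\dd}t$, while the trivial partition $\{0,1\}$ already gives $L_{d_F}(\gamma)\ge d_F(\gamma(0),\gamma(1))$; taking the infimum over all such $\gamma$ joining $x_0$ to $x_1$ yields $d_{\nac}(x_0,x_1)\ge d_F(x_0,x_1)$. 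Since $M$ is connected such curves always exist, so $d_{\nac}$ is finite-valued, and combining the two inequalities gives $d_{\nac}=d_F$. Hence $(M,d_{\nac})=(M,d_F)$ is an asymmetric metric space whose forward, backward and original topologies all coincide, which is exactly the first assertion.

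For the second assertion I would first check that $\mathcal{D}_{\nac}$ is $[0,\infty)$-valued on $\NAC([0,1];M)\times\NAC([0,1];M)$. Given $\gamma_1,\gamma_2\in\NAC([0,1];M)$, since $[0,1]$ is bounded and closed we have $F(\gamma_i')\in L^1([0,1])$, so the integral term is bounded by $L_F(\gamma_1)+L_F(\gamma_2)<\infty$; moreover $\gamma_1$ and $\gamma_2$ are continuous into $M$ (being locally absolutely continuous in charts), and $d_{\nac}=d_F$ is continuous on $M\times M$ with respect to the manifold topology by Theorem \ref{asymmetrtopol}(i), so $t\mapsto d_{\nac}(\gamma_1(t),\gamma_2(t))$ is continuous on the compact interval $[0,1]$ and therefore bounded. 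Thus $\mathcal{D}_{\nac}(\gamma_1,\gamma_2)<\infty$. Non-degeneracy is then clear: $\mathcal{D}_{\nac}(\gamma,\gamma)=0$, and if $\mathcal{D}_{\nac}(\gamma_1,\gamma_2)=0$ then $d_F(\gamma_1(t),\gamma_2(t))=0$ for every $t\in[0,1]$, whence $\gamma_1(t)=\gamma_2(t)$ for all $t$ because $d_F$ separates points, i.e., $\gamma_1=\gamma_2$.

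Finally, the triangle inequality for $\mathcal{D}_{\nac}$ is inherited term by term. For each fixed $t\in[0,1]$ the triangle inequality for $d_{\nac}=d_F$ gives $d_{\nac}(\gamma_1(t),\gamma_3(t))\le d_{\nac}(\gamma_1(t),\gamma_2(t))+d_{\nac}(\gamma_2(t),\gamma_3(t))$, and taking the supremum over $t$ together with the elementary bound $\sup(a+b)\le\sup a+\sup b$ handles the supremum part; similarly, integrating $|F(\gamma_1'(t))-F(\gamma_3'(t))|\le|F(\gamma_1'(t))-F(\gamma_2'(t))|+|F(\gamma_2'(t))-F(\gamma_3'(t))|$ over $[0,1]$ handles the integral part. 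Adding the two estimates yields $\mathcal{D}_{\nac}(\gamma_1,\gamma_3)\le\mathcal{D}_{\nac}(\gamma_1,\gamma_2)+\mathcal{D}_{\nac}(\gamma_2,\gamma_3)$, which finishes the proof. The only substantive point is the identity $d_{\nac}=d_F$, specifically its reverse inequality $d_{\nac}\ge d_F$, which is supplied essentially for free by the length identity $L_F=L_{d_F}$ for naturally absolutely continuous curves (itself a consequence of the Busemann-Mayer argument cited earlier); the remaining work is routine bookkeeping with suprema and integrals.
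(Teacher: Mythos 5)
Your proof is correct but it takes a genuinely different, and in one respect stronger, route than the paper's. The paper's proof of this proposition deliberately does \emph{not} assert $d_{\nac}=d_F$; it only proves the sequential statement $d_{\nac}(x,x_n)\to 0\iff d_F(x,x_n)\to 0$ (and its reverse counterpart), which is all that is needed for the topology claim, and it postpones the identity $d_{\nac}=d_F$ to Theorem~\ref{metricddsmo}, where it is derived from the density of $\mathcal{A}_\infty([0,1];M)$ in $\NAC([0,1];M)$. Your argument short-circuits this by using the length identity $L_{d_F}(\gamma)=L_F(\gamma)$ for $\gamma\in\NAC([0,1];M)$ (recalled before Theorem~\ref{newdefsabcon}) together with the trivial-partition bound $L_{d_F}(\gamma)\ge d_F(\gamma(0),\gamma(1))$ to get $d_{\nac}\ge d_F$ outright, so everything collapses to known facts about $d_F$. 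This is legitimate given the cited length identity; in fact the paper's own proof also invokes $L_{d_F}(\gamma_n)=L_F(\gamma_n)$ for $\NAC$ curves, so you are not using anything extra. What you should be aware of, though, is the logical tension you thereby expose: the nontrivial inequality $L_{d_F}(\gamma)\le L_F(\gamma)$ for $\gamma\in\NAC$ (the only direction either proof actually needs) is exactly what a Finsler version of Burtscher's Theorem~2.2 would establish by approximating with smooth curves — and that is precisely the content of Theorem~\ref{metricddsmo}, Step~1. So your shortcut makes Theorem~\ref{metricddsmo}, Step~2 logically redundant, and it leans fully on the externally cited length identity rather than the paper's own density argument. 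The remaining bookkeeping in your second and third paragraphs (finiteness of $\mathcal{D}_{\nac}$ via compactness and continuity, non-degeneracy, and the term-by-term triangle inequality) matches the paper's verification, with the minor difference that the paper only checks finiteness explicitly while you also spell out the metric axioms — a harmless completeness.
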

\begin{proof}
Owing to the connectivity of $M$, the length metric $d_{\nac}$ is always finite because every two points $p,q$ can be jointed by a piecewise smooth curve constructed by coordinate charts. Hence, $(M,d_{\nac})$ is an asymmetric metric space.

Now we claim that a sequence $(x_n)\subset M$ satisfies $d_{\nac}(x,x_n)\rightarrow 0$ if and only if  $d_F(x,x_n)\rightarrow 0$. Since $d_{\nac}\leq d_F$ follows from $\mathcal {A}_\infty([0,1];M)\subset\NAC([0,1];M)$, it suffices to show the ``only if" part. Provided $d_{\nac}(x,x_n)\rightarrow 0$,  the definition of $d_{\nac}$ yields a sequence $\gamma_n\in \NAC([0,1];M)$ from $x$ to $x_n$ such that  $d_{\nac}(x,x_n)\leq L_F(\gamma_n)\leq 2d_{\nac}(x,x_n)$, which furnishes $d_F(x,x_n)\leq L_{d_F}(\gamma_n)= L_F(\gamma_n)\rightarrow 0$.
So the claim is true, which implies that the  forward topology of $(M,d_{\nac})$ is the original topology of $M$.

Note that $\NAC([0,1];M)$ is independent of the choice of Finsler structures. Hence, by considering the reverse Finsler metric $\overleftarrow{F}(x,y):=F(x,-y)$, it follows by a similar argument to the above that  the  backward topology of $(M,d_{\nac})$ is also the original topology of $M$.

It remains to show the finiteness of $\mathcal {D}_{\nac}$ on $\NAC([0,1];M)$.
Given any $\gamma_1,\gamma_2\in \NAC([0,1];M)$, it is obvious that $\int_0^1 \left| F(\gamma_1'(t))-F(\gamma_2'(t))   \right|{\dd}t<\infty$. On the other hand, the finiteness of $\sup_{t\in [0,1]}d_{\nac}(\gamma_1(t),\gamma_2(t))$ follows from the compactness of $\gamma_1([0,1])\times \gamma_2([0,1])$   and the continuity of $d_{\nac}$ (with respect to  the original product  topology of $M\times M$), which concludes the proof.
\end{proof}

 Obviously, $d_{\nac}\leq d_F$ always holds. And if $(M,F)$ is either forward or backward complete, the standard theory of geodesic (cf. \cite{BCS,Sh1}) implies $d_{\nac}= d_F$. Now we show this identity remains true without the completeness assumption by the same method as employed in \cite{AB}.

\begin{theorem}\label{metricddsmo}For every Finsler manifold $(M,F)$,
 $\mathcal {A}_\infty([0,1];M)$ is dense in  $\left( \NAC([0,1];M), \mathcal {D}_{\nac} \right)$ with respect to the symmetrized topology and hence, $d_{\nac}=d_F$.
\end{theorem}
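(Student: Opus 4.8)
The goal is to show that the class $\mathcal{A}_\infty([0,1];M)$ of piecewise smooth curves is $\hat{\mathcal{T}}$-dense in $(\NAC([0,1];M),\mathcal{D}_{\nac})$, from which $d_{\nac}=d_F$ follows: indeed, given $x_0,x_1\in M$ and $\gamma\in\NAC([0,1];M)$ joining them, any $\mathcal{D}_{\nac}$-approximating piecewise smooth curve $\sigma$ joins $x_0$ to $x_1$ (the endpoints are unchanged under $\hat d_{\nac}$-convergence since $d_{\nac}(\gamma(i),\sigma(i))\to 0$ for $i=0,1$, and by Proposition~\ref{dacconverge} this forces $\sigma(i)\to\gamma(i)$ in the manifold topology — but actually one should arrange the approximation to fix the endpoints exactly, see below), and $L_F(\sigma)\to L_F(\gamma)=L_{d_F}(\gamma)\ge d_F(x_0,x_1)$, while $L_F(\sigma)\ge d_F(x_0,x_1)$ trivially; combined with $d_{\nac}\le d_F$ this yields equality. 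So the whole theorem reduces to the density statement.

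\textbf{Construction of the approximation.} Fix $\gamma\in\NAC([0,1];M)$. First I would reduce to the case where $\gamma([0,1])$ lies in a single coordinate chart: by compactness of $\gamma([0,1])$, choose $0=s_0<s_1<\cdots<s_N=1$ such that each $\gamma([s_{j-1},s_j])$ is contained in a chart $(U_j;\varphi_j)$; it suffices to approximate each restriction $\gamma|_{[s_{j-1},s_j]}$, keeping the endpoints $\gamma(s_j)$ fixed, and concatenate — the $\mathcal{D}_{\nac}$-errors and the length-errors add up, and a finite concatenation of piecewise smooth curves is piecewise smooth. Within a chart, transport everything to $\varphi_j(U_j)\subset\mathbb{R}^n$: the curve $c:=\varphi_j\circ\gamma$ is an $\mathbb{R}^n$-valued absolutely continuous function on $[s_{j-1},s_j]$, so $c'\in L^1$, and the Finsler metric becomes a continuous function $\tilde F(x,v)$ on $\varphi_j(U_j)\times\mathbb{R}^n$, smooth off the zero section, with $L_F(\gamma|_{[s_{j-1},s_j]})=\int \tilde F(c(t),c'(t))\,\dd t$. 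Now mollify: approximate $c'$ in $L^1$ by smooth $\mathbb{R}^n$-valued functions $w_k$ (e.g. truncated convolutions with a mollifier, or the derivatives of $C^1$ approximations), set $c_k(t):=\gamma(s_{j-1})+\int_{s_{j-1}}^t w_k$, and correct the endpoint by adding an affine term $\tfrac{t-s_{j-1}}{s_j-s_{j-1}}\,[c(s_j)-c_k(s_j)]$, which is harmless since $c_k(s_j)\to c(s_j)$. Then $c_k\to c$ uniformly (hence in $d_{\nac}$, as $d_{\nac}$ induces the manifold topology on the compact set in question, by Proposition~\ref{dacconverge}), and $c_k'\to c'$ in $L^1$. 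It remains to pass the length convergence through: $\int|\tilde F(c_k,c_k') - \tilde F(c,c')|\,\dd t\to 0$.

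\textbf{The main obstacle.} The delicate point is precisely this last convergence, because $\tilde F$ is only continuous (not Lipschitz) near the zero section, so one cannot simply estimate $|\tilde F(c_k,c_k')-\tilde F(c,c')|$ by a Lipschitz constant times $|c_k'-c'|$; moreover $w_k\to c'$ in $L^1$ does not give pointwise control without passing to a subsequence. The standard way around this (this is exactly the mechanism in the proof of \cite[Theorem 2.2]{AB}, which the authors cite) is: pass to a subsequence so that $c_k\to c$ uniformly and $c_k'\to c'$ a.e.\ with $|c_k'|\le g$ for some fixed $g\in L^1$ (the latter via the standard $L^1$-convergence-plus-domination argument, or by a Komlós-type/diagonal argument); then $\tilde F(c_k(t),c_k'(t))\to \tilde F(c(t),c'(t))$ for a.e.\ $t$ by joint continuity of $\tilde F$, and $\tilde F(c_k(t),c_k'(t))\le C\,|c_k'(t)|\le C g(t)\in L^1$ by positive $1$-homogeneity and continuity of $\tilde F$ on the compact sphere bundle over $\overline{c([s_{j-1},s_j])}$ (a neighbourhood of which can be taken compact inside the chart once $k$ is large, using uniform convergence); dominated convergence then finishes. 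Since a subsequence suffices to exhibit density, this is enough. I would also remark that the same argument applied to $\overleftarrow F$ handles the backward half, but in fact once $\mathcal{D}_{\nac}$-approximation is achieved the $\hat{\mathcal{T}}$-density is automatic because $\mathcal{D}_{\nac}$ already controls both $\sup_t d_{\nac}(\gamma_1(t),\gamma_2(t))$ and, via the triangle inequality in the reverse space together with the length term, the reverse distances. Thus the only real work is the dominated-convergence step, and everything else is bookkeeping of charts and concatenations.
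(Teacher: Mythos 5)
Your proposal is essentially correct and shares the paper's architecture: partition $[0,1]$ so that each piece of $\gamma$ lies in a chart, mollify in each chart, fix the endpoints, concatenate a piecewise smooth approximant, and then deduce $d_{\nac}=d_F$ from density exactly as you sketch. Two technical sub-steps differ. For the endpoint matching, the paper replaces only a short initial and final portion of each mollified piece by straight segments in the chart (using the convexity of $\varphi_\alpha(U_\alpha)$) rather than adding a global affine correction as you do; both are fine. For the key convergence $\int_0^1|\tilde F(c_k,c_k')-\tilde F(c,c')|\,\dd t\to 0$, you extract a subsequence with a.e.\ convergence and $L^1$-domination and invoke dominated convergence, whereas the paper obtains convergence for the full sequence by a direct estimate combining the triangle inequality of $F$ with the comparability \eqref{normestimate11} on a compact chart, roughly $\int|\tilde F(c_k,c_k')-\tilde F(c,c')|\leq C\int\|c_k'-c'\|+L\,\sup_t\|c_k-c\|\int\|c_k'\|$.

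Your stated justification for the detour --- that $\tilde F$ is ``only continuous (not Lipschitz) near the zero section'' --- is not accurate in the smooth Finsler setting of this paper. The fiberwise convexity of $F$ (Condition (c)) yields $|\tilde F(x,v)-\tilde F(x,w)|\le\max\{\tilde F(x,v-w),\tilde F(x,w-v)\}\le C\|v-w\|$, so $\tilde F(x,\cdot)$ is Lipschitz in the fiber variable including at $v=0$; and $1$-homogeneity together with smoothness off the zero section gives $|\tilde F(x,v)-\tilde F(x',v)|\le L\|x-x'\|\,\|v\|$ on a compact chart, so $\tilde F$ is jointly Lipschitz there. The subsequence-plus-dominated-convergence mechanism is what becomes necessary in Burtscher's original continuous-Riemannian setting, and it is a virtue of your argument that it would survive low regularity of $F$; but for the smooth metrics of this paper the paper's direct estimate is shorter and needs no subsequence. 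Everything else in your proposal matches the paper.
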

\begin{proof}  The proof is divided into two steps.

\smallskip

\textbf{Step 1.} Firstly, we show that $\mathcal {A}_\infty([0,1];M)$ is dense in  $\NAC([0,1];M)$.

\smallskip

Given $\gamma\in \NAC([0,1];M)$, choose a finite covering of chart $\{(U_\alpha,\varphi_\alpha)\}_{\alpha=1}^N$ such that $U_\alpha\Subset M$ and $\varphi_\alpha(U)$ is a convex open set of $\mathbb{R}^n$. Since $\cup_\alpha \overline{U}_\alpha$ is compact, there exists a constant $C>1$ such that for each $\alpha$,
\begin{equation}\label{normestimate11}
 C^{-1}\|\varphi_{\alpha*}y\|_\alpha\leq F(y)\leq C\|\varphi_{\alpha*}y\|_\alpha,\quad \forall\,y\in TU_\alpha,
\end{equation}
where $\varphi_{\alpha*}:TU_\alpha\rightarrow T\mathbb{R}^n$ is the tangent map and $\|\cdot\|_\alpha$ is the Euclidean norm on $\varphi_\alpha(U_\alpha)\subset \mathbb{R}^n$. Thus
\begin{equation} \label{distanceestimae}
C^{-1}\|\varphi_\alpha(x_1)-\varphi_\alpha(x_0)\|_\alpha\leq d_{\nac}(x_0,x_1)\leq C\|\varphi_\alpha(x_1)-\varphi_\alpha(x_0)\|_\alpha,\quad\forall\,x_0,x_1\in U_\alpha.
\end{equation}
Moreover, we choose a partition $0=t_0<t_1<\cdots<t_N=1$ of $[0,1]$ such that $\gamma|_{[t_{\alpha-1},t_\alpha]} \subset U_\alpha$ for all $\alpha$.

 For every $\alpha$, since $\varphi_\alpha\circ\gamma$ is  locally absolutely continuous, $\gamma$ is uniformly continuous over $[t_{\alpha-1},t_\alpha]$ and $F(\gamma') \in L^1([t_{\alpha-1},t_\alpha])$. Thus, given an arbitrary $\epsilon>0$,  there exists a small $\delta_\alpha\in (0,(t_\alpha-t_{\alpha-1})/2)$ such that
for any $t,T\in [t_{\alpha-1},t_\alpha]$ with $|T-t|<2\delta_\alpha$,
\[
\left\| \varphi_\alpha(\gamma(T))-\varphi_\alpha(\gamma(t)) \right\|_\alpha<\frac{\epsilon}{10CN},\quad \hat{d}_{\nac}(\gamma(t),\gamma(T))<\frac{\epsilon}{10CN},\quad \int^T_t F(\gamma'(s)){\dd}s<\frac{\epsilon}{10CN},
\]
where $\hat{d}_{\nac}$ is the symmetrized metric of ${d}_{\nac}$ (see \eqref{symmmetricde}).
By convolution with a mollifier $\rho$ we can get a componentwise regularization of $\varphi_\alpha\circ\gamma|_{[t_{\alpha-1},t_\alpha]}$. Thus,  there exists a small $\eta_\alpha>0$ such that the smooth approximation $\gamma_{\alpha}:=\varphi^{-1}_\alpha((\varphi_\alpha\circ \gamma)*\rho_{\eta_\alpha}) \in \mathcal {A}_\infty([t_{\alpha-1},t_\alpha];M)$ satisfies
\[
\sup_{t\in [t_{\alpha-1},t_\alpha]}\| \varphi_\alpha( \gamma(t))-\varphi_\alpha( \gamma_\alpha(t)) \|_\alpha<\frac{\epsilon}{10CN},\quad
\int^{t_\alpha}_{t_{\alpha-1}}\left\| (\varphi_\alpha\circ\gamma)'(s)- (\varphi_\alpha\circ\gamma_\alpha)'(s) \right\|_\alpha{\dd}s<\frac{\epsilon}{10CN},
\]
which together with (\ref{distanceestimae}), (\ref{normestimate11})   and the triangle inequality of $F$ furnishes
\[
\sup_{t\in [t_{\alpha-1},t_\alpha]} \hat{d}_{\nac}(\gamma(t),\gamma_\alpha(t))<\frac{\epsilon}{10N},\quad \int^{t_\alpha}_{t_{\alpha-1}}\left| F(\gamma'(s))- F(\gamma_\alpha'(s)) \right|{\dd}s<\frac{\epsilon}{10N}.
\]

Note that the  concatenation   $\gamma_1*\cdots*\gamma_\alpha*\cdots*\gamma_N$ is usually discontinuous. In the sequel we construct a continuous and piecewise smooth curve to approximate $\gamma$.
Let $\nu_{\alpha-1}:[t_{\alpha-1},t_{\alpha-1}+\delta_\alpha]\rightarrow U_\alpha$ denote a smooth curve from $\gamma(t_{\alpha-1})$ to $\gamma_{\alpha}(t_{\alpha-1}+\delta_\alpha)$ such that $\varphi_\alpha(\nu_{\alpha-1})$ is a straight line. The convexity of $\varphi_\alpha(U_\alpha)$ implies that  $\nu_{\alpha-1}$ is well defined.  Similarly define a smooth curve $\mu_\alpha:[t_\alpha-\delta_\alpha,t_\alpha]\rightarrow U_\alpha$ from $\gamma_\alpha(t_\alpha-\delta_\alpha)$ to $\gamma(t_\alpha)$. Now we define a curve $\zeta_\alpha:[t_{\alpha-1},t_\alpha]\rightarrow U_\alpha$ by $\zeta_\alpha:=\nu_{\alpha-1}*\gamma_\alpha|_{[t_{\alpha-1}+\delta_\alpha,t_\alpha-\delta_\alpha]}*\mu_\alpha$, which is a piecewise smooth curve from $\gamma(t_{\alpha-1})$ to $\gamma(t_\alpha)$. A direct but tedious calculation similar to that in \cite[p.\,283]{AB}  yields
\[
\sup_{t\in [t_{\alpha-1},t_\alpha]}\hat{d}_{\nac}(\gamma(t),\zeta_\alpha(t))+\int^{t_\alpha}_{t_{\alpha-1}}\left| F(\gamma')-F(\zeta'_\alpha) \right|{\dd}s
\leq \frac{3\epsilon}{10N}+\frac{7\epsilon}{10N}= \frac{\epsilon}{N}.
\]
Hence, the concatenation  $\zeta_\epsilon(t):=\zeta_1*\cdots *\zeta_N$ is a piecewise smooth curve from $\gamma(0)$ to $\gamma(1)$ such that $\hat{\mathcal {D}}_{\nac}(\gamma,\zeta_\epsilon)<\epsilon$, where $\hat{\mathcal {D}}_{\nac}$ is the symmetrized metric of ${\mathcal {D}}_{\nac}$. Therefore,  the density of $\mathcal {A}_\infty([0,1];M)$ in  $\NAC([0,1];M)$ follows.

\medskip

\textbf{Step 2.} Secondly, we show $d_{\nac}=d_F$.

\smallskip

Since $d_{\nac}\leq d_F$, it suffices to show the reverse inequality. Given any $x_0,x_1\in M$ and any $\epsilon>0$, there exists $\gamma\in \NAC([0,1];M)$ such that $\gamma(0)=x_0$, $\gamma(1)=x_1$ and
 $L_F(\gamma)<d_{\nac}(x_0,x_1)+\epsilon$. The density of $\mathcal {A}_\infty([0,1];M)$ yields a sequence $(\gamma_n)\subset \mathcal {A}_\infty([0,1];M)$ with $\mathcal {D}_{\nac}(\gamma,\gamma_n)\rightarrow 0$. Hence, there is $N=N(\epsilon)>0$ such that
 \begin{equation}\label{bacdefconc}
 d_F(\gamma_n(0),\gamma_n(1))\leq L_F(\gamma_n)\leq L_F(\gamma)+\epsilon<d_{\nac}(x_0,x_1)+2\epsilon,\quad\forall\, n>N.
 \end{equation}
 Since $\mathcal {D}_{\nac}(\gamma,\gamma_n)\rightarrow 0$ implies $  d_{\nac}(\gamma(0),\gamma_n(0))\rightarrow 0$ and $d_{\nac}(\gamma(1),\gamma_n(1))\rightarrow0$, Proposition \ref{dacconverge} furnishes
  \[
  \lim_{n\rightarrow \infty}d_F(x_0,\gamma_n(0))=0=\lim_{n\rightarrow \infty}d_F(x_1,\gamma_n(1)),
  \] which together with \eqref{bacdefconc} yields $d_F\leq d_{\nac}$.
\end{proof}

Now we proceed to prove Theorem \ref{newdefsabcon}.


\begin{proof}[Proof of Theorem \ref{newdefsabcon}] For convenience, we assume $I=[0,1]$. We first show $\FAC([0,1];M)\subseteq \MAC([0,1];M)$.
Given $\gamma\in \FAC([0,1];M)$, there exists a nonnegative $f\in L^1([0,1])$ such that for any $[t_1,t_2]\subset [0,1]$,
\[
d_F(\gamma(t_1),\gamma(t_2))\leq \int^{t_2}_{t_1} f(t){\dd}t=G(t_2)-G(t_1),
\]
where
  $G(s):=\int_0^s f(t){\dd}t$.
Thus
$\gamma\in \MAC([0,1];M)$ since $G(s)$ is an absolutely continuous function.

 Secondly, we show
$\MAC([0,1];M)\subseteq\NAC([0,1];M)$.
Given $\gamma\in \MAC([0,1];M)$, consider a chart $(U,\varphi)$ with $U\cap \gamma([0,1])\neq\emptyset$. Choose any $[a,b]\subset [0,1]$ with $\gamma([a,b])\subset U$. Without loss of generality,  we may assume that $\overline{U}$ is compact (otherwise choose a smaller open set $U'\subset U$ such that $\gamma([a,b])\subset U'$ and $\overline{U'}$ is compact). The same argument as in the proof of Theorem \ref{metricddsmo} yields  a constant $C>0$ such that
\[
\|\varphi(\gamma(t_2))-\varphi(\gamma(t_1))\|\leq   C d_F(\gamma(t_1),\gamma(t_2)),\quad a\leq t_1\leq t_2\leq b,
\]
where $\|\cdot\|$ is the Euclidean norm in $\varphi(U)\subset \mathbb{R}^n$. Since $\gamma$ is metric absolutely continuous, $\varphi\circ \gamma$ is absolutely continuous over $[a,b]$, that is $\gamma\in \NAC([0,1];M)$.

Thirdly, we prove $\NAC([0,1];M)\subseteq \FAC([0,1];M)$.
For any $\gamma\in \NAC([0,1];M)$, the derivative $\gamma'$ exists for $\mathscr{L}^1$-a.e. $t\in (0,1)$  and $F(\gamma')\in L^1([0,1])$. Thus, Theorem \ref{metricddsmo} yields
\[
d_F(\gamma(t_1),\gamma(t_2))=d_{\nac}(\gamma(t_1),\gamma(t_2))\leq \int^{t_2}_{t_1}F(\gamma'){\dd}t, \quad 0\leq t_1\leq t_2\leq 1,
\]
which implies $\gamma\in \FAC([0,1];M)$.

From above, we obtain $\NAC([0,1];M)=\MAC[0,1];M)=\FAC([0,1];M)$. It remains to show $$\FAC([0,1];M)=\BAC([0,1];M)=\AC([0,1];M).$$ Note that $\NAC([0,1];M)$ is independent of the choice of the Finsler metric. Hence,  considering the reverse Finsler manifold $(M,\overleftarrow{F})$, we have $\NAC([0,1];M)=\BAC([0,1];M)$, which concludes the proof.
 \end{proof}

\begin{corollary}\label{derivativeabsolu}Let $(M,F)$ be a generic Finsler manifold and let $I$ be an interval of $\mathbb{R}$. Thus,
for any $\gamma\in \FAC^p(I;M)$ (resp., $\gamma\in \BAC^p(I;M)$) with $p\in [1,\infty]$, there holds
\begin{equation}\label{dervatexsfin}
|\gamma'_+|(t)=F(\gamma'(t)) \quad (\text{resp., }|\gamma'_-|(t)=F(-\gamma'(t))),\quad \text{for $\mathscr{L}^1$-a.e. $t\in \intt(I)$}.
\end{equation}
\end{corollary}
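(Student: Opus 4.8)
The plan is to squeeze the forward metric derivative $|\gamma'_{+}|$ between $F(\gamma'(\cdot))$ pointwise from above and the same function from below after integration, so that equality is forced $\mathscr{L}^1$-a.e. Since the statement is local and $\intt(I)$ is open, it suffices to fix an arbitrary bounded closed interval $[a,b]\subset\intt(I)$ and prove \eqref{dervatexsfin} on $(a,b)$.

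First I would extract the local regularity. Because $L^p([a,b])\subset L^1([a,b])$, the restriction $\gamma|_{[a,b]}$ belongs to $\FAC([a,b];M)$, hence to $\NAC([a,b];M)$ by Theorem \ref{newdefsabcon}. Consequently $\gamma'(t)$ exists (in every chart) for $\mathscr{L}^1$-a.e.\ $t\in[a,b]$, $F(\gamma'(\cdot))\in L^1([a,b])$, and the length identity recorded just before Theorem \ref{newdefsabcon} gives $L_{d_F}(\gamma|_{[t_1,t_2]})=\int_{t_1}^{t_2}F(\gamma'(s))\,{\dd}s$ for all $[t_1,t_2]\subseteq[a,b]$. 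Since $d_F(x,y)\le L_{d_F}(c)$ whenever $c$ joins $x$ to $y$ (use the trivial partition in the definition of $L_{d_F}$; alternatively invoke $d_F=d_{\nac}$ from Theorem \ref{metricddsmo} with the competitor $\gamma|_{[t_1,t_2]}$), this yields $d_F(\gamma(t_1),\gamma(t_2))\le\int_{t_1}^{t_2}F(\gamma'(s))\,{\dd}s$.

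Then, at every $t\in(a,b)$ which is simultaneously a Lebesgue point of $F(\gamma'(\cdot))$ and a point at which $|\gamma'_{+}|(t)$ exists — a set of full measure in $(a,b)$ by Proposition \ref{baspeedforback} and the Lebesgue differentiation theorem — the previous inequality gives
\[
|\gamma'_{+}|(t)=\lim_{h\to0^{+}}\frac{d_F(\gamma(t),\gamma(t+h))}{h}\le\lim_{h\to0^{+}}\frac1h\int_{t}^{t+h}F(\gamma'(s))\,{\dd}s=F(\gamma'(t)).
\]
For the reverse inequality I would integrate instead of estimating pointwise: Proposition \ref{baspeedforback} gives $d_F(\gamma(t_{i-1}),\gamma(t_i))\le\int_{t_{i-1}}^{t_i}|\gamma'_{+}|$ for any partition $a=t_0<\dots<t_n=b$, so taking the supremum over partitions yields $L_{d_F}(\gamma|_{[a,b]})\le\int_{a}^{b}|\gamma'_{+}|(s)\,{\dd}s$, and therefore
\[
\int_{a}^{b}F(\gamma'(s))\,{\dd}s=L_{d_F}\big(\gamma|_{[a,b]}\big)\le\int_{a}^{b}|\gamma'_{+}|(s)\,{\dd}s\le\int_{a}^{b}F(\gamma'(s))\,{\dd}s,
\]
the last inequality being the a.e.\ bound just proved. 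Hence $F(\gamma'(s))-|\gamma'_{+}|(s)\ge0$ a.e.\ with vanishing integral over $[a,b]$, forcing $|\gamma'_{+}|(t)=F(\gamma'(t))$ for a.e.\ $t\in[a,b]$; letting $[a,b]$ exhaust $\intt(I)$ settles the forward case.

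The backward case reduces to the forward one via the reverse Finsler manifold $(M,\overleftarrow{F})$, $\overleftarrow{F}(x,y)=F(x,-y)$, whose distance satisfies $d_{\overleftarrow{F}}(x,y)=d_F(y,x)$: then $\gamma\in\BAC^{p}(I;(M,F))$ if and only if $\gamma\in\FAC^{p}(I;(M,\overleftarrow{F}))$, and the forward metric derivative computed with $d_{\overleftarrow{F}}$ equals $\lim_{h\to0^{+}}d_F(\gamma(t+h),\gamma(t))/h=|\gamma'_{-}|(t)$; applying the forward case to $\overleftarrow{F}$ gives $|\gamma'_{-}|(t)=\overleftarrow{F}(\gamma'(t))=F(-\gamma'(t))$ a.e. The one place where a direct approach would be delicate is the lower bound $|\gamma'_{+}|(t)\ge F(\gamma'(t))$: a pointwise proof would require a pointwise Busemann--Mayer estimate controlling every curve (not merely short straight segments in a chart) from $\gamma(t)$ to $\gamma(t+h)$. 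The integration device above avoids this entirely, trading the pointwise lower bound for the global length identity $L_{d_F}(\gamma|_{[a,b]})=\int_a^b F(\gamma'(s))\,{\dd}s$, which is already in hand; beyond that the argument is bookkeeping built on Theorem \ref{newdefsabcon}, Proposition \ref{baspeedforback} and that identity, the only care being the reduction to bounded closed subintervals of $\intt(I)$ and the $\mathscr{L}^1$-nullity of the partition and Lebesgue-point exceptional sets.
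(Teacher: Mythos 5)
Your proof is correct, and the setup — restrict to a bounded closed subinterval, use H\"older to pass from $\FAC^p$ to $\FAC$, invoke Theorem~\ref{newdefsabcon} to land in $\NAC$ and get a.e.\ differentiability, then exhaust $\intt(I)$ by a countable family of such subintervals — is exactly the paper's. Where you diverge is the decisive step. The paper, having $\gamma\in\NAC$, invokes the proof of the Busemann--Mayer theorem directly to obtain the \emph{pointwise} two-sided limit $\lim_{h\to0^+}d_F(\gamma(t),\gamma(t+h))/h=F(\gamma'(t))$ at every point of differentiability, and then identifies this with $|\gamma'_+|(t)$ via Proposition~\ref{baspeedforback}. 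You instead prove only the pointwise a.e.\ upper bound $|\gamma'_+|(t)\le F(\gamma'(t))$ (via $d_F\le L_{d_F}$, the $\NAC$ length formula, and Lebesgue differentiation), and recover the lower bound in integrated form from the sandwich $\int_a^b F(\gamma')=L_{d_F}(\gamma|_{[a,b]})\le\int_a^b|\gamma'_+|\le\int_a^b F(\gamma')$; nonnegativity of $F(\gamma')-|\gamma'_+|$ with zero integral then forces a.e.\ equality. This substitutes for the hard pointwise lower estimate (the real content of Busemann--Mayer) the already-proved length identity $L_{d_F}(\gamma)=\int F(\gamma')$ for $\NAC$ curves. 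Note, though, that the paper establishes that length identity itself by appealing to the proof of Busemann--Mayer, so the same analytic kernel underlies both routes; your version merely confines the appeal to one previously established statement rather than re-invoking the pointwise estimate. The reduction of the backward case to the forward one via the reverse Finsler manifold matches the paper's (implicit) treatment and is fine.
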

\begin{proof}We only prove the case of $\gamma\in \FAC^p(I;M)$. The other case can be deduced by considering the reverse Finsler manifold.
Given $\gamma \in \FAC^p(I;M)$, let $J\subset I$ be a bounded closed interval.
Thus,   an easy argument together with the H\"older inequality yields $\gamma|_J\in \FAC^p(J;M)\subset \FAC(J;M)$, which together with Theorem \ref{newdefsabcon} implies that $\gamma$ is differentiable for $\mathscr{L}^1$-a.e. $t\in \intt(J)$. Thus, the proof of the Busemann-Mayer theorem (cf. \cite[p.\,160]{BCS}) together with  Proposition \ref{baspeedforback} yields
\[
F(\gamma'(t))=\lim_{h\rightarrow0^+}\frac{d_F(\gamma(t),\gamma(t+h))}{h}=|\gamma'_+|(t),\quad \text{for $\mathscr{L}^1$-a.e. $t\in \intt(J)$}.
\]
The corollary follows by a countable partition $(J_i)$ of $I$  such that each $J_i$ is a bounded closed interval.
\end{proof}

The following result is a partially stronger version of Theorem \ref{metricddsmo}.
\begin{theorem}\label{FACBAC}
For every Finsler manifold $(M,F)$ and any bounded closed interval $I$,
\begin{equation*}\label{pcaseidentity}
\FAC^p(I;M)=\BAC^p(I;M)=\AC^p(I;M),\quad \forall\,p\in [1,\infty].
\end{equation*}
\end{theorem}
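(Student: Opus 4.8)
The plan is to reduce the $L^p$ assertion to the $L^1$ case already settled in Theorem \ref{newdefsabcon}, and then to recover the correct integrability of the comparison function from a reversibility estimate along the curve. Fix $p\in[1,\infty]$ and $\gamma\in\FAC^p(I;M)$. Since $I$ is bounded, $L^p(I)\subset L^1(I)$, so $\gamma\in\FAC(I;M)$, and Theorem \ref{newdefsabcon} gives $\gamma\in\NAC(I;M)=\BAC(I;M)$. In particular $\gamma'(t)$ exists for $\mathscr{L}^1$-a.e.\ $t\in\intt(I)$, and combining Proposition \ref{baspeedforback} with Corollary \ref{derivativeabsolu} the function $t\mapsto F(\gamma'(t))=|\gamma'_+|(t)$ lies in $L^p(I)$. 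At this stage $\gamma$ is known to be backward absolutely continuous only for \emph{some} $L^1$ speed; the work is to produce an $L^p$ one.

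The key step is to exploit the compactness of $\gamma(I)$ — this is precisely where the boundedness and closedness of $I$ are used. Because $x\mapsto\lambda_F(x)$ is continuous, $\Lambda:=\sup_{t\in I}\lambda_F(\gamma(t))=\lambda_F(\gamma(I))<\infty$, hence for a.e.\ $t$
\[
F(-\gamma'(t))\le\lambda_F(\gamma(t))\,F(\gamma'(t))\le\Lambda\,F(\gamma'(t)),
\]
so $t\mapsto F(-\gamma'(t))\in L^p(I)$. On the other hand, $\NAC$ is independent of the Finsler structure, so $\gamma\in\NAC(I;(M,\overleftarrow{F}))$, and applying the argument that established $\NAC\subseteq\FAC$ in the proof of Theorem \ref{newdefsabcon} — equivalently the length identity $d_{\nac}=d_F$ of Theorem \ref{metricddsmo} — to the reverse Finsler manifold $(M,\overleftarrow{F})$ yields, for all $t_1\le t_2$ in $I$,
\[
d_F(\gamma(t_2),\gamma(t_1))=d_{\overleftarrow{F}}(\gamma(t_1),\gamma(t_2))\le\int_{t_1}^{t_2}\overleftarrow{F}(\gamma'(s))\,{\dd}s=\int_{t_1}^{t_2}F(-\gamma'(s))\,{\dd}s.
\]
Since the integrand lies in $L^p(I)$, this shows $\gamma\in\BAC^p(I;M)$, i.e.\ $\FAC^p(I;M)\subseteq\BAC^p(I;M)$.

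Running the same argument with $F$ replaced by $\overleftarrow{F}$, and using $\BAC^p(I;(M,F))=\FAC^p(I;(M,\overleftarrow{F}))$, which follows from $d_{\overleftarrow{F}}(x,y)=d_F(y,x)$, gives the opposite inclusion; hence $\FAC^p(I;M)=\BAC^p(I;M)$, and both then coincide with $\AC^p(I;M)=\FAC^p(I;M)\cap\BAC^p(I;M)$. (For $p=\infty$ nothing changes, since on the bounded interval $I$ one still has $L^\infty(I)\subset L^1(I)$ and $\Lambda\,F(\gamma'(\cdot))\in L^\infty(I)$ whenever $F(\gamma'(\cdot))\in L^\infty(I)$.)

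The only point that will require care is the transfer to the reverse structure: one must check that the comparison function produced by the length formula for $\overleftarrow{F}$ is exactly $F(-\gamma'(\cdot))$, whose $L^p$-bound we control through the reversibility estimate, and that applying Theorem \ref{newdefsabcon}/Theorem \ref{metricddsmo} to $(M,\overleftarrow F)$ is legitimate because $(M,\overleftarrow F)$ is again a Finsler manifold and $\NAC$ is structure-independent. The reversibility inequality itself is immediate from the definition of $\lambda_F$, and its finiteness on $\gamma(I)$ is precisely the feature that breaks down for unbounded or non-closed $I$, consistent with the Funk-space counterexamples referenced after the statement.
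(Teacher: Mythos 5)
Your proof is correct and follows essentially the same route as the paper's: both reduce to the $L^1$ case via Theorem \ref{newdefsabcon}, use compactness of $\gamma(I)$ to obtain $\theta=\lambda_F(\gamma(I))<\infty$, and then bound $d_F(\gamma(t_2),\gamma(t_1))\le\int_{t_1}^{t_2}F(-\gamma'(s))\,{\dd}s\le\theta\int_{t_1}^{t_2}F(\gamma'(s))\,{\dd}s$ (the paper writes this chain via the reparametrized curve $t\mapsto\gamma(1-t)$, you via the length identity for $(M,\overleftarrow F)$ — the same fact). Your extra detail on the structure-independence of $\NAC$ and the transfer to $(M,\overleftarrow F)$ merely makes explicit what the paper leaves implicit in \eqref{reernoncurve}.
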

\begin{proof}It is enough to show $\FAC^p(I;M)=\BAC^p(I;M)$. Without loss of generality, we assume $I=[0,1]$.
Given $\gamma \in \FAC^p([0,1];M)$, it follows from  Corollary \ref{derivativeabsolu}, Proposition \ref{baspeedforback} and the H\"older inequality that $F(\gamma'(t))\in L^p([0,1])\subset L^1([0,1])$.
The compactness of  $\gamma([0,1])$ implies  the reversibility $\theta:=\lambda_F(\gamma([0,1]))<\infty$. Thus, for any $[t_1,t_2]\subset [0,1]$, we have
\begin{equation}\label{reernoncurve}
d_F(\gamma(t_2),\gamma(t_1))\leq
\int_{1-t_2}^{1-t_1}F(-\gamma'(1-t)){\dd}t
=\int_{t_1}^{t_2}F(-\gamma'(t)){\dd}t
\leq\theta\int_{t_1}^{t_2}F(\gamma'(t)){\dd}t,
\end{equation}
which implies $\gamma\in \BAC^p([0,1];M)$ and therefore,
$\FAC^p([0,1];M)\subseteq \BAC^p([0,1];M)$. The reverse relation follows by considering the reverse Finsler manifold.
\end{proof}
\begin{remark}\label{closureisbounded}
The assumption about $I$ in Theorem \ref{FACBAC} is also necessary. In fact, Example \ref{funcurve1} indicates that $I$ should be closed. Moreover,  let $\xi:[0,\infty)\rightarrow \mathbb{B}^n$ be the  unit speed minimizing geodesic from $\mathbf{0}$ to $(-1,0,\ldots,0)$ in the Funk space. A direct calculation yields $\xi \in \BAC([0,\infty);\mathbb{B}^n)\backslash \FAC([0,\infty);\mathbb{B}^n)$, which implies the necessity of the boundedness of $I$.
\end{remark}

The following result is an immediate consequence of Theorem \ref{FACBAC}. 
\begin{corollary}\label{abcurcoin}
$\AC^p([0,1];M)$ is  the class of $p$-absolutely continuous curves defined on $[0,1]$ in $(M,\hat{d}_F)$.
\end{corollary}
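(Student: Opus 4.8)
The plan is to unwind both sides of the asserted identity and verify the two set inclusions, leaning only on the elementary comparison $\tfrac12 d_F(x,y)\le\hat{d}_F(x,y)$ and $\tfrac12 d_F(y,x)\le\hat{d}_F(x,y)$ together with Theorem \ref{FACBAC} and Corollary \ref{derivativeabsolu}. First I would record what the right-hand side means: since $\hat{d}_F$ is an honest symmetric metric, the forward, backward and two-sided absolutely continuous classes of $(M,\hat{d}_F)$ all collapse to a single one, namely the curves $\gamma:[0,1]\to M$ for which there is a nonnegative $f\in L^p([0,1])$ with $\hat{d}_F(\gamma(t_1),\gamma(t_2))\le\int_{t_1}^{t_2}f(s)\,\dd s$ whenever $0\le t_1\le t_2\le 1$.

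For the inclusion ``$p$-absolutely continuous in $(M,\hat{d}_F)$ is contained in $\AC^p([0,1];M)$'', given such an $f$ I would simply note that $d_F(\gamma(t_1),\gamma(t_2))\le 2\hat{d}_F(\gamma(t_1),\gamma(t_2))\le\int_{t_1}^{t_2}2f$ and, using the symmetry of $\hat{d}_F$, likewise $d_F(\gamma(t_2),\gamma(t_1))\le\int_{t_1}^{t_2}2f$ for $t_1\le t_2$. Hence $2f$ is at once a forward and a backward $L^p$-witness, so $\gamma\in\FAC^p([0,1];M)\cap\BAC^p([0,1];M)=\AC^p([0,1];M)$; this direction does not use Theorem \ref{FACBAC} at all.

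For the reverse inclusion I would take $\gamma\in\AC^p([0,1];M)$. By Theorem \ref{FACBAC} this already forces $\gamma\in\FAC^p([0,1];M)$, so Corollary \ref{derivativeabsolu} and Proposition \ref{baspeedforback} apply: $\gamma'$ exists $\mathscr{L}^1$-a.e., $F(\gamma')$ and $F(-\gamma')$ lie in $L^p([0,1])$, and $d_F(\gamma(t_1),\gamma(t_2))\le\int_{t_1}^{t_2}F(\gamma')$ together with $d_F(\gamma(t_2),\gamma(t_1))\le\int_{t_1}^{t_2}F(-\gamma')$ for $t_1\le t_2$. Averaging the two estimates produces $\hat{d}_F(\gamma(t_1),\gamma(t_2))\le\int_{t_1}^{t_2}\tfrac12[F(\gamma')+F(-\gamma')]$, and the dominating function $\tfrac12[F(\gamma')+F(-\gamma')]$ is nonnegative and in $L^p([0,1])$ because $\tfrac12[F(\gamma')+F(-\gamma')]\le\tfrac12\bigl(1+\lambda_F(\gamma([0,1]))\bigr)F(\gamma')$ with $\lambda_F(\gamma([0,1]))<\infty$ by compactness of the image. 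This shows that $\gamma$ is $p$-absolutely continuous in $(M,\hat{d}_F)$, finishing the two inclusions.

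There is no genuine obstacle here — the statement is essentially a repackaging of Theorem \ref{FACBAC} — and the only point deserving care is the last $L^p$-bound on the symmetrized speed: it is exactly the place where boundedness and closedness of $I$, through compactness of $\gamma([0,1])$ and finiteness of its reversibility, are invoked, which is precisely the mechanism already exploited in Theorem \ref{FACBAC} (cf.\ \eqref{reernoncurve}). As a side remark the argument also identifies the metric derivative of $\gamma$ in $(M,\hat{d}_F)$ as $\tfrac12[F(\gamma'(t))+F(-\gamma'(t))]$ for $\mathscr{L}^1$-a.e.\ $t$, consistent with \eqref{dervatexsfin}.
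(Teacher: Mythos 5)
Your proof is correct: both inclusions go through, and the concluding identification of the $\hat{d}_F$-metric derivative as $\tfrac12[F(\gamma'(t))+F(-\gamma'(t))]$ also follows from Corollary~\ref{derivativeabsolu}. However, the reverse inclusion you give is noticeably heavier than it needs to be, and the extra machinery you invoke does not actually carry any weight. Starting from $\gamma\in\AC^p([0,1];M)$, the definition already hands you two functions $f,g\in L^p([0,1])$ with $d_F(\gamma(t_1),\gamma(t_2))\le\int_{t_1}^{t_2}f$ and $d_F(\gamma(t_2),\gamma(t_1))\le\int_{t_1}^{t_2}g$ for $t_1\le t_2$; averaging and using $\hat{d}_F=\tfrac12(d_F+\overleftarrow{d_F})$ immediately gives $\hat{d}_F(\gamma(t_1),\gamma(t_2))\le\int_{t_1}^{t_2}\tfrac12(f+g)$ with $\tfrac12(f+g)\in L^p$. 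So you never need Theorem~\ref{FACBAC} (its use in your argument is spurious anyway, since $\gamma\in\AC^p$ is already in $\FAC^p$ by definition), nor Corollary~\ref{derivativeabsolu}, nor the reversibility bound on the compact image. The place where Theorem~\ref{FACBAC} genuinely earns its role as the stated antecedent of this corollary is that it shows the a priori three classes $\FAC^p$, $\BAC^p$, $\AC^p$ all coincide — so in particular even a one-sided $L^p$-witness produces $\hat{d}_F$-absolute continuity. Your write-up proves the literal statement correctly, just by a longer route.
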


In the spirit of \cite[Lemma 2.1]{Li2}, we obtain the following result, which plays an important role in the proof of Theorem \ref{maintheorem}.

\begin{theorem}\label{strongcontrall}Let $(M,F)$ be a forward or backward complete Finsler manifold.
Suppose that $\gamma:[0,1]\rightarrow (M,d_F)$ is a map satisfying
\begin{itemize}

\item $\gamma$ is right-continuous at every $t\in [0,1]$ and continuous except a countable set in $[0,1]$;

\smallskip

\item there is a bounded nondecreasing function $v:[0,1]\rightarrow [0,\infty)$ such that
\begin{equation}\label{contbounded}
d_F(\gamma(s),\gamma(t))\leq v(t)-v(s),\quad \forall [s,t]\subset [0,1];
\end{equation}

\item by extending $\gamma(1+\varepsilon):=\gamma(1)$ for any $\varepsilon>0$, the following limit is bounded
\begin{equation}\label{normcontrall}
\limsup_{h\rightarrow 0^+}\left\|  \frac{d_F(\gamma(\cdot),\gamma(\cdot+h))}{h}   \right\|_{L^p([0,1])}<\infty.
\end{equation}

\end{itemize}
Thus, $\gamma\in \AC^p([0,1];M)$.
\end{theorem}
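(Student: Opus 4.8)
The plan is to show directly that $\gamma\in\FAC^p([0,1];M)$: since $[0,1]$ is a bounded closed interval, Theorem~\ref{FACBAC} then upgrades this at once to $\gamma\in\AC^p([0,1];M)$, which is the assertion. So it is enough to produce a nonnegative $g\in L^p([0,1])$ with $d_F(\gamma(s),\gamma(t))\le\int_s^t g(r)\,\dd r$ for all $0\le s\le t\le1$. (Such an inequality automatically forces $\gamma$ to be $\hat{\mathcal T}$-continuous, so it is consistent with the standing hypotheses on $\gamma$; the forward or backward completeness of $(M,F)$ serves to guarantee that the one-sided limits $\gamma(t^{\pm})$ exist in $M$, although in the argument below only the right-continuity of $\gamma$ is actually invoked.)

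First I would extract the density. Set $C:=\limsup_{h\to0^+}\|d_F(\gamma(\cdot),\gamma(\cdot+h))/h\|_{L^p([0,1])}<\infty$ and pick $h_n\downarrow0$ with $g_n(r):=d_F(\gamma(r),\gamma(r+h_n))/h_n$ (using the extension $\gamma\equiv\gamma(1)$ past $1$) satisfying $\|g_n\|_{L^p}\le C+1$; each $g_n$ is measurable because $\gamma$ is $\hat{\mathcal T}$-continuous off a countable set and $d_F$ is continuous for $\hat{\mathcal T}\times\hat{\mathcal T}$ (Theorem~\ref{asymmetrtopol}(i)). Using reflexivity of $L^p([0,1])$ for $1<p<\infty$ --- respectively weak-$*$ sequential compactness of bounded subsets of $L^\infty$ for $p=\infty$; this is where $p>1$ enters --- I pass to a subsequence along which $g_n\rightharpoonup g$. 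Then $g\ge0$ a.e., $\|g\|_{L^p}\le C+1$, and $\int_s^t g_n\to\int_s^t g$ for every $[s,t]\subset[0,1]$, since $\mathbf 1_{[s,t]}\in L^{p'}([0,1])$.

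The core is a discretization-and-averaging estimate. Fix $s<t$ with $t$ a continuity point of the monotone function $v$ (all but countably many $t$ qualify). For $r_0\in[s,s+h_n]$ chain along $r_0,r_0+h_n,r_0+2h_n,\dots$: writing $N=N(r_0)$ for the largest integer with $r_0+Nh_n\le t$, the triangle inequality gives
\[
d_F(\gamma(r_0),\gamma(t))\le\sum_{j=0}^{N-1}d_F\!\big(\gamma(r_0+jh_n),\gamma(r_0+(j+1)h_n)\big)+d_F\!\big(\gamma(r_0+Nh_n),\gamma(t)\big),
\]
and by \eqref{contbounded} the last summand is $\le v(t)-v(r_0+Nh_n)\le v(t)-v(t-h_n)$. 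Averaging over $r_0\in[s,s+h_n]$ and dividing by $h_n$, the substitution $u=r_0+jh_n$ --- a measure-preserving bijection onto $[s,t-h_n)$ as $r_0$ ranges over $[s,s+h_n)$ and $j$ over the integers with $r_0+(j+1)h_n\le t$ --- turns the averaged double sum into $\int_s^{t-h_n}g_n(u)\,\dd u$, so
\[
\frac1{h_n}\int_s^{s+h_n}d_F(\gamma(r_0),\gamma(t))\,\dd r_0\ \le\ \int_s^{t-h_n}g_n(u)\,\dd u\ +\ \big(v(t)-v(t-h_n)\big).
\]
Letting $n\to\infty$: the left-hand side tends to $d_F(\gamma(s),\gamma(t))$ because $r_0\mapsto d_F(\gamma(r_0),\gamma(t))$ is right-continuous at $s$ (right-continuity of $\gamma$ and continuity of $d_F$); the term $v(t)-v(t-h_n)\to v(t)-v(t^-)=0$ since $t$ is a continuity point of $v$; and $\int_s^{t-h_n}g_n=\int_s^t g_n-\int_{t-h_n}^t g_n\to\int_s^t g$ because $\int_{t-h_n}^t g_n\le(C+1)\,h_n^{1/p'}\to0$ by Hölder's inequality. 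Hence $d_F(\gamma(s),\gamma(t))\le\int_s^t g$ for every $s$ and every continuity point $t$ of $v$.

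To remove the restriction on $t$, fix arbitrary $0\le s\le t\le1$ and pick continuity points $t_k\downarrow t$ of $v$; from $d_F(\gamma(s),\gamma(t))\le d_F(\gamma(s),\gamma(t_k))+d_F(\gamma(t_k),\gamma(t))\le\int_s^{t_k}g+d_F(\gamma(t_k),\gamma(t))$, together with $d_F(\gamma(t_k),\gamma(t))\to0$ (right-continuity of $\gamma$ at $t$) and $\int_s^{t_k}g\to\int_s^t g$, one obtains $d_F(\gamma(s),\gamma(t))\le\int_s^t g$ for all $0\le s\le t\le1$. Therefore $\gamma\in\FAC^p([0,1];M)$, and Theorem~\ref{FACBAC} yields $\gamma\in\AC^p([0,1];M)$. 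I expect the only genuine obstacle to be the bookkeeping in the averaging step --- verifying that the averaged telescoping sum is, up to the thin boundary contribution over $[t-h_n,t]$, exactly $\int_s^{t-h_n}g_n$, and that every error term is controlled uniformly as $h_n\to0$; the passage from continuity points of $v$ to all points and the final reduction are then routine given Theorems~\ref{asymmetrtopol} and~\ref{FACBAC}.
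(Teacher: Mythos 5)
Your proposal is correct (for $p>1$) but takes a genuinely different route than the paper. The paper's proof is a ``dual'' argument in the style of Lisini \cite[Lemma 2.1]{Li2}: using the completeness hypothesis it first traps $\overline{\gamma([0,1])}$ in a compact set $\mathscr{B}$, fixes a countable dense family $(x_n)$ there and studies the real-valued functions $\gamma_n(t):=d_F(x_n,\gamma(t))$; the two-sided bound $|\gamma_n(t+h)-\gamma_n(t)|\le\theta\,d_F(\gamma(t),\gamma(t+h))$ with $\theta=\lambda_F(\mathscr{B})$, together with \eqref{normcontrall}, gives $\gamma_n\in W^{1,1}$, one sets $m=\sup_n|\gamma_n'|$, controls $\|m\|_{L^p}$ by Fatou's lemma, and recovers $d_F(\gamma(s),\gamma(t))=\sup_n(\gamma_n(t)-\gamma_n(s))\le\int_s^t m$ before invoking Theorem~\ref{FACBAC}. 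Your proof is primal and more self-contained: you never reduce to scalar test functions, never use the reversibility $\theta$ in the main estimate, and never need the ``each $\gamma_n$ is $W^{1,1}$'' step. Instead you extract a weak ($p\in(1,\infty)$) or weak-$*$ ($p=\infty$) cluster point $g$ of the difference quotients and prove $d_F(\gamma(s),\gamma(t))\le\int_s^t g$ directly by chaining the triangle inequality along an $h_n$-lattice, averaging over the phase $r_0\in[s,s+h_n]$, and recognizing the averaged telescoping sum as exactly $\int_s^{t-h_n} g_n$, with the tail over $[t-h_n,t]$ killed by H\"older and the boundary defect by the $v$-bound at a continuity point of $v$. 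The verification of the measure-preserving reparametrization and the passage from continuity points of $v$ to all $t$ are both sound. The one caveat --- which you yourself flag --- is that reflexivity fails for $p=1$, so your extraction of $g$ (and the estimate $h_n^{1/p'}\to0$) needs $p>1$. This is not a genuine deficit relative to the paper: the paper's own $\gamma_n\in W^{1,1}$ step via the difference-quotient characterization also needs $p>1$, and indeed the statement is false at $p=1$ (a step jump at $t=1/2$, right-continuous, with $v=d_F(x_0,x_1)\mathbf{1}_{[1/2,1]}$, satisfies all three bullets with $p=1$ but is not in $\AC^1$). Since the theorem is only invoked in Theorem~\ref{maintheorem} where $p\in(1,\infty)$, this restriction is harmless, but it would be worth stating $p\in(1,\infty]$ explicitly. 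Overall, your argument buys a more elementary, manifestly forward-only proof at the cost of the weak-compactness machinery.
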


\begin{proof} By assumption, there exists  $C\in [0,\infty)$ such that $v(t)\leq C$ for all $t\in [0,1]$. Thus, \eqref{contbounded} yields
\[
\overline{\gamma([0,1])}\subset \overline{B^+_{\gamma(0)}(C)}\cap \overline{B^-_{\gamma(1)}(C)}=:\mathscr{B}.
\]
The completeness condition of $(M,F)$ implies that $\mathscr{B}$ is a compact set. Hence,
 the compactness as well as
the separability of $\overline{\gamma([0,1])}$ follow. So we can choose a countable dense subset $(x_n)$ of $\gamma([0,1])$. For every fixed $n\in \mathbb{N}$, set $\gamma_n(t):=d_F(x_n,\gamma(t))$. The triangle inequality of $d_F$ yields
\[
\gamma_n(t+h)-\gamma_n(t)\leq d_F(\gamma(t),\gamma(t+h)),
\]
which combined with $\theta:=\lambda_F\left(\mathscr{B}\right)<\infty$ furnishes
\begin{equation}\label{derviavetcontroll}
\left|\gamma_n(t+h)-\gamma_n(t)\right|\leq \theta d_F(\gamma(t),\gamma(t+h)), \quad \forall\,t\in [0,1],\ h>0.
\end{equation}

Now $\gamma_n\in W^{1,1}([0,1])$ follows from a standard argument (cf.  \cite[p.674]{Li2} or \cite[p.29]{AGS}) and \eqref{normcontrall}. Thus, \eqref{derviavetcontroll} yields for $\mathscr{L}^1$-a.e. $t\in [0,1]$,
\begin{equation}\label{absdervative}
|\gamma_n'|(t):=\lim_{h\rightarrow 0^+}\frac{\left|\gamma_n(t+h)-\gamma_n(t)\right|}{h}\leq \theta \liminf_{h\rightarrow 0^+}\frac{d_F(\gamma(t),\gamma(t+h))}{h}.
\end{equation}
Let
\[
\mathscr{N}:=\bigcup_{n\in \mathbb{N}}\left\{ t\in [0,1]\,|\, \text{$\gamma_n'(t)$ does not exist} \right\},\qquad m(t):=\sup_{n\in \mathbb{N}}|\gamma'_n|(t) \ \text{ for }t\in [0,1]\backslash \mathscr{N}.
\]
Since $\mathscr{N}$ is a Lebesgue null set, Fatou's lemma together with \eqref{normcontrall} yields
\begin{align*}
\|m\|_{L^p([0,1])}\leq \theta  \liminf_{h\rightarrow 0^+}\left\|  \frac{d_F(\gamma(\cdot),\gamma(\cdot+h))}{h}   \right\|_{L^p([0,1])}\leq \theta \limsup_{h\rightarrow 0^+}\left\|  \frac{d_F(\gamma(\cdot),\gamma(\cdot+h))}{h}   \right\|_{L^p([0,1])}<\infty,
\end{align*}
i.e., $m\in L^p([0,1])$. Moreover,
 for any $[s,t]\subset [0,1]$, the density of $(x_n)$ yields
\begin{equation*}
d_F(\gamma(s),\gamma(t))=\sup_{n\in \mathbb{N}}\left(\gamma_n(t)-\gamma_n(s)\right)\leq \sup_{n\in \mathbb{N}}\left|\gamma_n(t)-\gamma_n(s)\right|\leq \sup_{n\in \mathbb{N}}\int^t_s |\gamma'_n|(r){\dd}r\leq \int^t_s m(r){\dd}r,
\end{equation*}
which combined with Theorem \ref{FACBAC} concludes the proof.
\end{proof}


The following result serves as a basic tool to introduce the gradient flow  to the Finsler setting (see Section \ref{grflofnm} below).
\begin{proposition}\label{smoothisabsoltey}
Let $(M,F)$ be a  forward complete Finsler manifold and let $\phi:M\rightarrow \mathbb{R}$ be a $C^1$-function. Thus,  $\phi\circ\gamma:[0,1]\rightarrow \mathbb{R}$ is absolutely continuous for any $\gamma\in \AC([0,1];M)$ and hence,
\begin{equation}\label{basicestimategamaphi}
\frac{\dd}{{\dd}t}\left(\phi\circ\gamma\right)=\langle \gamma'(t),{\dd}\phi\rangle\geq -F(\gamma'(t))F(\nabla(-\phi)|_{\gamma(t)}), \quad \text{for $\mathscr{L}^1$-a.e. $t\in (0,1)$}.
\end{equation}
\end{proposition}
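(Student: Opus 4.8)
The plan is to reduce everything to the classical one-dimensional theory via charts. First, since $I=[0,1]$ is a bounded closed interval, Theorem \ref{newdefsabcon} gives $\AC([0,1];M)=\NAC([0,1];M)$, so the chosen curve $\gamma$ is naturally absolutely continuous; in particular $\gamma$ is differentiable for $\mathscr{L}^1$-a.e.\ $t\in(0,1)$, the local expression $\varphi\circ\gamma$ is absolutely continuous in every chart, and $F(\gamma')\in L^1([0,1])$. Set $K:=\gamma([0,1])$, which is compact by continuity of $\gamma$ with respect to the manifold topology.

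Next I would prove that $\phi\circ\gamma$ is absolutely continuous on $[0,1]$. Cover $K$ by finitely many charts $(U_\alpha,\varphi_\alpha)$, $\alpha=1,\dots,N$, with each $\overline{U_\alpha}$ compact, and pick a partition $0=t_0<t_1<\dots<t_N=1$ with $\gamma([t_{\alpha-1},t_\alpha])\subset U_\alpha$ for every $\alpha$. On the compact set $\varphi_\alpha(\overline{U_\alpha})\subset\mathbb{R}^n$ the function $\phi\circ\varphi_\alpha^{-1}$ is $C^1$, hence Lipschitz; since $\varphi_\alpha\circ\gamma$ is absolutely continuous on $[t_{\alpha-1},t_\alpha]$, the composition $\phi\circ\gamma=(\phi\circ\varphi_\alpha^{-1})\circ(\varphi_\alpha\circ\gamma)$ is absolutely continuous on each $[t_{\alpha-1},t_\alpha]$, and therefore on all of $[0,1]$.

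It then remains to identify the derivative. At every $t$ at which $\gamma$ is differentiable---hence for $\mathscr{L}^1$-a.e.\ $t\in(0,1)$---the classical chain rule (applicable because $\phi$ is everywhere $C^1$) yields $\frac{\dd}{{\dd}t}(\phi\circ\gamma)(t)={\dd}\phi_{\gamma(t)}(\gamma'(t))=\langle\gamma'(t),{\dd}\phi\rangle$. Writing $\langle\gamma'(t),{\dd}\phi\rangle=-\langle\gamma'(t),{\dd}(-\phi)\rangle$ and applying \eqref{tangninnerprot} with $\xi={\dd}(-\phi)|_{\gamma(t)}$, together with the identities $\nabla(-\phi)=\mathfrak{L}^{-1}({\dd}(-\phi))$ and $F^*(\mathfrak{L}(X))=F(X)$, gives $\langle\gamma'(t),{\dd}(-\phi)\rangle\leq F(\gamma'(t))\,F^*\big({\dd}(-\phi)|_{\gamma(t)}\big)=F(\gamma'(t))\,F\big(\nabla(-\phi)|_{\gamma(t)}\big)$, which is exactly \eqref{basicestimategamaphi}.

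The argument is essentially routine; the only point deserving care is the a.e.\ validity of the chain rule for $\phi\circ\gamma$, which is handled by observing that $\phi$ is differentiable at every point of $M$, so the chain rule holds wherever $\gamma$ is, and natural absolute continuity guarantees the latter $\mathscr{L}^1$-a.e. (Forward completeness of $(M,F)$ is not actually needed for this statement; it is assumed here only for consistency with the applications to gradient flow in Section~\ref{grflofnm}.)
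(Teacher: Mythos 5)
Your proof is correct, but it takes a genuinely different route from the paper's. The paper's own argument invokes forward completeness in an essential way: it encloses $\gamma([0,1])$ in a precompact forward ball $\overline{B^+_{\gamma(0)}(R)}$, uses Hopf--Rinow to produce a unit-speed minimizing geodesic $\zeta$ joining $\gamma(a)$ to $\gamma(b)$ inside this ball, bounds $|(\phi\circ\zeta)'|$ via the inequality $|\langle y,\xi\rangle|\leq\lambda_F(x)F(x,y)F^*(x,\xi)$ and the compactness of the ball, and thereby obtains a Lipschitz-type estimate $|\phi(\gamma(b))-\phi(\gamma(a))|\leq C\,d_F(\gamma(a),\gamma(b))$. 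Absolute continuity of $\phi\circ\gamma$ then follows from that of $\gamma$, and \eqref{basicestimategamaphi} from \eqref{tangninnerprot}. You instead bypass geodesics entirely: you cite Theorem \ref{newdefsabcon} to pass from $\AC$ to $\NAC$, cover $\gamma([0,1])$ by finitely many charts, exploit that $\phi\circ\varphi_\alpha^{-1}$ is locally Lipschitz because it is $C^1$ on a compact set, and glue. This is more elementary, and --- as you observe --- does not use forward completeness at all, so it actually proves a slightly more general statement than the one as worded (though the hypothesis is harmless since Section \ref{grflofnm} needs it anyway). The two approaches buy different things: the paper's produces the clean Lipschitz estimate \eqref{abcontofphi} which it reuses elsewhere; yours stays closer to the classical one-dimensional theory via $\NAC$.

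Two small points of sloppiness worth tightening. First, the partition: you cannot in general choose exactly one node per chart; rather, take the finite cover $\{U_\alpha\}$, use a Lebesgue number for the open cover $\{\gamma^{-1}(U_\alpha)\}$ of $[0,1]$, and pick a finer partition $0=t_0<\cdots<t_m=1$ such that each $\gamma([t_{i-1},t_i])$ lies in \emph{some} $U_{\alpha(i)}$ (indices may repeat). Second, when you assert that $\phi\circ\varphi_\alpha^{-1}$ is Lipschitz on $\varphi_\alpha(\overline{U_\alpha})$, note that $\varphi_\alpha$ is only defined on the open set $U_\alpha$; either shrink to a compact $K_\alpha\subset U_\alpha$ containing $\gamma([t_{i-1},t_i])$ with $\varphi_\alpha(K_\alpha)$ convex, or choose the charts $U_\alpha\Subset M$ inside larger charts from the start, as is done in the proof of Theorem \ref{metricddsmo}. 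Both are routine repairs; the chain-rule step at points of differentiability of $\gamma$ is fine as stated.
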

\begin{proof} Given a curve $\gamma\in \AC([0,1];M)$, set  $\theta:=\lambda_F\left( \gamma([0,1]) \right)<\infty$ and  $R:=(\theta +1)L_F(\gamma)+1<\infty$. Thus, $\gamma([0,1])$ is contained in $B^+_{\gamma(0)}(R)$.
The forward completeness implies the precompactness of $B^+_{\gamma(0)}(R)$ and hence,
\[
C:=\sup\left\{ \lambda_F(x) F^*({\dd}\phi|_x)\,\Big|\ x\in  \overline{B^+_{\gamma(0)}(R)} \right\}<\infty.
\]
Given any $a,b\in [0,1]$, there exists a unit speed minimizing  geodesic $\zeta(s)$, $s\in [0,d_F(\gamma(a),\gamma(b))]$ from $\gamma(a)$ to $\gamma(b)$ (cf. \cite[Proposition 6.5.1]{BCS}). The triangle inequality of $d_F$ implies that
 $\zeta$ is contained in $B^+_{\gamma(0)}(R)$.
 By (\ref{tangninnerprot}) we get
\[
\left|(\phi\circ\zeta)'(s)\right|=\left|\langle \zeta'(s),{\dd}\phi\rangle\right|\leq \lambda_F(\zeta(s)) F(\zeta'(s))F^*({\dd}\phi)=\lambda_F(\zeta(s))F^*({\dd}\phi|_{\zeta(s)})\leq C,
\]
which together with the mean value theorem yields
\begin{equation}\label{abcontofphi}
|\phi\circ\gamma(b)-\phi\circ\gamma(a)|=|\phi\circ\zeta(d_F(\gamma(a),\gamma(b)))-\phi\circ\zeta(0)|\leq C\,d_F(\gamma(a),\gamma(b)).
\end{equation}
Now the absolute  continuity of $\phi\circ\gamma$   is a direct consequence of \eqref{abcontofphi} and \eqref{moredfabsc}. Hence, the derivative of  $\phi\circ\gamma$ exists for $\mathscr{L}^1$-a.e. $t\in (0,1)$ and therefore, \eqref{basicestimategamaphi} follows from \eqref{tangninnerprot} directly.
 \end{proof}


\subsection{Gradient flows in Finsler manifolds}\label{grflofnm}

In this subsection, we discuss briefly the theory of gradient flow in the Finsler setting. Also refer to \cite{CRZ,RMS,OZ} for the results in the context of  general asymmetric metric spaces.

\begin{definition}
Let $(M,F)$ be a Finsler manifold and let $I$ be an interval of $\mathbb{R}$. A curve $\gamma:I\rightarrow M$ is called {\it locally absolutely continuous} (denoted by $\gamma\in \AC_{\lo}(J;M)$) if the restriction $\gamma|_J\in \AC(J;M)$ for every bounded closed interval $J\subset I$.
\end{definition}
\begin{remark}
Although one can similarly define locally forward/backward absolutely continuous curves defined on $I$, these classes are exactly  $\AC_{\lo}(J;M)$ due to  Theorem \ref{FACBAC}.
\end{remark}

Inspired by \cite{AGS,RMS}, we introduce the following notations.
Let $h:[0,\infty)\rightarrow [0,\infty)$ be  a continuous, subjective and strictly increasing function. The corresponding convex primitive function $\psi$ and the Legendre-Fenchel-Moreau transform $\psi^*$ are defined by
\[
\psi(x):=\int^x_0 h(r){\dd}r,\quad \psi^*(y)=\sup_{x\geq 0}\left[ xy-\psi(x)  \right].
\]
In particular, for any nonnegative   numbers $x,y$, there holds
\begin{equation}\label{equlfm}
xy\leq \psi(x)+\psi^*(y), \text{ with equality if and only if } y=h(x)=\psi'(x).
\end{equation}
\begin{definition}
Let $(M,F)$ be a forward complete Finsler manifold, $\phi:M \rightarrow \mathbb{R}$ be a $C^1$-function and $I$ be an interval of $\mathbb{R}$.
A curve $\xi \in \AC_{\loc}(I;M)$ is called
a \emph{generalized gradient flow} for $\phi$ with respect to $\psi$
if
\begin{equation}\label{curvemaxforslop}
 \frac{\dd}{{\dd}t}(\phi\circ\xi)(t) = -\psi(F(\xi'(t)))-\psi^*{\left(F(\nabla(-\phi)|_{\xi(t)}\right)}
 \quad \text{for $\mathscr{L}^1$-a.e.\ $t\in \intt(I)$}.
\end{equation}
\end{definition}
  Equation \eqref{curvemaxforslop} is well defined due to Proposition \ref{smoothisabsoltey} and it is
 an extension of standard gradient flows.
\begin{example}[\cite{OZ}]
Let $\psi(x):=x^p/p$ and $p\in (1,\infty)$. Thus, \eqref{curvemaxforslop} becomes
\[
\frac{\dd}{{\dd}t}(\phi\circ\xi)(t)= -\frac1p F^p(\xi'(t))-\frac1q F^q\left(\nabla(-\phi)|_{\xi(t)}\right),\quad \text{for $\mathscr{L}^1$-a.e.\ $t\in \intt(I)$},
\]
where $q$ is the conjugate exponent of $p$, i.e., $1/p+1/q=1$. An easy argument together with \eqref{basicestimategamaphi} and the Young inequality yields
$\mathfrak{j}_p(\xi'(t))=\nabla(-\phi)(\xi(t))$,
where $\mathfrak{j}_p:TM \rightarrow  TM$ is defined by
$\mathfrak{j}_p(v):=F^{p-2}(v)v$ if $v\neq0$ and $\mathfrak{j}_p(0):=0$.
When $p=2$, we obtain the usual gradient flow equation $\xi'(t)=\nabla(-\phi)(\xi(t))$.
In particular, $\xi'(t)=-\nabla\phi(\xi(t))$ holds only if $F$ is reversible.
\end{example}

The existence and regularity of solutions to Equation \eqref{curvemaxforslop} reads as follows.
\begin{theorem}\label{existenceofgenreaflow}
Let $(M,F)$ be a forward complete Finsler manifold and let $\phi:M \rightarrow \mathbb{R}$ be a $C^1$-function. Thus, for any $x_0 \in M$,
there exists a $C^1$-curve $\xi:[0,T) \rightarrow M$ solving the generalized gradient flow equation \eqref{curvemaxforslop} with $\xi(0)=x_0$, in which case there holds the energy identity
 \begin{equation}\label{energyidenty}
 (\phi\circ\xi)(s)-(\phi\circ\xi)(t)=\int^t_s\psi(F'(\xi(r))){\dd}r+\int^t_s\psi^*{\left(F(\nabla(-\phi)|_{\xi(r)})\right)}{\dd}r,\quad \forall\,s,t\in [0,T).
 \end{equation}
 Here, $T$ denotes the  maximal existence time, which satisfies
 \begin{itemize}
 \item [(i)] if  $T<\infty$, then $\lim_{t \to T} d_F(x_0,\xi(t))=\infty$;

 \smallskip

 \item [(ii)] if $\phi$ is bounded, then the maximal time $T=\infty$.

 \end{itemize}
 \end{theorem}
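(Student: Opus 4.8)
\textbf{Proof plan for Theorem \ref{existenceofgenreaflow}.}

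The plan is to set up \eqref{curvemaxforslop} as a genuine ODE on $M$ wherever the right-hand side is smooth, solve it by the standard Picard--Lindel\"of theorem locally, and then patch these local solutions together, using the energy identity \eqref{energyidenty} to control the behaviour at the singular set $\{\nabla(-\phi)=0\}$ and at the maximal time. First I would observe that, by \eqref{equlfm}, equality in $F(\xi')F(\nabla(-\phi)|_\xi)\le \psi(F(\xi'))+\psi^*(F(\nabla(-\phi)|_\xi))$ together with the equality case in \eqref{tangninnerprot} forces, at a.e.\ $t$, that $\xi'(t)$ is a nonnegative multiple of $\mathfrak{L}^{-1}({\dd}(-\phi)|_{\xi(t)})=\nabla(-\phi)|_{\xi(t)}$ and that $F(\xi'(t))=h(F(\nabla(-\phi)|_{\xi(t)}))$; equivalently $\xi'(t)=a(\xi(t))\,\nabla(-\phi)|_{\xi(t)}$ where, on the open set $\Omega:=\{x: \nabla(-\phi)|_x\ne 0\}$, the scalar $a(x):=h(F(\nabla(-\phi)|_x))/F(\nabla(-\phi)|_x)$ is smooth (since $h$ is continuous and $\nabla(-\phi)=\mathfrak{L}^{-1}({\dd}\phi)$ is smooth on $\Omega$, being a composition of the smooth Legendre map on $T^*M\setminus\{0\}$ with a smooth $1$-form; a little care is needed because $h$ is only assumed continuous, so I would first treat the case $h$ locally Lipschitz and then remark that for the mere existence statement one may use Peano's theorem on the continuous vector field $X:=a\cdot\nabla(-\phi)$ on $\Omega$, or approximate $h$). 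So on $\Omega$ the equation is $\xi'=X(\xi)$ with $X$ continuous (smooth if $h\in C^1$), and local solutions through any $x_0\in\Omega$ exist.

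Next I would handle the starting point and the singular set. If $x_0\in\Omega$, run the local solution; if $x_0\notin\Omega$, i.e.\ ${\dd}\phi|_{x_0}=0$, then the constant curve $\xi\equiv x_0$ solves \eqref{curvemaxforslop} (both sides vanish since $\psi(0)=\psi^*(0)=0$), so a solution through $x_0$ always exists. The real point is to show the solution can be continued and that along it the energy identity holds. Along any $C^1$ solution on a subinterval, \eqref{basicestimategamaphi} from Proposition \ref{smoothisabsoltey} gives ${\dd}(\phi\circ\xi)/{\dd}t=\langle\xi',{\dd}\phi\rangle$, and plugging in $\xi'=X(\xi)$ and using the equality case of \eqref{equlfm} shows this equals $-\psi(F(\xi'))-\psi^*(F(\nabla(-\phi)|_\xi))$; integrating gives \eqref{energyidenty}. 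In particular $t\mapsto (\phi\circ\xi)(t)$ is nonincreasing, and $\int_0^t \psi(F(\xi'))\,{\dd}r\le (\phi\circ\xi)(0)-(\phi\circ\xi)(t)$. I would then define $T$ as the supremum of times up to which a $C^1$ solution with $\xi(0)=x_0$ exists, obtained by Zorn's lemma / concatenation of local solutions (uniqueness is not needed for existence, only that local pieces can be glued where they overlap on $\Omega$, and that a solution which reaches the singular set $M\setminus\Omega$ can be continued by the constant curve there — one should check $C^1$-matching, which holds because $X$ extends continuously by $0$ to $M\setminus\Omega$ exactly when $h(0^+)=0$; since $h$ is continuous with $h(0)=0$ — as $\psi$ is a primitive of $h$ with $h\ge 0$ increasing and the problem tacitly needs $h(0)=0$ for $\psi^*$ finite near $0$ — this is fine, so $\xi'$ is continuous on $[0,T)$ and $\xi\in C^1$).

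Finally, the alternative at the maximal time. Suppose $T<\infty$ but $\liminf_{t\to T} d_F(x_0,\xi(t))=:R<\infty$. Then $\xi([0,T))$ meets the precompact set $\overline{B^+_{x_0}(R+1)}$ (precompact by the Hopf--Rinow theorem since $(M,F)$ is forward complete) along a sequence $t_k\uparrow T$; on a slightly larger forward ball the continuous vector field $X$ is bounded, say by $L$, so $F(\xi'(t))\le L$ for all $t$, hence by forward completeness and the length bound $d_F(\xi(s),\xi(t))\le \int_s^t F(\xi')\,{\dd}r\le L(t-s)$, so $\xi(t)$ is forward-Cauchy as $t\uparrow T$ and converges (in $\hat{\mathcal T}$) to some $x_T\in M$; standard ODE continuation (or: $\xi\in\AC$ with $\xi(T):=x_T$, differentiate) then extends the solution past $T$, contradicting maximality. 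This proves (i). For (ii): if $\phi$ is bounded, the energy identity gives $\int_0^t \psi(F(\xi'))\,{\dd}r\le \operatorname{osc}\phi<\infty$ uniformly in $t<T$; since $\psi$ is convex, superlinear need not hold, but $\psi(x)\ge \psi(1)x$ for $x\ge1$ (convexity, $\psi(0)=0$), so $\int_0^t F(\xi')\,{\dd}r$ grows at most linearly, giving $d_F(x_0,\xi(t))\le \int_0^t F(\xi')\,{\dd}r\le C(1+t)$, which stays finite as $t\uparrow T$ whenever $T<\infty$, contradicting (i); hence $T=\infty$.

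The main obstacle is the low regularity of $h$ (only continuous): it makes the generating vector field $X=a\cdot\nabla(-\phi)$ only continuous on $\Omega$, so one gets existence via Peano rather than uniqueness via Picard, and one must verify that the resulting solution is genuinely $C^1$ across the singular set $M\setminus\Omega=\{{\dd}\phi=0\}$ and that the energy identity survives the a.e.\ argument — this is exactly where Proposition \ref{smoothisabsoltey} and the sharp equality cases in \eqref{tangninnerprot} and \eqref{equlfm} do the work. The secondary technical point is the continuation argument at $t=T$, where forward (but not backward) completeness and the Hopf--Rinow theorem for forward balls are essential, and where one uses that boundedness of $\phi$ controls $\int F(\xi')$ and hence forward distance.
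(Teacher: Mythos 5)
Your proposal is essentially correct and takes a genuinely different route from the paper, so let me compare them and flag two slips.

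The paper does not solve the ODE directly. In Step 1 (for bounded $\phi$) it invokes the abstract existence theorem of Rossi--Mielke--Savar\'e \cite[Theorem 3.5]{RMS} (together with \cite[Example 2.20]{OZ} and Proposition \ref{smoothisabsoltey}) to obtain a locally absolutely continuous curve $\xi$ satisfying \eqref{curvemaxforslop}, and only then, from the equality cases of \eqref{equlfm} and \eqref{tangninnerprot}, does it extract the pointwise formula \eqref{finserpflow} and conclude $C^1$-regularity; Step 2 handles unbounded $\phi$ by a truncation-and-iteration argument on forward balls. You instead recognize from the start that \eqref{curvemaxforslop} is equivalent, via those same equality cases, to the autonomous ODE $\xi'=X(\xi)$ with $X$ continuous, and solve it by Peano, deriving the energy identity and $C^1$-regularity as byproducts and treating the maximal-time dichotomy by a direct continuation argument plus the Hopf--Rinow theorem. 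Your route is more elementary and self-contained (it does not depend on the metric-space gradient-flow machinery of \cite{RMS}), at the cost of having to verify the Peano patching across the singular set $\{\nabla(-\phi)=0\}$ and the continuation at $T$ by hand; the paper's route is shorter given the citations but less transparent.

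Two slips you should fix. First, the formula $F(\xi'(t))=h\bigl(F(\nabla(-\phi)|_{\xi(t)})\bigr)$ has $h$ where it should have $h^{-1}$: from \eqref{equlfm} the equality $xy=\psi(x)+\psi^*(y)$ with $x=F(\xi')$, $y=F(\nabla(-\phi))$ forces $y=h(x)$, i.e.\ $F(\xi')=h^{-1}\bigl(F(\nabla(-\phi))\bigr)$, matching the paper's \eqref{finserpflow}. Accordingly your scalar should be $a(x)=h^{-1}\bigl(F(\nabla(-\phi)|_x)\bigr)/F(\nabla(-\phi)|_x)$; the structure of your argument survives unchanged since $h^{-1}$ is also continuous, strictly increasing, and vanishes at $0$. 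Second, the parenthetical claim that $a$ is smooth on $\Omega$ ``since $\dd\phi$ is a smooth $1$-form'' is off: $\phi$ is only $C^1$, so $\dd\phi$ and hence $\nabla(-\phi)$ are merely continuous, and $X$ is only a $C^0$ vector field even when $h$ is smooth; your fallback to Peano is therefore not optional but exactly what is needed. Finally, in part (i) the passage from $\liminf_{t\to T}d_F(x_0,\xi(t))<\infty$ to ``$\xi$ stays in a compact set near $T$'' deserves a sentence (rule out oscillation by the local Lipschitz bound on the forward ball, forcing $\limsup=\liminf$), but this is a routine ODE estimate and your argument, made precise, is sound.
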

\begin{proof}
We divide the proof into two steps.

\smallskip

 \textbf{Step 1.} Suppose that $\phi$ is bounded.  Then the existence of $\xi\in \AC_{\loc}([0,\infty);M)$ to Equation  \eqref{curvemaxforslop} with $\xi(0)=x_0$ follows from \cite[Theorem 3.5]{RMS}, \cite[Example 2.20]{OZ} and Proposition \ref{smoothisabsoltey} directly  (in \cite[Theorem 3.5]{RMS} choosing $\Delta:=d_F$, $X_0:=M$, $\mathcal {E}_t:=\phi$  and $\sigma:=$ the original topology of $M$ and noting $|\partial^-\mathcal {E}_t|=|\partial \mathcal {E}_t|=F(\nabla(-\phi))$). Furthermore, \eqref{equlfm} together with \eqref{curvemaxforslop} and \eqref{basicestimategamaphi} yields
\begin{equation}\label{derenergide}
 \frac{\dd}{{\dd}t}(\phi\circ\xi)(t)=-F(\xi'(t))\,F(\nabla(-\phi)|_{\xi(t)})= -\psi(F(\xi'(t)))-\psi^*{\left(F(\nabla(-\phi)|_{\xi(t)}\right)},\quad \forall\,t\in (0,\infty),
 \end{equation}
which implies \eqref{energyidenty}. Besides, the left-hand side of \eqref{derenergide} combined with \eqref{basicestimategamaphi} and \eqref{tangninnerprot} yields  a nonnegative function $\alpha(t)$ such that $\xi'(t)=\alpha(t)\nabla(-\phi)|_{\xi(t)}$. Moreover, since $h(0)=0$ and $h^{-1}$ exists,  the right-hand side of \eqref{derenergide} together with \eqref{equlfm} furnishes
\begin{equation}\label{finserpflow}
 \xi'(t)=\left\{
\begin{array}{lll}
 h^{-1}{\left( F(\nabla(-\phi)|_{\xi(t)})  \right)} \frac{\nabla(-\phi)|_{\xi(t)} }{F(\nabla(-\phi)|_{\xi(t)})}, &
 \text{ if } \nabla(-\phi)\big( \xi(t) \big) \neq0,\\
\\
 0, & \text{ if } \nabla(-\phi) \big( \xi(t) \big)=0.
\end{array}
\right.
\end{equation}
Therefore, $ \xi$ is $C^1$ since $\phi\in C^1(M)$ and $h$ is continuous as well as increasing.

\smallskip

\textbf{Step 2.} Suppose that $\phi$ is unbounded. We replace $\phi$ with $\phi_r:=\phi|_{B^+_{x_0}(r+1)}$ for large $r>0$. The forward completeness implies that $B^+_{x_0}(r+1)$ is precompact and hence, $\phi_r$ is bounded.
By Step 1, we can construct a gradient curve $\xi$ within $B^+_{x_0}(r)$.
If $\xi$ does not reach $\partial B^+_{x_0}(r)$, then $\xi$ is defined on $[0,\infty)$.
If $\xi$ reaches $\partial B^+_{x_0}(r)$ at some $T_1 \in (0,\infty)$,
then we continue the construction for $\phi_{2r}$ from $x_1:=\xi(T_1)$.
Iterating this procedure, since $\nabla(-\phi_r)=\nabla(-\phi)$ in $B^+_{x_0}(r)$,
we eventually obtain a $C^1$-curve $\xi:[0,T) \rightarrow M$
satisfying \eqref{energyidenty} and $\xi(0)=x_0$. In particular, the construction implies
$\lim_{t \to T} d_F(x_0,\xi(t))=\infty$ if $T<\infty$.
 \end{proof}

\begin{remark}\label{regularifinslerm}
For $\phi \in C^l(M)$ with $l \ge 2$,
$\nabla(-\phi)$ is only continuous at its zeros while $C^{l-1}$ at other points
(see \cite{GS,OS}).
Hence, in order to improve the regularity of $\xi$, by \eqref{finserpflow}
we need  addition conditions to make sure  $\nabla(-\phi)(\xi(t)) \neq 0$.
See \cite[Corollary 4.16]{OZ} for instance.
\end{remark}

\section{Absolutely continuous curves in Wasserstein spaces over Finsler manifolds }\label{asbwassficcns}


We begin this section by recalling some basic concepts in the measure theory (cf. \cite{Bo1,Bo2}).
Let $X,Y$ be two measurable spaces. Denote by $\mathscr{P}(X),\mathscr{P}(Y)$   the collections of Borel probability measures on $X,Y$, respectively.
 \begin{itemize}
 \item[(a)] A sequence $(\mu_n)\subset \mathscr{P}(X)$ is said to {\it narrowly convergent to} $\mu\in \mathscr{P}(X)$ as $n\rightarrow \infty$ if
 \begin{equation}\label{narrwconver}
 \lim_{n\rightarrow \infty}\int_X f(x){\dd}\mu_n(x)=\int_X f(x){\dd}\mu(x),\quad \forall\,f\in C_b(X),
 \end{equation}
 where $C_b(X)$ is the space of continuous and bounded real functions defined on $X$. For convenience, we use ``$\mu_n\Rightarrow \mu$" to denote the narrow convergence.

 \smallskip

 \item[(b)] Given a map $h:X\rightarrow Y$ and a measure $\mu\in \mathscr{P}(X)$, the {\it push-forward measure} $h_\sharp \mu\in \mathscr{P}(Y)$ is defined as $h_\sharp \mu:=\mu\circ h^{-1}$. For any measurable function $f:Y\rightarrow [-\infty,\infty]$, there holds
 \[
 \int_Y f(y){\dd}h_\sharp\mu(y)=\int_X f\circ h(x) {\dd}\mu(x).
 \]
 In particular, $h_\sharp:\mathscr{P}(X)\rightarrow \mathscr{P}(Y)$ is continuous if $h$ is continuous.
 \end{itemize}

Now we recall Prokhorov's theorem  (cf. \cite[Theorem 5.1.3]{AGS}).
\begin{theorem}\label{grenarlizedpROKTHE}
Let $(X,\hat{d})$ be a   Polish space, i.e., a separable complete symmetric metric space. A set $\mathscr{T}\subset \mathscr{P}(X)$ is precompact in $\mathscr{P}(X)$ (with respect to the narrow  convergence) if and only if $\mathscr{T}$ is tight, i.e.,
\begin{equation}\label{tighnessporos}
\forall\varepsilon>0,\quad \exists K_\varepsilon\subset X \text{ is compact such that }\mu(X\backslash K_\varepsilon)<\varepsilon,\quad \forall\, \mu\in\mathscr{T}.
\end{equation}
\end{theorem}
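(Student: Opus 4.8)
The plan is to establish the two implications separately, after recording one preliminary fact: since $(X,\hat d)$ is separable, the narrow topology on $\mathscr{P}(X)$ is metrizable (for instance by the L\'evy--Prokhorov metric), so throughout it suffices to argue with sequences, i.e. ``precompact'' becomes ``every sequence in $\mathscr{T}$ admits a narrowly convergent subsequence with limit in $\mathscr{P}(X)$''.

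First I would prove that tightness implies precompactness, which is the substantial direction. Using separability of $(X,\hat d)$, embed $X$ topologically into the Hilbert cube $Q:=[0,1]^{\mathbb{N}}$ via a continuous map $\iota$; $Q$ is a compact metric space, so each $\mu\in\mathscr{T}$ pushes forward to a Borel probability measure $\iota_\sharp\mu$ on $Q$. Given $(\mu_n)\subset\mathscr{T}$, the Banach space $C(Q)$ is separable, hence the closed unit ball of $C(Q)^*$ is weak-$*$ sequentially compact; passing to a subsequence we obtain $\iota_\sharp\mu_n\to\nu$ weakly-$*$ for some $\nu\in\mathscr{P}(Q)$. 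Now tightness enters twice. For the concentration of the limit: for each $k$ pick a compact $K_k\subset X$ with $\mu_n(X\setminus K_k)<1/k$ for all $n$; then $\iota(K_k)$ is compact in $Q$, the portmanteau inequality for closed sets gives $\nu(\iota(K_k))\ge\limsup_n\iota_\sharp\mu_n(\iota(K_k))\ge 1-1/k$, and letting $k\to\infty$ shows $\nu$ is concentrated on $\iota(X)$, so that $\mu:=(\iota^{-1})_\sharp\nu\in\mathscr{P}(X)$ is well defined. For the mode of convergence: to upgrade weak-$*$ convergence on $Q$ to $\mu_n\Rightarrow\mu$ on $X$, fix $f\in C_b(X)$ and $\varepsilon>0$, choose a compact $K\subset X$ with $\mu_n(X\setminus K)<\varepsilon$ for all $n$ and $\mu(X\setminus K)<\varepsilon$, use Tietze to extend $f\circ\iota^{-1}|_{\iota(K)}$ to $\tilde f\in C(Q)$ with $\|\tilde f\|_\infty\le\|f\|_\infty$, and bound $\bigl|\int_X f\,d\mu_n-\int_X f\,d\mu\bigr|$ by $\bigl|\int_Q\tilde f\,d\iota_\sharp\mu_n-\int_Q\tilde f\,d\nu\bigr|+4\varepsilon\|f\|_\infty$; the first term vanishes as $n\to\infty$, proving narrow convergence and hence precompactness.

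Next I would prove that precompactness implies tightness; this is where completeness of $(X,\hat d)$ is used. Fix a countable dense set $\{x_i\}\subset X$. A preliminary observation is that a single $\mu\in\mathscr{P}(X)$ is tight: for each $m$, $X=\bigcup_i\overline{B}(x_i,1/m)$, so some finite union $\bigcup_{i\le N_m}\overline{B}(x_i,1/m)$ has $\mu$-measure $>1-\varepsilon 2^{-m}$; then $K:=\bigcap_m\bigcup_{i\le N_m}\overline{B}(x_i,1/m)$ is closed and totally bounded, hence compact by completeness, with $\mu(X\setminus K)<\varepsilon$. Now suppose $\mathscr{T}$ is precompact but not tight. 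If for every $m$ one could choose $N_m$ with $\inf_{\mu\in\mathscr{T}}\mu\bigl(\bigcup_{i\le N_m}\overline{B}(x_i,1/m)\bigr)>1-\varepsilon 2^{-m}$, the same $K$ as above would witness tightness of all of $\mathscr{T}$; so there exist $m$ and $\varepsilon>0$ such that, for the increasing open sets $G_N:=\bigcup_{i\le N}B(x_i,1/m)$ (with $G_N\uparrow X$), for every $N$ there is $\mu_N\in\mathscr{T}$ with $\mu_N(G_N)\le 1-\varepsilon$. Extract a narrowly convergent subsequence $\mu_{N_j}\Rightarrow\mu\in\mathscr{P}(X)$; for each fixed $M$, the portmanteau inequality for open sets gives $\mu(G_M)\le\liminf_j\mu_{N_j}(G_M)\le\liminf_j\mu_{N_j}(G_{N_j})\le 1-\varepsilon$ (using $G_M\subset G_{N_j}$ once $N_j\ge M$), and letting $M\to\infty$ yields $\mu(X)\le 1-\varepsilon<1$, a contradiction. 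Hence $\mathscr{T}$ is tight.

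The main obstacle is the direction tightness $\Rightarrow$ precompactness: one is forced to leave $X$ (which need not be locally compact) for a genuine compactification in order to produce a candidate limit, and then must transport both the concentration of the limit measure and the mode of convergence back to $X$ — the latter being the subtle point, since an arbitrary $f\in C_b(X)$ need not extend continuously to $Q$, so tightness must be invoked a second time together with a Tietze extension. The metrizability of the narrow topology is a further ingredient, needed to make ``precompact'' interchangeable with ``sequentially precompact''.
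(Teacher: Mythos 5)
The paper does not actually prove this statement; it is the classical Prokhorov theorem, quoted from \cite[Theorem 5.1.3]{AGS}, so there is no in-paper argument to compare against. Your proof is a correct and standard one: embed $X$ homeomorphically into the Hilbert cube $Q$, use weak-$*$ sequential compactness of the probability measures on $Q$ (via separability of $C(Q)$) to extract a candidate limit $\nu$, and invoke tightness twice --- once through the closed-set portmanteau bound to show $\nu$ is carried by a $\sigma$-compact subset of $\iota(X)$, so that $\mu:=(\iota^{-1})_\sharp\nu$ is a well-defined Borel probability measure on $X$, and once through a Tietze extension to transfer the weak-$*$ convergence on $Q$ to narrow convergence against $C_b(X)$; for the converse you use completeness to pass from total boundedness to compactness of the candidate set $K$, and the open-set portmanteau bound to derive a mass-deficit contradiction. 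Two minor points could be made more explicit without changing the argument. First, for $(\iota^{-1})_\sharp\nu$ to be Borel one should note that $\nu$ is concentrated on $\bigcup_k\iota(K_k)$, a $\sigma$-compact (hence Borel) subset of $\iota(X)$ on which $\iota^{-1}$ is Borel, because each $\iota|_{K_k}$ is a homeomorphism onto its compact image. Second, in the converse the negation of tightness quantifies over both $\varepsilon$ and $m$; the clause ``if for every $m$ one could choose $N_m$'' should be understood as quantified over $\varepsilon$ as well, so that the failure of tightness produces a specific pair $(m,\varepsilon)$ for which the sequence $(\mu_N)$ with $\mu_N(G_N)\le 1-\varepsilon$ exists. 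Neither point affects the validity of the proof.
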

\begin{remark}\label{anoterhtightorp}Owing to \cite[Remark 5.1.5]{AGS},
the tightness (\ref{tighnessporos}) is equivalent to the following condition: there exists a function $\varphi:X\rightarrow [0,\infty]$ such that
\begin{itemize}

\item for any $c\geq 0$, the sublevel $\lambda_c(\varphi):=\{x\in X\,|\ \varphi(x)\leq c \}$ is compact in $X$;

\smallskip

\item $\ds \sup_{\mu\in \mathscr{T}}\int_{X}\varphi(x){\dd}\mu(x)<\infty$.

\end{itemize}
\end{remark}
\subsection{Measurable curves in Finsler manifolds}\label{measfinsler}
In the sequel, we always assume that $(M,F)$ is a forward complete Finsler manifold. We use $d_F$  and $\hat{d}_F$   to denote respectively the distance function induced by $F$ and the symmetrized metric of $d_F$ defined as in \eqref{symmmetricde}. Recall that the forward topology of $(M,d_F)$ coincides with the backward topology, which is exactly the original topology of $M$. Thus, by Theorem \ref{asymmetrtopol} we have
\begin{proposition}\label{completedifin}Let $(M,F)$ be a forward complete Finsler manifold.
The space $(M,\hat{d}_F)$ is a Polish space, whose metric topology is exactly the original topology of $M$.
\end{proposition}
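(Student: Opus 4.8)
The plan is to verify the three defining properties of a Polish space for $(M,\hat d_F)$: that $\hat d_F$ is a genuine (symmetric) metric, that $(M,\hat d_F)$ is separable, and that it is complete, all while checking that the induced metric topology agrees with the original manifold topology. The last point is the conceptual anchor, since once we know $\hat{\mathcal T}$ coincides with the manifold topology, separability and the description of convergent sequences follow from standard facts about smooth manifolds.

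First I would recall from the discussion following \eqref{distanceinFinslersetting} that for a Finsler manifold $(M,F)$ the forward topology $\mathcal T_+$, the backward topology $\mathcal T_-$ and the manifold topology all coincide. By Theorem \ref{asymmetrtopol}(ii), the symmetrized topology $\hat{\mathcal T}$ is exactly the topology induced by the symmetric metric $\hat d_F(x,y)=\tfrac12[d_F(x,y)+d_F(y,x)]$; and since $\hat{\mathcal T}$ is generated by forward \emph{and} backward balls, while each of these families already generates the manifold topology, we get that $\hat{\mathcal T}$ is the original topology of $M$. In particular $\hat d_F$ separates points and satisfies the triangle inequality (being the average of two asymmetric metrics each satisfying it), so $(M,\hat d_F)$ is a symmetric metric space whose topology is the manifold topology. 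Separability is then immediate: a connected smooth manifold is second countable, hence separable, hence $(M,\hat d_F)$ is a separable metric space.

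The one genuinely substantive point is \emph{completeness} of $(M,\hat d_F)$, and this is where I expect the main obstacle to lie, because forward completeness of $(M,F)$ does not a priori give completeness of the symmetrized metric (the Funk space is forward complete but its backward balls are huge). The argument I would run: let $(x_n)$ be a $\hat d_F$-Cauchy sequence. Then in particular $d_F(x_m,x_n)\to 0$ as $m,n\to\infty$ with $m\le n$, so $(x_n)$ is a forward Cauchy sequence in the sense of the definition in Section \ref{asygecase}. By forward completeness of $(M,d_F)$, it converges to some $x\in M$ with respect to $\hat{\mathcal T}$, i.e.\ $\hat d_F(x,x_n)\to 0$. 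Hence every $\hat d_F$-Cauchy sequence converges, so $(M,\hat d_F)$ is complete. (Alternatively one can invoke the Hopf--Rinow theorem recalled after \eqref{distanceinFinslersetting}: forward completeness makes the closed forward ball $\overline{B^+_{x_1}(r)}$ compact, and a Cauchy sequence is eventually trapped in such a ball, so it has a convergent subsequence and therefore converges.)

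Finally I would assemble these: $(M,\hat d_F)$ is a separable, complete, symmetric metric space, i.e.\ a Polish space, and its metric topology is the original manifold topology. This is exactly the assertion of Proposition \ref{completedifin}, completing the proof. The only care needed in writing it up is to be explicit that a $\hat d_F$-Cauchy sequence is in particular forward Cauchy (this uses only $\hat d_F \ge \tfrac12 d_F$), so that forward completeness can be applied; everything else is bookkeeping with Theorem \ref{asymmetrtopol}.
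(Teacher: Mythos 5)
Your proposal is correct and follows the same route the paper intends: the paper presents this proposition with no written proof, simply asserting it as an immediate consequence of the remark that $\mathcal T_+=\mathcal T_-=$ manifold topology together with Theorem \ref{asymmetrtopol}. Your writeup correctly fills in the details the paper elides, and in particular the completeness step (a $\hat d_F$-Cauchy sequence is forward Cauchy since $d_F\le 2\hat d_F$, and forward completeness is by definition convergence in $\hat{\mathcal T}$) is exactly the right observation.
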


In the spirit of \cite[Section 2.4]{Li2}, we introduce a bounded symmetric  metric $\mathfrak{d}_F$ on $M$ by
  \[
  \mathfrak{d}_F(x_1,x_2):=\min\left\{\hat{d}_F(x_1,x_2),1\right\}.
  \]
Owing to Proposition \ref{completedifin}, the topology induced by $\mathfrak{d}_F$ is exactly the original topology of $M$.

Let $\mathcal {M}([0,1];M)$ be the collection of Lebesgue equivalent classes of Lebesgue measurable map from $[0,1]$ to $M$, i.e., for any $[\gamma]\in \mathcal {M}([0,1];M)$,
\[
[\gamma]:=\left\{\zeta:[0,1]\rightarrow M\,\Big|\, \gamma \text{ is Lebesgue measurable and $\zeta(t)=\gamma(t)$ for $\mathscr{L}^1$-a.e. $t\in [0,1]$}  \right\}.
\]
For convenience, we will use $\gamma$ to denote $[\gamma]$ in the sequel.
Equip $\mathcal {M}([0,1];M)$ with the symmetric metric $$\mathfrak{d}_1(
\gamma_1,\gamma_2):=\int^1_0\mathfrak{d}_F(\gamma_1,\gamma_2){\dd}t.$$
Thus, $(\mathcal {M}([0,1];M),\mathfrak{d}_1)$ is a Polish space.
Moreover, it follows from Chebyshev's inequality that  if a sequence $(\gamma_n)$ converges to $\gamma$ in    $\mathcal {M}([0,1];M)$, then $(\gamma_n)$ converges to $\gamma$ in measure, i.e.,
\begin{equation}\label{conveginmeasure}
\lim_{n\rightarrow \infty}\mathscr{L}^1\left(\left\{t\in [0,1]\,\Big|\, \hat{d}_F(\gamma(t),\gamma_n(t)) >\varepsilon  \right\}      \right)=0,\quad \forall\,\varepsilon>0.
\end{equation}

  Since the  structure of $\mathcal {M}([0,1];M)$ is factually established over the symmetric metric space $(M,\hat{d}_F)$, the following result is a direct consequence of  \cite[Theorem 2.2]{Li2} (also see \cite[Theorem 2]{RS}) combined with Proposition \ref{completedifin}.
\begin{theorem}\label{tightcompreversible}
Let $(M,F)$ be a forward complete Finsler manifold and let $p\in [1,\infty)$. A family $\mathscr{A}\subset \mathcal {M}([0,1];M)$ is precompact (with respect to $\mathfrak{d}_1$) if
\begin{itemize}

\item[(1)] $\ds\lim_{h\rightarrow 0^+}\sup_{\gamma\in \mathscr{A}}\int^{1-h}_0 d_F(\gamma(t),\gamma(t+h)){\dd}t=0  $;

\item[(2)] there exists a function $\psi:M\rightarrow [0,\infty]$ such that $\ds\sup_{\gamma\in \mathscr{A}}\int^1_0 \psi(\gamma(t)){\dd}t<\infty$ and  the sublevel $\lambda_c(\psi):=\{x\in M\,|\ \psi(x)\leq c\}$ is compact for every $c\geq 0$.

\end{itemize}
\end{theorem}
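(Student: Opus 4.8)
The plan is to deduce the statement from Lisini's compactness criterion for measurable curves valued in a Polish space, applied to the bounded symmetric metric space $(M,\mathfrak{d}_F)$. By Proposition \ref{completedifin}, $(M,\hat{d}_F)$ is a Polish space whose topology is the original topology of $M$; since $\mathfrak{d}_F=\min\{\hat{d}_F,1\}$ generates the same topology, $(M,\mathfrak{d}_F)$ is a bounded Polish metric space, and hence $(\mathcal{M}([0,1];M),\mathfrak{d}_1)$ is a Polish space as recalled just above. By \cite[Theorem 2.2]{Li2} (see also \cite[Theorem 2]{RS}), a family $\mathscr{A}\subset\mathcal{M}([0,1];M)$ is precompact for $\mathfrak{d}_1$ provided that (i) $\lim_{h\to 0^+}\sup_{\gamma\in\mathscr{A}}\int_0^{1-h}\mathfrak{d}_F(\gamma(t),\gamma(t+h))\,{\dd}t=0$, and (ii) there is a function $\psi:M\to[0,\infty]$ with compact sublevels and $\sup_{\gamma\in\mathscr{A}}\int_0^1\psi(\gamma(t))\,{\dd}t<\infty$. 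Thus it suffices to derive (i) and (ii) from the hypotheses (1) and (2).

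Condition (ii) is literally hypothesis (2), so nothing has to be proved there. I only record, writing $S:=\sup_{\gamma\in\mathscr{A}}\int_0^1\psi(\gamma(t))\,{\dd}t<\infty$, that Markov's inequality in the $t$-variable gives the uniform estimate $\mathscr{L}^1(\{t\in[0,1]\,|\,\psi(\gamma(t))>c\})\le S/c$ for every $\gamma\in\mathscr{A}$ and every $c>0$, which will be used in the verification of (i).

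The substantial point is (i), where the asymmetry of $d_F$ must be absorbed, since hypothesis (1) only controls the \emph{forward} modulus of continuity. First I would record the elementary inequality $\mathfrak{d}_F(x,y)\le\min\{d_F(x,y),1\}+\min\{d_F(y,x),1\}$, which follows from the definition of $\hat{d}_F$ and the subadditivity and monotonicity of $s\mapsto\min\{s,1\}$. The forward term $\int_0^{1-h}\min\{d_F(\gamma(t),\gamma(t+h)),1\}\,{\dd}t$ is bounded by $\int_0^{1-h}d_F(\gamma(t),\gamma(t+h))\,{\dd}t$, which tends to $0$ uniformly in $\gamma\in\mathscr{A}$ by (1). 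For the backward term, fix $c>0$ and split $[0,1-h]$ into the set $A_{c,h}$ of those $t$ with $\psi(\gamma(t))\le c$ \emph{and} $\psi(\gamma(t+h))\le c$, and its complement. On $A_{c,h}$ both $\gamma(t)$ and $\gamma(t+h)$ lie in the compact sublevel $\lambda_c(\psi)$; since $\lambda_F$ is continuous, $\Lambda_c:=\lambda_F(\lambda_c(\psi))<\infty$, whence $d_F(\gamma(t+h),\gamma(t))\le\Lambda_c\,d_F(\gamma(t),\gamma(t+h))$ there. On the complement, which has $\mathscr{L}^1$-measure at most $2S/c$ by the Markov estimate above, one simply uses $\min\{d_F(\gamma(t+h),\gamma(t)),1\}\le 1$. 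Combining these bounds yields, for all $h\in(0,1)$, $\sup_{\gamma\in\mathscr{A}}\int_0^{1-h}\mathfrak{d}_F(\gamma(t),\gamma(t+h))\,{\dd}t\le(1+\Lambda_c)\sup_{\gamma\in\mathscr{A}}\int_0^{1-h}d_F(\gamma(t),\gamma(t+h))\,{\dd}t+\tfrac{2S}{c}$; letting $h\to0^+$ and then $c\to\infty$ gives (i), and the theorem follows.

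I expect the step requiring the most care to be precisely this treatment of the backward modulus of continuity in (i): it must be controlled using only the one-sided estimate (1), and the mechanism that makes this possible is the compactness of the sublevels of $\psi$ in (2) together with the local finiteness of the reversibility $\lambda_F$ of $F$. Everything else --- the Polish structure of $(M,\mathfrak{d}_F)$ and $(\mathcal{M}([0,1];M),\mathfrak{d}_1)$, the reduction to \cite{Li2}, and the verification of (ii) --- is routine.
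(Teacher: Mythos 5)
Your approach is the same one the paper uses --- invoke Lisini's compactness criterion (\cite[Theorem 2.2]{Li2}) in the bounded symmetric metric space $(M,\mathfrak{d}_F)$ --- but whereas the paper dispenses with the matter as a ``direct consequence'' of that citation together with Proposition \ref{completedifin}, you actually supply the nontrivial step that this formulation glosses over: hypothesis (1) controls only the \emph{forward} modulus $\int_0^{1-h}d_F(\gamma(t),\gamma(t+h))\,\dd t$, while Lisini's criterion needs the modulus with respect to $\mathfrak{d}_F$, which through $\hat{d}_F=\tfrac12[d_F+\overleftarrow{d_F}]$ involves the uncontrolled backward part. Your decomposition via $\mathfrak{d}_F(x,y)\le\min\{d_F(x,y),1\}+\min\{d_F(y,x),1\}$, the Markov/tightness splitting using $\psi$, and the use of local finiteness of the reversibility on the compact sublevel is exactly the mechanism needed, and it is worth recording explicitly --- the paper's one-sentence justification does not make it clear why (1) and (2) jointly imply the symmetric modulus condition. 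There is one imprecision to fix: you write ``$\Lambda_c:=\lambda_F(\lambda_c(\psi))<\infty$, whence $d_F(\gamma(t+h),\gamma(t))\le\Lambda_c\,d_F(\gamma(t),\gamma(t+h))$,'' but the pointwise (tangent-level) reversibility $\lambda_F$ of \eqref{resiblity} on $\lambda_c(\psi)$ does not directly yield a metric inequality between $d_F(y,x)$ and $d_F(x,y)$ for $x,y\in\lambda_c(\psi)$, since the minimizing geodesic between $x$ and $y$ may exit $\lambda_c(\psi)$. The paper is careful to distinguish $\lambda_F$ from the metric reversibility $\lambda_{d_F}$ in Theorem \ref{reversbilityofW} precisely for this reason. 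The fix is routine: since $\lambda_c(\psi)$ is compact, it is contained in some closed forward ball $\overline{B^+_\star(r_c)}$, and one takes $\Lambda_c:=\lambda_{d_F}(\overline{B^+_\star(r_c)})\le\Theta(r_c)<\infty$ (Theorem \ref{reversbilityofW}); equivalently, one may use $\lambda_F$ on a larger compact set $K'\supset\lambda_c(\psi)$ that contains all minimizing geodesics between points of $\lambda_c(\psi)$ (such a $K'$ exists by forward completeness). With that replacement the argument is complete.
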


 We also recall the following result (cf. \cite[Lemma 2.3]{Li2}).
\begin{lemma}\label{mcontress}
Suppose that a sequence $(\widetilde{\eta}_n)\subset \mathscr{P}(\mathcal {M}([0,1];M))$ satisfies the following conditions:
\begin{itemize}
\item $(\widetilde{\eta}_n)$ narrowly converges to $\widetilde{\eta}\in \mathscr{P}(\mathcal {M}([0,1];M))$;

\item there is a sequence of $\widetilde{\eta}_n$-measurable functions $L_n:\mathcal {M}([0,1];M)\rightarrow[0,\infty)$ with
 \begin{equation*}
 \sup_{n\in \mathbb{N}}\int_{\mathcal {M}([0,1];M)} L_n(\gamma){\dd}\widetilde{\eta}_n(\gamma)<\infty.
 \end{equation*}

\end{itemize}
Thus, there is a subsequence $(\widetilde{\eta}_{n_k})$ such that for $\widetilde{\eta}$-a.e. $\gamma\in \supp\widetilde{\eta}$, there is a $\gamma_{n_k}\in \supp\widetilde{\eta}_{n_k}$ with
\begin{equation*}\label{d1suplboundedness}
\lim_{k\rightarrow \infty}\mathfrak{d}_1(\gamma,\gamma_{n_k})=0,\quad \sup_{k\in \mathbb{N}}L_{n_k}(\gamma_{n_k})<\infty.
\end{equation*}
\end{lemma}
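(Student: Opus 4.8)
\textbf{Proof plan for Lemma \ref{mcontress}.}
The statement is a selection-type result: from narrow convergence of measures $\widetilde\eta_n$ on the Polish space $(\mathcal M([0,1];M),\mathfrak d_1)$ together with a uniform bound on the $\widetilde\eta_n$-averages of nonnegative functions $L_n$, we want, after passing to a subsequence, to produce for $\widetilde\eta$-a.e.\ curve $\gamma$ an approximating sequence $\gamma_{n_k}\in\supp\widetilde\eta_{n_k}$ with $\mathfrak d_1(\gamma,\gamma_{n_k})\to0$ and $\sup_k L_{n_k}(\gamma_{n_k})<\infty$. The plan is to reduce everything to a single auxiliary metric space and invoke a gluing/coupling argument. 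First I would form the product space $Y:=\mathcal M([0,1];M)\times[0,\infty]$ with the product metric (using, say, $\min\{|s-t|,1\}$ on $[0,\infty]$), which is again Polish since $(\mathcal M([0,1];M),\mathfrak d_1)$ is Polish by the discussion preceding the lemma. On $Y$ consider the measures $\sigma_n:=(\id,L_n)_\sharp\widetilde\eta_n$, i.e.\ the law of $(\gamma,L_n(\gamma))$ under $\widetilde\eta_n$; its first marginal is $\widetilde\eta_n$ and $\int_Y s\,{\dd}\sigma_n(\gamma,s)=\int L_n\,{\dd}\widetilde\eta_n$ is uniformly bounded by hypothesis.

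The second step is to extract a limit for $(\sigma_n)$. The uniform moment bound $\sup_n\int_Y s\,{\dd}\sigma_n<\infty$ gives, via Remark \ref{anoterhtightorp} applied with $\varphi(\gamma,s):=s$ on the second factor combined with tightness of $(\widetilde\eta_n)$ on the first factor (which holds because $\widetilde\eta_n\Rightarrow\widetilde\eta$, so the family $\{\widetilde\eta_n\}$ is tight by Prokhorov's theorem, Theorem \ref{grenarlizedpROKTHE}), that $(\sigma_n)$ is tight on $Y$. Hence a subsequence $\sigma_{n_k}\Rightarrow\sigma$ for some $\sigma\in\mathscr P(Y)$; its first marginal is $\widetilde\eta$ (first marginals pass to the narrow limit), and by lower semicontinuity of $s\mapsto s$ under narrow convergence $\int_Y s\,{\dd}\sigma<\infty$, so $\sigma$ is concentrated on $\mathcal M([0,1];M)\times[0,\infty)$. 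Disintegrate $\sigma=\int \sigma_\gamma\,{\dd}\widetilde\eta(\gamma)$ with respect to the first marginal; for $\widetilde\eta$-a.e.\ $\gamma$ the fiber $\sigma_\gamma$ is a probability measure on $[0,\infty)$, so in particular there is a finite number $M(\gamma)$ with $\sigma_\gamma([0,M(\gamma)])>0$; equivalently $\sigma$ charges arbitrarily small neighborhoods in $Y$ of some point $(\gamma,s_\gamma)$ with $s_\gamma<\infty$ for a.e.\ $\gamma$.

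The third step is the actual selection. I would use the standard fact that $\supp\sigma_{n_k}\subseteq\supp\widetilde\eta_{n_k}\times[0,\infty)$ together with the following approximation property of narrow convergence: if $\sigma_{n_k}\Rightarrow\sigma$ and $U$ is open in $Y$ with $\sigma(U)>0$, then $\sigma_{n_k}(U)>0$, hence $\supp\sigma_{n_k}\cap U\neq\emptyset$, for all large $k$. Fix a point $y_\gamma=(\gamma,s_\gamma)\in\supp\sigma$ with $s_\gamma<\infty$ projecting onto $\gamma$ (such $y_\gamma$ exists for $\widetilde\eta$-a.e.\ $\gamma$ because the first marginal of $\sigma$ restricted to $\{s<\infty\}$ is still $\widetilde\eta$; this uses $\sigma(\{s=\infty\})=0$ and that the projection of $\supp\sigma$ is dense in, hence equal up to a null set to, $\supp\widetilde\eta$). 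Apply the approximation property to the balls $U_j=B_{1/j}(y_\gamma)$ in $Y$: for each $j$ pick $k_j$ large and $(\gamma_{k_j},s_{k_j})\in\supp\sigma_{n_{k_j}}\cap U_j$, so $\mathfrak d_1(\gamma,\gamma_{k_j})<1/j$ and $s_{k_j}<s_\gamma+1$. Since $(\gamma_{k_j},s_{k_j})\in\supp\sigma_{n_{k_j}}$ and $\sigma_{n_{k_j}}=(\id,L_{n_{k_j}})_\sharp\widetilde\eta_{n_{k_j}}$, one may choose the representative so that $\gamma_{k_j}\in\supp\widetilde\eta_{n_{k_j}}$ and $L_{n_{k_j}}(\gamma_{k_j})=s_{k_j}$ (after a further $\widetilde\eta_{n_{k_j}}$-null modification if necessary), giving $\sup_j L_{n_{k_j}}(\gamma_{k_j})\le s_\gamma+1<\infty$ and $\mathfrak d_1(\gamma,\gamma_{k_j})\to0$; relabeling recovers the asserted subsequence.

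The main obstacle, and the step requiring care, is making the a.e.-in-$\gamma$ selection measurable and genuinely uniform over the subsequence rather than constructing a subsequence that depends on $\gamma$: the cleanest fix is to perform the argument once at the level of $\sigma$ on $Y$ (choosing a single subsequence $n_k$ that works for the narrow convergence $\sigma_{n_k}\Rightarrow\sigma$), and only afterwards disintegrate and select fiberwise, so that the diagonal extraction $k_j$ is the only $\gamma$-dependent choice — and that choice is legitimate because it is made from a countable family of open conditions, each of which holds for all large $k$. One should also verify that $\sigma_n$ is well defined, i.e.\ that $L_n$ is $\widetilde\eta_n$-measurable (hypothesis) so that $(\id,L_n)$ is measurable, and that $\supp$ of a push-forward behaves as claimed; these are routine. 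This is precisely the argument of \cite[Lemma 2.3]{Li2}, transported verbatim to $(M,\hat d_F)$ using Proposition \ref{completedifin}, so I would present it concisely with pointers to that reference.
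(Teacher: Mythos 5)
The paper does not actually prove this lemma; it states it and cites \cite[Lemma~2.3]{Li2}, which is precisely the argument you reproduce: push forward to the product $\mathcal M([0,1];M)\times[0,\infty]$, extract a narrowly convergent subsequence of the joint laws $\sigma_n=(\id,L_n)_\sharp\widetilde\eta_n$, and then select fiberwise from the support of the limit. Your plan is correct and faithful to that reference, and the order of operations you emphasize (one subsequence extracted at the level of $\sigma$, followed by a $\gamma$-dependent but countable selection) is exactly what makes the argument legitimate.

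Two spots in your write-up should be tightened. First, invoking Remark~\ref{anoterhtightorp} with $\varphi(\gamma,s)=s$ on $Y$ does not yield tightness of $(\sigma_n)$, since the sublevels $\mathcal M([0,1];M)\times[0,c]$ are not compact; the cleaner observation is that once $[0,\infty]$ is compactified, tightness of $(\sigma_n)$ on $Y$ follows immediately from tightness of the first marginals $(\widetilde\eta_n)$ (Prokhorov applied to a narrowly convergent sequence), and the uniform moment bound is only needed afterward, to conclude via lower semicontinuity of the nonnegative function $s$ that $\int_Y s\,{\dd}\sigma<\infty$ and hence $\sigma\bigl(\mathcal M([0,1];M)\times\{\infty\}\bigr)=0$. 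Second, the step ``for $\widetilde\eta$-a.e.\ $\gamma$ there is $s_\gamma<\infty$ with $(\gamma,s_\gamma)\in\supp\sigma$'' deserves a justification: use the disintegration $\sigma=\int\sigma_\gamma\,{\dd}\widetilde\eta(\gamma)$ and the standard fact (obtained from a countable cover of $Y\setminus\supp\sigma$ by open rectangles) that $\supp\sigma_\gamma\subset\{s:(\gamma,s)\in\supp\sigma\}$ for $\widetilde\eta$-a.e.\ $\gamma$; since $\sigma_\gamma([0,\infty))=1$ for a.e.\ $\gamma$, any $s_\gamma\in\supp\sigma_\gamma\cap[0,\infty)$ works. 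With these repairs, the final selection — using $\sigma_{n_k}(B_{1/j}(\gamma,s_\gamma))>0$ for large $k$ and the fact that a set of positive $\widetilde\eta_{n_k}$-measure must intersect $\supp\widetilde\eta_{n_k}$, to produce $\gamma_{n_k}\in\supp\widetilde\eta_{n_k}$ with $\mathfrak d_1(\gamma,\gamma_{n_k})$ and $|L_{n_k}(\gamma_{n_k})-s_\gamma|$ small — is sound and completes the proof.
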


Let $C([0,1];M)$ denote the collection of continuous curves from $[0,1]$ to $M$. Define the {\it supremum metrics} associated to $d_F$ and $\hat{d}_F$ by
\[
d^*_F(\gamma_1,\gamma_2):=\sup_{t\in [0,1]}d_F(\gamma_1(t),\gamma_2(t)),\quad \hat{d}^*_F(\gamma_1,\gamma_2):=\sup_{t\in [0,1]}\hat{d}_F(\gamma_1(t),\gamma_2(t)).
\]

\begin{theorem}\label{basictopolgyunfirom}
Let $(M,F)$ be a forward complete Finsler manifold. Thus, $(C([0,1];M),d_F^*)$ is a separable forward complete asymmetric metric space. In particular, the forward topology coincides with the symmetrized topology, which  is exactly the metric topology of $\hat{d}^*_F$ (i.e., the compact-open topology). Moreover, the evaluation map $e_t: \gamma\mapsto\gamma(t)$ is continuous from $(C([0,1];M),d_F^*)$ to $M$.
\end{theorem}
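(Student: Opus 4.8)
The plan is to transfer as much as possible from the well-known symmetric theory of $C([0,1];\hat M)$ with $\hat M = (M,\hat d_F)$ (which is a Polish space by Proposition \ref{completedifin}), and then bootstrap the comparison between $d_F^*$ and $\hat d_F^*$ using the local two-sided Lipschitz comparison between $d_F$ and the Euclidean metric in coordinate charts that already appeared in the proof of Theorem \ref{metricddsmo}. I first verify that $d_F^*$ is an asymmetric metric on $C([0,1];M)$: finiteness of $d_F^*(\gamma_1,\gamma_2)$ for continuous $\gamma_1,\gamma_2$ follows because $t\mapsto d_F(\gamma_1(t),\gamma_2(t))$ is continuous (by Theorem \ref{asymmetrtopol}(i), since the forward and backward topologies of $(M,d_F)$ both coincide with the manifold topology, on which $\gamma_1,\gamma_2$ are continuous) on the compact interval $[0,1]$; non-negativity, the identity of indiscernibles, and the triangle inequality are inherited pointwise from $d_F$.

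Next I would identify the topologies. The symmetrized metric of $d_F^*$ is $\widehat{d_F^*}(\gamma_1,\gamma_2)=\tfrac12[d_F^*(\gamma_1,\gamma_2)+d_F^*(\gamma_2,\gamma_1)]$, and one checks the elementary two-sided bound $\tfrac12 d_F^*(\gamma_1,\gamma_2)\le \hat d_F^*(\gamma_1,\gamma_2)\le\widehat{d_F^*}(\gamma_1,\gamma_2)\le d_F^*(\gamma_1,\gamma_2)+d_F^*(\gamma_2,\gamma_1)\le 2\,d_F^*(\gamma_1,\gamma_2)$, where the middle inequalities come from $\hat d_F(x,y)=\tfrac12[d_F(x,y)+d_F(y,x)]\le\max\{d_F(x,y),d_F(y,x)\}$ and the supremum being subadditive. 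Hence the symmetrized metric $\widehat{d_F^*}$ and the metric $\hat d_F^*$ are bi-Lipschitz equivalent, so they induce the same topology; and since a $d_F^*$-forward ball of radius $r$ around $\gamma$ contains $\gamma$'s $\hat d_F^*$-ball of radius $r/2$ and vice versa (using the first and last inequalities), the forward topology $\mathcal T_+$ of $d_F^*$ coincides with the symmetrized topology, which is the topology of $\hat d_F^*$. That $(C([0,1];\hat M),\hat d_F^*)$ is precisely the compact-open topology on $C([0,1];M)$ is standard since $[0,1]$ is compact.

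For separability I would use that $(C([0,1];\hat M),\hat d_F^*)$ is separable: $\hat M$ is a separable metric space, $[0,1]$ is compact metric, so $C([0,1];\hat M)$ with the uniform metric is separable (e.g. piecewise-geodesic or polygonal curves through a countable dense subset of $M$, reparametrized with rational breakpoints, are dense — here I may invoke Theorem \ref{metricddsmo} to approximate in the manifold). Separability is a topological property, so it passes to $(C([0,1];M),d_F^*)$ by the topology identification just made. For forward completeness I take a forward Cauchy sequence $(\gamma_n)$ in $d_F^*$; by the bi-Lipschitz comparison it is Cauchy for $\widehat{d_F^*}$, hence (by the symmetric theory, using that $\hat M=(M,\hat d_F)$ is complete by Proposition \ref{completedifin}, so $C([0,1];\hat M)$ with the sup metric is complete) it converges uniformly to some $\gamma\in C([0,1];M)$ in $\hat d_F^*$; then $d_F^*(\gamma_n,\gamma)\le 2\,\widehat{d_F^*}(\gamma_n,\gamma)\to 0$, so $(\gamma_n)$ converges in the symmetrized topology, which is what forward completeness requires. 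Finally, continuity of $e_t$ from $(C([0,1];M),d_F^*)$ to $M$: if $d_F^*(\gamma,\gamma_n)\to 0$ and $d_F^*(\gamma_n,\gamma)\to0$ then $d_F(\gamma(t),\gamma_n(t))\to0$ and $d_F(\gamma_n(t),\gamma(t))\to0$, so $\gamma_n(t)\to\gamma(t)$ in the symmetrized topology of $(M,d_F)$, which is the manifold topology — so $e_t$ is continuous on the symmetrized topology, hence on the (identical) forward topology.

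The only genuinely non-formal point, and the one I expect to require the most care, is the passage between $d_F^*$-completeness and uniform completeness of $(M,\hat d_F)$-valued curves: a priori a $d_F^*$-forward Cauchy sequence of \emph{continuous} curves need not stay in a compact set, so one must be slightly careful that the uniform limit (which exists as an $\hat M$-valued map because $\hat M$ is a complete metric space and uniform Cauchy sequences of continuous maps into a complete metric space converge uniformly to a continuous map) is indeed the required limit in the asymmetric sense; but as indicated above this reduces cleanly to the $\widehat{d_F^*}$–$\hat d_F^*$ bi-Lipschitz comparison, so no compactness is actually needed. All other steps are routine once the comparison inequalities $\tfrac12 d_F^*\le\hat d_F^*\le 2\,d_F^*$ (in the appropriate one-sided forms) are in hand.
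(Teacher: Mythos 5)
There is a genuine gap: the chain
\[
\widehat{d_F^*}(\gamma_1,\gamma_2)\le d_F^*(\gamma_1,\gamma_2)+d_F^*(\gamma_2,\gamma_1)\le 2\,d_F^*(\gamma_1,\gamma_2)
\]
is false at the last step, since it asserts $d_F^*(\gamma_2,\gamma_1)\le d_F^*(\gamma_1,\gamma_2)$, which fails for any genuinely irreversible Finsler metric. More concretely, the one-sided inequality $\hat d_F^*\le 2\,d_F^*$ that you need (to show a $d_F^*$-forward ball is small in $\hat d_F^*$, and to promote a $d_F^*$-forward Cauchy sequence to a $\hat d_F^*$-Cauchy sequence) has no global form: in the Funk space one can choose $\gamma_1,\gamma_2$ with $d_F^*(\gamma_1,\gamma_2)$ small but $d_F^*(\gamma_2,\gamma_1)$ arbitrarily large, so $\hat d_F^*(\gamma_1,\gamma_2)$ is not controlled by $d_F^*(\gamma_1,\gamma_2)$. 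Only the easy inequality $\tfrac12 d_F^*\le\hat d_F^*$ is global. Everything downstream in your write-up (topology identification, separability transfer, completeness) relies on this nonexistent uniform comparison, which is precisely the nontrivial content of the theorem, not a formality.

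The paper's argument fixes this locally: given a fixed $\gamma\in C([0,1];M)$, compactness of $\gamma([0,1])$ yields $R$ with $\gamma(t)\in B^+_{\gamma(0)}(R)$; then for any $\xi,\zeta$ in the $d_F^*$-forward ball $\mathcal B^+_\gamma(\varepsilon)$ the images $\xi([0,1]),\zeta([0,1])$ lie in $\overline{B^+_{\gamma(0)}(R+\varepsilon)}$, whose reversibility $\theta:=\lambda_F(B^+_{\gamma(0)}(R+\varepsilon))$ is finite (forward completeness gives precompactness of forward balls), whence $d_F^*(\zeta,\xi)\le\theta\,d_F^*(\xi,\zeta)$ on that neighbourhood. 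Since topology and Cauchy-ness are local/tail notions, this local two-sided comparison with a $\gamma$-dependent constant $\theta$ is exactly enough to identify $\mathcal T_+$ with the $\hat d_F^*$-topology and to show that a $d_F^*$-forward Cauchy sequence is eventually confined to such a region (so is $\hat d_F^*$-Cauchy and converges by the symmetric theory you invoke). Your proposal needs that local reversibility argument inserted in place of the false global bound; the rest of your reasoning (separability, continuity of $e_t$, identification with the compact-open topology) then goes through as you describe.
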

\begin{proof}[Sketch of the proof] 
Given $\gamma\in C([0,1];M)$, the compactness of $\gamma([0,1])$ yields $R>0$ such that $\gamma(t)\in B^+_{\gamma(0)}(R)$.   Thus, for  any $\varepsilon>0$ and  for any $\xi\in \mathcal {B}^+_{\gamma}(\varepsilon):=\{\zeta\in C([0,1];M)\,|\ d_F^*(\gamma,\zeta)<\varepsilon\}$, the triangle inequality of $d_F$ yields $\xi(t)\in B^+_{\gamma(0)}(R+\varepsilon)$ for $t\in [0,1]$. Set $\theta:=\lambda_F(  B^+_{\gamma(0)}(R+\varepsilon) )<\infty$. Thus, an argument similar to that of \eqref{reernoncurve} yields
\[
d^*_F(\xi,\zeta)\leq \theta d^*_F(\zeta, \xi),\quad \forall\,\xi,\zeta\in  \mathcal {B}^+_{\gamma}(\varepsilon).
\]
 By this observation, it is not hard to show that the forward topology coincides with the symmetrized topology, which is the metric topology   induced by $\hat{d}^*_F$. Consequently, the rest of the theorem follows from the standard theory in the symmetric case directly (cf. \cite[$\S$2.2]{AGS2}).
\end{proof}


\subsection{Wasserstein spaces over Finsler manifolds}\label{wassspaceoverfinsler}
  This subsection is devoted to the investigation of Wasserstein spaces in the context of Finsler manifolds.
See \cite{KZ,OS,OZ}, etc., for more details.

Let $(M,F)$ be a forward complete Finsler manifold.
Given $\mu,\nu\in \mathscr{P}(M)$, let $\Pi(\mu, \nu)$ denote the collection of {\it transference plans} from $\mu$ to $\nu$, i.e.,
\[
\Pi(\mu,\nu):=\left\{ \pi\in \mathscr{P}{(M\times M)}\,|\ \mathfrak{p}^1_\sharp \pi=\mu,\  \mathfrak{p}^2_\sharp \pi=\nu \right\},
\]
where {$\mathfrak{p}^i:M_1\times M_2\rightarrow M_i$} is the $i$-th natural projection for $i=1,2$.
Given $p\in [1,\infty)$, the {\it Wasserstein distance of order $p$} from $\mu$ to $\nu$ is defined as
\[
W_p(\mu,\nu):=\inf_{\pi\in \Pi(\mu,\nu)}\left( {\int_{M\times M}} d_F(x,y)^p{\dd}\pi(x,y)\right)^{\frac1p}.
\]
Owing to  \cite[Theorem 4.1]{Vi}, there always exists a transference plan $\tilde{\pi}\in \Pi(\mu,\nu)$ such that
\[
W_p(\mu,\nu)=\left(\int_{M\times M} d_F(x,y)^p {\dd}\tilde{\pi}(x,y)                  \right)^{\frac1p}.
\]
Such a $\tilde{\pi}$ is called an {\it optimal transference plan}  from $\mu$ to $\nu$ (with respect to $d_F$).
Given a fixed point $\star\in M$, set
\[
\mathscr{P}_p(M):=\left\{\mu\in \mathscr{P}(M)\,\Big|\ \int_M  {\hat{d}_F}(\star,x)^p{\dd}\mu(x)<\infty \right\}.
\]
The triangle inequality of $d_F$ implies that $\mathscr{P}_p(M)$ is independent of the choice of $\star$.

\begin{theorem}[\cite{V,Vi,KZ}]\label{wassdistacbasisthe} Given $p\in [1,\infty)$, the $p$-Wasserstein space
$(\mathscr{P}_p(M),W_p)$ is an  asymmetric metric space, i.e., for any $\mu,\nu,\upsilon\in \mathscr{P}_p(M)$,
\begin{itemize}
\item[(i)]  $W_p(\mu,\nu)$ is finite$;$

\item[(ii)] $W_p(\mu,\nu)\geq 0,$ with equality if and only if $\mu=\nu;$

\item[(iii)] $W_p(\mu,\nu)\leq W_p(\mu,\upsilon)+W_p(\upsilon,\nu).$

\end{itemize}
In particular, 
$W_q\leq W_p$ for any $1\leq q\leq p$.
Moreover, $W_1$ is the Kantorovich-Rubeinstein distance, i.e.,
\begin{equation}
 {W_1(\mu,\nu)}=\sup_{\psi\in \Lip_1(M)}\left\{ \int_{M}\psi(y)\,{\dd}\nu(y)-\int_M\psi(x)\,{\dd}\mu(x)  \right\},\quad \forall\,\mu,\nu\in \mathscr{P}_1(M),\label{Kantorovich-Rubeinstein}
\end{equation}
where ${\Lip}_1(M):=\{f\in C(M)\,|\, f(y)-f(x)\leq  {d_F(x,y)}  \}$.
\end{theorem}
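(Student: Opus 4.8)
The plan is to verify the three metric axioms together with the two supplementary claims by adapting the standard optimal-transport arguments to the asymmetric cost $c=d_F$; the only genuinely new points are bookkeeping forced by the asymmetry and the fact that $\mathscr{P}_p(M)$ is defined through the symmetrized metric $\hat d_F$ rather than through $d_F$ itself. For finiteness (i) I would start from the elementary observation that, since $\hat d_F(x,\star)=\tfrac12[d_F(x,\star)+d_F(\star,x)]$, one has $d_F(x,\star)\le 2\hat d_F(x,\star)$ and $d_F(\star,y)\le 2\hat d_F(\star,y)$; combining this with the triangle inequality of $d_F$ and the convexity bound $(a+b)^p\le 2^{p-1}(a^p+b^p)$ gives a pointwise estimate $d_F(x,y)^p\le 4^{p-1}2^p\big(\hat d_F(x,\star)^p+\hat d_F(\star,y)^p\big)$. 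Testing the definition of $W_p$ against the product plan $\mu\otimes\nu\in\Pi(\mu,\nu)$ (which in particular shows $\Pi(\mu,\nu)\neq\emptyset$) and using $\mu,\nu\in\mathscr{P}_p(M)$ then yields $W_p(\mu,\nu)<\infty$; the same inequality shows $\mathscr{P}_p(M)\subseteq\mathscr{P}_q(M)$ for $q\le p$, which is needed for the last assertion to make sense.

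For non-degeneracy (ii), positivity is immediate and $W_p(\mu,\mu)=0$ follows by pushing $\mu$ forward under the diagonal embedding $x\mapsto(x,x)$. Conversely, if $W_p(\mu,\nu)=0$ I would take an optimal plan $\tilde\pi\in\Pi(\mu,\nu)$ (which exists by \cite[Theorem 4.1]{Vi}): then $\int d_F^p\,{\dd}\tilde\pi=0$ forces $\tilde\pi$ to be concentrated on $\{(x,y):d_F(x,y)=0\}=\{x=y\}$, a Borel set because $(M,\hat d_F)$ is Polish by Proposition \ref{completedifin}, and projecting onto the two factors gives $\mu=\mathfrak{p}^1_\sharp\tilde\pi=\mathfrak{p}^2_\sharp\tilde\pi=\nu$. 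For the triangle inequality (iii) I would run the classical gluing construction: given optimal plans $\pi_{12}\in\Pi(\mu,\upsilon)$ and $\pi_{23}\in\Pi(\upsilon,\nu)$, disintegrate both over $\upsilon$ and glue to obtain $\pi_{123}\in\mathscr{P}(M\times M\times M)$ whose $(1,2)$- and $(2,3)$-marginals are $\pi_{12}$ and $\pi_{23}$ (legitimate since $(M,\hat d_F)$ is Polish, so the disintegration theorem applies), then take the $(1,3)$-marginal $\pi_{13}\in\Pi(\mu,\nu)$ and estimate $W_p(\mu,\nu)\le\|d_F(x,z)\|_{L^p(\pi_{123})}\le\|d_F(x,y)\|_{L^p(\pi_{123})}+\|d_F(y,z)\|_{L^p(\pi_{123})}=W_p(\mu,\upsilon)+W_p(\upsilon,\nu)$, using the pointwise triangle inequality of $d_F$ and Minkowski's inequality in $L^p(\pi_{123})$. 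Note that $d_F$ enters only in the fixed orders $(x,y)$, $(y,z)$, $(x,z)$, so its asymmetry is harmless here.

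The monotonicity $W_q\le W_p$ for $1\le q\le p$ is then simply Jensen's inequality applied to an optimal $W_p$-plan, since on a probability space $\|\cdot\|_{L^q}\le\|\cdot\|_{L^p}$. Finally, the Kantorovich--Rubinstein formula \eqref{Kantorovich-Rubeinstein} I would derive from Kantorovich duality for the cost $c=d_F$: the general duality gives $W_1(\mu,\nu)=\sup\{\int\psi\,{\dd}\nu-\int\phi\,{\dd}\mu:\ \psi(y)-\phi(x)\le d_F(x,y)\}$, and replacing $\phi$ by the one-sided $c$-transform of $\psi$ one may take $\phi=\psi$ with $\psi$ satisfying $\psi(y)-\psi(x)\le d_F(x,y)$, i.e. $\psi\in\Lip_1(M)$. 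This last step is the one I expect to be the main obstacle: the one-sided Lipschitz condition is sensitive to the order of its arguments, so one must track carefully that it matches the direction in which $d_F$ appears in the cost, and one also needs the validity of duality for this lower semicontinuous, possibly asymmetric cost; both points are, however, already available in \cite{KZ,Vi}. Everything else is formally identical to the symmetric theory, since although $d_F$ is not symmetric it still obeys the triangle inequality, which is all the metric-space axioms for $W_p$ use.
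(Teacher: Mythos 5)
The paper does not prove this theorem; it is quoted from the references \cite{V,Vi,KZ}, and your argument is essentially the standard one carried out there. Finiteness via the product plan and the comparison $d_F\le 2\hat d_F$, identity of indiscernibles via concentration of an optimal plan on the diagonal, triangle inequality via the gluing lemma plus Minkowski in $L^p(\pi_{123})$, $W_q\le W_p$ via Jensen, and the Kantorovich--Rubinstein formula via duality and one-sided $c$-transforms all check out. One small clarification you may find reassuring: the Kantorovich duality theorem requires only that the cost be a nonnegative lower semicontinuous function on a product of Polish spaces, with no symmetry hypothesis whatsoever, so there is no genuine obstacle there; the only place asymmetry leaves a trace is exactly where you flagged it, namely that the $c$-transform $\psi^c(x)=\sup_y[\psi(y)-d_F(x,y)]$ must be checked to lie in the one-sided class $\Lip_1(M)=\{f:f(y)-f(x)\le d_F(x,y)\}$, and that $\psi^c=\psi$ for $\psi\in\Lip_1(M)$; both follow from the directed triangle inequality exactly as you indicate. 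Also, a minor point: the containment $\mathscr{P}_p(M)\subseteq\mathscr{P}_q(M)$ is not literally ``the same inequality'' as the finiteness bound but rather H\"older's/Jensen's inequality on the probability space $(M,\mu)$, but that does not affect the argument.
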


Moreover, it is not hard to show that the reverse of Wasserstein
distance $\overleftarrow{W_p} $ is exactly the Wasserstein
distance induced by $\overleftarrow{d_F}$, i.e.,
\[
W_p(\nu,\mu)=\overleftarrow{W_p}(\mu,\nu)=\inf_{\pi\in \Pi(\mu,\nu)}\left( {\int_{M\times M}} \overleftarrow{d_F}(x,y)^p{\dd}\pi(x,y)\right)^{\frac1p}.
\]

In the sequel, $\mathscr{P} ({M})$ is equipped with the narrow topology while
$\mathscr{P}_p( {M})$ is endowed by the symmetrized topology $\hat{\mathcal {T}}$ induced by the symmetrized metric $\hat{W}_p$ (see Theorem \ref{asymmetrtopol}).  In particular,  the convergence in $\hat{\mathcal {T}}$ implies the narrow convergence.

\begin{proposition}\label{wasstopologequaivel}
Let $(M,F)$ be a forward complete Finsler manifold and let $p\in [1,\infty)$. If a sequence $(\mu_k)$ converges to $\mu$ in $\mathscr{P}_p( {M})$, then $(\mu_k)$ narrowly converges to $\mu$.

\end{proposition}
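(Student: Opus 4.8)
The plan is to show that convergence in the symmetrized Wasserstein metric $\hat W_p$ forces narrow convergence, by reducing everything to the symmetric metric space $(M,\hat d_F)$ and then invoking the classical theory. First I would recall that $\hat W_p(\mu_k,\mu)\to 0$ means both $W_p(\mu,\mu_k)\to 0$ and $W_p(\mu_k,\mu)\to 0$, and that by Theorem \ref{wassdistacbasisthe}(iv) (the inequality $W_q\le W_p$) it suffices to handle $p$; moreover $\hat W_p$ dominates $\hat W_1$, so it is enough to get narrow convergence from $\hat W_1(\mu_k,\mu)\to 0$. The key structural observation is that the symmetrized Wasserstein distance $\hat W_p$ is comparable to — indeed, bounded below by a constant multiple of — the Wasserstein distance $\widehat{W}_p^{\,\hat d_F}$ built directly from the symmetric metric $\hat d_F$: since $\hat d_F(x,y)=\tfrac12[d_F(x,y)+d_F(y,x)]$, for any $\pi\in\Pi(\mu,\nu)$ one has $\int \hat d_F^{\,p}\,\dd\pi \le 2^{p-1}\big(\int d_F(x,y)^p\dd\pi + \int d_F(y,x)^p\dd\pi\big)$, and taking infima (or using a common plan) yields $W_p^{\hat d_F}(\mu,\nu)^p \le 2^{p-1}\big(W_p(\mu,\nu)^p + W_p(\nu,\mu)^p\big) = 2^{p-1}\big(W_p(\mu,\nu)^p + \overleftarrow{W_p}(\mu,\nu)^p\big)$. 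Hence $\hat W_p(\mu_k,\mu)\to 0$ implies $W_p^{\hat d_F}(\mu_k,\mu)\to 0$ in the genuine metric space $(M,\hat d_F)$.

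Next I would invoke Proposition \ref{completedifin}, which guarantees $(M,\hat d_F)$ is Polish, so the standard theory of Wasserstein spaces over Polish spaces applies: convergence in $W_p^{\hat d_F}$ implies narrow convergence (this is \cite[Proposition 7.1.5]{AGS} or \cite[Theorem 6.9]{Vi}, together with the fact that the $W_p^{\hat d_F}$-topology refines the narrow topology). Since the narrow topology on $\mathscr P(M)$ only depends on the underlying topological space $M$ — and by Proposition \ref{completedifin} the $\hat d_F$-metric topology coincides with the original manifold topology — narrow convergence in $(M,\hat d_F)$ is precisely the narrow convergence intended in the statement. Chaining these implications gives $\mu_k\Rightarrow\mu$, as desired.

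Alternatively, and perhaps more self-containedly, one can argue directly from the Kantorovich–Rubinstein duality \eqref{Kantorovich-Rubeinstein}: for $f\in\Lip_1(M)$ (with respect to $d_F$) one has $\int f\,\dd\mu_k - \int f\,\dd\mu \le W_1(\mu,\mu_k)\to 0$ and, applying the same to $\overleftarrow{d_F}$, also $\int f\,\dd\mu - \int f\,\dd\mu_k \le \overleftarrow{W_1}(\mu,\mu_k) = W_1(\mu_k,\mu)\to 0$, so $\int f\,\dd\mu_k\to\int f\,\dd\mu$ for every bounded Lipschitz $f$; since bounded Lipschitz functions for the symmetrized metric $\hat d_F$ are differences/combinations of $\Lip_c$-functions for $d_F$, and bounded Lipschitz functions determine narrow convergence on a Polish space (Portmanteau), narrow convergence follows. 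I would also need the uniform tightness of $(\mu_k)$, which comes for free from the moment bound: $\hat W_p$-convergence gives $\sup_k \int \hat d_F(\star,x)^p\dd\mu_k<\infty$, and by Remark \ref{anoterhtightorp} with $\varphi(x)=\hat d_F(\star,x)^p$ (whose sublevels are closed forward-and-backward balls, hence compact by forward completeness and Theorem \ref{basictopolgyunfirom}-type reasoning) this is exactly tightness; combined with convergence of integrals against $C_b$ test functions this upgrades to genuine narrow convergence.

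The main obstacle is bookkeeping the interplay between the asymmetric distance $d_F$ and its symmetrization: one must be careful that the ``reverse'' relations $W_p(\nu,\mu)=\overleftarrow{W_p}(\mu,\nu)$ are used correctly so that both one-sided estimates are available, and that the test-function class in the Kantorovich duality — which uses the asymmetric $\Lip_1$ condition $f(y)-f(x)\le d_F(x,y)$ — is rich enough, after symmetrizing, to separate measures in the narrow topology. None of this is deep, but it is the place where an asymmetric-geometry paper can slip; everything else is a direct citation of the symmetric theory via Proposition \ref{completedifin}.
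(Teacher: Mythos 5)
The proposal contains two arguments of very different status. Your first argument, reducing to the symmetric Wasserstein distance $W_p^{\hat d_F}$ over $(M,\hat d_F)$, has a genuine gap. The proposed inequality $W_p^{\hat d_F}(\mu,\nu)^p\le C\big(W_p(\mu,\nu)^p+W_p(\nu,\mu)^p\big)$ does not follow from ``taking infima (or using a common plan)'': the infima defining $W_p(\mu,\nu)$ and $\overleftarrow{W_p}(\mu,\nu)$ are both taken over $\Pi(\mu,\nu)$, but are attained by different plans $\pi_1\ne\pi_2$ in general, and feeding $\pi_1$ into $\int\hat d_F^p\,\dd\pi$ leaves the reverse cost $\int d_F(y,x)^p\,\dd\pi_1$ completely uncontrolled. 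In fact no such inequality holds for asymmetric metrics: take a directed cycle on $2n$ vertices $a_1\to b_1\to a_2\to\cdots\to b_n\to a_1$ with unit edge lengths and the shortest-path quasi-metric, $\mu=\frac1n\sum\delta_{a_i}$, $\nu=\frac1n\sum\delta_{b_i}$; then $W_p(\mu,\nu)=W_p(\nu,\mu)=1$ while $\hat d_F(a_i,b_j)\equiv n$ so $W_p^{\hat d_F}(\mu,\nu)=n$. This is exactly the kind of comparison the paper can only obtain under the extra concavity hypothesis on $\Theta$ (Theorem~\ref{reversbilityofW}), and is why the proposition is not trivial. The first route should be discarded.

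Your second, ``alternative'' argument is essentially sound, and happens to run parallel to the paper's proof in that both rely on the Kantorovich--Rubinstein duality \eqref{Kantorovich-Rubeinstein}, but used differently. The tightness argument from the uniform moment bound $\sup_k\int\hat d_F(\star,\cdot)^p\,\dd\mu_k<\infty$ (which follows from the triangle inequalities $W_p(\delta_\star,\mu_k)\le W_p(\delta_\star,\mu)+W_p(\mu,\mu_k)$ and $W_p(\mu_k,\delta_\star)\le W_p(\mu_k,\mu)+W_p(\mu,\delta_\star)$, both bounded since $W_p(\mu,\mu_k),W_p(\mu_k,\mu)\to0$) together with Remark~\ref{anoterhtightorp} is correct, and is in fact leaner than the paper's Bolley-type construction of the compacts $K_\varepsilon$ --- the paper's version is what one would need for merely forward-Cauchy sequences, but you are handed actual convergence, so the moment bound is free. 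One small repair is needed on the identification step: the claim that bounded $\hat d_F$-Lipschitz functions are ``differences/combinations of $\Lip_c$-functions for $d_F$'' is false when the reversibility is unbounded. What is true, and enough, is that $\Lip_1(M)\cap C_b(M)$ is already a determining class for narrow limits of tight sequences --- via the asymmetric Moreau--Yosida approximants $\varphi_k(x):=\sup_{y\in M}\big[\varphi(y)-k\,d_F(x,y)\big]$, exactly as used in Step~5 of the proof of Theorem~\ref{maintheorem}. The paper instead identifies the limit by passing to a narrowly convergent subsequence and invoking lower semicontinuity of $W_p$ with respect to narrow convergence; both are legitimate ways to finish once tightness and the duality estimates are in hand.
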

\begin{proof}It suffices to show that $(\mu_k)$ is tight.
In fact, if $(\mu_k)$ is tight, then there is a subsequence $(\mu_{k'})$ such that $(\mu_{k'})$ narrowly converges to some probability measure $\tilde{\mu}$. Thus, the lower semicontinuity of $W_p$ with respect to the narrow convergence (cf.\,\cite[Lemma C.6]{KZ})  yields
$W_p(\tilde{\mu},\mu)\leq \liminf_{k'\rightarrow \infty}W_p(\mu_{k'},\mu)=0$, i.e., $\tilde{\mu}=\mu$. And an easy contradiction argument furnishes that the whole sequence $(\mu_k)$ is narrowly convergent to $\mu$ and hence, the proposition follows.

In the sequel, we show the tightness of $(\mu_k)$. 
Theorem \ref{wassdistacbasisthe} implies  $\hat{W}_1(\mu,\mu_k)\leq \hat{W}_p(\mu,\mu_k)$.
Thus $(\mu_k)$ is a Cauchy sequence for $\hat{W}_1$ and hence, a forward Cauchy sequence for $W_1$.
Therefore, for any $\varepsilon>0$ and $\ell\in \mathbb{N}$, there exists a $N=N(\varepsilon,\ell)>0$ such that $W_1(\mu_N,\mu_k)<2^{-2\ell-2}\varepsilon^2$ for any $k>N$.
 Thus,  for any $k\in \mathbb{N}$, there is $j\in \{1,\ldots,N\}$ satisfying
 \begin{equation}\label{controlWptoinetaulty}
 W_1(\mu_j,\mu_k)<2^{-2\ell-2}\varepsilon^2.
 \end{equation}

 Since the finite set $\{\mu_1,\ldots,\mu_N\}$ is always tight, there is a compact set $K\subset M$ with $\mu_j[M\backslash K]<2^{-\ell-1}\varepsilon$ for all $j\in \{1,\ldots,N\}$. This compact set $K$ can be covered by a finite number of small forward balls
\[
K\subset \bigcup_{1\leq i\leq m(\ell)}B^+_{x_i}(2^{-\ell-1}\varepsilon)=:U_\ell.
\]
Now set $U_\ell^\varepsilon:=\{x\in X\,|\ d_F(U_\ell,x)<2^{-\ell-1}\varepsilon\}\subset \cup_{1\leq i\leq m(\ell)}B^+_{x_i}(2^{-\ell}\varepsilon)$ and
$\phi(x):=\min\left\{0,\  \frac{d_F(U_\ell,x)}{2^{-\ell-1}\varepsilon} -1\right\}$.
Thus, $\textbf{1}_{U_\ell}\leq -\phi\leq \textbf{1}_{U_\ell^\varepsilon}$ and $2^{-\ell-1}\varepsilon\phi$ belongs to $\text{Lip}_1(M)$. Given $k\in \mathbb{N}$, choose $j\in \{1,\ldots,N\}$ such that $\mu_j$ satisfies (\ref{controlWptoinetaulty}).  Then
\eqref{Kantorovich-Rubeinstein} together with (\ref{controlWptoinetaulty}) furnishes
\begin{align*}\label{w1wpcontal}
&\mu_k\left[  \bigcup_{1\leq i\leq m(\ell)}B^+_{x_i}(2^{-\ell}\varepsilon) \right] \geq \mu_k[U^\varepsilon_\ell]\geq\int_M -\phi d\mu_k= \int_M -\phi d\mu_j-\left(\int_M \phi d\mu_k -\int_M \phi d\mu_j  \right)\\
&\geq \mu_j[U_\ell]- \frac{W_1(\mu_j,\mu_k)}{2^{-\ell-1}\varepsilon} \geq \mu_j[K]-\frac{W_1(\mu_j,\mu_k)}{2^{-\ell-1}\varepsilon}\geq 1-2^{-\ell}\varepsilon,
\end{align*}
which indicates
\[
\mu_k\left[ M\backslash \bigcup_{1\leq i\leq m(\ell)}B^+_{x_i}(2^{-\ell}\varepsilon) \right]\leq 2^{-\ell}\varepsilon,\quad \forall \,k\in \mathbb{N}.
\]
Now set
$K_\varepsilon:=\cap_{1\leq \ell\leq \infty} \cup_{1\leq i\leq m(\ell)} \overline{B^+_{x_i}(2^{-\ell}\varepsilon)}$.
Thus, for any $k\in \mathbb{N}$, we have $\mu_k\left[ M\backslash K_\varepsilon \right]\leq \varepsilon$.
It remains to show that $K_\varepsilon$ is compact. In fact, for any small $\delta>0$, choose an $\ell>0$ such that $2^{-\ell}\varepsilon<\delta$ and then
\[
K_\varepsilon\subset \bigcup_{1\leq i\leq m(\ell)} \overline{B^+_{x_i}(2^{-\ell}\varepsilon)}\subset\bigcup_{1\leq i\leq m(\ell)} {B^+_{x_i}(\delta)},
\]
 which implies that $K_\varepsilon$ is forward totally bounded in $(M, {d}_F)$.  Owing to \cite[Theorem 2.9]{KZ}, the closedness of $K_\varepsilon$ implies its compactness. Thus, the tightness of $(\mu_k)$ follows.
\end{proof}

It is remarkable that
the forward topology induced by $W_p$ may not coincide with the backward one.
\begin{example}\label{wpcontfunk}
Let $(\mathbb{B}^n, d_F)$ be the Funk space defined by \eqref{distFunk}. For any $p\in [1,\infty)$, there exist a sequence of probability measures $(\mu_k)\subset \mathscr{P}_p(\mathbb{B}^n)$ and a probability measure $\mu\notin \mathscr{P}_p({\mathbb{B}^n})$  such that
\[
\lim_{k\rightarrow \infty}W_p(\mu,\mu_{k})=0,\quad  W_p(\mu_k,\mu)=\infty,\ \forall\,k\in \mathbb{N}.
\]

In fact, let $\mu_k:={\gamma_k}_{\sharp}\mathscr{L}|_{[0,1]}$ and $\mu:=\gamma_{\sharp}\mathscr{L}|_{[0,1]}$, where
 $\gamma_k$ and $\gamma$ are defined by
 \begin{alignat*}{2}
\gamma_k(t) &:=\left(1-e^{\frac{1}{t-1}+1},0,\ldots,0\right), \ \text{ if }t\in \left[0,1-\frac1k\right);   &\qquad \gamma_k(t)&:=\mathbf{0},\ \text{ if }t\in \left[1-\frac1k,1\right];\\
\gamma(t)&:=\left(1-e^{\frac{1}{t-1}+1},0,\ldots,0\right),\ \text{ if }t\in [0,1);   & \gamma(t)&:=\mathbf{0},\ \text{ if }t=1.
 \end{alignat*}
 Thus, by $d_F(x,\mathbf{0})=\ln(1+\|x\|)$ (see \eqref{distFunk}), we have
\begin{align*}
W_p(\mu,\mu_k)^p\leq  {\int_{M\times M}}d_F(x,y)^p{\dd}\pi_k(x,y)=\int^1_0 d_F(\gamma(t),\gamma_k(t))^p{\dd}t=\int^{1}_{1-1/k}d_F(\gamma(t),\mathbf{0})^p{\dd}t\rightarrow 0,
\end{align*}
where $\pi_k=(\gamma,\gamma_k)_\sharp\mathscr{L}|_{[0,1]}\in \Pi(\mu,\mu_k)$. On the other hand, suppose by contradiction that  there  exists some $N_0>0$ such that $W_p(\mu_{N_0},\mu)<\infty$, which together with  the triangle inequality of $W_p$ implies
\begin{equation}\label{w0gamafini}
W_p(\delta_{\mathbf{0}},\mu)\leq W_p(\delta_{\mathbf{0}},\mu_{N_0})+W_p(\mu_{N_0},\mu)<\infty,
\end{equation}
where $\delta_{\mathbf{0}}$ is the Dirac mass  concentrated on $\mathbf{0}$.
However, due to $d_F(\mathbf{0},x)=-\ln(1-\|x\|)$, we get
\begin{align*}
W_p(\delta_{\mathbf{0}},\mu)^p= {\int_{M\times M}}d_F(x,y)^p\,\delta_{\mathbf{0}}\otimes \mu({\dd}x\,{\dd}y)= {\int_M } d_F(\mathbf{0},y)^p{\dd}\mu(y)=\int_0^1d_F(\mathbf{0},\gamma(t))^p{\dd}t=\infty,
\end{align*}
which contradicts \eqref{w0gamafini}.  Hence,  $W_p(\mu_k,\mu)=\infty$ for all $k\in \mathbb{N}$.
\end{example}

In view of Example \ref{wpcontfunk}, it seems impossible to find an accurate relation between $W_p(\mu,\nu)$ and $W_p(\nu,\mu)$. However, if the reversibility satisfies a concavity property,
we have the following result (cf. \cite[Theorem 2.23,\, Lemma 4.4]{KZ}).
\begin{theorem}[\cite{KZ}]\label{reversbilityofW}Let $(M,F)$ be a forward complete Finsler manifold and let $\star\in M$ be a fixed point. Thus, there  exists a nondecreasing function (dependent on $\star$) $\Theta:[0,\infty)\rightarrow [1,\infty)$ such that
\begin{equation}\label{Thetacontrll}
\lambda_{d_F}\left(\overline{B^+_\star(r)}\right)\leq \Theta(r), \quad \forall\,r>0.
\end{equation}
where
\[
\lambda_{d_F}\left( \overline{B^+_\star(r)} \right)
 :=\inf\left\{ \lambda\geq1 \,\Big|\,
 d_F(x,y)\leq \lambda \, d_F(y,x) \text{ for any } x,y \in \overline{B^+_\star(r)} \right\}.
 \]

 Moreover,
given $1\leq q\leq p<\infty$, if $\Theta^{ {qp}/{(p-q)}}$ is a concave function, then
\begin{equation}\label{wwcontroll}
 W_{q}(\nu,\mu)\leq  \Theta\bigg(  W_p(\delta_\star,\mu)+W_p(\mu,\nu) \bigg)\,W_p(\mu,\nu),\quad \forall\,\mu,\nu\in \mathscr{P}_p(M).
 \end{equation}
 Here we use the convention that $\Theta^\infty:=\sup_{x\in M}\Theta(d_F(\star,x))$ and  $\Theta^\infty$ is said to be {\it concave} if it is finite.
\end{theorem}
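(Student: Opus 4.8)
This is \cite[Theorem~2.23 and Lemma~4.4]{KZ}; the plan is to treat the two assertions separately, first constructing $\Theta$ and then deriving the comparison \eqref{wwcontroll} from a single $W_p$-optimal plan.

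For the existence of $\Theta$ satisfying \eqref{Thetacontrll}, I would use that a forward complete $(M,F)$ has, by the Hopf--Rinow theorem, compact forward balls $\overline{B^+_\star(r)}$. Since $d_F$ is continuous for the manifold topology (Theorem~\ref{asymmetrtopol}), the quantity $D(r):=\sup\{d_F(x,y):x,y\in\overline{B^+_\star(r)}\}$ is finite. Given $x,y\in\overline{B^+_\star(r)}$, take a minimizing geodesic $\sigma$ from $y$ to $x$ (which exists by \cite[Proposition~6.5.1]{BCS}); every point $z$ on $\sigma$ satisfies $d_F(\star,z)\le d_F(\star,y)+d_F(y,z)\le r+D(r)$, so $\sigma$ lies in the compact set $\overline{B^+_\star(r+D(r))}$. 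Running $\sigma$ backwards gives a curve $\bar\sigma$ from $x$ to $y$, and using $F(z,-v)\le\lambda_F(z)F(z,v)$,
\[
d_F(x,y)\le L_F(\bar\sigma)\le\Big(\sup_{z\in\overline{B^+_\star(r+D(r))}}\lambda_F(z)\Big)L_F(\sigma)=\Theta(r)\,d_F(y,x);
\]
hence it suffices to set $\Theta(r):=\sup\{\lambda_F(z):z\in\overline{B^+_\star(r+D(r))}\}$, which is finite (compactness plus continuity of $\lambda_F$), is $\ge 1$, and is nondecreasing because $r\mapsto r+D(r)$ is. This yields \eqref{Thetacontrll}.

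For \eqref{wwcontroll} with $q<p$, fix $\mu,\nu\in\mathscr{P}_p(M)$ and let $\pi\in\Pi(\mu,\nu)$ be a $W_p$-optimal plan, so $\int d_F^p\,{\dd}\pi=W_p(\mu,\nu)^p$. Since $W_q(\nu,\mu)=\overleftarrow{W_q}(\mu,\nu)$ and $\pi$ is admissible for the reversed problem, $W_q(\nu,\mu)^q\le\int d_F(y,x)^q\,{\dd}\pi(x,y)$. For $\pi$-a.e.\ $(x,y)$ both $x$ and $y$ lie in $\overline{B^+_\star(r_{x,y})}$ with $r_{x,y}:=d_F(\star,x)+d_F(x,y)$ (triangle inequality), so the first step gives $d_F(y,x)\le\Theta(r_{x,y})\,d_F(x,y)$. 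Applying H\"older with exponents $p/q$ and $p/(p-q)$,
\[
W_q(\nu,\mu)^q\le\Big(\int\Theta(r_{x,y})^{\frac{qp}{p-q}}{\dd}\pi\Big)^{\frac{p-q}{p}}\Big(\int d_F^p\,{\dd}\pi\Big)^{\frac{q}{p}},
\]
whose last factor is $W_p(\mu,\nu)^q$. Because $\Theta^{qp/(p-q)}$ is concave by hypothesis and nondecreasing, Jensen's inequality for the probability measure $\pi$ followed by monotonicity yields
\[
\int\Theta(r_{x,y})^{\frac{qp}{p-q}}{\dd}\pi\le\Theta\Big(\int r_{x,y}\,{\dd}\pi\Big)^{\frac{qp}{p-q}}\le\Theta\big(W_p(\delta_\star,\mu)+W_p(\mu,\nu)\big)^{\frac{qp}{p-q}},
\]
since $\int r_{x,y}\,{\dd}\pi=\int_M d_F(\star,x)\,{\dd}\mu(x)+\int d_F\,{\dd}\pi=W_1(\delta_\star,\mu)+\int d_F\,{\dd}\pi\le W_p(\delta_\star,\mu)+W_p(\mu,\nu)$, using that $\delta_\star\otimes\mu$ is the only transference plan from $\delta_\star$, that $W_1\le W_p$ (Theorem~\ref{wassdistacbasisthe}), and Jensen for $\int d_F\,{\dd}\pi$. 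As $\frac{qp}{p-q}\cdot\frac{p-q}{p}=q$, substituting back and taking $q$-th roots gives \eqref{wwcontroll}. In the degenerate case $q=p$ the hypothesis reads $\Theta^\infty<\infty$, i.e.\ $\Theta$ is bounded by $\Theta^\infty$; since every pair $x,y$ lies in some $\overline{B^+_\star(r)}$, \eqref{Thetacontrll} gives $d_F(y,x)\le\Theta^\infty d_F(x,y)$ pointwise, hence $W_p(\nu,\mu)\le\Theta^\infty W_p(\mu,\nu)$, which is \eqref{wwcontroll} under the stated convention.

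The hard part will be organizing the H\"older/Jensen bookkeeping so that precisely the exponent $qp/(p-q)$ of the concavity hypothesis is produced, and — more conceptually — choosing the radius $r_{x,y}=d_F(\star,x)+d_F(x,y)$ that is simultaneously large enough to contain both $x$ and $y$ (legitimizing the pointwise reversibility bound) and small enough that $\int r_{x,y}\,{\dd}\pi$ is controlled by $W_p(\delta_\star,\mu)+W_p(\mu,\nu)$ after Jensen. In the construction of $\Theta$, the analogous nuisance is that a minimizing geodesic between two points of a forward ball need not stay inside that ball; the enlargement by $D(r)$, finite thanks to forward completeness and Hopf--Rinow, is exactly what repairs this.
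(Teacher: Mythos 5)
This theorem is imported from \cite{KZ} and the paper gives no proof of its own, so there is nothing in the text to compare against; what can be checked is whether your reconstruction is correct and self-contained, and it essentially is. Your construction of $\Theta$ is sound: the enlargement to $\overline{B^+_\star(r+D(r))}$ is exactly the right fix for the fact that a minimizing geodesic between two points of $\overline{B^+_\star(r)}$ can wander outside that ball, and forward completeness plus Hopf--Rinow makes $D(r)$ finite, so $\Theta(r):=\sup\{\lambda_F(z):z\in\overline{B^+_\star(r+D(r))}\}$ is finite, $\geq 1$, and nondecreasing. For \eqref{wwcontroll}, the choice of radius $r_{x,y}=d_F(\star,x)+d_F(x,y)$ is the right one: it puts both $x$ and $y$ in the same forward ball so the pointwise reversibility bound applies, and it is affine in quantities whose $\pi$-averages are controlled by $W_p(\delta_\star,\mu)$ and $W_p(\mu,\nu)$ via Jensen and $W_1\le W_p$. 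The H\"older split with conjugates $p/q$, $p/(p-q)$ produces precisely the exponent $qp/(p-q)$, so the concavity hypothesis is consumed exactly where it is needed, and the powers recombine to $W_p(\mu,\nu)^q$. Two small remarks. First, your reversibility bound applies $\lambda_F$ pointwise along the reversed geodesic; it would be cleaner to say that the bound $d_F(x,y)\le\Theta(r)d_F(y,x)$ directly yields $\lambda_{d_F}(\overline{B^+_\star(r)})\le\Theta(r)$, matching the theorem's $d_F$-level definition rather than the infinitesimal one. Second, in the degenerate case $q=p$, the argument you give delivers $W_p(\nu,\mu)\le\Theta^\infty\,W_p(\mu,\nu)$; the displayed formula \eqref{wwcontroll} literally has $\Theta\bigl(W_p(\delta_\star,\mu)+W_p(\mu,\nu)\bigr)$ in place of $\Theta^\infty$, but since $\Theta$ is nondecreasing and bounded above by $\Theta^\infty$ once $\Theta^\infty<\infty$, the two bounds differ only by which is sharper; your version follows cleanly from the essential-supremum endpoint of H\"older and is the one actually obtainable by this method, so it is the right reading of the convention. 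No genuine gaps.
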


For simplicity of presentation,  in the sequel a triple $(M,\star,F)$ is called a {\it $\Theta$-Finsler manifold} if $(M,F)$ is a Finsler manifold, $\star$ is a fixed point in $M$ and $\Theta$ is a nondecreasing function satisfying \eqref{Thetacontrll}.

\subsection{Structures of absolutely continuous curves in Wasserstein spaces}\label{sacws}

In view of Example \ref{wpcontfunk}, the structure of $(\mathscr{P}_p(M),W_p)$ over a Finsler manifold is much different from the one in the symmetric case.  To begin with,
   we present  the existence of absolutely continuous curves in $(\mathscr{P}_p(M),W_p)$.
 \begin{proposition}
 Let $(M,F)$ be a forward complete Finsler manifold and let $p\in [1,\infty)$. Thus, for every $\mu_0,\mu_1\in \mathscr{P}_p( {M})$, there exists a curve $\mu_t\in \FAC^p([0,1];\mathscr{P}_p( {M}))$ from $\mu_0$ to $\mu_1$. Moreover, if  the supports of $\mu_0,\mu_1$ are both compact, then $\mu_t\in \AC^p([0,1];\mathscr{P}_p( {M}))$.
 \end{proposition}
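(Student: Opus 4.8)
The plan is to realize the connecting curve as the family of time-marginals of a dynamical transference plan concentrated on minimizing geodesics of $(M,F)$. First I would fix an optimal transference plan $\pi\in\Pi(\mu_0,\mu_1)$ for $W_p$; it exists since $\mu_0,\mu_1\in\mathscr{P}_p(M)$ forces $W_p(\mu_0,\mu_1)<\infty$ by Theorem \ref{wassdistacbasisthe}, together with \cite[Theorem~4.1]{Vi}. By forward completeness and the Hopf--Rinow theorem (cf.\ \cite{BCS,Sh1}), any $(x,y)\in M\times M$ is joined by a constant-speed minimizing geodesic $\gamma_{x,y}\colon[0,1]\to M$ from $x$ to $y$, and since every sub-arc of a minimizing geodesic is again minimizing,
\[
d_F(\gamma_{x,y}(s),\gamma_{x,y}(t))=(t-s)\,d_F(x,y),\qquad 0\le s\le t\le 1.
\]
The technical core of the argument, and the step I expect to demand the most care, is to choose these geodesics measurably, i.e.\ to produce a Borel (hence $\pi$-measurable) map $S\colon M\times M\to C([0,1];M)$, $(x,y)\mapsto\gamma_{x,y}$. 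I would obtain $S$ from a standard measurable selection theorem, using that $(C([0,1];M),\hat{d}^*_F)$ is a Polish space by Theorem \ref{basictopolgyunfirom} and that the set-valued map sending $(x,y)$ to the nonempty closed set of constant-speed minimizing geodesics from $x$ to $y$ is measurable with closed values (compare the analogous construction of dynamical plans in \cite{Vi}).

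With $S$ at hand, I would set $\eta:=S_\sharp\pi\in\mathscr{P}(C([0,1];M))$ and define $\mu_t:=(e_t)_\sharp\eta=(e_t\circ S)_\sharp\pi$; since $e_0\circ S=\mathfrak{p}^1$ and $e_1\circ S=\mathfrak{p}^2$ hold $\pi$-a.e., $\mu_0$ and $\mu_1$ are recovered, and each $e_t$ is continuous by Theorem \ref{basictopolgyunfirom}, so $\mu_t$ is a Borel probability measure. For $0\le s\le t\le1$ the plan $(e_s\circ S,\,e_t\circ S)_\sharp\pi$ belongs to $\Pi(\mu_s,\mu_t)$, whence
\[
W_p(\mu_s,\mu_t)^p\le\int_{M\times M}d_F\bigl(\gamma_{x,y}(s),\gamma_{x,y}(t)\bigr)^p\,{\dd}\pi(x,y)=(t-s)^p\,W_p(\mu_0,\mu_1)^p,
\]
so \eqref{moredfabsc} holds with the constant weight $f\equiv W_p(\mu_0,\mu_1)\in L^p([0,1])$. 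It remains to check that each $\mu_t$ lies in $\mathscr{P}_p(M)$: from $d_F(x,\gamma_{x,y}(t))=t\,d_F(x,y)\le d_F(x,y)$ and $d_F(\gamma_{x,y}(t),y)=(1-t)\,d_F(x,y)\le d_F(x,y)$ one gets
\[
d_F(\star,\gamma_{x,y}(t))\le d_F(\star,x)+d_F(x,y),\qquad d_F(\gamma_{x,y}(t),\star)\le d_F(x,y)+d_F(y,\star),
\]
and raising to the $p$-th power and integrating against $\pi$ keeps everything finite, because $\int_M d_F(\star,x)^p{\dd}\mu_0(x)$, $\int_M d_F(y,\star)^p{\dd}\mu_1(y)$ and $\int_{M\times M}d_F(x,y)^p{\dd}\pi=W_p(\mu_0,\mu_1)^p$ are all finite; hence $\int_M\hat{d}_F(\star,x)^p{\dd}\mu_t(x)<\infty$ and $\mu_t\in\FAC^p([0,1];\mathscr{P}_p(M))$.

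For the final assertion, suppose $\supp\mu_0$ and $\supp\mu_1$ are compact. Then $\pi$ is concentrated on the compact set $\supp\mu_0\times\supp\mu_1$, so $D:=\sup\{d_F(x,y):(x,y)\in\supp\mu_0\times\supp\mu_1\}<\infty$; fixing $x_0\in\supp\mu_0$ and using $d_F(x,\gamma_{x,y}(t))\le d_F(x,y)\le D$ together with the triangle inequality, every $\gamma_{x,y}$ stays in $\overline{B^+_{x_0}(D')}$ for a suitable $D'<\infty$, a set that is compact by Hopf--Rinow; denote it $K$ and set $\theta:=\lambda_F(K)<\infty$. Reversing the sub-arc $\gamma_{x,y}|_{[s,t]}$ and arguing exactly as in \eqref{reernoncurve} gives $d_F(\gamma_{x,y}(t),\gamma_{x,y}(s))\le\theta(t-s)\,d_F(x,y)$, hence $W_p(\mu_t,\mu_s)\le\theta(t-s)\,W_p(\mu_0,\mu_1)$ for $0\le s\le t\le1$; thus $\mu_t\in\BAC^p([0,1];\mathscr{P}_p(M))$ with weight $\theta\,W_p(\mu_0,\mu_1)$ as well, and therefore $\mu_t\in\AC^p([0,1];\mathscr{P}_p(M))$.
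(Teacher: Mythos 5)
Your proof is correct and takes essentially the same approach as the paper, but you unpack what the paper delegates to references. The paper simply invokes \cite[Theorem 4.16]{KZ} (a displacement-interpolation theorem) to produce $\eta\in\mathscr{P}(C([0,1];M))$ with $\mu_t=(e_t)_\sharp\eta$ and $W_p(\mu_s,\mu_t)=(t-s)W_p(\mu_0,\mu_1)$, and then cites \cite[Lemma D.5]{KZ} to find a compact $K$ containing $\eta$-a.e.\ curve when the supports are compact. You instead build $\eta=S_\sharp\pi$ by hand (optimal plan plus a Borel selection $S$ of constant-speed minimizing geodesics) and replace the lemma by a direct triangle-inequality bound placing all geodesics in a closed forward ball $\overline{B^+_{x_0}(D')}$, compact by Hopf--Rinow; the final reversibility estimate $d_F(\gamma(t),\gamma(s))\le\theta(t-s)d_F(\gamma(0),\gamma(1))$ matches \eqref{reernoncurve} exactly as in the paper. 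Your extra check that $\mu_t\in\mathscr{P}_p(M)$ is a sound addition since you are not relying on the cited theorem to guarantee it. The one step you should flag if writing this out in full is the measurable-selection lemma: the existence of a Borel $S:M\times M\to (C([0,1];M),\hat{d}^*_F)$ is standard, but it does require verifying that the multifunction sending $(x,y)$ to the (nonempty, closed, and in fact compact once you restrict to a $\hat{d}_F$-bounded set of pairs) set of constant-speed minimizing geodesics is measurable before applying Kuratowski--Ryll-Nardzewski; Theorems \ref{completedifin} and \ref{basictopolgyunfirom} supply the needed Polish structure.
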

\begin{proof} According to  \cite[Theorem 4.16]{KZ}, there exists  $\eta\in \mathscr{P}(C([0,1];M))$ such that  $\mu_t=(e_t)_\sharp\eta$ and for any $[s,t]\subset [0,1]$,
\begin{equation}\label{realcontrw}
(t-s)W_p(\mu_0,\mu_1)=W_p(\mu_s,\mu_t)=\int_{C([0,1];M)}d_F(e_s(\gamma),e_t(\gamma)){\dd} \eta(\gamma)<\infty,
\end{equation}
which implies $\mu_t\in \FAC^p([0,1];\mathscr{P}_p( {M}))$. Moreover, if $\supp\mu_0\cup\supp\mu_1$ is compact,
\cite[Lemma D.5]{KZ} yields a compact set $K\subset M$ such that $\gamma([0,1])\subset K$ for $\eta$-a.e. $\gamma\in \supp \eta$. By letting $\theta:=\lambda_F(K)<\infty$ and using \eqref{realcontrw}, we obtain
\begin{align*}
W_p(\mu_t,\mu_s)\leq \int_{\supp \eta}d_F(e_t(\gamma),e_s(\gamma)){\dd} \eta(\gamma)\leq \theta \int_{\supp \eta}d_F(e_s(\gamma),e_t(\gamma)){\dd} \eta(\gamma)=\theta \,W_p(\mu_s,\mu_t),
\end{align*}
which indicates $\mu_t\in \AC^p([0,1];\mathscr{P}_p( {M}))$.
\end{proof}

Now we are going to investigate the metric derivative of absolutely continuous curves in $(\mathscr{P}_p(M),W_p)$.
Since $W_q\leq W_p$ for any $1\leq q\leq p$, the following result is a direct consequence  of   Proposition \ref{baspeedforback}.

\begin{theorem}\label{vilocitythem} Let $(M,F)$ be a forward complete Finsler manifold  and $p\in [1,\infty)$.
 For any $\mu_t\in \FAC^p([0,1];\mathscr{P}_p( {M}))$, i.e.,
 \begin{equation}\label{facinwpspace}
 W_p  {\big(\mu_{t_1},\mu_{t_2} \big)}\leq \int^{t_2}_{t_1} f(r){\dd}r, \quad \text{ for any } [t_1,t_2]\subset [0,1],
 \end{equation}
  for some $f\in L^p([0,1])$,  the forward metric derivative
\[
 |\mu_+'|_q(t) :=\lim_{s \to t} \frac{W_q(\mu_{\min\{s,t\}},\mu_{\max\{s,t\}})}{|t-s|}
\]
exists for $\mathscr{L}^1$-a.e.\ $t \in (0,1)$ and for any $q\in [1,p]$. In particular, $|\mu_+'|_q\in L^p([0,1])$ and satisfies
\[
W_q\big(\mu_{t_1},\mu_{t_2} \big) \leq
 \int^{t_2}_{t_1} |\mu_+'|_q(r) \,{\dd}r, \quad \text{ for any } [t_1,t_2]\subset [0,1].
\]

Provided $\mu_t\in \BAC^p([0,1];\mathscr{P}_p( {M}))$, a similar property holds for the backward metric derivative $|\mu_-'|_q$.
\end{theorem}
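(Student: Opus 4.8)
The plan is to reduce the assertion to Proposition \ref{baspeedforback} by viewing $t\mapsto\mu_t$ as a $p$-forward absolutely continuous curve in the asymmetric metric space $(\mathscr{P}_q(M),W_q)$ for each fixed $q\in[1,p]$. First I would record the two elementary facts that make this reduction legitimate. On the one hand, since each $\mu\in\mathscr{P}_p(M)$ is a probability measure, Jensen's inequality gives $\int_M\hat{d}_F(\star,x)^q{\dd}\mu(x)\leq\left(\int_M\hat{d}_F(\star,x)^p{\dd}\mu(x)\right)^{q/p}<\infty$, hence $\mathscr{P}_p(M)\subseteq\mathscr{P}_q(M)$ and in particular $\mu_t\in\mathscr{P}_q(M)$ for every $t\in[0,1]$; by Theorem \ref{wassdistacbasisthe} (applied with exponent $q$), $(\mathscr{P}_q(M),W_q)$ is an asymmetric metric space. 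On the other hand, the same theorem gives $W_q\leq W_p$, so \eqref{facinwpspace} yields
\[
W_q(\mu_{t_1},\mu_{t_2})\leq W_p(\mu_{t_1},\mu_{t_2})\leq\int^{t_2}_{t_1}f(r){\dd}r\quad\text{for every }[t_1,t_2]\subset[0,1],
\]
with the very same $f\in L^p([0,1])$; thus $t\mapsto\mu_t$ lies in $\FAC^p([0,1];(\mathscr{P}_q(M),W_q))$.

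Next I would apply Proposition \ref{baspeedforback} with $X=(\mathscr{P}_q(M),W_q)$, $I=[0,1]$ and $\gamma(t)=\mu_t$. It provides, for $\mathscr{L}^1$-a.e.\ $t\in(0,1)$, the existence of the common value
\[
g(t):=\lim_{h\rightarrow0^+}\frac{W_q(\mu_t,\mu_{t+h})}{h}=\lim_{h\rightarrow0^+}\frac{W_q(\mu_{t-h},\mu_t)}{h},
\]
it asserts $g\in L^p([0,1])$, and it gives $W_q(\mu_{t_1},\mu_{t_2})\leq\int_{t_1}^{t_2}g(r){\dd}r$ for all $[t_1,t_2]\subset[0,1]$. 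It then remains to identify $g$ with $|\mu_+'|_q$: for $s=t+h>t$ one has $\min\{s,t\}=t$ and $\max\{s,t\}=s$, so the difference quotient in the definition of $|\mu_+'|_q(t)$ equals $W_q(\mu_t,\mu_{t+h})/h$, whereas for $s=t-h<t$ it equals $W_q(\mu_{t-h},\mu_t)/h$. Hence the two-sided limit defining $|\mu_+'|_q(t)$ exists precisely when both one-sided limits above do, and its value is $g(t)$; the $L^p$-membership and the integral inequality are inherited verbatim. The claim for $\mu_t\in\BAC^p([0,1];\mathscr{P}_p(M))$ is obtained in exactly the same way, now invoking the backward half of Proposition \ref{baspeedforback} and observing that $|\mu_-'|_q(t)$ unpacks into $\lim_{h\rightarrow0^+}W_q(\mu_{t+h},\mu_t)/h=\lim_{h\rightarrow0^+}W_q(\mu_t,\mu_{t-h})/h$.

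I do not expect a genuine obstacle: the argument is essentially bookkeeping. The two points deserving care — and the only places where the hypotheses are actually used — are the inclusion $\mathscr{P}_p(M)\subseteq\mathscr{P}_q(M)$ together with the monotonicity $W_q\leq W_p$ (which legitimise the change of exponent), and the unwinding of the $\min/\max$ in the definition of $|\mu_\pm'|_q$ into the one-sided limits handled by Proposition \ref{baspeedforback}. Everything else, including the $L^p$-bound and the integral estimate, is transported unchanged from that proposition.
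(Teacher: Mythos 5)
Your proof is correct and coincides with the paper's: the paper simply observes that since $W_q\leq W_p$ for $1\leq q\leq p$ (Theorem \ref{wassdistacbasisthe}), the statement is a direct consequence of Proposition \ref{baspeedforback}. You have merely filled in the bookkeeping the paper leaves implicit — the inclusion $\mathscr{P}_p(M)\subseteq\mathscr{P}_q(M)$ via Jensen, and the unwinding of the two-sided difference quotient in $|\mu_+'|_q$ into the pair of one-sided limits that Proposition \ref{baspeedforback} controls.
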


Clearly, a $p$-absolutely continuous curve is always continuous in $\mathscr{P}_p( {M})$. But
in view of Example \ref{wpcontfunk}, it seems impossible to check the continuity of a partially absolutely continuous curve in a generic Wasserstein space.
The following result is  the next best thing.
\begin{proposition}\label{compactfacpp}Let $(M,\star,F)$ be a forward complete $\Theta$-Finsler manifold and let $p\in [1,\infty)$.
Suppose that $\Theta^{ {qp}/{(p-q)}}$ is a concave function for some $q\in [1,p]$.
 Thus,
 \[
 \FAC^p([0,1];\mathscr{P}_p(M))\cup \BAC^p([0,1];\mathscr{P}_p(M))\subset \AC^p([0,1];\mathscr{P}_q(M)).
 \] Hence,
 for every  $\mu_t\in \FAC^p([0,1];\mathscr{P}_p(M))\cup\BAC^p([0,1];\mathscr{P}_p(M))$,
   it is a continuous curve in $(\mathscr{P}_q(M),W_q)$ and  $\{\mu_t\,|\ t\in [0,1]\}$ is a compact set in $\mathscr{P}(M)$.
\end{proposition}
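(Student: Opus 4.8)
The plan is to derive both the inclusion and its two consequences from the reversibility estimate \eqref{wwcontroll} of Theorem \ref{reversbilityofW} (whose concavity hypothesis is precisely the one assumed here); the only preliminary observation needed is that a $p$-forward (or $p$-backward) absolutely continuous curve in $\mathscr{P}_p(M)$ stays within a bounded region of $\mathscr{P}_p(M)$, so that the nondecreasing factor $\Theta(\cdot)$ in \eqref{wwcontroll} may be replaced by a fixed constant along the curve.

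I would first treat $\mu_t\in\FAC^p([0,1];\mathscr{P}_p(M))$, with nonnegative controlling function $f\in L^p([0,1])\subset L^1([0,1])$, and set $L:=\int_0^1 f(r)\,{\dd}r$. The triangle inequality gives $W_p(\delta_\star,\mu_t)\le W_p(\delta_\star,\mu_0)+L=:R<\infty$ for every $t$, and $W_p(\mu_{t_1},\mu_{t_2})\le L$ for all $[t_1,t_2]\subset[0,1]$. Applying \eqref{wwcontroll} with $\mu:=\mu_{t_1}$, $\nu:=\mu_{t_2}$ and using that $\Theta$ is nondecreasing then yields
\[
W_q(\mu_{t_2},\mu_{t_1})\le\Theta\bigl(W_p(\delta_\star,\mu_{t_1})+W_p(\mu_{t_1},\mu_{t_2})\bigr)\,W_p(\mu_{t_1},\mu_{t_2})\le\Theta(R+L)\int_{t_1}^{t_2}f(r)\,{\dd}r ,
\]
while $W_q(\mu_{t_1},\mu_{t_2})\le W_p(\mu_{t_1},\mu_{t_2})\le\int_{t_1}^{t_2}f$ follows from $W_q\le W_p$ (Theorem \ref{wassdistacbasisthe}); together with $\mathscr{P}_p(M)\subset\mathscr{P}_q(M)$ (Jensen's inequality), this shows $\mu_t\in\AC^p([0,1];\mathscr{P}_q(M))$ with common controlling function $\max\{1,\Theta(R+L)\}\,f\in L^p([0,1])$. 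For $\mu_t\in\BAC^p([0,1];\mathscr{P}_p(M))$ I would argue symmetrically: now $W_p(\mu_{t_2},\mu_{t_1})\le\int_{t_1}^{t_2}f$, one bounds $W_p(\delta_\star,\mu_t)\le W_p(\delta_\star,\mu_1)+W_p(\mu_1,\mu_t)\le W_p(\delta_\star,\mu_1)+L=:R'$ using the backward estimate on $[t,1]$, and one applies \eqref{wwcontroll} with $\mu:=\mu_{t_2}$, $\nu:=\mu_{t_1}$ to control the remaining forward direction $W_q(\mu_{t_1},\mu_{t_2})$. I would deliberately \emph{not} try to reduce the backward case to the forward one via the reverse Finsler manifold $(M,\overleftarrow{F})$, since that manifold need not be forward complete (cf.\ the Funk space), which is exactly why the backward case has to be handled directly.

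For the two consequences: once $\mu_t\in\AC^p([0,1];\mathscr{P}_q(M))\subset\AC^1([0,1];\mathscr{P}_q(M))$ (H\"older), it is $\hat{\mathcal T}$-continuous in $(\mathscr{P}_q(M),W_q)$ by the observation following Definition \ref{forwardabcontinc}; composing the continuous map $t\mapsto\mu_t$, $[0,1]\to(\mathscr{P}_q(M),\hat{\mathcal T})$, with the inclusion of $(\mathscr{P}_q(M),\hat{\mathcal T})$ into $\mathscr{P}(M)$ endowed with the narrow topology — continuous by Proposition \ref{wasstopologequaivel} applied with exponent $q$ — exhibits $\{\mu_t\,|\,t\in[0,1]\}$ as the continuous image of the compact interval, hence a compact subset of $\mathscr{P}(M)$.

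I expect the main obstacle to be purely one of bookkeeping: securing the uniform bounds on $W_p(\delta_\star,\mu_t)$ and on the $W_p$-oscillation of the curve so that the argument of $\Theta$ in \eqref{wwcontroll} stays in a fixed bounded interval, and handling the $\BAC^p$ case by hand rather than through the reverse structure; both become routine once \eqref{wwcontroll} is in hand.
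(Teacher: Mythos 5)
Your proposal is correct and follows essentially the same route as the paper: bound $W_p(\delta_\star,\mu_t)$ and the $W_p$-oscillation by a constant via the triangle inequality, feed this into the reversibility estimate \eqref{wwcontroll} to get a common $L^p$ controlling function for both $W_q(\mu_{t_1},\mu_{t_2})$ and $W_q(\mu_{t_2},\mu_{t_1})$, then use continuity in $\hat{W}_q$ plus Proposition \ref{wasstopologequaivel} to get compactness in $\mathscr{P}(M)$. The only difference is that you spell out the $\BAC^p$ case explicitly (and correctly flag why the reverse-Finsler shortcut is unavailable), whereas the paper simply says it is proved ``in the same way''; your version makes that claim precise.
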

\begin{proof}
Given $\mu_t\in \FAC^p([0,1];\mathscr{P}_p(M))$, there is $f\in L^p([0,1])$ satisfying \eqref{facinwpspace}. Thus,
 \eqref{wwcontroll} together with   the triangle inequality of $W_p$ and \eqref{facinwpspace} implies
\begin{align*}
W_q(\mu_{t_2},\mu_{t_1})&\leq \Theta\left(  W_p(\delta_\star,\mu_0)+2\int^1_0 f(r){\dd}r \right)\,W_p(\mu_{t_1},\mu_{t_2})
\leq \Theta\left(  W_p(\delta_\star,\mu_0)+2\|f\|^p_{L^p} \right)\int^{t_2}_{t_1} f(r){\dd}r
\end{align*}
for any $[t_1,t_2]\subset [0,1]$, which combined with  Theorem \ref{vilocitythem} yields $\mu_t\in \AC^p([0,1];\mathscr{P}_q( {M}))$. Thus, it is continuous in the symmetrized space $(\mathscr{P}_q( {M}),\hat{W}_q)$. Then Proposition \ref{wasstopologequaivel} implies the compactness of $\{\mu_t\,|\ t\in [0,1]\}$  in $\mathscr{P}(M)$.
The case  when  $\mu_t\in \BAC^p([0,1];\mathscr{P}_p(M))$   can be proved in the same way.
\end{proof}


Inspired by \cite[Theorem 5]{Li} and \cite[Theorem 3.1]{Li2}, we obtain the following result, which characterizes the natural of forward absolutely continuous curves by dynamical transference plans.

\begin{theorem}\label{maintheorem}Let $(M,\star,F)$ be a forward complete $\Theta$-Finsler manifold and let $p\in (1,\infty)$.
  Suppose that $\Theta^{qp/(p-q)}$ is a convex function for some $q\in (1,p]$. Thus,
for any $\mu_t\in  \FAC^p([0,1];\mathscr{P}_p(M))$, there exists $\eta\in \mathscr{P}(C([0,1];M))$ such that
\begin{enumerate}[{\rm (i)}]

\item \label{supent1} $\eta$ is concentrated on $\AC^p([0,1];M)$;

\smallskip

\item\label{supent2} $\mu_t=(e_t)_\sharp \eta$ for any $t\in [0,1]$;

\item\label{supent3}  $\ds |\mu'_+|^p_p(t)= \int_{C([0,1];M)} F^p(\gamma'(t)) \,{\dd}\eta(\gamma)$ for $\mathscr{L}^1$-a.e. $t\in (0,1)$.
\end{enumerate}
\end{theorem}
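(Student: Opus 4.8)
The plan is to adapt the Lisini structure theorem from the symmetric setting (see \cite{Li,Li2}) to the Finsler case by the standard two-pronged strategy: first discretize the curve $\mu_t$ in time and build piecewise-geodesic approximations, then pass to the limit using a compactness argument for the associated dynamical plans. The convexity hypothesis on $\Theta^{qp/(p-q)}$ is what replaces the concavity hypothesis of Proposition~\ref{compactfacpp}; crucially, by Theorem~\ref{reversbilityofW} the concavity of $\Theta^\infty$ (the case $q=p$, giving a global reversibility bound on the whole manifold) also suffices, and in that regime one gets genuine continuity of $\mu_t$ in $(\mathscr{P}_p(M),W_p)$, which streamlines the argument. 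I would first reduce to the case where $\mu_t$ is right-continuous and satisfies a clean length bound $W_p(\mu_s,\mu_t)\le v(t)-v(s)$ for a nondecreasing $v$, using Theorem~\ref{vilocitythem} to get $v(t)=\int_0^t|\mu'_+|_p$.

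\textbf{Construction of the approximating plans.} For each $n$, partition $[0,1]$ into $2^n$ equal subintervals and connect $\mu_{(k-1)/2^n}$ to $\mu_{k/2^n}$ by a constant-speed $W_p$-geodesic; by \cite[Theorem 4.16]{KZ} (as already used in Section~\ref{sacws}) each geodesic segment is realized as $(e_t)_\sharp\eta_{n,k}$ for a dynamical plan concentrated on minimizing geodesics of $(M,F)$. Gluing these via the natural concatenation map on $C([0,1];M)$ produces $\eta_n\in\mathscr{P}(C([0,1];M))$ with $(e_{k/2^n})_\sharp\eta_n=\mu_{k/2^n}$ at the dyadic nodes, and with the kinetic energy controlled: $\int_{C([0,1];M)}\int_0^1 F^p(\gamma'(t))\,{\dd}t\,{\dd}\eta_n(\gamma)=\sum_k 2^n\big(W_p(\mu_{(k-1)/2^n},\mu_{k/2^n})\big)^p\le\int_0^1|\mu'_+|_p^p(t)\,{\dd}t$ by Jensen/the length bound. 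I would then push these plans to $\mathscr{P}(\mathcal{M}([0,1];M))$ and invoke the compactness machinery: Theorem~\ref{tightcompreversible} gives tightness of the supports (using the energy bound as the modulus-of-continuity condition (1) and a sublevel-compact $\psi$ built from $\hat d_F(\star,\cdot)^p$ together with Proposition~\ref{compactfacpp} for condition (2)), hence by Prokhorov (Theorem~\ref{grenarlizedpROKTHE}) and Lemma~\ref{mcontress} a subsequence $\eta_{n_k}\Rightarrow\eta$ with the energy bound preserved in the limit by lower semicontinuity of $\gamma\mapsto\int_0^1 F^p(\gamma')$.

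\textbf{Identifying the limit.} From the finite energy bound, $\eta$ is concentrated on curves with $F(\gamma')\in L^p$, i.e.\ on $\FAC^p([0,1];M)=\AC^p([0,1];M)$ by Corollary~\ref{derivativeabsolu} and Theorem~\ref{FACBAC} — this gives (i). For (ii), $\mu_t=(e_t)_\sharp\eta$ holds at dyadic $t$ by construction and the narrow continuity of $e_t$ (Theorem~\ref{basictopolgyunfirom}), then extends to all $t$ by right-continuity of $\mu_t$ and of $t\mapsto(e_t)_\sharp\eta$. For (iii), the inequality $|\mu'_+|_p^p(t)\le\int F^p(\gamma'(t))\,{\dd}\eta(\gamma)$ for a.e.\ $t$ follows from lower semicontinuity after disintegration in time, while the reverse inequality $W_p(\mu_s,\mu_t)\le\big(\int\int_s^t F^p(\gamma'(r))\,{\dd}r\,{\dd}\eta\big)^{1/p}|t-s|^{1-1/p}\cdot(\dots)$ comes from the fact that $(e_s,e_t)_\sharp\eta\in\Pi(\mu_s,\mu_t)$ is an admissible transference plan, so $W_p(\mu_s,\mu_t)^p\le\int d_F(\gamma(s),\gamma(t))^p\,{\dd}\eta\le\int\big(\int_s^t F(\gamma'(r))\,{\dd}r\big)^p{\dd}\eta$; dividing by $(t-s)^p$, letting $t\to s^+$, and using that $\int F^p(\gamma'(\cdot))\,{\dd}\eta$ is the $L^1$-density of $r\mapsto\int_0^r\!\int F^p(\gamma')$ gives the matching bound. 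Combining both yields the a.e.\ identity.

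\textbf{The main obstacle} I anticipate is the same one flagged in the introduction: the incompatibility of forward and backward topologies in $(\mathscr{P}_p(M),W_p)$, which means one cannot freely assume $\mu_t$ is continuous. The convexity assumption on $\Theta^{qp/(p-q)}$ is precisely what is needed to push the argument through at the coarser exponent $q$ — one runs the compactness and lsc steps in $\mathscr{P}_q(M)$ where Proposition~\ref{compactfacpp} guarantees $\mu_t\in\AC^p([0,1];\mathscr{P}_q(M))$, and only at the end transfers the conclusion back to the $W_p$-level using that the kinetic-energy identity is exponent-robust. Handling the at-most-countable discontinuity set of $\mu_t$ cleanly, and making sure the concatenation map is Borel and the glued plans remain genuine probability measures on $C([0,1];M)$ (not merely on $\mathcal{M}([0,1];M)$), are the technical points requiring the most care; here Theorem~\ref{strongcontrall} is the key input, playing the role of \cite[Lemma 2.1]{Li2}.
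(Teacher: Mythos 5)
Your plan takes a genuinely different route from the paper's proof. You interpolate between consecutive dyadic marginals by $W_p$-geodesics lifted to dynamical plans (via \cite[Theorem 4.16]{KZ}) and glue them on $C([0,1];M)$ --- this is essentially Lisini's original strategy from \cite{Li}. The paper instead follows \cite{Li2}: it glues the optimal \emph{transference plans} $\pi_{i,i+1}$ in the product space $\mathbf{M}=M_0\times\cdots\times M_{2^N}$, pushes forward by the \emph{piecewise-constant} map $\sigma$ into $\mathcal{M}([0,1];M)$, and only at the very end recovers a measure on $C([0,1];M)$ via Theorem~\ref{strongcontrall}. The step-function construction is lighter: it never invokes $W_p$-geodesics, avoids the gluing lemma for dynamical plans on $C([0,1];M)$, and is naturally compatible with the $\mathcal{M}([0,1];M)$ compactness machinery. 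Your geodesic construction, by contrast, lives on $C([0,1];M)$ from the start, which would streamline statement \eqref{supent2} --- but you then push to $\mathcal{M}([0,1];M)$ for compactness anyway, so that advantage is lost. Both routes hinge on Theorem~\ref{strongcontrall}, as you correctly identify.

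There are genuine gaps. The most serious is the choice of the sublevel-compact function $\psi$: taking $\psi=\hat d_F(\star,\cdot)^p$ will not in general give the required bound $\sup_t\int\psi\,{\dd}\mu_t<\infty$. The forward part $\int d_F(\star,\cdot)^p\,{\dd}\mu_t$ is uniformly bounded because $\mu_t\in\FAC^p$, but the backward part $\int d_F(\cdot,\star)^p\,{\dd}\mu_t$ is only controlled to exponent $q<p$ by Proposition~\ref{compactfacpp}, which yields $\mu_t\in\AC^p([0,1];\mathscr{P}_q(M))$ and hence uniform $q$-th backward moments, not $p$-th. The paper avoids this by using the \emph{abstract} $\psi$ furnished by Remark~\ref{anoterhtightorp} from the compactness of $\{\mu_t\}$ in $\mathscr{P}(M)$ --- that $\psi$ has no a priori growth rate, and that is exactly why it works. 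A second gap is statement~\eqref{supent2}: $e_t$ is not defined on $\mathcal{M}([0,1];M)$ (where the narrow limit lives), and narrow convergence there does not transfer directly to $(e_t)_\sharp\eta_{n_k}\Rightarrow(e_t)_\sharp\eta$ after lifting. The paper handles this by an explicit test-function computation (its Step~5) that exploits the piecewise-constant structure plus uniform continuity of $t\mapsto\int\varphi\,{\dd}\mu_t$; you would need an analogue. Finally, two smaller issues: the kinetic-energy coefficient should be $2^{n(p-1)}$, not $2^n$, and in \eqref{supent3} the lower-semicontinuity/Fatou step actually gives the inequality $|\mu'_+|_p^p(t)\geq\int F^p(\gamma'(t))\,{\dd}\eta(\gamma)$, while the transference-plan argument $(e_t,e_{t+h})_\sharp\eta\in\Pi(\mu_t,\mu_{t+h})$ gives the matching $\leq$ --- you have the two directions swapped, although both ingredients are present. (You were also misled by what is almost certainly a typo in the theorem statement: the hypothesis on $\Theta^{qp/(p-q)}$ should read ``concave'', matching Theorem~\ref{reversbilityofW} and Proposition~\ref{compactfacpp}, which the proof uses.)
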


\begin{proof}
For any integer $N\geq 1$, we divide the interval $[0,1]$ into $2^N$ equal parts, and set the nodal parts
\begin{equation}\label{defti}
t^i:=\frac{i}{2^N},\quad i=0,1,\ldots, 2^N.
\end{equation}
Let $M_i$, $i= 0,1,\ldots, 2^N$ be $2^N+1$ copies of $M$ and set
\[
\mathbf{M}:=M_0\times \cdots\times M_{2^N}.
\]

Choose
  optimal transference plans $\pi_{i,i+1}\in \Pi(\mu_{t^i},\mu_{t^{i+1}})$ in $(\mathscr{P}_p(M),W_p)$, $i=0,\ldots,2^{N}-1$. Thus, owing to \cite[Lemma 5.3.4]{AGS}, there exists $\pi_N\in \mathscr{P}(\mathbf{M})$  such that
\begin{equation}\label{distingragofpi_N}
\mathfrak{p}^i_\sharp \pi_N=\mu_{t^i},\quad  \mathfrak{p}^{i,i+1}_\sharp \pi_N=\pi_{i,i+1},
\end{equation}
where $\mathfrak{p}^i:\mathbf{M}\rightarrow M_i$ and $\mathfrak{p}^{i,i+1}:\mathbf{M}\rightarrow M_i\times M_{i+1}$  are the natural projections.

Now define a map $\sigma:\mathbf{M}\rightarrow \mathcal {M}([0,1];M)$ by
\begin{equation}\label{defsigma}
\mathbf{x}=(x_0,\ldots,x_{2^N})\longmapsto\sigma_t(\mathbf{x}):=x_i, \text{ for }t\in [t^i,t^{i+1}),
\end{equation}
and set
\begin{equation}\label{etandef}
\widetilde{\eta}_N:=\sigma_\sharp\pi_N\in \mathscr{P}(\mathcal {M}([0,1];M)).
\end{equation}
The rest of the proof is divided into {six} steps.

\medskip

\noindent \textbf{Step 1.} In this step, we prove that for any $\alpha\in [1,p]$,
\begin{align}
\sup_{1/2^N\leq h<1}\int^{1-h}_0 \left(\frac{{d}_F(\sigma_t(\mathbf{x}),\sigma_{t+h}(\mathbf{x})) }{h }\right)^\alpha{\dd}t&\leq 2^{\alpha+N(\alpha-1)}\sum_{i=0}^{2^N-1}{d}_F(x_i,x_{i+1})^\alpha,\quad \forall\,\mathbf{x}\in \mathbf{M};\label{lapclsigmestimate}\\
\sup_{0<h<1}\int^{1-h}_0 \frac{{d}_F(\sigma_t(\mathbf{x}),\sigma_{t+h}(\mathbf{x}))^\alpha}{h}{\dd}t &\leq \left( 2^\alpha+\frac{2^{\alpha N}}{2^N-1}  \right)\sum_{i=0}^{2^N-1} {d}_F(x_i,x_{i+1})^\alpha,\quad \forall\,\mathbf{x}\in \mathbf{M};\label{forprobb}\\
\int_{\mathbf{M}}d_F(x_i,x_{i+1})^\alpha {\dd}\pi_N(\mathbf{x})&\leq W_p(\mu_{t^i},\mu_{t^{i+1}})^\alpha.\label{forprobb22}
\end{align}

For \eqref{lapclsigmestimate},  if $1/2^N\leq h<1$, then choose the integer $k\geq 1$ such that
\begin{equation}\label{khcontroll}
\frac{k}{2^N}\leq h<\frac{k+1}{2^N}\quad \Longrightarrow \quad \frac{1}{2^N}\leq \min\left\{\frac{h}k,\ 2^{N(\alpha-1)}\frac{h^\alpha}{k^\alpha}\right\}.
\end{equation}
Given $\alpha\in [1,p]$, the triangle inequality of $d_F$ yields
\begin{align}
{d}_F(\sigma_t(\mathbf{x}),\sigma_{t+h}(\mathbf{x}))^{\alpha}\leq  \left[\sum_{i=0}^k  {d}_F(\sigma_{t+t^i}(\mathbf{x}),\sigma_{t+t^{i+1}}(\mathbf{x}))\right]^{\alpha}\leq (k+1)^{{\alpha}-1}\sum_{i=0}^k {d}_F(\sigma_{t+t^i}(\mathbf{x}),\sigma_{t+t^{i+1}}(\mathbf{x}))^{\alpha}.
\label{sumqqesti}
\end{align}
By $1-t^k=(2^N-k)/2^N$ and \eqref{khcontroll} we have
\begin{align}
&\int^{1-h}_0 {d}_F(\sigma_t(\mathbf{x}),\sigma_{t+h}(\mathbf{x}))^{\alpha}{\dd}t\leq \int_0^{1-t^k}{d}_F(\sigma_t(\mathbf{x}),\sigma_{t+h}(\mathbf{x}))^{\alpha}{\dd}t\notag\\
\leq & (k+1)^{\alpha-1}\sum_{i=0}^k \int^{1-t^k}_0{d}_F(\sigma_{t+t^i}(\mathbf{x}),\sigma_{t+t^{i+1}}(\mathbf{x}))^{\alpha}{\dd}t\notag\\
=& (k+1)^{\alpha-1}\sum_{i=0}^k \sum_{j=0}^{2^N-k-1} \int^{t^{j+1}}_{t^j}{d}_F(\sigma_{t+t^i}(\mathbf{x}),\sigma_{t+t^{i+1}}(\mathbf{x}))^{\alpha}{\dd}t\notag\\
=&(k+1)^{\alpha-1}\sum_{i=0}^k \frac1{2^N} \sum_{j=0}^{2^N-k-1}{d}_F( {x_{i+j}}, x_{i+j+1} )^{\alpha}\leq \frac{(k+1)^{\alpha}}{2^N}\sum_{j=0}^{2^N-1}{d}_F(x_j,x_{j+1})^{\alpha}\label{khNinequality}\\
\leq &2^{N(\alpha-1)}h^\alpha\frac{(k+1)^{\alpha}}{k^\alpha}\sum_{j=0}^{2^N-1}{d}_F(x_j,x_{j+1})^{\alpha}\leq 2^{N(\alpha-1)+\alpha} h^\alpha\sum_{j=0}^{2^N-1}{d}_F(x_j,x_{j+1})^{\alpha},\notag
\end{align}
which is exactly \eqref{lapclsigmestimate}. 
Now we show \eqref{forprobb}. Firstly, an argument similar to that of  \eqref{khNinequality} combined with \eqref{khcontroll}  furnish
\begin{equation*}
\int^{1-h}_0  {d}_F(\sigma_t(\mathbf{x}),\sigma_{t+h}(\mathbf{x}))^\alpha{\dd}t\leq h\frac{(k+1)^{\alpha}}{k}\sum_{j=0}^{2^N-1} {d}_F(x_j,x_{j+1})^\alpha,
\end{equation*}
which together with
$\frac{(k+1)^\alpha}{k}\leq 2^\alpha+\frac{2^{N\alpha}}{2^N-1}$ yields
\begin{align}\label{bacisinequl}
\sup_{1/2^N\leq h<1}\int^{1-h}_0 \frac{ {d}_F(\sigma_t(\mathbf{x}),\sigma_{t+h}(\mathbf{x}))^\alpha}{h}{\dd}t \leq \left( 2^\alpha+\frac{2^{\alpha N}}{2^N-1}  \right)\sum_{i=0}^{2^N-1} {d}_F(x_i,x_{i+1})^\alpha.
\end{align}
On the other hand, for $h\in (0,1/2^N)$, since $\sigma_{t+h}(\mathbf{x})=\sigma_t(\mathbf{x})$ if $t\in [t^i,t^{i+1}-h)$, there holds
\begin{align*}
\int^{1-h}_0 {d}_F(\sigma_t(\mathbf{x}),\sigma_{t+h}(\mathbf{x}))^\alpha{\dd}t=\sum_{i=0}^{2^N-2}\int^{t^{i+1}}_{t^i} {d}_F(\sigma_t(\mathbf{x}),\sigma_{t+h}(\mathbf{x}))^\alpha{\dd}t=h\sum_{i=0}^{2^N-2} {d}_F(x_i,x_{i+1})^\alpha,
\end{align*}
which combined with \eqref{bacisinequl} yields \eqref{forprobb}.

As for \eqref{forprobb22}, recall that $\mathfrak{p}^{i,i+1}_\sharp \pi_N=\pi_{i,i+1}\in \Pi(\mu_{t^i},\mu_{t^{i+1}})$ is an optimal transference plan in $(\mathscr{P}_p(M),W_p)$. Then  the H\"older inequality together with $\mathfrak{p}^{i,i+1}(\mathbf{x})=(x_i,x_{i+1})$ implies
\begin{align*}
\int_{\mathbf{M}} {d}_F(x_i,x_{i+1})^\alpha{\dd}\pi_N(\mathbf{x})
\leq  \left(  \int_{M_i\times M_{i+1}} {d}_F(x_i,x_{i+1})^p{\dd}(\mathfrak{p}^{i,i+1}_\sharp \pi_N)(x_i,x_{i+1}) \right)^{\alpha/p}  = {W_p(\mu_{t_i},\mu_{t^{i+1}})^\alpha}.
\end{align*}

\medskip

\noindent \textbf{Step 2.} In this step, we show the tightness of $(\widetilde{\eta}_N)_{N}$ in $\mathcal {M}([0,1];M)$. According to  Remark \ref{anoterhtightorp}, it suffices to construct a function $\Phi:\mathcal {M}([0,1];M)\rightarrow [0,\infty]$ satisfying the following two properties:
\begin{enumerate}[{\rm {\quad} (a)}]

\item\label{basicapoer1} for any $c\geq 0$, the sublevel $\lambda_c(\Phi):=\{\gamma\in \mathcal {M}([0,1];M)\,|\ \Phi(\gamma)\leq c \}$ is compact in $\mathcal {M}([0,1];M)$;

\smallskip

\item\label{basicapoer2} $\ds \sup_{N\in \mathbb{N}}\int_{\mathcal {M}([0,1]; {M})}\Phi(\gamma){\dd}\widetilde{\eta}_N(\gamma)<\infty$.

\end{enumerate}

\medskip

\textbf{Construction of $\Phi$:} now we construct the function $\Phi$.
The assumption together with Proposition \ref{compactfacpp} implies that $\mu_t\in \FAC^p([0,1];\mathscr{P}_p(X))$  is $p$-absolutely continuous in    $(\mathscr{P}_q(X), \hat{W}_q)$ and
$\mathscr{A}:=\{\mu_t\,|\ t\in [0,1]\}$ is a compact set   in $\mathscr{P}(X)$. Thus,  there exists $f\in L^p([0,1])$ such that for any $[s,t]\subset[0,1]$,
\begin{equation}\label{facinwpspace2}
  W_p(\mu_s,\mu_t)\leq  \int^t_s  f(r){\dd}r,\quad   W_q(\mu_s,\mu_t)+ W_q(\mu_t,\mu_s)\leq \int^t_s  f(r){\dd}r.
 \end{equation}
Furthermore, the compactness of
  $\mathscr{A}$   in $\mathscr{P}(M)$ implies its  tightness by Theorem \ref{grenarlizedpROKTHE} (by considering $(M,\hat{d}_F)$). Then Remark \ref{anoterhtightorp} together with Proposition \ref{completedifin} furnishes a function $\psi:M\rightarrow [0,\infty]$ satisfying
\begin{itemize}

\item for any $c\geq 0$, the sublevel $\lambda_c(\psi):=\{x\in M\,|\ \psi(x)\leq c \}$ is compact in $(M,d_F)$;

\smallskip

\item $\ds C_1:=\sup_{t\in [0,1]}\int_{X}\psi(x){\dd}\mu_t(x)<\infty$.

\end{itemize}
In particular, it follows from \cite[Remark 5.1.5,\ (5.1.13)]{AGS}  that   $\psi$ is lower semicontinuous.
Now we define $\Phi:\mathcal {M}([0,1];M)\rightarrow [0,\infty]$ as
\begin{equation}\label{defofPHI}
\Phi(\gamma):=\int^1_0 \psi(\gamma(t)){\dd}t+\sup_{0<h<1}\int^{1-h}_0 \frac{d_F(\gamma(t),\gamma(t+h))}{h}{\dd}t.
\end{equation}
Thus Fatou's lemma implies that $\Phi$ is a lower semicontinuous function.

\smallskip

 \textbf{Property \eqref{basicapoer1}:} we show that $\Phi$ satisfies \eqref{basicapoer1} (see the beginning of Step 2).
In fact, there holds
\[
\lim_{h\rightarrow0^+}\sup_{\gamma\in  \lambda_c(\Phi)}\int^{1-h}_0 {d}_F(\gamma(t),\gamma(t+h)) {\dd}t\leq \lim_{h\rightarrow0^+} ch=0.
\]
Besides, note that $\lambda_c(\psi)$ is compact and
\[
\int^1_0 \psi(\gamma(t)){\dd}t\leq \Phi(\gamma)\leq c,\ \forall\,\gamma \in \lambda_c(\Phi)\quad \Longrightarrow \quad \sup_{\gamma\in \lambda_c(\Phi) }\int^1_0 \psi(\gamma(t)){\dd}t\leq c<\infty.
\]
Thus, the precompactness of $\lambda_c(\Phi)$ in $\mathcal {M}([0,1];M)$ follows by  Theorem \ref{tightcompreversible} directly.

Given a sequence $(\gamma_n)\subset \lambda_c(\Phi)$, the precompactness of $\lambda_c(\Phi)$ yields a limit point $\gamma\in \mathcal {M}([0,1];M)$, while the lower semicontinuity of $\Phi$ implies
\[
\Phi(\gamma)\leq \liminf_{n\rightarrow \infty}\Phi(\gamma_n)\leq c\quad \Longrightarrow \quad \gamma\in \lambda_c(\Phi).
\]
Therefore, $\lambda_c(\Phi)$ is compact in $\mathcal {M}([0,1];M)$, i.e., Property \eqref{basicapoer1} follows.

\smallskip

 \textbf{Property \eqref{basicapoer2}:} in order to show \eqref{basicapoer2}, we firstly claim
\begin{equation}\label{boundsupNdoublinter}
\sup_{N\in \mathbb{N}}\int_{\mathcal {M}([0,1];M)}\int^1_0  \psi(\gamma(t)) {\dd}t{\dd}\widetilde{\eta}_N(\gamma)<\infty.
\end{equation}
In fact,  for $t\in [t^i,t^{i+1})$, by (\ref{defsigma}) and (\ref{distingragofpi_N})  we have
\begin{align*}
\int_{\mathbf{M}} \psi(\sigma_t(\mathbf{x})) {\dd}\pi_N(\mathbf{x})
=\int_{\mathbf{M}} \psi(\mathfrak{p}^i(\mathbf{x}))  {\dd}\pi_N(\mathbf{x})
=\int_{M}  \psi(x) {\dd}\mu_{t^i}(x),
\end{align*}
which combined with \eqref{etandef}  furnishes
\begin{align}
&\int^1_0 \int_{\mathcal {M}([0,1];M)}  \psi(\gamma(t)) {\dd}\widetilde{\eta}_N(\gamma){\dd}t= \int^1_0 \int_{\mathcal {M}([0,1];M)}  \psi(e_t(\gamma)) {\dd}\widetilde{\eta}_N(\gamma){\dd}t =\int^1_0 \int_{\mathbf{M}} \psi(\sigma_t(\mathbf{x})) {\dd}\pi_N(\mathbf{x}){\dd}t\notag\\
&=\sum_{i=0}^{2^N-1}\int^{t^{i+1}}_{t^i}\int_{\mathbf{M}} \psi(\sigma_t(\mathbf{x}))  {\dd}\pi_N(\mathbf{x}){\dd}t=\sum_{i=0}^{2^N-1}\int^{t^{i+1}}_{t^i}\int_{M} \psi(x)  {\dd}\mu_{t^i}(x){\dd}t\label{etaNtNconverse}\\
&=\frac1{2^N}\sum_{i=0}^{2^N-1}\int_{M}  \psi(x)  {\dd}\mu_{t^i}(x)\leq \frac1{2^N}\sum_{i=0}^{2^N-1} C_1 = C_1.\notag
\end{align}
Thus, (\ref{boundsupNdoublinter}) follows by Fubini's theorem.
Secondly,
by \eqref{forprobb}, \eqref{forprobb22} and \eqref{facinwpspace2}, we have
\begin{align}
&\int_{\mathcal {M}([0,1];M)}\sup_{0<h<1}\int^{1-h}_0 \frac{ {d}_F(\gamma(t),\gamma(t+h)) }{h}{\dd}t {\dd}\widetilde{\eta}_N(\gamma)= \int_{\mathcal {M}([0,1];M)}\sup_{0<h<1}\int^{1-h}_0  \frac{ {d}_F(e_t(\gamma) ,e_{t+h}(\gamma)) }{h}{\dd}t {\dd}\widetilde{\eta}_N(\gamma)\notag\\
&= \int_{\mathbf{M}}\sup_{0<h<1}\int^{1-h}_0 \frac{ {d}_F(\sigma_t(\mathbf{x}),\sigma_{t+h}(\mathbf{x})) }{h}{\dd}t {\dd}\pi_N(\mathbf{x}) \leq  \left( 2 +\frac{2^{  N }}{2^N-1}  \right)\sum_{i=0}^{2^N-1}\int_{\mathbf{M}} {d}_F(x_i,x_{i+1})  {\dd}\pi_N(\mathbf{x})\label{keylatboelone}\\
&\leq \left( 2 +\frac{2^{  N }}{2^N-1}  \right)   \sum_{i=0}^{2^N-1}W_p(\mu_{t_i},\mu_{t^{i+1}}) \leq \left( 2 +\frac{2^{  N }}{2^N-1}  \right) \sum_{i=0}^{2^N-1} \int^{t^{i+1}}_{t^i}f(t){\dd}t
\leq 4\int^1_0 f (t){\dd}t<\infty,\notag
\end{align}
which together with \eqref{defofPHI} and \eqref{boundsupNdoublinter} furnishes
 Property \eqref{basicapoer2}.

\smallskip

Since Properties \eqref{basicapoer1},\eqref{basicapoer2} are true,   we obtain the tightness of $(\widetilde{\eta}_N)_N$ in $\mathcal {M}([0,1];M)$.


\medskip

\noindent \textbf{Step 3.}
As $(\widetilde{\eta}_N)$ is tight, let $\widetilde{\eta}$ be an arbitrary narrow limit of $(\widetilde{\eta}_N)$, i.e., a subsequence  $\widetilde{\eta}_{N_k}\Rightarrow \widetilde{\eta}$  in $\mathscr{P}(\mathcal {M}([0,1];M))$ (see \eqref{narrwconver}).
In this step, we show that $\widetilde{\eta}$ is concentrated on BV right-continuous curves.

Given a curve $\gamma:[0,1]\rightarrow M$, let the pointwise variation and essential variation be denoted by
\begin{align*}
\Vv(\gamma;[0,1])&:=\sup\left\{   \sum_{i=0}^{n-1} d_F(\gamma(s_i),\gamma(s_{i+1}))\,\Big|\,0=s_0<s_2<\cdots<s_{n}=1    \right\};\\
\eV(\gamma;[0,1])&:=\inf\Big\{ \Vv(\zeta;[0,1])\,\big|\, \zeta(s)=\gamma(s) \text{ for $\mathscr{L}^1$-a.e. $s\in [0,1]$}   \Big\}.
\end{align*}
Also define a function $L_N:\mathcal {M}([0,1];M)\rightarrow [0,\infty)$ by
\[
L_N(\gamma)=\left \{
\begin{array}{lll}
\ds \eV(\gamma;[0,1]),&&\text{ if }\gamma\in \supp \eta_N;\\
\\
0,&&\text{ otherwise. }
\end{array}
\right.
\]

If $\gamma(t)=\sigma_t({\mathbf{x}})$  for $\mathscr{L}^1$-a.e. $t\in [0,1]$, by \eqref{defsigma} we obtain
\[
\eV(\gamma;[0,1])=\Vv(\sigma(\mathbf{x});[0,1])=\sum_{j=0}^nd_F(x_j,x_{j+1}),
\]
which combined with \eqref{etandef} and \eqref{forprobb22} implies
\begin{align}
&\sup_{N\in \mathbb{N}}\int_{\mathcal {M}([0,1];M)}L_N(\gamma){\dd}\widetilde{\eta}_N(\gamma)=\sup_{N\in \mathbb{N}}\int_{\mathcal {M}([0,1];M)}\sum_{j=0}^{2^N-1}d_F(x_j,x_{j+1}){\dd}\widetilde{\eta}_N(\gamma)\notag\\
=&\sup_{N\in \mathbb{N}}\int_{\mathbf{M}}\sum_{j=0}^{2^N-1}d_F(x_j,x_{j+1}){\dd}\pi_N(\mathbf{x})\leq \sup_{N\in \mathbb{N}}\left(\sum_{j=0}^{2^N-1} W_p(\mu_{t^j},\mu_{t^{j+1}})\right)=\int^1_0f(t){\dd}t<\infty.\label{controllLnboundedness}
\end{align}

By passing a subsequence  and  Lemma \ref{mcontress}, we may assume that   for $\widetilde{\eta}$-a.e. $\gamma\in \supp\widetilde{\eta}$, there is $\gamma_{N_k}\in \supp\widetilde{\eta}_{N_k}$ with
\[
\lim_{k\rightarrow \infty}\mathfrak{d}_1(\gamma,\gamma_{N_k})=0,\qquad \sup_{k\in \mathbb{N}}L_{N_k}(\gamma_{N_k})=: C_2<\infty.
 \]
 Fix $\gamma\in \supp\widetilde{\eta}$ and $\gamma_{N_k}\in \supp\widetilde{\eta}_{N_k}$.
 By \eqref{conveginmeasure} and extracting a further subsequence, we may also assume that $\gamma_{N_k}(t)$ pointwisely converges to $\gamma(t)$ for $\mathscr{L}^1$-a.e. $t\in [0,1]$.

Owing to the discreteness of $\gamma_{N_k}\in \supp\widetilde{\eta}_{N_k}$, we can choose the piecewise constant right-continuous representative of $\gamma_{N_k}$, which is still denoted by $\gamma_{N_k}$. Thus,
\begin{equation}\label{vargammank}
\eV(\gamma_{N_k})=\Vv(\gamma_{N_k})\leq C_2.
\end{equation}

For each $k$, define an incresaing function $v_k:[0,1]\rightarrow [0,C_2]$ by $v_k(t):=\Vv(\gamma_{N_k};[0,t])$. By extracting a subsequence, it follows from \eqref{vargammank} and Helly's selection theorem that $(v_k )$ pointwisely converges to an increasing $v : [0,1]\rightarrow [0,C_2]$. Since the set of discontinuity points of $v$ is at most countable, we can redefine a right-continuous function $\bar{v}$ by $\bar{v}(t):=\lim_{s\rightarrow t^+}v(s)$.
Thus, by observing
\begin{equation*}
d_F(\gamma_{N_k}(s),\gamma_{N_k}(t))\leq v_k(t)-v_k(s),\quad \forall\,[s,t]\subset [0,1],
\end{equation*}
we derive
\begin{equation}\label{basiccontrllv}
d_F(\gamma(s),\gamma(t))\leq \bar{v}(t)-\bar{v}(s), \text{ for $\mathscr{L}^1$-a.e. $s,t\in [0,1]$ with $s\leq t$}.
\end{equation}
Owing to Proposition \ref{completedifin},   we can choose a  representative of $\gamma$  defined by $\bar{\gamma}(t):=\lim_{s\rightarrow t^+}\gamma(s)$. Clearly, $\bar{\gamma}$ is right-continuous and moreover,  \eqref{basiccontrllv} yields
\begin{equation}\label{boundedvariationcrv}
d_F(\bar{\gamma}(s),\bar{\gamma}(t))\leq \bar{v}(t)-\bar{v}(s),\quad  \forall\,[s,t]\subset [0,1].
\end{equation}

From above, we see that for $\widetilde{\eta}$-a.e. $\gamma \in \supp\widetilde{\eta}$, there is a right-continuous representative $\bar{\gamma}$ with \eqref{boundedvariationcrv} which  is factually continuous except at most a countable set.

\medskip

\noindent \textbf{Step 4.} In this step, we show Statement \eqref{supent1}.
Now define a sequence of lower semicontinuous functions $f_N:\mathcal {M}([0,1];M)\rightarrow [0,\infty]$ by
\[
f_N(\gamma):=\sup_{1/2^N\leq h<1}\int^{1-h}_0 \left(\frac{ {d}_F(\gamma(t),\gamma(t+h)) }{h }\right)^p{\dd}t.
\]
Since $[1/2^N,1)\subset [1/2^{N+1},1)$, they satisfy the monotonicity property
\begin{equation}\label{fnfn+1estimate}
f_N(\gamma)\leq f_{N+1}(\gamma),\quad \forall\,\gamma\in \mathcal {M}([0,1];M).
\end{equation}

Moreover, we claim that there exists a constant $C_3>0$ such that
\begin{equation}\label{lpestimatefn}
 \int_{\mathcal {M}([0,1];M)} f_N(\gamma){\dd}\widetilde{\eta}_N(\gamma)\leq C_3,\quad \forall\,N\in \mathbb{N}.
\end{equation}
In fact, a modification of \eqref{keylatboelone} together with \eqref{lapclsigmestimate} and \eqref{forprobb22}   yields
\begin{align*}
& \int_{\mathcal {M}([0,1];M)} f_N(\gamma){\dd}\widetilde{\eta}_N(\gamma)= \int_{\mathcal {M}([0,1];M)}\sup_{1/2^N\leq h<1}\int^{1-h}_0  \left(\frac{ {d}_F(e_t(\gamma),e_{t+h}(\gamma)) }{h }\right)^p{\dd}t {\dd}\widetilde{\eta}_N(\gamma)\notag\\
=& \int_{\mathbf{M}}\sup_{1/2^N\leq h<1}\int^{1-h}_0 \left(\frac{ {d}_F(\sigma_t(\mathbf{x}),\sigma_{t+h}(\mathbf{x}))}{h}\right)^p{\dd}t {\dd}\pi_N(\mathbf{x}) \leq  2^{p+N(p-1)}\sum_{i=0}^{2^N-1}\int_{\mathbf{M}} {d}_F(x_i,x_{i+1})^p {\dd}\pi_N(\mathbf{x})\notag\\
\leq &2^{p+N(p-1)}   \sum_{i=0}^{2^N-1}W_p(\mu_{t_i},\mu_{t^{i+1}})^{p}\leq 2^p \int^1_0 f^p(t){\dd}t<\infty.\label{derivatproof}
\end{align*}
Hence, \eqref{lpestimatefn} follows by choosing $C_3:=[2  \|f\|_{L^p([0,1])}]^p$.
Thus, if $N_k\geq N$, \eqref{lpestimatefn} and \eqref{fnfn+1estimate} furnish
\[
\int_{\mathcal {M}([0,1];M)}f_N(\gamma){\dd}\widetilde{\eta}_{N_k}(\gamma)\leq \int_{\mathcal {M}([0,1];M)}f_{N_k}(\gamma){\dd}\widetilde{\eta}_{N_k}(\gamma)\leq  C_3,
\]
 which together with the lower semicontinuity of $f_N$ and \cite[Lemma 5.1.7]{AGS} yields
\[
\int_{\mathcal {M}([0,1];M)}f_N(\gamma){\dd}\widetilde{\eta}(\gamma)\leq C_3,\quad \forall\,N\in \mathbb{N}.
\]
Thus, the monotone convergence theorem indicates
$\int_{\mathcal {M}([0,1];M)}\sup_{N\in \mathbb{N}}f_N(\gamma){\dd}\widetilde{\eta}(\gamma)\leq C_3$ and hence,
\begin{equation*}\label{fnfinite}
\sup_{0<h<1}\int^{1-h}_0 \left(\frac{ {d}_F(\gamma(t),\gamma(t+h)) }{h }\right)^p{\dd}t=\sup_{N\in \mathbb{N}}f_N(\gamma)<\infty, \quad \text{for $\widetilde{\eta}$-a.e. $\gamma\in \mathcal {M}([0,1];M)$},
\end{equation*}
which implies
\[
\limsup_{h\rightarrow 0^+}\left\| \frac{d_F(\gamma(\cdot),\gamma(\cdot+h))}{h}    \right\|_{L^p([0,1])}<\infty, \quad \text{for $\widetilde{\eta}$-a.e. $\gamma\in \mathcal {M}([0,1];M)$}.
\]
Therefore, in view of the end of Step 3 and Theorem \ref{strongcontrall}, for   $\widetilde{\eta}$-a.e. $\gamma \in \supp\widetilde{\eta}$, there is a representative $\bar{\gamma}\in \AC^p([0,1];M)$.
Let $T:C([0,1];M)\rightarrow \mathcal {M}([0,1];M)$ denote the canonical immersion, which is continuous due to Theorem \ref{basictopolgyunfirom}. Thus, we can define a new Borel measure
\begin{equation}\label{defwideeta}
\eta:= \widetilde{\eta}\circ T\in \mathscr{P}(C([0,1];M)),
\end{equation}
which is concentrated on $\AC^p([0,1];M)$,
i.e., Statement \eqref{supent1} follows.

\medskip

\noindent \textbf{Step 5.} In this step, we prove Statement \eqref{supent2}.
It is sufficient to show that for every $t\in [0,1]$,
\begin{equation}\label{lastidentityee}
\int_{C([0,1];M)}\varphi(e_t(\gamma)){\dd}\eta(\gamma)=\int_{M}\varphi(x){\dd}\mu_t(x),\quad \forall\,\varphi\in \Lip(M)\cap C_b(M),
\end{equation}
where $\Lip(M):=\left\{f\in C(M)\,|\ \exists \ C>0 \text{ such that }f(y)-f(x)\leq C\,d_F(x,y),\ \forall\,x,y\in M\right\}$.

In fact, if (\ref{lastidentityee}) holds, then for any $\varphi\in C_b(M)$, define a sequence $\varphi_k\in \Lip(M)\cap C_b(M)$ by
\[
\varphi_k(x):=\sup_{y\in M}\left[ \varphi(y)-k \,d_F(x,y) \right].
\]
Clearly, $\inf \varphi\leq \varphi(x)\leq \varphi_k(x)\leq \sup \varphi$ and $\lim_{k\rightarrow \infty}\varphi_k(x)=\varphi(x)$. Thus, \eqref{lastidentityee} (for $\varphi_k$) combined with the dominated convergence theorem yields
\begin{equation}
\int_{M}\varphi(x){\dd}(e_t)_\sharp\eta(x)=\int_{C([0,1];M)}\varphi(e_t(\gamma)){\dd}\eta(\gamma)=\int_{M}\varphi(x){\dd}\mu_t(x), \quad \forall t\in [0,1],
\end{equation}
which indicates $(e_t)_\sharp \eta=\mu_t$, i.e., Statement \eqref{supent2} follows.

In the sequel, we show (\ref{lastidentityee}). Given any $\varphi\in \Lip(M)\cap C_b(M)$, by dividing a constant, we may assume
\[
\varphi\in \Lip_1(M)\cap C_b(M):=\{f\in C_b(M)\,|\ f(y)-f(x)\leq d_F(x,y),\ \forall\,x,y\in X\}.
\]
Set
$g(t):=\int_{M} \varphi(x){\dd}\mu_t(x)$.
Firstly, we claim that $g(t)$ is uniformly continuous. In fact, for any $s\leq t$, owing  to Theorems \ref{wassdistacbasisthe} $\&$ \ref{reversbilityofW}, we have
\begin{align*}
g(t)-g(s)&= \int_{M} \varphi{\dd}\mu_t-\int_{M} \varphi{\dd}\mu_s\leq \sup_{\psi\in \Lip_1(M)}\left( \int_{M} \psi{\dd}\mu_t-\int_{M} \psi{\dd}\mu_s  \right)\notag=W_1(\mu_s,\mu_t)\leq W_q(\mu_s,\mu_t),\notag\\
g(s)-g(t)&\leq W_1(\mu_t,\mu_s)\leq W_q(\mu_t,\mu_s),
\end{align*}
which together with \eqref{facinwpspace2} yield the uniform continuity of $g$.

In view of \eqref{defti}, define a sequence of piecewise constant functions
\[
g_N(t):=g(t^i)=\int_{M} \varphi(x){\dd}\mu_{t^i}(x),\quad \text{if }t\in [t^i,t^{i+1}),
\]
which converges uniformly to $g$ in $[0,1]$ as $N\rightarrow \infty$ due to the uniform continuity. Thus, for every test function $\zeta\in C_b([0,1])$, we have
\begin{equation}\label{fistzetalimit}
\lim_{N\rightarrow \infty}\int^1_0 \zeta(t)g_N(t){\dd}t=\int^1_0 \zeta(t)g(t){\dd}t=\int^1_0\zeta(t)\left(\int_{M} \varphi(x){\dd}\mu_t(x)\right){\dd}t.
\end{equation}
On the other hand, the  proof of (\ref{etaNtNconverse}) together with Fubini's theorem yields
\begin{align}\label{secondlimit}
&\int^1_0 \zeta(t) g_N(t){\dd}t=\sum_{i=0}^{2^{N}-1}\int^{t^{i+1}}_{t^i}\zeta(t)\left(\int_M      \varphi(x){\dd}\mu_{t^i}(x)\right){\dd}t\notag\\
=&\int^1_0 \zeta(t) \left(\int_{\mathcal {M}([0,1];M)}\varphi(e_t(\gamma)){\dd}\widetilde{\eta}_N(\gamma)\right){\dd}t=\int_{\mathcal {M}([0,1];M)}\left(\int^1_0\zeta(t)\varphi(e_t(\gamma)){\dd}t\right){\dd}\widetilde{\eta}_N(\gamma).
\end{align}

Since $\varphi$ is bounded, the map
\[
\gamma\in \mathcal {M}([0,1];M)\longmapsto \int^1_0\zeta(t)\varphi(e_t(\gamma)){\dd}t\in \mathbb{R}
\]
is    bounded and continuous (by a contradiction argument about subsequence). The narrow convergence  $\widetilde{\eta}_{N_k}\Rightarrow\widetilde{\eta}$ in $\mathscr{P}(\mathcal {M}([0,1];M))$ (see the beginning of Step 3) combined with Fubini's theorem and \eqref{defwideeta}  yields
\begin{align*}\label{lastlimi}
&\lim_{k\rightarrow \infty}\int_{\mathcal {M}([0,1];M)}\left(\int^1_0 \zeta(t)\varphi(e_t(\gamma)){\dd}t\right){\dd}\widetilde{\eta}_{N_k}(\gamma)=\int_{\mathcal {M}([0,1];M)}\left(\int^1_0 \zeta(t)\varphi(e_t(\gamma)){\dd}t\right){\dd}\widetilde{\eta}(\gamma)\\
&=\int_{C([0,1];M)}\int^1_0\zeta(t)\varphi(e_t(\gamma)){\dd}t{\dd}\eta(\gamma)=\int^1_0 \zeta(t) \left( \int_{C([0,1];M)} \varphi(e_t(\gamma)){\dd}\eta(\gamma)\right){\dd}t,
\end{align*}
which together with \eqref{secondlimit}  furnishes
\[
\lim_{k\rightarrow \infty}\int^1_0 \zeta(t) g_{N_k}(t){\dd}t=\int^1_0 \zeta(t) \left(\int_{C([0,1];M)} \varphi(e_t(\gamma)){\dd}\eta(\gamma)\right){\dd}t.
\]
This combined with (\ref{fistzetalimit}) yields
\[
\int^1_0\zeta(t)\left(\int_{M} \varphi(x){\dd}\mu_t(x)\right){\dd}t=\int^1_0 \zeta(t) \left(\int_{C([0,1];M)} \varphi(e_t(\gamma)){\dd}\eta(\gamma)\right){\dd}t,
\]
which indicates that for $\mathscr{L}^1$-a.e. $t\in (0,1)$,
\begin{equation}\label{legeequalmeasure}
\int_{C([0,1];M)} \varphi(e_t(\gamma)){\dd}\eta(\gamma)=\int_{M} \varphi(x){\dd}\mu_t(x).
\end{equation}
Note that both $t\mapsto \int_{M} \varphi(x){\dd}\mu_t(x)$ and $t\mapsto \int_{C([0,1];M)} \varphi(e_t(\gamma)){\dd}\eta(\gamma)$ are continuous because  $t \mapsto \mu_t $ and $t\mapsto (e_t)_\sharp \eta$ are narrowly continuous in $ \mathscr{P}(M)$. Hence, \eqref{legeequalmeasure} holds for all $t\in [0,1]$, i.e., (\ref{lastidentityee}) is true.

\medskip

\noindent \textbf{Step 6.} In this step, we prove Statement \eqref{supent3}.

First, we claim that for all $s_1,s_2\in [0,1]$ with $s_1<s_2$,
\begin{equation}\label{s1s2+hestim}
\int_{\mathcal {M}([0,1];M)}\int^{s_2}_{s_1}\left(\frac{{d}_F(\gamma(t),\gamma(t+h)) }{h }\right)^p{\dd}t{\dd}\widetilde{\eta}(\gamma)\leq  \int^{s_2+h}_{s_1}|\mu'_+|_p^p(t){\dd}t,
\end{equation}
for every $h\in (0,1-s_2)$.

In fact, for any $h\in (0,1-s_2)$, choose $N\in \mathbb{N}$ and  $k\geq 1$ such that $1/2^N\leq h$ and (\ref{khcontroll}) holds. Setting
\[
s^N_1:=\frac{\underline{i}}{2^N}:=\max\left\{ \frac{i}{2^N}\,\Big|\ \frac{i}{2^N}\leq s_1 \right\},\quad s^N_2:=\frac{\bar{i}}{2^N}:=\min\left\{ \frac{i}{2^N}\,\Big|\ \frac{i}{2^N}\geq s_2 \right\},
\]
and reasoning as in the proof of (\ref{khNinequality}) we obtain
\[
\int^{s_2}_{s_1}d_F(\sigma_t(\mathbf{x}),\sigma_{t+h}(\mathbf{x}))^p{\dd}t\leq \frac{(k+1)^p}{2^N}\sum_{j=\underline{i}}^{\bar{i}-1}d_F(x_j,x_{j+1})^p,
\]
which together with (\ref{khcontroll}) yields
\begin{equation}\label{dfsing}
\int^{s_2}_{s_1}d_F(\sigma_t(\mathbf{x}),\sigma_{t+h}(\mathbf{x}))^p{\dd}t\leq h^p \left(  \frac{k+1}{k} \right)^p 2^{N(p-1)}\sum_{j=\underline{i}}^{\bar{i}-1}d_F(x_j,x_{j+1})^p.
\end{equation}
Moreover, \eqref{forprobb22} combined with Theorem \ref{vilocitythem} furnishes
\begin{equation}\label{sigmalable1}
\int_{\mathbf{M}}d_F(x_j,x_{j+1})^p{\dd}\pi_N(\mathbf{x})= W_p(\mu_{t^j},\mu_{t^{j+1}})^p\leq \frac{1}{2^{N(p-1)}}\int^{t^{j+1}}_{t^j}|\mu'_+|_p^p(t){\dd}t.
\end{equation}
A similar but easier argument to that of \eqref{keylatboelone} together with \eqref{dfsing} and \eqref{sigmalable1} yields
\begin{equation*}\label{lpestmiate1}
\int_{\mathcal {M}([0,1];M)}\int^{s_2}_{s_1}\left(\frac{{d}_F(\gamma(t),\gamma(t+h)) }{h }\right)^p{\dd}t{\dd}\widetilde{\eta}_N(\gamma)\leq \left( \frac{k+1}{k} \right)^p \int_{s^N_1}^{s^N_2+h}|\mu'_+|_p^p(t){\dd}t.
\end{equation*}
Now by letting $N\rightarrow \infty$ and hence, $k\rightarrow \infty$, we obtain (\ref{s1s2+hestim}) from the above inequality.

Secondly, by \eqref{defwideeta}  we have
 \begin{align*}
\int_{\mathcal {M}([0,1];M)}\int^{s_2}_{s_1}\left(\frac{d_F(\gamma(t),\gamma(t+h)) }{h }\right)^p{\dd}t{\dd}\widetilde{\eta}(\gamma)=\int_{C([0,1];M)}\int^{s_2}_{s_1}\left(\frac{d_F(\tilde{\gamma}(t),\tilde{\gamma}(t+h))}{h}\right)^p{\dd}t{\dd}\eta(\tilde{\gamma}),
\end{align*}
which together with  (\ref{s1s2+hestim}) furnishes
\begin{align*}
\int_{C([0,1];M)}\int^{s_2}_{s_1}\left(\frac{d_F(\tilde{\gamma}(t),\tilde{\gamma}(t+h)) }{h }\right)^p{\dd}t{\dd}\eta(\tilde{\gamma})\leq  \int^{s_2+h}_{s_1}|\mu'_+|_p^p(t){\dd}t.
\end{align*}
Since $\eta$ is concentrated on $\AC^p([0,1];M)$, by letting $h\rightarrow 0^+$,  Fatou's lemma yields
\begin{align}\label{energeyidenetii}
\int_{C([0,1];M)}\int^{s_2}_{s_1}{F^p(\gamma'(t))}{\dd}t{\dd}\eta(\gamma)\leq  \int^{s_2}_{s_1}|\mu'_+|_p^p(t){\dd}t
\end{align}
for every $s_1,s_2\in [0,1]$ such that $s_1<s_2$. Thus, it follows from \eqref{energeyidenetii}, Fubini's theorem and the Lebesgue differentiation theorem that
\begin{equation}\label{bascimulargeerinf}
 |\mu'_+|^p_p(t)\geq  \int_{C([0,1];M)} F^p(\gamma'(t)) \,{\dd}\eta(\gamma) \ \text{ for $\mathscr{L}^1$-a.e. $t\in (0,1)$}.
\end{equation}

In order to show that reverse of \eqref{bascimulargeerinf}, choose $t\in (0,1)$ such that $|\mu'_+|_p(t)$ exists.
Given $h>0$, set $\pi_{t,t+h}:=(e_t,e_{t+h})_\sharp\eta\in \Pi(\mu_t,\mu_{t+h})$. By Fatou's lemma and \eqref{supent1}, we have
\begin{align}
&|\mu'_+|^p_p(t)=\lim_{t\rightarrow 0^+}\left(\frac{W_p(\mu_t,\mu_{t+h}) }{h}\right)^p\leq \limsup_{h\rightarrow 0^+}\int_{M\times M}\left(\frac{d_F(x,y)}{h}\right)^p{\dd}\pi_{t,t+h}(x,y)\notag\\
\leq&  \limsup_{h\rightarrow 0^+}\int_{C([0,1];M)}\left(  \frac{d_F(e_t(\gamma),e_{t+h}(\gamma))}{ h} \right)^p{\dd}\eta(\gamma)\leq \int_{C([0,1];M)}\limsup_{h\rightarrow 0^+}\left(  \frac{d_F(\gamma(t),\gamma(t+h))}{h} \right)^p{\dd}\eta(\gamma)\notag\\
=&\int_{C([0,1];M)} F(\gamma'(t))^p{\dd}\eta(\gamma),\label{speectonr}
\end{align}
which together with \eqref{bascimulargeerinf} furnishes  Statement \eqref{supent3}.
\end{proof}

Proceeding as in the above proof, one can get a similar structure theorem of $\mu_t\in \BAC^p([0,1];\mathscr{P}_p(M))$.
In particular, a stronger result reads as follows.

\begin{theorem}\label{basicfinterevers}
Let $(M,F)$ be a forward complete Finsler manifold and $p\in (1,\infty)$. Thus,
for any $\mu_t\in  \AC^p([0,1];\mathscr{P}_p(M))$, there exist  $\eta_\pm\in \mathscr{P}(C([0,1];M))$ such that
\begin{enumerate}[{\rm (i)}]

\item \label{dousupent1} $\eta_\pm$ are concentrated on $\AC^p([0,1];M)$;

\smallskip

\item\label{dousupent2} $\mu_t=(e_t)_\sharp \eta_+=(e_t)_\sharp \eta_-$ for any $t\in [0,1]$;
\item\label{dousupent3}  $\ds |\mu'_\pm|^p_p(t)= \int_{C([0,1];M)} F^p(\pm\gamma'(t)) \,{\dd}\eta_\pm(\gamma)$ for $\mathscr{L}^1$-a.e. $t\in (0,1)$.
\end{enumerate}
In particular, for $\mathscr{L}^1$-a.e. $t\in (0,1)$,
\begin{align}\label{speedcontroll}
 |\mu'_+|_p^p(t)\leq \int_{C([0,1];M)}F^p(\gamma'(t)){\dd}\eta_-(\gamma), \quad
 |\mu'_-|_p^p(t)\leq \int_{C([0,1];M)}F^p(-\gamma'(t)){\dd}\eta_+(\gamma).
\end{align}
\end{theorem}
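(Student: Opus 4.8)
The plan is to deduce the statement from (a time-reversed version of) the structure theorem already established in the proof of Theorem~\ref{maintheorem}. The first task is to prove a \emph{forward} version valid on every forward complete Finsler manifold \emph{with no hypothesis on the reversibility}: for every $\mu_t\in\AC^p([0,1];\mathscr{P}_p(M))$ there is $\eta_+\in\mathscr{P}(C([0,1];M))$, concentrated on $\AC^p([0,1];M)$, with $\mu_t=(e_t)_\sharp\eta_+$ for all $t\in[0,1]$ and $|\mu'_+|^p_p(t)=\int_{C([0,1];M)}F^p(\gamma'(t))\,{\dd}\eta_+(\gamma)$ for $\mathscr{L}^1$-a.e.\ $t$. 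To obtain this I would re-run the proof of Theorem~\ref{maintheorem} with $q:=p$, noting that the convexity of $\Theta^{qp/(p-q)}$ was used there only twice: through Proposition~\ref{compactfacpp} (to know that $\mu_t$ is continuous in $(\mathscr{P}_p(M),\hat{W}_p)$ and that $\{\mu_t\,|\,t\in[0,1]\}$ is compact in $\mathscr{P}(M)$) and through Theorem~\ref{reversbilityofW} (for the uniform continuity of $g(t)=\int_M\varphi\,{\dd}\mu_t$ in the identification step). Under the present hypothesis both facts are automatic: $\mu_t\in\AC^p$ is by definition continuous in $(\mathscr{P}_p(M),\hat{W}_p)$, so $\{\mu_t\}$ is compact in $\mathscr{P}(M)$ by Proposition~\ref{wasstopologequaivel}; and if $f\in L^p([0,1])$ satisfies $W_p(\mu_s,\mu_t)+W_p(\mu_t,\mu_s)\le\int_s^t f$ for $s\le t$, then \eqref{Kantorovich-Rubeinstein} gives $|g(t)-g(s)|\le\int_s^t f$, the uniform continuity needed. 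Everything else in that proof --- the dyadic optimal plans $\pi_N$, the construction of $\Phi$ and the tightness of $(\widetilde{\eta}_N)_N$, Theorem~\ref{strongcontrall} (which requires only forward, or backward, completeness), the identification $\mu_t=(e_t)_\sharp\eta$, and the final speed identity --- carries over verbatim with $q=p$. Applied to our curve this furnishes the plan $\eta_+$.

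For the plan $\eta_-$ I would use time reversal. Put $\nu_s:=\mu_{1-s}$; since $\mu_t\in\AC^p([0,1];\mathscr{P}_p(M))$, for $s_1\le s_2$ one has $W_p(\nu_{s_1},\nu_{s_2})=W_p(\mu_{1-s_1},\mu_{1-s_2})\le\int_{1-s_2}^{1-s_1}f$ and likewise $W_p(\nu_{s_2},\nu_{s_1})\le\int_{1-s_2}^{1-s_1}f$, so $\nu_s\in\AC^p([0,1];\mathscr{P}_p(M))$ on the \emph{same} forward complete manifold. Applying the forward version above to $\nu$ yields $\widetilde{\eta}\in\mathscr{P}(C([0,1];M))$, concentrated on $\AC^p([0,1];M)$, with $\nu_s=(e_s)_\sharp\widetilde{\eta}$ and $|\nu'_+|^p_p(s)=\int F^p(\sigma'(s))\,{\dd}\widetilde{\eta}(\sigma)$ a.e. Let $R:C([0,1];M)\to C([0,1];M)$ be the reparametrization $R(\sigma)(t):=\sigma(1-t)$; by Theorem~\ref{basictopolgyunfirom} it is a homeomorphism for the compact-open topology, and by Theorem~\ref{FACBAC} and Corollary~\ref{derivativeabsolu} it maps $\AC^p([0,1];M)$ into itself with $(R\sigma)'(t)=-\sigma'(1-t)$. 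Set $\eta_-:=R_\sharp\widetilde{\eta}$. Then $\eta_-$ is concentrated on $\AC^p([0,1];M)$; $(e_t)_\sharp\eta_-=(e_t\circ R)_\sharp\widetilde{\eta}=(e_{1-t})_\sharp\widetilde{\eta}=\nu_{1-t}=\mu_t$; and, using $|\nu'_+|_p(s)=|\mu'_-|_p(1-s)$ together with the substitution $s=1-t$,
\[
|\mu'_-|^p_p(t)=|\nu'_+|^p_p(1-t)=\int F^p(\sigma'(1-t))\,{\dd}\widetilde{\eta}(\sigma)=\int F^p\bigl(-(R\sigma)'(t)\bigr)\,{\dd}\widetilde{\eta}(\sigma)=\int_{C([0,1];M)}F^p(-\gamma'(t))\,{\dd}\eta_-(\gamma)
\]
for $\mathscr{L}^1$-a.e.\ $t$. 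Together with the analogous facts for $\eta_+$ this yields \eqref{dousupent1}--\eqref{dousupent3}.

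The cross estimates \eqref{speedcontroll} then follow as in the last step of the proof of Theorem~\ref{maintheorem}. Since $\mu_t=(e_t)_\sharp\eta_-$ with $\eta_-$ concentrated on $\AC^p([0,1];M)$, the plan $\pi_{t,t+h}:=(e_t,e_{t+h})_\sharp\eta_-\in\Pi(\mu_t,\mu_{t+h})$ gives $W_p(\mu_t,\mu_{t+h})^p\le\int_{C([0,1];M)}d_F(\gamma(t),\gamma(t+h))^p\,{\dd}\eta_-(\gamma)$; dividing by $h^p$, letting $h\to0^+$ and invoking the reverse Fatou step exactly as in the derivation of \eqref{speectonr} yields $|\mu'_+|^p_p(t)\le\int_{C([0,1];M)}F^p(\gamma'(t))\,{\dd}\eta_-(\gamma)$ for $\mathscr{L}^1$-a.e.\ $t$; the other inequality of \eqref{speedcontroll} comes out symmetrically, running the same argument for the backward metric derivative with $\mu_t=(e_t)_\sharp\eta_+$ and the plan $(e_{t+h},e_t)_\sharp\eta_+$.

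The step I expect to cost the most care is the bookkeeping in the first task: one must confirm, line by line, that the convexity of $\Theta^{qp/(p-q)}$ enters the proof of Theorem~\ref{maintheorem} \emph{only} through Proposition~\ref{compactfacpp} and Theorem~\ref{reversbilityofW}, and that, once $\mu_t\in\AC^p([0,1];\mathscr{P}_p(M))$, the remaining machinery (in particular the tightness argument and the reverse Fatou estimate) is genuinely insensitive to taking $q=p$. A secondary point is to check that the reparametrization $R$ really preserves $\AC^p([0,1];M)$ with $(R\sigma)'=-\sigma'(1-\cdot)$, which rests on the metric-reversal independence of $\AC^p$ over bounded closed intervals (Theorem~\ref{FACBAC}) and on Corollary~\ref{derivativeabsolu}.
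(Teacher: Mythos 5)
Your proof is correct, and its backbone---run the argument of Theorem~\ref{maintheorem} with $q=p$ after observing that the $\AC^p$ hypothesis already supplies the continuity of $\mu_t$ in $(\mathscr{P}_p(M),\hat{W}_p)$ and the compactness of $\{\mu_t\}$, then obtain $\eta_-$ by a symmetry---is the same as the paper's. Where you diverge is in implementing that symmetry: the paper applies the forward structure theorem to the \emph{same} curve $\mu_t$ on the \emph{reverse} Finsler manifold $(M,\overleftarrow{F})$ and reads off $\eta_-$ directly, whereas you stay on $(M,F)$, apply the forward version to the time-reversed curve $\nu_s:=\mu_{1-s}$, and then push forward through the reparametrization $R(\sigma)(t)=\sigma(1-t)$. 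These are dual devices and each is legitimate: your bookkeeping with $\AC^p$ under $R$ (via Theorem~\ref{FACBAC} and Corollary~\ref{derivativeabsolu}) and the identity $|\nu'_+|_p(s)=|\mu'_-|_p(1-s)$ are all correct. Your route has the small practical advantage of never leaving the fixed forward complete manifold $(M,F)$; the paper's route must (implicitly) note that the reverse manifold is backward rather than forward complete, which is still fine because Theorem~\ref{strongcontrall} accepts either completeness and the compactness of $\{\mu_t\}$ in $\mathscr{P}(M)$ is a statement about the common underlying topology, but your version sidesteps even having to make that observation. Your identification of exactly where the $\Theta$-concavity enters Theorem~\ref{maintheorem}---only through Proposition~\ref{compactfacpp} and the uniform continuity of $g$ in Step~5---and the verification that both become automatic under the $\AC^p$ hypothesis match what the paper's proof tacitly relies on, and the cross estimates~\eqref{speedcontroll} are derived exactly as the paper does, via~\eqref{speectonr}.
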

\begin{proof}
Note that $\mu_t\in \AC^p([0,1];\mathscr{P}_p(M))$ is continuous in $(\mathscr{P}_p(M),\hat{W}_p)$ and hence, $\mathscr{A}:=\{\mu_t\,|\,t\in [0,1]\}$ is compact in $\mathscr{P}(M)$. In particular, \eqref{facinwpspace2} is vail for $q=p$.
Hence, by repeating the same proof of Theorem \ref{maintheorem}, one can show  the existence of $\eta_+\in \mathscr{P}(C([0,1];M))$ which satisfies \eqref{dousupent1}--\eqref{dousupent3}.  The $\eta_-$-case follows by considering the reverse Finsler manifold and using the same argument.
And an easy modification of \eqref{speectonr} combined with \eqref{dousupent2} yields \eqref{speedcontroll}.
\end{proof}

\begin{remark}\label{etapmexplain}    It is clear that $\eta_-$ is the counterpart of $\eta_+$ in the reverse Finsler manifold.
Note that we construct  $\eta_+$ (resp., $\eta_-$) by the optimal transference plans with respect to $W_p$  (resp., $\overleftarrow{W_p}$). Hence, for a  reversible Finsler manifold, we have $\eta_+=\eta_-$.
\end{remark}

\subsection{Continuity equations}\label{Contineqa} Continuity equations play a key role in the study of diffusion equations (cf. \cite{AGS,Vi}).
As an application of Theorem \ref{basicfinterevers},
  we investigate continuity equations in the non-compact Finsler case. Also refer to \cite{OS,OZ} for the compact Finsler case, \cite{BERN,Erb} for the Riemannian case and \cite{Li} for the Banach case.

\begin{definition} Let $(M,F)$ be a  Finsler manifold and let $p\in (1,\infty)$.
Given a narrowly continuous curve $\mu_t\in C([0,1]; \mathscr{P}(M))$,
\begin{enumerate}[{\rm (i)}]

\item \label{speicalmeaure} a   measure $\bar{\mu}\in \mathscr{P}([0,1]\times M)$ is said to be {\it associated to $\mu_t$} if for every bounded Borel function $\varphi:[0,1]\times M\rightarrow \mathbb{R}$,
\begin{equation}\label{baretadef}
\int_{[0,1]\times M}\varphi(t,x){\dd}\bar{\mu}(t,x)=\int^1_0 \int_{M}\varphi(t,x){\dd}\mu_t(x){\dd}t.
\end{equation}
In the subsection, we always use $\bar{\mu}$ to denote the measure  associated to $\mu_t$.

\smallskip

\item \label{vectorfieldnew}  a time-dependent Borel vector field $\Bv:[0,1]\times M\rightarrow TM$
is said to  {\it belong to $L^p(\bar{\mu};TM_+)$} (resp., $L^p(\bar{\mu};{TM_-})$) if   $\Bv_t(x)\in T_xM$ for $\bar{\mu}$-a.e.  $(t,x)\in [0,1]\times M$ and
\[
 \int_{[0,1]\times M} F^p(\Bv_t(x)){\dd}\bar{\mu}(t,x)<\infty \quad \left( \text{ resp., }  \int_{[0,1]\times M} {F}^p(-\Bv_t(x)){\dd}\bar{\mu}(t,x)<\infty\right);
\]
It is remarkable that $L^p(\bar{\mu};TM_+)$ may be different from  $L^p(\bar{\mu};{TM_-})$ when
 the reversibility is infinite, in which case neither of them is a vector space (cf. \cite{KR}).

\smallskip

\item \label{conteqation} given $\Bv\in L^p(\bar{\mu};TM_+)\cup L^p(\bar{\mu};{TM}_-)$, the pair $(\mu_t,\Bv)$ is said to {\it satisfy the continuity equation}
\begin{equation}\label{contequa}
\partial_t\mu_t+\di(\Bv_t\mu_t)=0
\end{equation}
if there holds
\[
\frac{{\dd}}{{\dd}t}\int_{M}\phi(x){\dd}\mu_t(x)=\int_{M}\langle \Bv_t, {\dd}\phi\rangle {\dd}\mu_t,\quad \forall\phi\in C^1_0(M),
\]
where the equality is intended in the sense of distribution in $(0,1)$.

\end{enumerate}

\end{definition}

\begin{theorem}\label{contequat1}Let $(M,F)$ be a forward complete Finsler manifold and let $p\in (1,\infty)$.
Given $\mu_t\in \AC^p([0,1];\mathscr{P}_p(M))$,  there exists two vector fields $\Bv^\pm\in L^p(\bar{\mu};TM_\pm)$ such that  both $(\mu_t,\Bv^\pm)$ satisfy the continuity equation \eqref{contequa} and
\begin{align}\label{normcontrollvvt}
\| {\Bv^+_t}\|_{L^p(\mu_t;M_+)}\leq |\mu'_+|_p(t),\quad \| {{\Bv}^-_t}\|_{L^p(\mu_t;{M}_-)}\leq |\mu'_-|_p(t),
\end{align}
for $\mathscr{L}^1$-a.e. $t\in (0,1)$, where
\[
\| \cdot\|_{L^p(\mu_t;M_+)}:=\left(\int_{M}F^p(\cdot){\dd}\mu_t\right)^{1/p},\quad \| \cdot\|_{L^p(\mu_t;{M}_-)}:=\left(\int_{M}{F}^p(-~\cdot~){\dd}\mu_t\right)^{1/p}.
\]
 \end{theorem}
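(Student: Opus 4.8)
The plan is to combine the structure theorem for $\AC^p$-curves in the Wasserstein space (Theorem \ref{basicfinterevers}) with a disintegration argument, following the classical scheme of \cite[Chapter 8]{AGS} adapted to the Finsler setting. By Theorem \ref{basicfinterevers} there exist $\eta_\pm \in \mathscr{P}(C([0,1];M))$ concentrated on $\AC^p([0,1];M)$ with $\mu_t = (e_t)_\sharp \eta_+ = (e_t)_\sharp \eta_-$ and $|\mu'_\pm|_p^p(t) = \int_{C([0,1];M)} F^p(\pm\gamma'(t))\,{\dd}\eta_\pm(\gamma)$ for $\mathscr{L}^1$-a.e.\ $t$. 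The first step is to fix the ``$+$'' case (the ``$-$'' case is entirely symmetric, via the reverse Finsler manifold $(M,\overleftarrow{F})$, using $\eta_-$). Consider the map $[0,1]\times C([0,1];M)\to [0,1]\times M$ given by $(t,\gamma)\mapsto (t,\gamma(t))$; push forward the measure $\mathscr{L}^1|_{[0,1]}\otimes\eta_+$ under this map to obtain exactly the measure $\bar\mu$ associated to $\mu_t$ in the sense of \eqref{baretadef}. Disintegrate $\mathscr{L}^1\otimes\eta_+$ with respect to this map: for $\bar\mu$-a.e.\ $(t,x)$ one gets a probability measure on the fiber $\{\gamma : \gamma(t)=x\}$, and one defines
\[
\Bv^+_t(x):=\int_{\{\gamma(t)=x\}} \gamma'(t)\,{\dd}(\text{disintegration})(\gamma).
\]
Here a technical point must be addressed: $\gamma'(t)$ lives in $T_xM$, so the fiberwise integral is an honest integral in the finite-dimensional vector space $T_xM$, and one checks Borel measurability of $(t,x)\mapsto \Bv^+_t(x)$ using measurable selection / disintegration theorems together with the continuity of $e_t$ from Theorem \ref{basictopolgyunfirom}.

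The second step is the $L^p$-bound \eqref{normcontrollvvt}. Since $\psi\mapsto F^p(\psi)$ is convex on each $T_xM$ (it is the $p$-th power of a norm), Jensen's inequality applied fiberwise gives $F^p(\Bv^+_t(x)) \le \int_{\{\gamma(t)=x\}} F^p(\gamma'(t))\,{\dd}(\cdots)$, and integrating against $\mu_t={(e_t)}_\sharp\eta_+$ yields
\[
\int_M F^p(\Bv^+_t(x))\,{\dd}\mu_t(x) \le \int_{C([0,1];M)} F^p(\gamma'(t))\,{\dd}\eta_+(\gamma) = |\mu'_+|_p^p(t)
\]
for $\mathscr{L}^1$-a.e.\ $t$, which is exactly $\|\Bv^+_t\|_{L^p(\mu_t;M_+)}\le |\mu'_+|_p(t)$; in particular $\Bv^+\in L^p(\bar\mu;TM_+)$ after integrating in $t$ and using $|\mu'_+|_p\in L^p([0,1])$ from Theorem \ref{vilocitythem}. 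The third step is to verify the continuity equation \eqref{contequa}. Fix $\phi\in C^1_0(M)$. For $\eta_+$-a.e.\ $\gamma$, the curve $t\mapsto \phi(\gamma(t))$ is absolutely continuous (apply Proposition \ref{smoothisabsoltey}, noting $\eta_+$ is concentrated on $\AC^p\subset\AC$) with $\tfrac{{\dd}}{{\dd}t}\phi(\gamma(t)) = \langle \gamma'(t),{\dd}\phi\rangle$ a.e. Integrate in $\eta_+$, use Fubini (justified by the $L^p$-bound just obtained and the compact support of $\phi$), and apply the disintegration to replace the fiberwise average of $\langle\gamma'(t),{\dd}\phi|_{\gamma(t)}\rangle$ by $\langle \Bv^+_t(\gamma(t)),{\dd}\phi\rangle$; since $\tfrac{{\dd}}{{\dd}t}\int_M\phi\,{\dd}\mu_t = \int_{C}\tfrac{{\dd}}{{\dd}t}\phi(\gamma(t))\,{\dd}\eta_+$ in the distributional sense, one obtains $\tfrac{{\dd}}{{\dd}t}\int_M\phi\,{\dd}\mu_t = \int_M\langle \Bv^+_t,{\dd}\phi\rangle\,{\dd}\mu_t$, which is \eqref{contequa}.

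The main obstacle I anticipate is the measurability and well-posedness of the fiberwise barycenter $\Bv^+_t(x)$: one needs a clean disintegration of $\mathscr{L}^1\otimes\eta_+$ over the evaluation map, the Borel measurability of $(t,\gamma)\mapsto\gamma'(t)$ on the set where the derivative exists (this set has full $\mathscr{L}^1\otimes\eta_+$-measure by Fubini and Corollary \ref{derivativeabsolu}), and a coherent way to view $\gamma'(t)\in T_{\gamma(t)}M$ as a section so that the fiber integral makes sense — locally in a chart this is just integration of $\mathbb{R}^n$-valued functions, but one should patch charts and invoke the forward completeness (hence the compactness of $\overline{\gamma([0,1])}$-type sets, used implicitly in Theorem \ref{basicfinterevers}) to keep everything in a relatively compact region. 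The asymmetry of $F$ causes no real trouble here beyond forcing the separate treatment of $\Bv^+$ and $\Bv^-$, since on a fixed tangent space $F^p(\cdot)$ and $F^p(-\cdot)$ are both perfectly good convex functions; the subtlety that $L^p(\bar\mu;TM_+)$ and $L^p(\bar\mu;TM_-)$ need not coincide or be vector spaces is exactly why one cannot merge the two constructions, but it does not affect the argument for either one individually.
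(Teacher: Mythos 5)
Your proposal is correct and follows essentially the same route as the paper's proof: invoking Theorem \ref{basicfinterevers} for the representing measures $\eta_\pm$, disintegrating $\mathscr{L}^1\otimes\eta_+$ with respect to the evaluation map $(t,\gamma)\mapsto(t,\gamma(t))$ to define $\Bv^+_t(x)$ as the fiberwise barycenter of $\gamma'(t)$, applying Jensen's inequality via the convexity of $F^p$ on tangent spaces for the norm bound, and using Fubini together with Proposition \ref{smoothisabsoltey} to verify the continuity equation. The measurability concern you flag is indeed the one technical point the paper addresses, and it does so precisely as you anticipate — via the continuity of the difference quotients $(t,\gamma)\mapsto d_F(\gamma(t),\gamma(t+h))/h$ and Pettis' measurability theorem.
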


\begin{proof}We focus on the ``$+$'' case since the ``$-$" case can be derived from a similar argument.
By  Theorem \ref{basicfinterevers}, there exists $\eta:=\eta_+\in \mathscr{P}(C([0,1];M))$ such that $\mu_t=(e_t)_\sharp\eta$. Now define a measure $\breve{\eta}\in \mathscr{P}([0,1]\times C([0,1];M))$ and an evaluation map $\e:[0,1]\times C([0,1];M)\rightarrow [0,1]\times M$ as
\[
\breve{\eta}:=\mathscr{L}^1|_{[0,1]}\otimes \eta,\quad \e(t,\gamma):=(t,e_t(\gamma)).
\]
 Thus, owing to \eqref{baretadef}, it is easy to check  $\e_\sharp \breve{\eta}=\bar{\mu}$.

According to \cite[Theorem 5.3.1]{AGS}, the disintegration of $\breve{\eta}$ with respect to $\e$ yields a family of Borel probability measures $\breve{\eta}_{t,x}\in \mathscr{P}(C([0,1];M))$ concentrated on $\{\gamma\in C([0,1];M)\,|\  e_t(\gamma)=x \}=(e_t)^{-1}(x)$ such that for every $\varphi:[0,1]\times C([0,1];M)\rightarrow \mathbb{R} $ with $\varphi\in L^1(\breve{\eta})$,
\begin{align}
\gamma&\quad\longmapsto\quad \varphi(t,\gamma)\in L^1(\breve{\eta}_{t,x})\ \text{ for $\bar{\mu}$-a.e. $(t,x)\in [0,1]\times M$},\label{lietatx1}\\
(t,x)&\quad\longmapsto\quad \int_{C([0,1];M)}\varphi(t,\gamma){\dd}\breve{\eta}_{t,x}(\gamma)\in L^1(\bar{\mu}),\label{lietatx2}\\
\int_{[0,1]\times C([0,1];M)}\varphi(t,\gamma){\dd}\breve{\eta}(t,\gamma)&\quad=\quad\int_{[0,1]\times M}\int_{C([0,1];M)} \varphi(t,\gamma){\dd}\breve{\eta}_{t,x}(\gamma){\dd}\bar{\mu}(t,x),\label{disintegr3}
\end{align}
and the measures $\breve{\eta}_{t,x}$ are uniquely determined for $\bar{\mu}$-a.e. $(t,x)\in [0,1]\times M$.

 Define a set
\[
\mathcal {A}:=\{(t,\gamma)\in [0,1]\times C([0,1];M)\,|\ {\gamma'}(t) \text{ exists} \}.
\]
Thus, $\mathcal {A}$ is a Borel subset of $[0,1]\times  C([0,1];M)$ as the map $(t,\gamma)\mapsto d_F(\gamma(t),\gamma(t+h))/h$ are continuous from $[0,1]\times C([0,1];M)$ to $\mathbb{R}$ for every $h>0$. Since $\eta$ is concentrated on $\AC^p([0,1];M)$, we have
\[
\mathscr{L}^1\left( \left\{ t\in [0,1]\,|\  (t,\gamma)\in \mathcal {A}^c    \right\}  \right)=0, \quad  \text{for  $\eta$-a.e. $\gamma\in C([0,1];M)$}.
\]
Now  Fubini's theorem yields
\begin{align*}
0=\int_{C([0,1];M)} \mathscr{L}^1\left( \left\{ t\in [0,1]\,|\  (t,\gamma)\in \mathcal {A}^c   \right\}  \right){\dd}\eta(\gamma)=\int_{\mathcal {A}^c}{\dd}\eta{\dd}t=\breve{\eta}(\mathcal {A}^c).
\end{align*}
Hence, we can define a map $\Psi:[0,1]\times C([0,1];M)\rightarrow TM$ as
$\Psi(t,\gamma):={\gamma'}(t)$, which is well defined for $\breve{\eta}$-a.e. $(t,\gamma)\in [0,1]\times C([0,1];M)$.

Now we claim that $F\circ \Psi\in L^p(\breve{\eta})$ and $F\circ\Psi(t,\cdot)\in L^1(\breve{\eta}_{t,x})$ for $\bar{\mu}$-a.e. $(t,x)\in [0,1]\times M$. Factually, since $\mu_t\in \AC^p([0,1];\mathscr{P}_p(M))$, Theorem \ref{basicfinterevers} furnishes
\begin{align}
\int_{[0,1]\times C([0,1];M)} F^p\circ\Psi(t,\gamma) {\dd}\breve{\eta}(t,\gamma)
=\int^1_0 \left( \int_{C([0,1];M)}F^p(\gamma'(t)){\dd}\eta(\gamma)  \right){\dd}t=\int^1_0 |\mu'_+|^p_p(t){\dd}t<\infty,\label{pfpcontoll}
\end{align}
which indicates $F\circ \Psi\in L^p(\breve{\eta})$. The H\"older inequality yields $F\circ \Psi\in L^1(\breve{\eta})$  
and hence,
 \eqref{disintegr3} implies
\begin{align*}
\int_{[0,1]\times M}\int_{C([0,1];M)} F\circ\Psi (t,\gamma){\dd}\breve{\eta}_{t,x}(\gamma){\dd}\bar{\mu}(t,x)=\int_{[0,1]\times C([0,1];M)}F\circ\Psi (t,\gamma){\dd}\breve{\eta}(t,\gamma)<\infty,
\end{align*}
which means $F\circ\Psi(t,\cdot)\in L^1(\breve{\eta}_{t,x})$ for $\bar{\mu}$-a.e. $(t,x)\in [0,1]\times M$. So the claim is true.

Now we show that
 for $\bar{\mu}$-a.e. $(t,x)\in [0,1]\times M$, the vector field
\begin{equation}\label{definbv}
{\Bv^+_t}(x):=\int_{C([0,1];M)}\Psi(t,\gamma) {\dd}\breve{\eta}_{t,x}(\gamma)=\int_{C([0,1];M)} {\gamma'}(t) {\dd}\breve{\eta}_{t,x}(\gamma)
\end{equation}
is well defined and particularly $\Bv^+\in L^p(\bar{\mu};TM_+)$. Indeed, it follows by $\breve{\eta}(\mathcal {A}^c)=0$ and \eqref{disintegr3} that
\[
\breve{\eta}_{t,x}(\{\gamma\in C([0,1];M)\,|\ (t,\gamma)\in \mathcal {A}^c\})=0, \quad \text{for $\bar{\mu}$-a.e. $(t,x)\in [0,1]\times M$},
\]
which implies that
$\breve{\eta}_{t,x}$ is concentrated on
\[
\mathscr{U}_{t,x}:=\{\gamma\in C([0,1];M)\,|\ \gamma(t)=x,\ \gamma'(t) \text{ exists}\}\subset (e_t)^{-1}(x).
\]
Thus, $\Psi(t,\gamma)=\gamma'(t)\in T_xM$ for every $\gamma\in \mathscr{U}_{t,x}$. Moreover, given $\zeta\in T^*_xM$,
 by \eqref{tangninnerprot},  Jensen's inequality and  $F\circ\Psi(t,\cdot)\in L^1(\breve{\eta}_{t,x})$, we have
\begin{align*}
\langle {\Bv^+_t}(x),\zeta  \rangle=\left\langle\int_{C([0,1];M)}\Psi(t,\gamma) {\dd}\breve{\eta}_{t,x}(\gamma),\zeta\right\rangle\leq F^*(\zeta)\int_{{C([0,1];M)}}F\circ\Psi(t,\gamma) {\dd}\breve{\eta}_{t,x}(\gamma)<\infty,
\end{align*}
which together with Pettis' measurability theorem (cf. \cite{HKST}) implies that $\Psi(t,\cdot)$ is $\breve{\eta}_{t,x}$-measurable and hence,   ${\Bv^+_t}(x)\in T_xM$ is well defined. Moreover,
the convexity of $F^p$ combined with \eqref{pfpcontoll} and \eqref{disintegr3} yields
\begin{align}\label{contrallestvv}
&\int_{[0,1]\times M} F^p({\Bv^+_t}(x)){\dd}\bar{\mu}(t,x)=\int_{[0,1]\times M} F^p\left(  \int_{C([0,1];M)} {\gamma'}(t) {\dd}\breve{\eta}_{t,x}(\gamma)   \right){\dd}\bar{\mu}(t,x)\notag\\
\leq & \int_{[0,1]\times M}   \int_{C([0,1];M)} F^p({\gamma'}(t)) {\dd}\breve{\eta}_{t,x}(\gamma)   {\dd}\bar{\mu}(t,x)=\int_{[0,1]\times C([0,1];M)} F^p({\gamma'}(t)){\dd}\breve{\eta}(t,\gamma)<\infty,
\end{align}
which implies  $\Bv^+\in L^p(\bar{\mu};TM_+)$.

Now we show \eqref{normcontrollvvt}. In fact, for any $[a,b]\subset [0,1]$,  the definition of $\Bv^+$, \eqref{contrallestvv} and \eqref{pfpcontoll} furnish
\begin{align*}
&\int^b_a\| {\Bv^+_t}\|^p_{L^p(\mu_t;M_+)}{\dd}t=\int^b_a \int_{M}F^p( {\Bv^+_t}(x)){\dd}\mu_t(x){\dd}t=\int_{[0,1]\times M}\mathbf{1}_{[a,b]}(t) \,F^p( {\Bv^+_t}(x)){\dd}\bar{\mu}(t,x)\\
&\leq \int_{[0,1]\times M}\mathbf{1}_{[a,b]}(t) F^p( {\gamma'}(t)) {\dd}\breve{\eta}(t,\gamma)=\int^b_a |\mu'_+|_p^p(t){\dd}t.
\end{align*}

It remains to show that $(\mu_t,\Bv^+)$ satisfies \eqref{contequa}. First we claim that     $t\mapsto \int_{M}\phi{\dd}\mu_t$ is absolutely continuous for every $\phi\in C^1_0(M)$. In fact,  an  argument  similar to \eqref{abcontofphi} yields
\[
\sup_{x,y\in M}\frac{|\phi(y)-\phi(x)|}{d_F(x,y)}\leq \sup_{z\in M}\max\{ F^*(\pm {\dd}\phi(z))\}=:C(\phi)<\infty.
\]
Thus,
for every $[a,b]\subset [0,1]$, by choosing an optimal transference plan $\pi_{a,b}$ from $\mu_a$ to $\mu_b$ with respect to $W_1$, we have
\begin{align*}
&\left| \int_{M}\phi{\dd}\mu_b-\int_{M}\phi{\dd}\mu_a  \right|\leq \int_{M\times M} |\phi(y)-\phi(x)|{\dd}\pi_{a,b}(x,y)=\int_{M\times M} \frac{|\phi(y)-\phi(x)|}{d_F(x,y)}d_F(x,y){\dd}\pi_{a,b}(x,y)\\
&\leq C(\phi) \int_{M\times M}d_F(x,y){\dd}\pi_{a,b}(x,y)\leq  C(\phi)\,W_p(\mu_a,\mu_b)\leq C(\phi) \int^b_a |\mu'_+|_p(t){\dd}t,
\end{align*}
which implies the absolute continuity of $t\mapsto \int_{M}\phi{\dd}\mu_t$ due to Theorem \ref{vilocitythem}. Now
Theorem \ref{basicfinterevers} together with Proposition \ref{smoothisabsoltey} and \eqref{disintegr3} yields
\begin{align*}
&\int^{b}_{a} \left(\frac{\dd}{{\dd}t}\int_M\phi\mu_t\right){\dd}t=\int_{M}\phi{\dd}\mu_{b}-\int_{M}\phi{\dd}\mu_{a}=\int_{C([0,1];M)} \Big(\phi(e_b(\gamma))-\phi(e_a(\gamma))\Big){\dd}\eta(\gamma)\\
=&\int_{C([0,1];M)} \left(\int^{b}_{a} \frac{\dd}{{\dd} t}\phi(\gamma(t)) {\dd}t\right){\dd}\eta(\gamma)=\int_{[0,1]\times C([0,1];M) } \mathbf{1}_{[a,b]}(t)\,\langle  \Psi(t,\gamma),{\dd}\phi\rangle\,{\dd}\breve{\eta}(t,\gamma)\\
=&\int_{[0,1]\times M} \int_{C([0,1];M)}\mathbf{1}_{[a,b]}(t) \langle  \Psi(t,\gamma),{\dd}\phi\rangle {\dd}\breve{\eta}_{t,x}(\gamma){\dd}\bar{\mu}(t,x)=\int_{[0,1]\times M}\mathbf{1}_{[a,b]}(t)\langle {\Bv^+_t},{\dd}\phi\rangle {\dd}\bar{\mu}(t,x)\\
=&\int^{b}_{a}\left(\int_M\langle {\Bv^+_t},{\dd}\phi\rangle {\dd}\mu_t\right){\dd}t,
\end{align*}
which  concludes the proof.
\end{proof}

Now we discuss the reverse of Theorem \ref{contequat1}. Recall that
   the {\it uniform constant} $\Lambda_F(M)$  of $(M,F)$ (cf. \cite{E}) is defined as
\[
 \Lambda_F(M):=\underset{y,v,z\in TM\backslash\{0\}}{\sup}\frac{g_v(y,y)}{g_z(y,y)},
\]
where $g_v(y,y):=g_{ij}(v)y^iy^j$.
Clearly, ${\Lambda_F}(M)\geq 1$ with equality if and
only if $F$ is Riemannian.

\begin{proposition}\label{uncontcounequa1}
Let $(M,F)$ be a forward complete Finsler manifold with finite uniform constant and let $p\in (1,\infty)$. If $\mu_t\in C([0,1];\mathscr{P}(M))$ satisfies \eqref{contequa} for some vector field $\Bv\in L^p(\bar{\mu};TM_+)$,
then $\mu_t\in \AC^p([0,1];\mathscr{P}_p(M))$ and for $\mathscr{L}^1$-a.e. $t\in (0,1)$,
\begin{equation*}\label{coneqbasicproper}
 |\mu'_+|_p(t)\leq \|{\Bv}_t\|_{L^p(\mu_t;M_+)}, \quad |\mu'_-|_p(t)\leq \|{\Bv}_t\|_{L^p(\mu_t;M_-)}.
\end{equation*}

\end{proposition}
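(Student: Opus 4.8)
The plan is to reduce the statement, via a suitable lifting, to the structure theorem already established, but since here we are going in the opposite direction (from a solution of the continuity equation to an absolutely continuous curve) the natural tool is a superposition/representation result for solutions of the continuity equation in terms of characteristics. First I would observe that because $\Lambda_F(M)<\infty$ the metric $F$ is uniformly comparable to a fixed Riemannian (or reversible Finsler) background metric on each forward ball, so that $L^p(\bar\mu;TM_+)=L^p(\bar\mu;TM_-)$ and the flow of $\Bv_t$ enjoys the usual local integrability estimates; this removes the asymmetry obstruction that Example \ref{wpcontfunk} exploits. Then I would invoke the classical superposition principle (the Ambrosio--Gigli--Savar\'e version, \cite[Theorem~8.2.1]{AGS}, transplanted to the manifold via charts and a partition of unity, exactly as in the compact Finsler treatments \cite{OS,OZ}): the pair $(\mu_t,\Bv)$ satisfying \eqref{contequa} with $\int_{[0,1]\times M}F^p(\Bv_t(x)){\dd}\bar\mu(t,x)<\infty$ is represented by a measure $\eta\in\mathscr{P}(C([0,1];M))$ concentrated on absolutely continuous solutions $\gamma$ of $\gamma'(t)=\Bv_t(\gamma(t))$, with $\mu_t=(e_t)_\sharp\eta$.

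Granting this $\eta$, the rest is a computation that mirrors Step~5 and Step~6 of the proof of Theorem \ref{maintheorem}, run in reverse. For $[s,t]\subset[0,1]$, setting $\pi_{s,t}:=(e_s,e_t)_\sharp\eta\in\Pi(\mu_s,\mu_t)$ and using that each $\gamma$ is absolutely continuous with $d_F(\gamma(s),\gamma(t))\le\int_s^t F(\gamma'(r)){\dd}r=\int_s^t F(\Bv_r(\gamma(r))){\dd}r$, one gets by Minkowski's integral inequality
\begin{align*}
W_p(\mu_s,\mu_t)\le\left(\int_{C([0,1];M)}d_F(\gamma(s),\gamma(t))^p{\dd}\eta(\gamma)\right)^{1/p}
\le\int_s^t\left(\int_{C([0,1];M)}F^p(\Bv_r(\gamma(r))){\dd}\eta(\gamma)\right)^{1/p}{\dd}r,
\end{align*}
and since $(e_r)_\sharp\eta=\mu_r$ the inner integral equals $\int_M F^p(\Bv_r(x)){\dd}\mu_r(x)=\|\Bv_r\|_{L^p(\mu_r;M_+)}^p$, which lies in $L^1([0,1])$. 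Hence $\mu_t\in\FAC^p([0,1];\mathscr{P}_p(M))$, and because $\Lambda_F(M)<\infty$ the reversibility of any forward ball is bounded, so Proposition \ref{compactfacpp} (or directly the estimate $W_p(\mu_t,\mu_s)\le\lambda\,W_p(\mu_s,\mu_t)$ on the compact trace) upgrades this to $\mu_t\in\AC^p([0,1];\mathscr{P}_p(M))$. The speed bound $|\mu'_+|_p(t)\le\|\Bv_t\|_{L^p(\mu_t;M_+)}$ for $\mathscr{L}^1$-a.e.\ $t$ then follows from the displayed inequality by dividing by $t-s$ and applying the Lebesgue differentiation theorem; the $|\mu'_-|_p$ bound is obtained identically from $d_F(\gamma(t),\gamma(s))\le\int_s^t F(-\gamma'(r)){\dd}r$.

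The main obstacle is the superposition principle itself: the standard statement is for $\mathbb{R}^n$ or symmetric metric measure settings, and here the base is a (possibly incomplete-in-the-backward-sense, non-compact) Finsler manifold with an asymmetric distance. I expect the real work to be checking that the characteristic ODE $\gamma'=\Bv_t\circ\gamma$ has, $\eta$-almost surely, solutions that do not escape to infinity in finite time — this is where forward completeness together with the global $L^p$-bound on $F(\Bv)$ and the finiteness of $\Lambda_F(M)$ (which controls how $F$-length translates into $\hat d_F$-length) must be combined to produce an a priori confinement to a forward ball, hence to a compact set, so that no blow-up occurs and the superposition measure is genuinely supported on $C([0,1];M)$. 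Once that confinement is in place, localizing in charts and quoting \cite[Theorem~8.2.1]{AGS} is routine, and the measure-theoretic bookkeeping (disintegration, measurability of $t\mapsto\|\Bv_t\|_{L^p(\mu_t;M_+)}$) is exactly as in the proof of Theorem \ref{contequat1}.
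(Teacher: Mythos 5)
Your proposal is correct and runs along the same lines as the paper's proof: obtain a superposition measure $\eta\in\mathscr{P}(C([0,1];M))$ concentrated on characteristics of $\Bv$, use the transference plans $(e_s,e_t)_\sharp\eta$ together with an $L^p$ estimate to get the Wasserstein bound, and handle $|\mu'_-|_p$ by reversing the metric. (The paper uses H\"older's inequality to derive $W_p(\mu_a,\mu_b)^p\leq(b-a)^{p-1}\int_a^b\|\Bv_t\|^p_{L^p(\mu_t;M_+)}\dd t$, while you use Minkowski's integral inequality to get $W_p(\mu_s,\mu_t)\leq\int_s^t\|\Bv_r\|_{L^p(\mu_r;M_+)}\dd r$ directly; your version actually gives the $\FAC^p$ estimate more cleanly.)

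The one genuinely different ingredient is how the superposition principle is obtained, and here the paper takes a slicker route that dissolves the obstacle you flag at the end. Rather than localizing via charts, a partition of unity, and the Euclidean statement in \cite[Theorem~8.2.1]{AGS}, the paper exploits $\Lambda_F(M)<\infty$ to build a \emph{canonical auxiliary Riemannian metric}
\[
\hat{g}_x(X,Y):=\frac{1}{\nu_x(S_xM)}\int_{S_xM}g_y(X,Y)\,{\dd}\nu_x(y),
\]
which satisfies the two-sided comparison $\Lambda_F(M)^{-1}\hat{g}(X,X)\leq F(X)^2\leq\Lambda_F(M)\hat{g}(X,X)$ globally. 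Consequently $(M,\hat{g})$ is a \emph{complete} Riemannian manifold, $\Bv\in L^p(\bar\mu;TM_+)$ translates (via H\"older) into the $L^1$ condition $\int\sqrt{\hat{g}(\Bv_t,\Bv_t)}\,{\dd}\bar\mu<\infty$, and one can then quote Bernard's superposition theorem for complete Riemannian manifolds (\cite[Theorem~5.8]{BERN}) directly. This avoids the chart patching entirely and, because Bernard's statement already handles complete non-compact manifolds, the ``escape to infinity in finite time'' concern you raise never arises; there is no need for a separate a priori confinement argument. Your instinct that the superposition step is where the finiteness of $\Lambda_F(M)$ must do work is correct, but the decisive use of it is to manufacture a comparable complete Riemannian structure, not to build confinement estimates.
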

\begin{proof}Owing to $\Lambda_F(M)<\infty$, one can define a Riemannian metric $\hat{g}$ on $M$ by
\[
\hat{g}_x(X,Y):=\frac{1}{\nu_x(S_xM)}\int_{S_xM}g_{y}(X,Y){\dd}\nu_x(y),\quad \forall\,X,Y\in T_xM,\ \forall\,x\in M,
\]
where $\nu_x$ is the Riemannian measure induced by $g$ on $S_xM:=\{y\in T_xM\,|\ F(x,y)=1\}$. Clearly,
\begin{equation}\label{equivmetricrIE}
{\Lambda_F(M)^{-1}} {\hat{g}(X,X)}\leq F(X)^2\leq {\Lambda_F(M)}\,{\hat{g}(X,X)},\quad \forall\,X\in TM,
\end{equation}
which means that $(M,\hat{g})$ is a complete Riemannian manifold. Moreover,  $\Bv\in L^p(\bar{\mu};TM_+)$ together with \eqref{equivmetricrIE} and the H\"older inequality implies
\[
\int_{[0,1]\times M} \sqrt{\hat{g}({\Bv_t}(x),{\Bv_t}(x))}  {\dd}\bar{\mu}(t,x)\leq     {\Lambda_F(M)}^{\frac{1}2} \left(\int_{[0,1]\times M} F^p({\Bv_t}(x)){\dd}\bar{\mu}(t,x)\right)^{\frac1p}<\infty.
\]
Thus,
owing to \cite[Theorem 5.8]{BERN}, there exists a   measure $\eta\in \mathscr{P}(C([0,1];M))$ such that $(e_t)_\sharp \eta=\mu_t$ for all $t\in [0,1]$ and $\eta$ is concentrated on the set of
curves $\gamma$ solving ${\gamma}'(t)={\Bv}_t(\gamma(t))$, which are differentiable at $\mathscr{L}^1$-a.e. $t\in (0,1)$. Hence, for any $[a,b]\subset [0,1]$, since $(e_a,e_b)_\sharp \eta$ is a transference plan from $\mu_a$ to $\mu_b$,   the H\"older inequality combined with $\mathfrak{e}_\sharp\breve{\eta}=\bar{\mu}$ yields
\begin{align*}
&W_p(\mu_a,\mu_b)^p\leq \int_{M\times M}d_F(x,y)^p{\dd} (e_a,e_b)_\sharp \eta=    \int_{C([0,1];M)}d_F(\gamma(a),\gamma(b))^p\,{\dd}\eta(\gamma)\\
&\leq \int_{C([0,1];M)} \left( \int^b_a F(\gamma'(t)){\dd}t  \right)^p{\dd}\eta(\gamma)=\int_{C([0,1];M)} \left( \int^b_a F({\Bv_t}(\gamma(t))){\dd}t  \right)^p{\dd}\eta(\gamma)\\
&\leq  (b-a)^{p-1} \int_{C([0,1];M)}\int^b_a F^p({\Bv_t}(e_t(\gamma))){\dd}t{\dd}\eta(\gamma)=(b-a)^{p-1} \int^b_a \int_M F^p({\Bv_t}){\dd}\mu_t{\dd}t,
\end{align*}
which implies $|\mu'_+|_p(t)\leq \|{\Bv}_t\|_{L^p(\mu_t;M_+)}$  for $\mathscr{L}^1$-a.e. $t\in (0,1)$ and hence, $\mu_t\in \FAC^p([0,1];\mathscr{P}_p(M))$. One can conclude the proof by considering the reverse Finsler manifold.
\end{proof}

\begin{corollary}\label{unformuqncon11}
Let $(M,F)$ be a forward complete Finsler manifold with finite uniform constant and let $p\in (1,\infty)$. For every $\mu_t\in \AC^p([0,1];\mathscr{P}_p(M))$, there exist  two vector fields $\Bv^\pm\in L^p(\bar{\mu};TM_\pm)$ such that $(\mu_t,\Bv^\pm)$ satisfy the continuity equation and for $\mathscr{L}^1$-a.e. $t\in (0,1)$,
  \begin{align}\label{constaid}
    \| \Bv^+_t\|_{L^p(\mu_t;M_+)}= |\mu'_+|_p(t),\quad \| \Bv^-_t\|_{L^p(\mu_t;M_-)}= |\mu'_-|_p(t).
  \end{align}
  In particular, both $\Bv^+$ and $\Bv^-$ are unique.
\end{corollary}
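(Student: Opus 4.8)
The plan is to deduce Corollary \ref{unformuqncon11} by combining the one-sided estimate of Theorem \ref{contequat1} with its converse Proposition \ref{uncontcounequa1}, the hypothesis $\Lambda_F(M)<\infty$ being precisely what makes the converse available; uniqueness will then be squeezed out of the strict convexity of the fiberwise Finsler norm. A preliminary remark I would record is that $\Lambda_F(M)<\infty$ forces $\lambda_F(M)^2\le\Lambda_F(M)<\infty$, so the reversibility is globally finite, and hence, by $F(x,-v)\le\lambda_F(M)F(x,v)$ and $F(x,v)\le\lambda_F(M)F(x,-v)$, the two spaces $L^p(\bar\mu;TM_+)$ and $L^p(\bar\mu;TM_-)$ coincide and form an honest linear space.

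First I would apply Theorem \ref{contequat1} to $\mu_t\in\AC^p([0,1];\mathscr{P}_p(M))$, which yields Borel vector fields $\Bv^\pm\in L^p(\bar\mu;TM_\pm)=L^p(\bar\mu;TM_+)$ such that $(\mu_t,\Bv^+)$ and $(\mu_t,\Bv^-)$ both solve \eqref{contequa} and, for $\mathscr{L}^1$-a.e.\ $t\in(0,1)$,
\[
\|\Bv^+_t\|_{L^p(\mu_t;M_+)}\le|\mu'_+|_p(t),\qquad \|\Bv^-_t\|_{L^p(\mu_t;M_-)}\le|\mu'_-|_p(t).
\]
Since $\Lambda_F(M)<\infty$, Proposition \ref{uncontcounequa1} applies to $(\mu_t,\Bv^+)$ and gives $|\mu'_+|_p(t)\le\|\Bv^+_t\|_{L^p(\mu_t;M_+)}$ for a.e.\ $t$; applied to $(\mu_t,\Bv^-)$ it gives $|\mu'_-|_p(t)\le\|\Bv^-_t\|_{L^p(\mu_t;M_-)}$. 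Together with the previous display this yields the identities \eqref{constaid}.

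For uniqueness I would argue by strict convexity. Suppose $\widetilde\Bv^+\in L^p(\bar\mu;TM_+)$ also satisfies \eqref{contequa} with $\mu_t$ and $\|\widetilde\Bv^+_t\|_{L^p(\mu_t;M_+)}=|\mu'_+|_p(t)$ for a.e.\ $t$. Since the continuity equation is linear and $L^p(\bar\mu;TM_+)$ is a vector space, $\Bv^0:=\tfrac12(\Bv^++\widetilde\Bv^+)$ is again admissible, so Proposition \ref{uncontcounequa1} gives $\|\Bv^0_t\|_{L^p(\mu_t;M_+)}\ge|\mu'_+|_p(t)$ for a.e.\ $t$; on the other hand the fiberwise convexity of $y\mapsto F^p(x,y)$ (condition (c)) together with the two equalities gives $\|\Bv^0_t\|^p_{L^p(\mu_t;M_+)}\le\tfrac12\|\Bv^+_t\|^p_{L^p(\mu_t;M_+)}+\tfrac12\|\widetilde\Bv^+_t\|^p_{L^p(\mu_t;M_+)}=|\mu'_+|^p_p(t)$. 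Hence equality holds throughout, forcing $F^p(\Bv^0_t(x))=\tfrac12F^p(\Bv^+_t(x))+\tfrac12F^p(\widetilde\Bv^+_t(x))$ for $\bar\mu$-a.e.\ $(t,x)$, and the strict convexity of $y\mapsto F^p(x,y)$ then gives $\Bv^+=\widetilde\Bv^+$ $\bar\mu$-a.e. The same computation with $F^p(-\,\cdot\,)$ in place of $F^p$ yields uniqueness of $\Bv^-$.

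The point deserving the most care is the strict convexity of $y\mapsto F^p(x,y)$: I would deduce it from the equality case in the triangle inequality of condition (c) — namely $F(x,y_1+y_2)=F(x,y_1)+F(x,y_2)$ only if $y_1=\alpha y_2$ with $\alpha\ge0$ — combined with the strict convexity of $t\mapsto t^p$ for $p>1$, so that $F^p\bigl(x,\tfrac{y_1+y_2}{2}\bigr)=\tfrac12F^p(x,y_1)+\tfrac12F^p(x,y_2)$ forces first $F(x,y_1)=F(x,y_2)$ and then $y_1=\alpha y_2$, hence $y_1=y_2$; the version with $y\mapsto F^p(x,-y)$ follows since $y\mapsto-y$ is linear. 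Everything else is bookkeeping with the disintegrations already set up in the proofs of Theorem \ref{contequat1} and Proposition \ref{uncontcounequa1}.
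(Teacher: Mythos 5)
Your argument matches the paper's proof in strategy and substance: existence and the identity \eqref{constaid} come from pairing the inequality in Theorem~\ref{contequat1} with its converse in Proposition~\ref{uncontcounequa1}, and uniqueness from averaging two candidates and contradicting optimality via strict convexity of $F$. Your write-up is somewhat more explicit than the paper's (you record $\lambda_F(M)^2\le\Lambda_F(M)$ and work with fiberwise strict convexity of $F^p$ rather than strict convexity of $\|\cdot\|_{L^p(\mu_t;M_+)}$), but it is the same argument.
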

\begin{proof} We just prove the ``$+$" case since the ``$-$"  case can be derived from a similar argument.
The existence of $\mathbf{v}^+$ follows by Theorem \ref{contequat1} and Proposition \ref{uncontcounequa1}. It remains to show the uniqueness. Suppose that there are two different vector fields $\mathbf{v}^+_t,\mathbf{w}^+_t$ satisfying the continuity equation and the first equality in \eqref{constaid}. Thus,  $\mathbf{y}^+_t:=\frac12(\mathbf{v}^+_t+ \mathbf{w}^+_t)$ also satisfies   \eqref{contequa} and hence, Proposition \ref{coneqbasicproper} yields $|\mu'_+|_p(t)\leq \|\mathbf{y}^+_t\|_{L^p(\mu_t;M_+)}$.
Since $\|\cdot\|_{L^p(\mu_t;M_+)}$ is strictly convex due to the strict convexity of $F$, we have
\[
\|\mathbf{y}^+_t\|_{L^p(\mu_t;M_+)}< \frac12\|\mathbf{v}^+_t\|_{L^p(\mu_t;M_+)}+\frac12\|\mathbf{w}^+_t\|_{L^p(\mu_t;M_+)}=|\mu'_+|_p(t),
\]
 which leads to a contradiction. Therefore, the uniqueness follows.
\end{proof}

\begin{remark}It follows from \eqref{definbv} that
$\Bv^+$ (resp., $\Bv^-$) is constructed from $\eta_+$ (resp., $\eta_-$). In view of Remark \ref{etapmexplain}, $\Bv^-$ is the counterpart of $\Bv^+$ in the reverse Finsler manifold. In particular,  $\Bv^+=\Bv^-$ if $F$ is reversible.
\end{remark}

\section{Generalizations }\label{forresul1t}

Many results in the previous sections are independent of  differential structures of  manifolds.  Hence, we extend them to the nonsmooth setting. The following definition is  a natural generalization of Finsler manifold, which was introduced in \cite{KZ}.

\begin{definition}[\cite{KZ}]\label{thetametricspace}
Let $\Theta:(0,\infty) \rightarrow [1,\infty)$ be a (not necessarily continuous) non-decreasing function.
A triple $(X,\star,d)$ is called a \emph{pointed forward $\Theta$-metric space}
if $(X,d)$ is an asymmetric metric space and $\star$ is a point in $X$ such that
$\lambda_d \left(\overline{B^+_\star(r)}\right) \leq \Theta(r)$ for all $r>0$, where
\[
 \lambda_d\big( \overline{B^+_\star(r)} \big)
 :=\inf\big\{ \lambda\geq1 \,\big|\,
 d(x,y)\leq \lambda d(y,x) \text{ for any } x,y \in \overline{B^+_\star(r)} \big\}.
\]
If there is a constant function $\Theta\equiv\theta$ (i.e., $\lambda_d(X) \le \theta$),
then   $(X,d)$ is called a \emph{$\theta$-metric space}.
\end{definition}
Suppressing $\star$ and $\Theta$ for the sake of simplicity,
we will write $(X,d)$ and call it a \emph{forward metric space}.
Owing to Theorem \ref{reversbilityofW}, every forward   complete Finsler manifold can be viewed as a forward  metric space.
However, the class of forward  metric spaces also contains non-Finslerian examples.
 \begin{example}
Let  $(\mathscr{B},\|\cdot\|)$ be a reflexive Banach space
and $(\mathscr{B}^*,\|\cdot\|_*)$ be its dual space.
Given $\omega \in \mathscr{B}^*$ with $\|\omega\|_*<1$,
 define an asymmetric metric $d_\omega$ on $\mathscr{B}$ by
\[
d_\omega(x,y) :=\|y-x\| +\omega(y-x),\quad \forall\,x,y\in \mathscr{B}.
\]
Thus, $(\mathscr{B},d_\omega)$ is a $\theta$-metric space with $\theta=(1+\|\omega\|_*)/(1-\|\omega\|_*)$.

\end{example}

For  forward  metric spaces,
the backward topology $\mathcal{T}_-$ is weaker than the forward topology $\mathcal{T}_+$ (compared with Theorem \ref{asymmetrtopol}).

\begin{theorem}[\cite{KZ}] \label{topologychara}
Let $(X, d)$ be a  forward  metric space.
Then,
\begin{enumerate}[{\rm (i)}]
\item\label{contin} $\mathcal{T}_- \subset \mathcal{T}_+$  and hence, $d$ is continuous in $\mathcal{T}_+ \times \mathcal{T}_+$;
in particular $(X,\mathcal{T}_+)$ is a Hausdorff space$;$

\item $\mathcal{T}_+$ coincides with the symmetrized topology $\hat{\mathcal{T}}$.
\end{enumerate}
\end{theorem}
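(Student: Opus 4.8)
The plan is to reduce both assertions to a single claim: every backward ball is $\mathcal{T}_+$-open. Once that is established, (i) and (ii) follow formally from Theorem \ref{asymmetrtopol}.

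First I would fix a backward ball $B^-_{x_0}(\varepsilon)$ and a point $y\in B^-_{x_0}(\varepsilon)$, so that $d(y,x_0)<\varepsilon$. Set $r:=d(\star,y)+1$, so $y\in B^+_\star(r)\subseteq\overline{B^+_\star(r)}$. For $\delta\in(0,1)$ and any $z$ with $d(y,z)<\delta$, the triangle inequality gives $d(\star,z)\le d(\star,y)+d(y,z)<(r-1)+1=r$, hence $y,z\in\overline{B^+_\star(r)}$, and the defining property of a forward $\Theta$-metric space yields $d(z,y)\le\lambda_d\big(\overline{B^+_\star(r)}\big)\,d(y,z)\le\Theta(r)\,\delta$. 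Therefore $d(z,x_0)\le d(z,y)+d(y,x_0)<\Theta(r)\,\delta+d(y,x_0)$, and choosing $\delta<\min\{1,(\varepsilon-d(y,x_0))/\Theta(r)\}$ forces $z\in B^-_{x_0}(\varepsilon)$. Thus $B^+_y(\delta)\subset B^-_{x_0}(\varepsilon)$, proving $B^-_{x_0}(\varepsilon)\in\mathcal{T}_+$.

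Since the backward balls form a basis for $\mathcal{T}_-$ and $\mathcal{T}_+$ is a topology, this gives $\mathcal{T}_-\subset\mathcal{T}_+$ at once, which is the first part of \eqref{contin}. Consequently the symmetrized topology $\hat{\mathcal{T}}$, being generated by forward balls together with backward balls, satisfies $\hat{\mathcal{T}}\subseteq\mathcal{T}_+$; the reverse inclusion $\mathcal{T}_+\subseteq\hat{\mathcal{T}}$ is trivial, so $\mathcal{T}_+=\hat{\mathcal{T}}$, which is assertion (ii). The remaining claims in (i) then follow from Theorem \ref{asymmetrtopol}(i): continuity of $d$ with respect to $\hat{\mathcal{T}}\times\hat{\mathcal{T}}$ is exactly continuity with respect to $\mathcal{T}_+\times\mathcal{T}_+$, and the Hausdorff property of $(X,\hat{\mathcal{T}})$ is that of $(X,\mathcal{T}_+)$.

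The only substantive point — and the place where the hypothesis genuinely enters — is the implication $d(y,z)<\delta\ \Rightarrow\ d(z,y)\le\Theta(r)\,\delta$: in a general asymmetric metric space $d(z,y)$ is not controlled by $d(y,z)$, which is precisely why $\mathcal{T}_-\not\subset\mathcal{T}_+$ there. Confining $z$ to the forward ball $\overline{B^+_\star(r)}$ first, so that the reversibility bound $\Theta(r)$ becomes available, is what makes the argument go through; note in particular that no completeness of $(X,d)$ is needed.
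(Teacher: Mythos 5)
Your proof is correct. The paper does not reproduce an argument for this theorem — it is quoted from \cite{KZ} — but your reasoning is exactly the expected one: confining $y$ and $z$ to $\overline{B^+_\star(r)}$ with $r=d(\star,y)+1$ and $\delta<1$ before invoking $\lambda_d\big(\overline{B^+_\star(r)}\big)\le\Theta(r)$ gives the reversed bound $d(z,y)\le\Theta(r)\,\delta$, from which $B^+_y(\delta)\subset B^-_{x_0}(\varepsilon)$ follows; the inclusions $\mathcal{T}_-\subset\mathcal{T}_+$ and $\hat{\mathcal{T}}=\mathcal{T}_+$, and the continuity and Hausdorff claims via Theorem \ref{asymmetrtopol}, are then routine consequences exactly as you describe.
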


\begin{remark}\label{forwardpointspaceandbackwardones}Some more remarks are in order.
\begin{enumerate}[(a)]

\item \label{pfms-b}
If $(X,\star,d)$ is a pointed forward $\Theta$-metric space, then  for every $x\in X$,
the triple $(X,x,d)$ is a pointed forward $\widetilde{\Theta}$-metric space
for $\widetilde{\Theta}(r):=\Theta(d(\star,x)+r)$.
Moreover, if $\diam(X):=\sup_{x,y\in X}d(x,y)<\infty$,
then $(X,d)$ is a $\theta$-metric space with $\theta:=\Theta(\diam(X))$.

\item \label{pfms-c}
One can similarly introduce a \emph{pointed backward $\Theta$-metric space}
 $(X,\star,d)$ by $\lambda_d (\overline{B^-_\star(r)}) \leq \Theta(r)$ for $r>0$.
Note that a pointed forward $\Theta$-metric space may not be
a pointed backward $\widetilde{\Theta}$-metric space for any $\widetilde{\Theta}$;
for example the Funk space defined by \eqref{distFunk}.
Since $(X,\star,d)$ is a pointed backward $\Theta$-metric space
if and only if $(X,\star,\overleftarrow{d})$ is a pointed forward $\Theta$-metric space,
we will focus only on pointed forward $\Theta$-metric spaces.
\end{enumerate}
\end{remark}

Since $\mathcal{T}_-$ is weaker than $\mathcal{T}_+$ for a forward metric space, the backward absolutely continuous curves may be discontinuous. Thus,
Proposition \ref{FACBAC} can be modified as follows.
\begin{proposition}
Let $(X,d)$ be a forward metric space and let $I$ be a bounded closed interval. Thus,
\[
\AC^p(I;X)=\FAC^p(I;X)\subset \BAC^p(I;X),\quad \forall\,p\in [1,\infty].
\]
\end{proposition}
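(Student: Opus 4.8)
The statement to prove is that for a forward metric space $(X,d)$ and a bounded closed interval $I$, one has $\AC^p(I;X)=\FAC^p(I;X)\subset \BAC^p(I;X)$ for every $p\in[1,\infty]$. The key structural fact available is Definition \ref{thetametricspace}: there is a point $\star$ and a nondecreasing $\Theta$ with $\lambda_d(\overline{B^+_\star(r)})\le\Theta(r)$ for all $r>0$. The overall strategy mirrors the proof of Theorem \ref{FACBAC}, but where that proof used compactness of $\gamma(I)$ (via the Hopf--Rinow theorem for forward complete Finsler manifolds) to bound the reversibility, here I would instead use the $\Theta$-bound directly, which does not require any completeness assumption.

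First I would observe that the inclusion $\AC^p(I;X)\subset\FAC^p(I;X)$ is immediate from the definition, and $\AC^p(I;X)\subset\BAC^p(I;X)$ is also immediate; so the whole statement reduces to showing $\FAC^p(I;X)\subset\BAC^p(I;X)$ (which then forces $\FAC^p(I;X)=\FAC^p(I;X)\cap\BAC^p(I;X)=\AC^p(I;X)$). So fix $\gamma\in\FAC^p(I;X)$ with $I=[a,b]$ and a nonnegative $f\in L^p(I)$ witnessing forward absolute continuity. The plan is: (1) bound the image $\gamma(I)$ inside a single forward ball $\overline{B^+_\star(R)}$; (2) apply the $\Theta$-estimate on that ball to reverse the metric at the cost of the constant $\Theta(R)$; (3) conclude that $f$ (or $\Theta(R)f$) also witnesses backward absolute continuity.

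For step (1): by the triangle inequality, for any $t\in[a,b]$, $d(\gamma(a),\gamma(t))\le\int_a^t f\,\dd s\le\|f\|_{L^1([a,b])}=:L<\infty$, so $\gamma(t)\in \overline{B^+_{\gamma(a)}(L)}$ for all $t$. Now I want this ball expressed with the basepoint $\star$: by Remark \ref{forwardpointspaceandbackwardones}\eqref{pfms-b}, $(X,\gamma(a),d)$ is a pointed forward $\widetilde\Theta$-metric space with $\widetilde\Theta(r):=\Theta(d(\star,\gamma(a))+r)$, so $\lambda_d(\overline{B^+_{\gamma(a)}(L)})\le\widetilde\Theta(L)=\Theta(d(\star,\gamma(a))+L)=:\theta<\infty$. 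For step (2): for any $[t_1,t_2]\subset[a,b]$, since both $\gamma(t_1),\gamma(t_2)\in\overline{B^+_{\gamma(a)}(L)}$,
\[
d(\gamma(t_2),\gamma(t_1))\le\theta\, d(\gamma(t_1),\gamma(t_2))\le\theta\int_{t_1}^{t_2}f(s)\,\dd s=\int_{t_1}^{t_2}\theta f(s)\,\dd s .
\]
Since $\theta f\in L^p([a,b])$, this shows $\gamma\in\BAC^p([a,b];X)$, completing the argument. The inclusion can be strict — e.g. Example \ref{curverexa1} shows $\FAC^p\ne\BAC^p$ in general — so one should not expect equality, and I would remark on this.

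The only genuine subtlety, and the one point I would flag as the place where care is needed, is step (1): one must be sure the \emph{entire} image of $\gamma$ lies in a forward ball of finite radius. This is exactly where the boundedness and closedness of $I$ enter: closedness guarantees the endpoint $\gamma(a)$ is a point of the curve so the bound $L=\|f\|_{L^1(I)}$ is over the whole interval, and boundedness (together with $f\in L^p(I)\subset L^1(I)$ on the bounded interval $I$) guarantees $L<\infty$. Without either hypothesis — as the counterexamples from the Funk space in Example \ref{funcurve1} and Remark \ref{closureisbounded} illustrate — the image can escape every forward ball, $\theta$ can be infinite, and the reversing estimate fails. Everything else is a routine application of the triangle inequality and the defining property of a forward $\Theta$-metric space, and no completeness of $(X,d)$ is used anywhere.
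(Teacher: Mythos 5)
Your proof is correct and follows essentially the same route as the paper: reduce to $\FAC^p\subset\BAC^p$, use $f\in L^1(I)$ on the bounded closed interval to trap $\gamma(I)$ in a single forward ball, and then apply the $\Theta$-bound on that ball to reverse the distance at the cost of a fixed constant. The only cosmetic difference is that you invoke Remark~\ref{forwardpointspaceandbackwardones}(a) to recenter at $\gamma(a)$, whereas the paper absorbs the basepoint shift directly into the radius via the triangle inequality from $\star$; the substance is identical.
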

\begin{proof} Without loss of generality, we may suppose that $(X,\star,d)$ is a pointed forward $\Theta$-metric space and $I=[0,1]$.
  Provided $\gamma\in \FAC^p([0,1];X)$, there exists some $f\in L^p([0,1])$ satisfying \eqref{moredfabsc}. Hence,
\[
d(\gamma(0),\gamma(t))\leq \int^t_0 f(s){\dd}s\leq \|f\|^p_{L^p}<\infty\quad \Longrightarrow \quad \gamma([0,1])\subset \overline{B^+_\star(d(\star,\gamma(0))+\|f\|^p_{L^p})},
\]
which implies that for any $[t_1,t_2]\subset [0,1]$,
\[
d(\gamma(t_2),\gamma(t_1))\leq \Theta(d(\star,\gamma(0))+\|f\|^p_{L^p})\,d(\gamma(t_1),\gamma(t_2)) \leq \int^{t_2}_{t_1}g(s){\dd}s,
\]
where $g:=\Theta(d(\star,\gamma(0))+\|f\|^p_{L^p})\,f\in L^p([0,1])$. Hence, $\FAC^p([0,1];X)\subset \BAC^p([0,1];X)$.
\end{proof}
\begin{remark}
If $\gamma\in \BAC^p(I;X)$ is continuous, then $\gamma\in \FAC^p(I;X)$. Factually the compactness of $\gamma(I)$ and Theorem \ref{topologychara}/\eqref{contin} imply $R:=\sup_{x\in \gamma(I)}d(\star,x)<\infty$ and hence, $\gamma(I)\subset \overline{B^+_\star(R)}$.   Thus, a similar argument to the above yields $\gamma\in \FAC^p(I;X)$.
\end{remark}

The concept of metric absolutely continuous curve  in Definition \ref{abthreekinds}/\eqref{metriccontinuous} can be extended naturally to forward metric spaces whereas the definition of naturally absolutely continuous curve cannot due to the lack of differential structures.
Hence, we have the following result, whose proof is given in Appendix \ref{forwauxipro}.
\begin{theorem}\label{basicabsoulcurve}
 Let $(X,d)$ be a separable  forward complete forward metric space and let $I$ a bounded closed interval. Thus,
 \[
  \MAC(I;X)= \AC(I;X)=\FAC(I;X)\subset \BAC(I;X).
 \]
 \end{theorem}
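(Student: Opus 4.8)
\textbf{Proof proposal for Theorem \ref{basicabsoulcurve}.}
The plan is to follow the same chain of inclusions used in the Finsler case (Theorem \ref{newdefsabcon}), but to bypass the charts and replace the natural absolute continuity there by a length-space argument. First I would record the trivial inclusions: clearly $\FAC(I;X)\subseteq\MAC(I;X)$, since if $d(\gamma(t_1),\gamma(t_2))\le\int_{t_1}^{t_2}f$ with $f\in L^1(I)$, then $G(s):=\int_{\inf I}^s f$ is an absolutely continuous function, and the modulus-of-continuity estimate for $G$ transfers verbatim to $\gamma$. The inclusion $\FAC(I;X)\subseteq\BAC(I;X)$ is already proved in the preceding Proposition (using the $\Theta$-metric structure, precompactness of a forward ball, and continuity of $d$ in $\mathcal{T}_+\times\mathcal{T}_+$), so only $\MAC(I;X)\subseteq\FAC(I;X)$ requires new work; combined with $\FAC\subseteq\MAC$ this gives the asserted equalities.

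The core step is therefore: given $\gamma\in\MAC(I;X)$, produce $f\in L^1(I)$ with $d(\gamma(t_1),\gamma(t_2))\le\int_{t_1}^{t_2}f$. Assume $I=[0,1]$. Metric absolute continuity with $\varepsilon=1$ yields a $\delta>0$ so that $\sum d_F(\gamma(a_i),\gamma(b_i))<1$ whenever the $(a_i,b_i)$ are nonoverlapping with total length $<\delta$; a standard chopping of $[0,1]$ into $\lceil 1/\delta\rceil$ pieces shows $\gamma\in\BAC$-type control is not needed, rather that the total variation $\Vv(\gamma;[0,1])$ (supremum of $\sum d(\gamma(s_i),\gamma(s_{i+1}))$ over partitions) is finite. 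Moreover $\gamma$ is continuous in the forward (equivalently symmetrized) topology, so $\overline{\gamma([0,1])}$ is compact — here I would invoke the completeness hypothesis together with $\gamma([0,1])\subset\overline{B^+_{\gamma(0)}(\Vv(\gamma;[0,1]))}$ and the forward Hopf–Rinow-type statement used elsewhere (closed forward-bounded sets are compact in a forward complete space; cf.\ the argument in the proof of Theorem \ref{strongcontrall}). Then, as in the proof of Theorem \ref{strongcontrall}, pick a countable dense set $(x_n)$ of $\overline{\gamma([0,1])}$, set $\gamma_n(t):=d(x_n,\gamma(t))$, and observe via the triangle inequality and the bound $\theta:=\lambda_d(\overline{\gamma([0,1])})<\infty$ that $|\gamma_n(t_2)-\gamma_n(t_1)|\le\theta\,d(\gamma(t_1),\gamma(t_2))$, so each $\gamma_n$ inherits metric absolute continuity of $\gamma$; hence $\gamma_n$ is an absolutely continuous real function, $\gamma_n\in W^{1,1}([0,1])$, with $|\gamma_n'|(t)\le\theta\,\liminf_{h\to0^+}d(\gamma(t),\gamma(t+h))/h$ a.e.

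Finally I would set $m(t):=\sup_n|\gamma_n'|(t)$ off the null set where some $\gamma_n'$ fails to exist, and argue $m\in L^1([0,1])$: by absolute continuity of each $\gamma_n$ and the uniform bound $\Vv(\gamma;[0,1])<\infty$ one has $\int_0^1|\gamma_n'|\le\theta\,\Vv(\gamma;[0,1])$ for every $n$; but this alone does not bound the supremum, so instead I would use the $\MAC$ property directly to get a single modulus: the uniform control $\sum|\gamma_n(b_i)-\gamma_n(a_i)|\le\theta\sum d(\gamma(a_i),\gamma(b_i))$ shows the family $\{\gamma_n\}$ is \emph{uniformly} metric absolutely continuous, whence (by the classical Vitali/de la Vall\'ee-Poussin argument) their derivatives are uniformly integrable and dominated by one $m\in L^1$. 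Then density of $(x_n)$ gives, for every $[s,t]\subset[0,1]$,
\[
d(\gamma(s),\gamma(t))=\sup_{n}\big(\gamma_n(t)-\gamma_n(s)\big)\le\sup_n\int_s^t|\gamma_n'|(r)\,{\dd}r\le\int_s^t m(r)\,{\dd}r,
\]
so $\gamma\in\FAC(I;X)$ with $f=m$. The main obstacle is precisely this last extraction of a single $L^1$ dominating function $m$: one must convert the $\varepsilon$–$\delta$ definition of $\MAC$ into a genuine uniform integrability statement for the countable family $(|\gamma_n'|)_n$, rather than merely a uniform $L^1$ bound, and then pass the sup inside the integral; the compactness/completeness input is needed to make $\theta$ and the dense set available in the first place, and the assumption that $I$ is bounded and closed is used both to have $\gamma$ defined (and continuous) on the endpoints and to keep $\overline{\gamma(I)}$ forward-bounded.
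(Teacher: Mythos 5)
Your inclusions $\FAC(I;X)\subseteq\MAC(I;X)$ and $\FAC(I;X)\subseteq\BAC(I;X)$ are handled correctly, so the only nontrivial content is $\MAC(I;X)\subseteq\FAC(I;X)$. There the gap is exactly the step you flag at the end: the claim that uniform integrability of the family $\{|\gamma_n'|\}_n$ produces a single dominating function $m\in L^1([0,1])$ with $|\gamma_n'|\le m$ a.e.\ for all $n$ is false. Uniform integrability (equivalently, by de la Vall\'ee-Poussin, a uniform bound on $\int_0^1\Phi(|\gamma_n'|)$ for some superlinear convex $\Phi$) does \emph{not} give pointwise domination by one $L^1$ function. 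For instance $g_n:=n\,\mathbf{1}_{(1/(n+1),\,1/n]}$ satisfies $\sup_n\int_0^1 g_n\log(1+g_n)\,{\dd}t<\infty$, hence $\{g_n\}$ is uniformly integrable, yet $\sup_n g_n(t)\sim t^{-1}$ is not in $L^1([0,1])$ (the supports are disjoint, so $\int_0^1\sup_n g_n=\sum_n 1/(n+1)=\infty$). Without $m\in L^1$, the chain $d(\gamma(s),\gamma(t))=\sup_n(\gamma_n(t)-\gamma_n(s))\le\sup_n\int_s^t|\gamma_n'|\le\int_s^t m$ does not close, and you have not produced the $f$ required by the definition of $\FAC$.

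The paper's proof avoids the dense-set machinery altogether. Proposition \ref{basicmaac} attaches to $\gamma$ the Radon measure $\nu_\gamma$ with $\nu_\gamma(J)=\Vv(\gamma;J)$ on open intervals $J$ (Theorem \ref{derivatboundedvariation}), applies the Lebesgue--Radon--Nikodym decomposition $\nu_\gamma=\rho\,\mathscr{L}^1+(\nu_\gamma)^s$, and then uses the $\varepsilon$--$\delta$ definition of $\MAC$ (refining partitions inside each subinterval to control $\Vv$ rather than single increments $d(\gamma(a_i),\gamma(b_i))$) to force $(\nu_\gamma)^s\equiv 0$. Once $\nu_\gamma=\rho\,\mathscr{L}^1$ one reads off $d(\gamma(t_1),\gamma(t_2))\le\Vv(\gamma;(t_1,t_2))=\int_{t_1}^{t_2}\rho$, so $\gamma\in\FAC$ with $f=\rho$; no countable dense set, no exchange of supremum and integral. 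Put differently, what you must prove is that the variation function $v(t):=\Vv(\gamma;[0,t])$ is an absolutely continuous real function; this is precisely what the $\MAC$ hypothesis delivers, and then $v'$ is your $f$. If you insist on the $\gamma_n$-route, the correct dominating function is $\theta\,v'$ (since $|\gamma_n(t)-\gamma_n(s)|\le\theta\,d(\gamma(s),\gamma(t))\le\theta\,[v(t)-v(s)]$), but once $v$ is known to be absolutely continuous the dense family $(x_n)$ is superfluous. Incidentally, you also do not need any Hopf--Rinow-type compactness for $\overline{\gamma([0,1])}$: bounded variation alone gives $\gamma([0,1])\subset\overline{B^+_{\gamma(0)}(\Vv(\gamma;[0,1]))}$, and the pointed forward $\Theta$-metric structure already bounds the reversibility $\theta$ on that forward ball.
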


Moreover, one can define the Wasserstein spaces over forward metric spaces  in the same way as in Section \ref{asbwassficcns} (also see \cite{KZ}).
In particular, all the arguments and results in Sections \ref{measfinsler}--\ref{sacws} remain valid for separable forward complete forward metric spaces while the results in Section \ref{Contineqa} can be extended to Minkowski normed spaces by simple modifications. We leave the formulation of such statements to the interested reader.

 \appendix

\section{Complementary results for forward metric spaces}\label{forwauxipro}
In this section, we study variation of   curves in the asymmetric setting.
Let  $(X,d)$ be a separable forward complete forward metric space and let $\gamma:[0,1]\rightarrow X$ be a continuous curve. The curve $\gamma$ is said to be {\it of bounded variation} if
\[
L_d(\gamma):=\sup_{0=t_0\leq t_1\leq \cdots\leq t_n=1 }\sum_{i=0}^{n-1} d(\gamma(t_i),\gamma(t_{i+1}))<\infty.
\]
Owing to  \cite{AGS,KZ,OZ}, $L_d(\gamma)$ is exactly the length of $\gamma$. Hence, $\gamma$ is of bounded variation if and only if it is rectifiable (i.e., the length is finite). In particular, in view of \cite[Lemma 1.1.4]{AGS} and \cite[Theorem 2.7.6]{DYS}, it is not hard to check that
\begin{equation}\label{lengforw}
L_d(\gamma)=\int^1_0 |\gamma'_+|(t){\dd}t,\quad \forall\,\gamma\in \FAC([0,1];X).
\end{equation}

If $\gamma$ is of bounded variation and $J\subset [0,1]$ is an open interval in $\mathbb{R}$, the {\it (pointwise) variation} of $\gamma$ on $J$, say $\Vv(\gamma;J)$, is defined by
\begin{itemize}
\item if $J=(a,b)$, then
$\Vv(\gamma;J):=\sup_{a=t_0\leq t_1\leq \cdots\leq t_n=b }\sum_{i=0}^{n-1} d(\gamma(t_i),\gamma(t_{i+1}))$;

\smallskip

\item if $J$ is a disjoint union of open intervals $J_\alpha$ contained in $[0,1]$, then
$\Vv(\gamma;J):=\sum_\alpha\Vv(\gamma;J_\alpha);$

\smallskip

\item by the continuity of $\gamma$, also set
$\Vv(\gamma;[a,b])=\Vv(\gamma;(a,b])=\Vv(\gamma;[a,b))=\Vv(\gamma;(a,b))$.
\end{itemize}
Finally, the {\it total variation} of $\gamma$ is defined as $\Vv(\gamma;[0,1])$. Proceeding as in (the first part of) the proof of \cite[Theorem 4.4.8]{HKST}, one can show the following result.

\begin{theorem}\label{derivatboundedvariation}
Let $(X,d)$ be a separable forward complete forward metric space and let $\gamma:[0,1]\rightarrow (X,d)$ be a continuous curve of bounded variation. There is a unique Radon measure $\nu_\gamma$ on $[0,1]$ such that $\nu_\gamma(J)=\Vv(\gamma;J)$ for every open interval $J\subset [0,1]$ and the derivative
of $\nu_\gamma$ with respect to   $\mathscr{L}^1$
\begin{equation}\label{lebesgue-radon}
\frac{{\dd}\nu_\gamma}{{\dd}\mathscr{L}^1}(t):=\lim_{\epsilon\rightarrow 0^+}\frac{\nu_\gamma([t-\epsilon,t+\epsilon])}{\mathscr{L}^1([t-\epsilon,t+\epsilon])}
\end{equation}
exists for $\mathscr{L}^1$-a.e. $t\in (0,1)$.
\end{theorem}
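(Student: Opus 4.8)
The plan is to mimic the classical construction of the variation measure, carrying out the same steps as in \cite[Theorem 4.4.8]{HKST} but paying attention to the fact that $d$ is asymmetric. First I would define the variation function $v:[0,1]\rightarrow[0,\infty)$ by $v(t):=\Vv(\gamma;[0,t])$, with $v(0):=0$. Since $\gamma$ is of bounded variation, $v$ is finite and bounded by $L_d(\gamma)$; moreover $v$ is nondecreasing because for $s\le t$ one has $\Vv(\gamma;[0,s])\le\Vv(\gamma;[0,t])$ by adding $s$ to any partition of $[0,s]$ and extending it to a partition of $[0,t]$. The next point is to check that $v$ is continuous: here I would use the continuity of $\gamma$ with respect to $\mathcal{T}_+=\hat{\mathcal{T}}$ (Theorem \ref{topologychara}) together with a standard "splitting off a small tail'' argument — for $\varepsilon>0$ pick a partition $0=t_0<\cdots<t_n=t$ with $\sum_i d(\gamma(t_{i-1}),\gamma(t_i))>v(t)-\varepsilon$, and then for $s$ close enough to $t$ (on either side) the continuity of $d$ in both arguments and of $\gamma$ forces $|v(t)-v(s)|<2\varepsilon$. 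Asymmetry causes no trouble here since both $d(\gamma(s),\gamma(t))$ and $d(\gamma(t),\gamma(s))$ are small when $s\to t$ by Hausdorffness of $(X,\mathcal{T}_+)$.

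Once $v$ is known to be continuous and nondecreasing, I would invoke the Lebesgue–Stieltjes construction: a continuous nondecreasing function $v$ on $[0,1]$ induces a unique Radon (finite Borel) measure $\nu_\gamma$ on $[0,1]$ with $\nu_\gamma((a,b))=\nu_\gamma([a,b])=v(b)-v(a)$ for every subinterval (continuity of $v$ removes the atoms, so the closed/open distinction disappears). The identity $\nu_\gamma(J)=\Vv(\gamma;J)$ for an open interval $J=(a,b)$ is then $\Vv(\gamma;(a,b))=v(b)-v(a)$, which follows from the additivity of the pointwise variation over adjacent subintervals, $\Vv(\gamma;[0,b])=\Vv(\gamma;[0,a])+\Vv(\gamma;[a,b])$ — this additivity is the one genuinely combinatorial lemma and is proved exactly as in the symmetric case, since it only uses the triangle inequality of $d$, not symmetry. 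For a disjoint union of open intervals one extends by countable additivity of $\nu_\gamma$, matching the stipulated definition $\Vv(\gamma;J)=\sum_\alpha\Vv(\gamma;J_\alpha)$.

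Finally, the differentiability statement \eqref{lebesgue-radon} is the classical Lebesgue differentiation theorem for a Radon measure on $\mathbb{R}$ (equivalently, the a.e.\ differentiability of the monotone function $v$): the symmetric Radon–Nikodym decomposition $\nu_\gamma=\frac{{\dd}\nu_\gamma}{{\dd}\mathscr{L}^1}\,\mathscr{L}^1+\nu_\gamma^{s}$ holds on $(\mathbb{R},\mathscr{L}^1)$ irrespective of any metric structure on $X$, and the symmetric derivative in \eqref{lebesgue-radon} exists $\mathscr{L}^1$-a.e.\ and equals the Radon–Nikodym density a.e. Uniqueness of $\nu_\gamma$ is immediate since a Radon measure on $[0,1]$ is determined by its values on intervals. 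I expect the only mildly delicate step to be the continuity of $v$: one must make sure that smallness of the variation tail is controlled simultaneously in the forward and backward directions, but Theorem \ref{topologychara}\eqref{contin} guarantees $d$ is jointly continuous in $\mathcal{T}_+\times\mathcal{T}_+$, so both $d(\gamma(t),\gamma(t\pm h))\to0$ and $d(\gamma(t\pm h),\gamma(t))\to0$ as $h\to0^+$, which is exactly what the argument needs; all other steps are insensitive to the asymmetry. Hence the theorem follows, and in combination with \eqref{lengforw} the density $\frac{{\dd}\nu_\gamma}{{\dd}\mathscr{L}^1}$ agrees $\mathscr{L}^1$-a.e.\ with $|\gamma'_+|$ when $\gamma\in\FAC([0,1];X)$.
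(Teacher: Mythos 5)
Your proposal is correct and follows essentially the paper's route: the paper simply defers to the first part of the proof of \cite[Theorem~4.4.8]{HKST}, which is precisely the classical variation-function/Lebesgue--Stieltjes/Lebesgue-differentiation argument you spell out, together with your accurate observation that asymmetry only affects the continuity of $v$ (where Theorem~\ref{topologychara} supplies what is needed) and that additivity of variation uses only the triangle inequality. The one inessential slip is the appeal to ``Hausdorffness'' for $d(\gamma(s),\gamma(t))\to 0$ and $d(\gamma(t),\gamma(s))\to 0$ as $s\to t$; the correct reason is that $\gamma$ is continuous with respect to $\hat{\mathcal{T}}=\mathcal{T}_+$, so Theorem~\ref{asymmetrtopol}(iii) applies.
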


\begin{proposition}\label{basicmaac}
Let $(X,d)$ be a separable forward complete forward metric space.
Thus, $\gamma\in \MAC([0,1];X)$ if and only if the associated Radon measure $\nu_\gamma$ is absolutely continuous with respect to $\mathscr{L}^1$. In particular, if $\gamma$ is metric absolutely continuous, then $\gamma\in \FAC([0,1];X)$ and in particular, for $\mathscr{L}^1$-a.e. $t\in (0,1)$,
\[
\frac{{\dd}\nu_\gamma}{{\dd}\mathscr{L}^1}(t)=|\gamma'_+|(t).
\]
 \end{proposition}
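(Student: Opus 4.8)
The plan is to prove the two implications separately, using as a common bridge the Radon measure $\nu_\gamma$ provided by Theorem \ref{derivatboundedvariation} together with the length identity \eqref{lengforw}, and then to deduce the two ``in particular'' assertions from what has been established.

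First I would check that $\gamma\in\MAC([0,1];X)$ forces $\gamma$ to be continuous and of bounded variation, so that $\nu_\gamma$ is indeed well defined. Taking $\varepsilon=1$ in the definition of $\MAC$ and the corresponding $\delta$, a partition of $[0,1]$ into $\lceil 1/\delta\rceil$ subintervals of length $<\delta$ bounds the total variation; hence $\gamma([0,1])\subset\overline{B^+_\star(R)}$ for a suitable $R$, on which the reversibility is bounded by $\Theta(R)<\infty$, and this upgrades the uniform smallness of $d(\gamma(s),\gamma(t))$ as $t-s\to0^+$ (immediate from the $\MAC$ condition) to genuine continuity with respect to $\hat{\mathcal{T}}$. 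To obtain $\nu_\gamma\ll\mathscr{L}^1$, fix $\varepsilon>0$, take the $\delta$ from the $\MAC$ condition, and for a Lebesgue-null set $E$ choose an open $U\supset E$ with $\mathscr{L}^1(U)<\delta$ and decomposition $U=\bigsqcup_i(a_i,b_i)$. Any finite family of partitions of finitely many of these intervals yields nonoverlapping subintervals of total length $<\delta$, so the $\MAC$ inequality bounds the corresponding variation sum by $\varepsilon$; passing to the supremum over partitions and then letting the number of intervals tend to infinity gives $\nu_\gamma(E)\le\nu_\gamma(U)=\sum_i\Vv(\gamma;(a_i,b_i))\le\varepsilon$, and therefore $\nu_\gamma(E)=0$.

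For the converse, suppose $\gamma$ is a continuous curve of bounded variation with $\nu_\gamma\ll\mathscr{L}^1$. Since $\nu_\gamma$ is a finite Borel measure on $[0,1]$, its absolute continuity is equivalent to the $\varepsilon$-$\delta$ property: for every $\varepsilon>0$ there is $\delta>0$ with $\mathscr{L}^1(A)<\delta\Rightarrow\nu_\gamma(A)<\varepsilon$. Applying this with $A=\bigcup_i(a_i,b_i)$ for nonoverlapping intervals of total length $<\delta$, and using $\sum_i d(\gamma(a_i),\gamma(b_i))\le\sum_i\Vv(\gamma;(a_i,b_i))=\nu_\gamma(A)$, gives $\gamma\in\MAC([0,1];X)$; this completes the equivalence.

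It remains to treat the final assertion. Assuming $\gamma\in\MAC([0,1];X)$, the equivalence just proved gives $\nu_\gamma\ll\mathscr{L}^1$, so by Radon--Nikodym $\nu_\gamma=g\,\mathscr{L}^1$ for some $g\in L^1([0,1])$; then $d(\gamma(t_1),\gamma(t_2))\le\Vv(\gamma;[t_1,t_2])=\int_{t_1}^{t_2}g(s)\,{\dd}s$ shows $\gamma\in\FAC([0,1];X)$, whence $|\gamma'_+|$ exists $\mathscr{L}^1$-a.e.\ by Proposition \ref{baspeedforback}. Applying \eqref{lengforw} on each subinterval $[a,b]$ (whose length is precisely $\Vv(\gamma;[a,b])=\nu_\gamma([a,b])$) yields $\nu_\gamma([a,b])=\int_a^b|\gamma'_+|(t)\,{\dd}t$ for every $[a,b]\subset[0,1]$, so $g=|\gamma'_+|$ $\mathscr{L}^1$-a.e., which by \eqref{lebesgue-radon} is the claimed formula. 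The one genuinely delicate point is the regularity step in the first implication, namely making the supremum that defines the pointwise variation over the open set $U$ controllable by the single $\delta$ coming from the $\MAC$ condition; the remaining steps are routine measure theory together with the length formula \eqref{lengforw}.
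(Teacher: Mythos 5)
Your proof is correct, and on the central step it takes a route that is genuinely different from, and arguably cleaner than, the paper's. For the implication $\MAC \Rightarrow \nu_\gamma \ll \mathscr{L}^1$, the paper invokes the Lebesgue--Radon--Nikodym decomposition $\nu_\gamma = \rho\,\mathscr{L}^1 + (\nu_\gamma)^s$, first proves separately that $(\nu_\gamma)^s$ has no atoms (so that $\nu_\gamma([c_k,d_k]) = \nu_\gamma((c_k,d_k))$), and then derives a contradiction from $(\nu_\gamma)^s(E) > 0$ by covering a null set $E$ with \emph{closed} intervals of small total length. You instead verify $\nu_\gamma(E) = 0$ directly from the definition of absolute continuity: cover $E$ by a single open set $U = \bigsqcup_i (a_i,b_i)$ of total length $<\delta$, observe that finite partitions of finitely many of these intervals are exactly what the $\MAC$ condition controls, and pass to the supremum and the countable union to get $\nu_\gamma(E) \le \nu_\gamma(U) \le \varepsilon$. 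By working with open intervals from the outset you sidestep both the singular decomposition and the no-atoms lemma, which were needed in the paper only to reconcile closed and open intervals. Your converse is logically organized slightly differently (you deduce $\MAC$ via the $\varepsilon$--$\delta$ characterization of $\nu_\gamma \ll \mathscr{L}^1$, and then establish $\FAC$ separately from the Radon--Nikodym density, whereas the paper gets $\FAC$ directly and infers $\MAC \supset \FAC$), but this is a reordering, not a different idea. A minor but welcome addition is that you actually spell out why a curve in $\MAC$ is continuous and of bounded variation — in particular, that symmetrized continuity requires bounding the reversibility on a forward ball containing $\gamma([0,1])$ — a point the paper dispatches with ``it is not hard to show.'' The final identification $\frac{\dd\nu_\gamma}{\dd\mathscr{L}^1} = |\gamma'_+|$ via \eqref{lengforw} matches the paper.
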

\begin{proof}
Suppose $\gamma\in \MAC([0,1];X)$. In view of Definition \ref{abthreekinds}/\eqref{metriccontinuous}, it is not hard to show that $\gamma$ is a continuous curve of bounded variation.  Theorem \ref{derivatboundedvariation} then implies that $\rho(t):=\frac{{\dd}\nu_\gamma}{{\dd}\mathscr{L}^1}(t)$ exists for $\mathscr{L}^1$-a.e. $t\in (0,1)$. Thus, by the Lebesgue-Radon-Nikodym theorem (cf.~\cite[p.\,82]{HKST}), we have $\nu_\gamma=\rho\mathscr{L}^1+(\nu_\gamma)^s$, where $(\nu_\gamma)^s$ denotes the singular part of $\nu_\gamma$ such that $(\nu_\gamma)^s\bot \rho\mathscr{L}^1$. In particular, $(\nu_\gamma)^s$ is finite and nonnegative. We now claim that $(\nu_\gamma)^s(\{t\})=0$ for every $t\in [0,1]$. In fact,
if there is some $t_0\in [0,1]$ with $(\nu_\gamma)^s(\{t_0\})>0$,  we have
\[
0<(\nu_\gamma)^s(\{t_0\})\leq  (\nu_\gamma)^s((t_0-\delta,t_0+\delta))\leq \nu_\gamma((t_0-\delta,t_0+\delta))=\Vv(\gamma;(t_0-\delta,t_0+\delta)),\quad \forall\,\delta>0,
\]
which is contrary to  $\gamma\in \MAC([0,1];X)$ because $\lim_{\delta\rightarrow 0} \Vv(\gamma;(t_0-\delta,t_0+\delta))=0$ (see Definition \ref{abthreekinds}/\eqref{metriccontinuous}). So the claim is true, which furnishes
\begin{equation}\label{finiteiszero}
\nu_\gamma(F)=0\quad \text{for any finite set $F\subset [0,1]$}.
\end{equation}

Now we show that $ (\nu_\gamma)^s$ is a null measure. If not, there is a  Lebesgue null set $E\subset [0,1]$ with $(\nu_\gamma)^s(E)>0$ due to $(\nu_\gamma)^s\bot \rho\mathscr{L}^1$.  By Definition \ref{abthreekinds}/\eqref{metriccontinuous}, for any $\varepsilon\in (0,(\nu_\gamma)^s(E)/2)$, there exists $\delta=\delta(\varepsilon)>0$ such that
\begin{equation}\label{controllbounded}
\sum_{i=1}^n d(\gamma(a_i), \gamma(b_i)) <\varepsilon<\frac12{(\nu_\gamma)^s(E)},
\end{equation}
whenever $\{(a_i, b_i)\}^n_{i=1}$ are nonoverlapping subintervals of $[0,1]$ with
 $\sum_{i=1}^n (b_i-a_i) <\delta$. On the other hand, $\mathscr{L}^1(E)=0$ yields
a countable sequence  of nonoverlapping subintervals $\{(c_k,d_k)\}_k\subset [0,1]$ such that $E\subset \cup_{k}[c_k,d_k]$ and $\sum_{k}(d_k-c_k)<\delta$. By the definition of $\Vv(\gamma;\cdot)$ and \eqref{controllbounded}, a direct argument yields
\[
\sum_{k=1}^\infty\Vv(\gamma;(c_k,d_k))=\lim_{N\rightarrow \infty}\sum_{k=1}^N\Vv(\gamma;(c_k,d_k))=\lim_{N\rightarrow \infty}\Vv\left(\gamma; \bigcup_{k=1}^N (c_k,d_k)  \right)\leq \varepsilon< \frac12(\nu_\gamma)^s(E).
\]
However, owing to  \eqref{finiteiszero} and Theorem \ref{derivatboundedvariation}, we also have
\begin{align*}
0<(\nu_\gamma)^s(E)\leq \nu_\gamma(E)\leq \sum_{k=1}^\infty\nu_\gamma([c_k,d_k])=\sum_{k=1}^\infty\nu_\gamma((c_k,d_k))
=\sum_{k=1}^\infty\Vv(\gamma;(c_k,d_k))< \frac12(\nu_\gamma)^s(E),
\end{align*}
which leads to a contradiction. Thus,
$(\nu_\gamma)^s$ must be a null measure and hence, $\nu_\gamma$ is absolutely continuous with respect to $\mathscr{L}^1$.

On the other hand, if $\nu_\gamma$ is absolutely continuous with respect to $\mathscr{L}^1$, then $\nu_\gamma=\rho\mathscr{L}^1$ and hence,
\begin{equation*}\label{forwardequat}
d(\gamma(t_1),\gamma(t_2))\leq \Vv(\gamma;(t_1,t_2))= \nu_\gamma((t_1,t_2))=\int^{t_2}_{t_1}\rho(s){\dd}s,
\end{equation*}
which implies $\gamma\in \FAC([0,1];X)\subset\MAC([0,1];X)$. Moreover, since $\nu_\gamma((t_1,t_2))=\Vv(\gamma;(t_1,t_2))$ is the length of $\gamma|_{[t_1,t_2]}$,   we obtain  $\rho(t)=|\gamma'_+|(t)$ by  \eqref{lengforw}.
\end{proof}

\begin{proof}[Proof of Theorem \ref{basicabsoulcurve}]  The definitions indicate
 $\FAC([0,1];X)\subset \MAC([0,1];X)$. On the other hand, if $\gamma\in \MAC([0,1];X)$, then Proposition \ref{basicmaac} implies $\gamma\in \FAC([0,1];X)$, which concludes the proof.
\end{proof}

\end{document}